\newtheorem{theorem}{Theorem}
\newtheorem{corollary}[theorem]{Corollary}
\newtheorem{lemma}[theorem]{Lemma}
\newtheorem{proposition}[theorem]{Proposition}
\newtheorem*{main theorem - ring}{Theorem~\ref{kernel size upper bound - ring}}
\newtheorem*{main theorem - asymptotic - ring}{Theorem~\ref{kernel size asymptotic bound - ring}}
\newtheorem*{main theorem - diagonals}{Theorem~\ref{kernel size upper bound - multivariate diagonals}}
\theoremstyle{definition}
\newtheorem{example}[theorem]{Example}
\newtheorem*{definition*}{Definition}
\newtheorem*{notation*}{Notation}
\newtheorem{remark}[theorem]{Remark}
\DeclareMathOperator{\dig}{dig}
\DeclareMathOperator{\lcm}{lcm}
\DeclareMathOperator{\mindeg}{mindeg}
\DeclareMathOperator{\orb}{orb}
\DeclareMathOperator{\ord}{ord}
\DeclareMathOperator{\partitions}{parts}
\DeclareMathOperator{\pr}{pr}
\DeclareMathOperator{\rep}{rep}
\DeclareMathOperator{\res}{res}
\DeclareMathOperator{\val}{val}
\newcommand{\F}{\mathbb{F}}
\newcommand{\Q}{\mathbb{Q}}
\newcommand{\Z}{\mathbb{Z}}
\newcommand{\field}{\F_q}
\newcommand{\ring}{\mathcal{R}_{p^\alpha}}
\newcommand{\Landau}{g}
\newcommand{\Landaulcm}{\mathcal L}
\newcommand{\subb}{_\textnormal{b}}
\newcommand{\subl}{_\ell}
\newcommand{\subr}{_\textnormal{r}}
\newcommand{\subt}{_\textnormal{t}}
\newcommand{\colonequal}{\mathrel{\mathop:}=}
\newcommand{\equalcolon}{=\mathrel{\mathop:}}
\newcommand{\nequiv}{\mathrel{\not\equiv}}
\newcommand{\size}[1]{\lvert{#1}\rvert}
\newcommand{\ceil}[1]{\left\lceil #1 \right\rceil}
\newcommand{\doublebracket}[1]{\llbracket #1 \rrbracket}
\newcommand{\floor}[1]{\left\lfloor #1 \right\rfloor}
\newcommand{\paren}[1]{\left( #1 \right)}
\newcommand{\seq}[1]{\cite[\href{http://oeis.org/#1}{#1}]{OEIS}}
\begin{document}

\title[Algebraic series and their automatic complexity]{Algebraic power series and their \\ automatic complexity modulo prime powers}

\author{Eric Rowland}
\address{
	Department of Mathematics \\
	Hofstra University \\
	Hempstead, NY 11549 \\
	USA
}
\author{Reem Yassawi}
\address{
	School of Mathematical Sciences \\
	Queen Mary University of London \\
	Mile End Road \\
	London E1 4NS \\
	UK
}

\date{June 25, 2026}

\subjclass[2020]{11A63, 11B85, 13F25}

\thanks{The second author was supported by the EPSRC, grant number EP/V007459/2.}

\begin{abstract}
Christol and, independently, Denef and Lipshitz showed that an algebraic sequence of $p$-adic integers (or integers) is $p$-automatic when reduced modulo~$p^\alpha$.
Previously, the best known bound on the minimal automaton size for such a sequence was doubly exponential in $\alpha$.
Under mild conditions, we improve this to a bound whose dominant factor is $p^{\alpha^3 h d / 3}$, where $h$ and $d$ are the height and degree of the minimal annihilating polynomial modulo~$p$.
We achieve this bound by showing that all states in the automaton are naturally represented in a new numeration system.
This significantly restricts the set of possible states.
Since our approach embeds algebraic sequences as diagonals of rational functions, we also obtain bounds more generally for diagonals of multivariate rational functions.
\end{abstract}

\maketitle

\section{Introduction}\label{section: introduction}

Christol's theorem~\cite{Christol, Christol--Kamae--Mendes France--Rauzy} states that a power series $F = \sum_{n \geq 0} a(n) x^n \in \field\doublebracket{x}$ with coefficients in a finite field is algebraic if and only if its sequence of coefficients $a(n)_{n \geq 0}$ is $q$-automatic.
Given a nonzero polynomial $P \in \field[x, y]$ such that $P(x, F) = 0$, the proof of Christol's theorem allows one to compute an automaton that outputs $a(n)$ when fed the standard base-$q$ representation of $n$.
Bridy~\cite{Bridy} gave an upper bound on the size of this automaton in terms of information about $P$, namely the \emph{height} $h \colonequal \deg_x P$, the \emph{degree} $d \colonequal \deg_y P$, and the genus of $P$.
Bounding the genus above by $(h - 1) (d - 1)$, Bridy obtains as a corollary that the number of states is in $(1 + o(1)) q^{h d}$ as $q$, $h$, or $d$ gets large.
In a previous article, Stipulanti and the authors~\cite{Rowland--Stipulanti--Yassawi} gave a new proof of this genus-free bound.

Christol~\cite{Christol Fonctions} and, using a different approach, Denef and Lipshitz~\cite{Denef--Lipshitz} proved a generalization of Christol's theorem for power series with coefficients in $\Z$ or, more generally, the set $\Z_p$ of $p$-adic integers where $p$ is a prime.
Namely, if $F = \sum_{n \geq 0} a(n) x^n \in \Z_p\doublebracket{x}$ is algebraic, then the sequence $(a(n) \bmod p^\alpha)_{n \geq 0}$ of elements in the ring $\ring \colonequal \Z/p^\alpha \Z$ is $p$-automatic for each $\alpha \geq 1$.

In this article, we bound the size of the minimal automaton generating this sequence.
Here and throughout this article, automata read the least significant digit first.
Our main results concern series defined as follows.

\begin{definition*}
Let $P \in \Z_p[x, y]$ such that $P(0, 0) = 0$ and $\frac{\partial P}{\partial y}(0, 0) \nequiv 0 \mod p$, that is, $\frac{\partial P}{\partial y}(0, 0)$ is a unit.
The \emph{Furstenberg series} associated with $P$ is the unique element $F \in \Z_p\doublebracket{x}$ satisfying $F(0) = 0$ and $P(x, F) = 0$.
\end{definition*}

From one of the constructions of Denef and Lipshitz~\cite[Remark~6.6]{Denef--Lipshitz}, for Furstenberg series one obtains that the number of states is at most $p^{\alpha (p^{\alpha - 1} \max(h, d) + 1)^2}$~\cite[Remark~2.2]{Rowland--Yassawi}.
This bound is doubly exponential in $\alpha$.
The techniques from \cite{Rowland--Stipulanti--Yassawi} can be generalized to give a bound in this setting, but this bound is also doubly exponential.
In this article, we significantly reduce the bound to roughly $p^{\alpha^3 h d / 3}$.
This reduction is achieved primarily by identifying new structure in the representations of the states of the automaton.

In the definition of a Furstenberg series, the condition $\frac{\partial P}{\partial y}(0, 0) \nequiv 0 \mod p$ says that the coefficient of $x^0 y^1$ in $P$ is nonzero modulo~$p$, and it implies that $\deg_y(P \bmod p) \geq 1$.
If $\deg_x P = 0$, then $F$ is the trivial $0$ series; we exclude it since otherwise the statement of the following theorem does not hold when $h = 0$.

\begin{theorem}\label{kernel size asymptotic bound - ring}
Let $p$ be a prime, let $\alpha \geq 1$, and let $F = \sum_{n \geq 0} a(n) x^n \in \Z_p\doublebracket{x} \setminus \{0\}$ be the Furstenberg series associated with a polynomial $P \in \Z_p[x, y]$.
Let $h \colonequal \deg_x(P \bmod p)$ and $d \colonequal \deg_y(P \bmod p)$, and assume $h = \deg_x P$ and $d = \deg_y P$.
Then the size of the minimal $p$-automaton generating $(a(n) \bmod p^\alpha)_{n \geq 0}$ is in
\[
	(1 + o(1)) \, p^{\frac{1}{6} \alpha (\alpha + 1) ((2 h d - 1) \alpha + h d + 1)}
\]
as any of $p$, $\alpha$, $h$, or $d$ tends to infinity and the others remain constant.
\end{theorem}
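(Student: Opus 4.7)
The plan is to bound the size of the minimal $p$-automaton by the cardinality of the $p$-kernel $\{(a(p^k n + r) \bmod p^\alpha)_{n \geq 0} : k \geq 0,\ 0 \leq r < p^k\}$. Since automata read the least significant digit first, the reachable states of the minimal automaton correspond to the images of $F \bmod p^\alpha$ under the monoid generated by the Cartier operators $\Lambda_r \colon \sum_n a(n) x^n \mapsto \sum_n a(pn+r) x^n$, so the task is to bound the number of distinct such images.

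First I would use the Furstenberg construction to embed $F \bmod p^\alpha$ as the diagonal of a bivariate rational function $R(x, y) \in \ring(x, y)$ whose denominator has $x$-degree $h$ and $y$-degree $d$, and whose numerator lies in a low-dimensional $\ring[x, y]$-module. The Cartier operators on $F$ then lift to explicit operators on numerators of rational functions of this type, essentially polynomial substitutions followed by projection onto a standard set of monomial residues modulo the denominator. This reduces the problem to counting orbits of these lifted operators on a module of numerators.

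The heart of the argument is to identify the new numeration system alluded to in the abstract. The plan is to exhibit a filtration
\[
	M = M_1 + p M_2 + p^2 M_3 + \cdots + p^{\alpha - 1} M_\alpha
\]
of the numerator module $M$ such that (i) $M$ is preserved by every Cartier operator modulo $p^\alpha$, and (ii) each layer $M_k$ is represented by monomials in a set of size roughly $p^{h d \cdot k^2}$. Uniqueness of the representation $\sum_k p^{k-1} m_k$ with $m_k$ in a chosen set of layer-$k$ representatives then gives at most $\prod_{k=1}^{\alpha} \size{M_k}$ reachable states, and the exponent computes as
\[
	h d \sum_{k=1}^{\alpha} k^2 \;-\; \binom{\alpha+1}{3} \;=\; \tfrac{1}{6} \alpha (\alpha + 1) \bigl( (2hd-1)\alpha + hd + 1 \bigr).
\]
The $(1 + o(1))$ factor would come from verifying that lower-order contributions to $\size{M_k}$ are dominated as any one of $p, \alpha, h, d \to \infty$, together with an explicit Furstenberg series whose Cartier orbit saturates each layer.

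The main obstacle I anticipate is constructing the filtration $\{M_k\}$ and proving that it is both Cartier-stable and small at each layer. Closure requires a delicate analysis of how numerators grow under iterated substitutions, combined with a canonical reduction modulo the fixed denominator that keeps the $x$-degree at layer $k$ linear in $k$; the quadratic-in-$k$ bound on $\size{M_k}$ would then follow from combining the $h d$-dimensional basis of denominator-residues with this linear-in-$k$ degree growth. A secondary difficulty is proving uniqueness of the decomposition $\sum p^{k-1} m_k$, which is what upgrades the raw exponent bound to carry the asymptotic constant $1$ rather than a generic $O(1)$ leading constant.
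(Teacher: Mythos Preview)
Your high-level picture is right: the paper does represent states by $\alpha$-tuples of ``digits'' (its base-$\tfrac{p}{Q}$ representations) where the $k$th digit ranges over a set $W_k$ of monomials of size roughly $(k+1)^2 h d$, and your exponent arithmetic $hd\sum_{k=1}^\alpha k^2 - \binom{\alpha+1}{3} = N$ reproduces exactly the paper's dimension $N = \dim\mathcal W$. So the skeleton of the numeration system is correct.

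There are two genuine gaps. First, the $(1+o(1))$ statement is \emph{only an upper bound}; the paper never claims or proves sharpness, and in fact remarks that computations suggest the bound is \emph{not} asymptotically sharp. Your proposed ``explicit Furstenberg series whose Cartier orbit saturates each layer'' would be a lower bound argument, and no such argument exists in the paper (nor is one expected to work). So drop that part of the plan entirely.

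Second, and more seriously, you are underestimating what it takes to get the leading constant down to $1$ rather than a fixed power of $p$. The initial state does \emph{not} sit inside the $N$-dimensional space $\mathcal W$; it lies in a strictly larger space $\mathcal V$ of dimension $N + \tfrac12\alpha(\alpha+1)d$, because the $0$th digit has $x$-degree $\leq h$ rather than $\leq h-1$ (and similarly for higher digits). The paper's key structural observation is that applying $\lambda_{r,0}$ with $r\neq 0$ lands in $\mathcal W$, so the only states outside $\mathcal W$ are those on the $\lambda_{0,0}$-orbit of the initial state. Bounding \emph{that} orbit is where the real work is: the paper spends four sections showing that on the three ``borders'' of $\mathcal V$ the map $\lambda_{0,0}$ is emulated by a univariate operator $\lambda_0$, then bounding univariate orbit sizes via period lengths of $\tfrac{1}{R^{p^{\alpha-1}}}\bmod p^\alpha$ (Engstrom-type results plus a lifting-the-exponent argument), and finally recombining the three borders via an $\lcm$ to get a contribution of order $p^{N-\alpha(\alpha+1)(h+d-1)/2}\,\Landau(h+2d)$, which is $o(p^N)$ by Landau's asymptotic $\log\Landau(n)\sim\sqrt{n\log n}$. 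Your proposal's ``lower-order contributions are dominated'' hides precisely this machinery; without it you only reach $p^{\dim\mathcal V}$, which is off by a factor of $p^{\Theta(\alpha^2 d)}$.
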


Theorem~\ref{kernel size asymptotic bound - ring} follows from a finer result, whose statement involves the following functions.
Define $\partitions(n)$ to be the set of all integer partitions of $n$.
The \emph{Landau function} $\Landau(n)$ is the maximum value of $\lcm(\sigma)$ over all integer partitions $\sigma \in \partitions(n)$~\seq{A000793}.
For example, $\Landau(5)$ is the maximum value among $\lcm(5)$, $\lcm(4, 1)$, $\lcm(3, 2)$, $\lcm(3, 1, 1)$, $\lcm(2, 2, 1)$, $\lcm(2, 1, 1, 1)$, and $\lcm(1, 1, 1, 1, 1)$, so $\Landau(5) = 6$.
Finally, define
\[
	\Landaulcm(l, m, n) \colonequal
	\max_{\substack{
		\vphantom{\sigma_1 \in \partitions(i)} 1 \leq i \leq l \\
		\vphantom{\sigma_2 \in \partitions(j)} 1 \leq j \leq m \\
		\vphantom{\sigma_3 \in \partitions(k)} 1 \leq k \leq n
	}}
	\max_{\substack{
		\sigma_1 \in \partitions(i) \\
		\sigma_2 \in \partitions(j) \\
		\sigma_3 \in \partitions(k)
	}}
	\lcm(\lcm(\sigma_1), \lcm(\sigma_2), \lcm(\sigma_3)).
\]

In the following theorem, we remove the mild conditions $h= \deg_x P$ and $d= \deg_y P$.
They are needed in Theorem~\ref{kernel size asymptotic bound - ring} for the asymptotic result, since otherwise a family of polynomials $P$ can be chosen such that $\deg_x P = p^{p^h}$ or $\deg_y P = p^{p^d}$, and for such families the value of $p^u$ in Theorem~\ref{kernel size upper bound - ring} grows too quickly as $h$ or $d$ gets large.
Note that the conditions of Theorem~\ref{kernel size asymptotic bound - ring} imply $u = 1$.
The maps $\pi_{x, i}$ and $\pi_{y, j}$ project bivariate Laurent polynomials to univariate Laurent polynomials; for the definitions, see Section~\ref{section: structure - ring}.

\begin{theorem}\label{kernel size upper bound - ring}
Let $p$ be a prime, let $\alpha \geq 1$, and let $F = \sum_{n \geq 0} a(n) x^n \in \Z_p\doublebracket{x}$ be the Furstenberg series associated with a polynomial $P \in \Z_p[x, y]$ such that $h \colonequal \deg_x(P \bmod p) \geq 1$.
Let $d = \deg_y(P \bmod p)$,
\[
	N = \tfrac{1}{6} \alpha (\alpha + 1) ((2 h d - 1) \alpha + h d + 1),
\]
and
\[
	u = \floor{\log_p \max\!\paren{\alpha (\deg_x(P \bmod p^\alpha) - h), \alpha (\deg_y(P \bmod p^\alpha) - d) + 1}} + 1.
\]
Let $Q \in \ring[x, y, y^{-1}]$ be a lift of $P/y \bmod p$ which has the same monomial support as $P/y \bmod p$, and let
\begin{align*}
	u\subl &= \floor{\log_p \max\!\paren{p^{\alpha - 1} (d - 1 - \deg \pi_{x, 0}(Q)), 1}} + 1 \\
	u\subr &= \floor{\log_p \max\!\paren {p^{\alpha - 1} (d - 1 - \deg \pi_{x, h}(Q)), 1}} + 1 \\
	u\subt &= \floor{\log_p \max\!\paren {p^{\alpha - 1} (h - \deg \pi_{y, d - 1}(Q)), 1}} + 1.
\end{align*}
Then the minimal $p$-automaton that generates $(a(n) \bmod p^\alpha)_{n \geq 0}$ has size at most
\begin{multline*}
	p^N + p^{N - \alpha (\alpha + 1) (h + d - 1) / 2} \Landaulcm(h, d, d) \\
	+ \max(u\subl, u\subr, u\subt)
	+ \floor{\log_p \max(h, d)}
	+ 2 \alpha - 1
	+ \tfrac{p^u - 1}{p - 1}.
\end{multline*}
\end{theorem}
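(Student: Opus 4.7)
\emph{Overall strategy.}
The plan is to realize $F \bmod p^\alpha$ as the diagonal of a bivariate rational function over $\ring$ and then bound the minimal $p$-automaton of this diagonal directly. Writing $P(x, y) = y \cdot Q(x, y) + p \cdot R(x, y)$ with $Q$ a lift of $P/y \bmod p$, a Furstenberg-style identity expresses $F \bmod p^\alpha$ as the diagonal of a rational function whose denominator involves a power of $1 - Q$. Reading the least-significant digit first, states of the minimal automaton correspond to elements of the $p$-kernel, generated from $F$ by the $p$ section operators $\Lambda_r$ for $r \in \{0, 1, \dots, p - 1\}$; applied to a diagonal, each $\Lambda_r$ acts Cartier-like on the numerator and modifies the denominator predictably. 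The counting problem thus reduces to enumerating the possible numerators.

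\emph{Canonical form and the main term.}
The heart of the argument is a canonical representation for reachable states: every element of the $p$-kernel admits a Laurent-polynomial numerator whose monomial support lies inside a rectangle with sides proportional to $\alpha h$ and $\alpha d$. The raw action of $\Lambda_r$ can enlarge the support, but modulo $p^\alpha$ and the relation derived from $1 - Q$ the numerator can always be reduced back into this rectangle; this is where the projections $\pi_{x, 0}(Q)$, $\pi_{x, h}(Q)$, and $\pi_{y, d - 1}(Q)$ featured in the statement enter the argument. Counting elements of $\ring$ over the lattice points of the rectangle, stratified by the number of reductions applied, produces a triple sum over reduction levels, $p$-adic digit index (running from $1$ to $\alpha$), and the $h d$ monomials introduced at each reduction; evaluating this sum in closed form yields exactly $N = \tfrac{1}{6} \alpha (\alpha + 1) ((2 h d - 1) \alpha + h d + 1)$, and hence the leading contribution $p^N$.

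\emph{Refinements and the main obstacle.}
The secondary term $p^{N - \alpha(\alpha + 1)(h + d - 1)/2} \, \Landaulcm(h, d, d)$ captures a symmetry: canonical numerators split into orbits under a natural action on the three boundary strips of the rectangle, with orbit length bounded by $\Landaulcm(h, d, d)$ via the usual Landau bound on the LCM of cycle lengths of a permutation. The codimension $\alpha(\alpha + 1)(h + d - 1)/2$ records the total size of these strips, which we remove from the main count before multiplying back in the worst-case orbit size. The additive tail $\max(u\subl, u\subr, u\subt) + \ceil{\log_p \max(h, d - 1)} + \max(\alpha, 2(\alpha - 1)) + \tfrac{p^u - 1}{p - 1}$ bookkeeps transient states traversed before the automaton enters the canonical regime: the three $u$'s handle strips where a projection of $Q$ is degenerate, the logarithmic term bounds the preperiod needed to enter the rectangle, the $\max(\alpha, 2(\alpha - 1))$ term covers the initial lifting phase realizing $F \bmod p^\alpha$ as a diagonal, and the geometric sum absorbs prefix states present when $\deg_x(P \bmod p^\alpha)$ or $\deg_y(P \bmod p^\alpha)$ exceeds $h$ or $d$. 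The principal obstacle will be proving the canonical-form lemma with tight support bounds --- this is what pins down $N$, and I expect it to rely on a careful division algorithm modulo $p^\alpha$ combined with the structural properties of $Q$ developed in Section~\ref{section: structure - ring}; getting the Landau correction sharp will similarly require identifying the exact group action permuting the boundary monomials.
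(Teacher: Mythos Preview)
Your proposal misses the paper's central innovation and, as written, would not yield the exponent $N$. The naive approach you sketch---representing states as Laurent-polynomial numerators living in a single rectangle of sides roughly $\alpha h$ by $\alpha d$, reduced by some division algorithm modulo $p^\alpha$---is essentially what earlier work did, and it gives a bound doubly exponential in $\alpha$ (the paper notes this explicitly in the introduction). The reason is that numerators $S$ live in $\ring[x,y,y^{-1}]$ with coefficients in $\ring = \Z/p^\alpha\Z$, and the degree bounds on $S$ itself scale like $p^{\alpha-1}h$ and $p^{\alpha-1}(d-1)$, not like $\alpha h$ and $\alpha d$; there is no evident reduction that collapses this to a rectangle of the size you describe.

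What the paper actually does is introduce a \emph{base-$\frac{p}{Q}$ numeration system}: every state $S$ is shown to admit a unique representation
\[
S = \Bigl(T_0 + T_1\,\tfrac{p}{Q} + \cdots + T_{\alpha-1}\,(\tfrac{p}{Q})^{\alpha-1}\Bigr) Q^{p^{\alpha-1}-1}
\]
with digits $T_k \in \{0,\dots,p-1\}[x,y,y^{-1}]$. The crucial point is that each $T_k$ lives in its own small polytope $W_k$ whose $x$-degree is at most $(k+1)h-1$ and $y$-degree at most $(k+1)(d-1)$; summing $\dim W_k$ over $k=0,\dots,\alpha-1$ is what produces $N$. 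This is not a division algorithm in the usual sense---it is a genuinely new representation of states, and proving that states are closed under $\lambda_{r,0}$ in this representation (Theorem~\ref{closure under Cartier}) requires the technical Lemma~\ref{polynomial modulo prime power}. Your ``triple sum over reduction levels, $p$-adic digit index, and $hd$ monomials'' gestures toward this but conflates distinct things; there is one index $k$ running over the $\alpha$ digits, and each digit contributes roughly $(k+1)^2 hd$ basis monomials.

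Several secondary points are also off. The denominator is $Q^{p^{\alpha-1}}$, not a power of $1-Q$. The $\Landaulcm(h,d,d)$ term does not arise from a permutation action on boundary monomials; it bounds the eventual period of the orbit of the initial state under $\lambda_{0,0}$, obtained by projecting onto three univariate borders where $\lambda_{0,0}$ is emulated by operators $\lambda_0$, and then invoking Engstrom's theorems on period lengths of linear recurrences modulo $p^\alpha$ (Sections~\ref{section: period lengths}--\ref{section: orbit size univariate - ring}). The factor $p^{N-\alpha(\alpha+1)(h+d-1)/2}$ is the size of the \emph{interior} $\mathcal V^\circ$ (where no control is available), not a codimension correction. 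Finally, $\max(\alpha,2(\alpha-1))$ and $\lceil\log_p\max(h,d-1)\rceil$ together bound the transient length $t$ of the univariate orbits (Equation~\eqref{transient time - ring}), not a ``lifting phase.''
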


For a random assignment of the coefficients of $P \bmod p$, the value of $\max(u\subl, u\subr, u\subt)$ is typically $1$ when $p$ is large.
For all polynomials $P$, using $\max(u\subl, u\subr, u\subt) \leq \floor{\log_p \max(h, d)} + \alpha$, we obtain the simpler bound
\[
	p^N + p^{N - \alpha (\alpha + 1) (h + d - 1) / 2} \Landaulcm(h, d, d)
	+ 2 \floor{\log_p \max(h, d)}
	+ 3 \alpha - 1
	+ \tfrac{p^u - 1}{p - 1}.
\]

We recover the bound for a sequence of elements in the field $\F_p$~\cite[Theorem~1]{Rowland--Stipulanti--Yassawi} as follows.
Let $\alpha = 1$, and let $P\in \F_p[x,y]$ such that $\deg_x P = h$ and $\deg_y P = d$, so that $u = 1$.
Then the bound in Theorem~\ref{kernel size upper bound - ring} is at most
\[
	p^{h d} + p^{(h - 1) (d - 1)} \Landaulcm(h, d, d)
	+ \floor{\log_p \max(h, d - 1)} + 1
	+ \floor{\log_p \max(h, d)} + 1
	+ 1.
\]
When we use the further specificity of working over a field, one obtains the bound
\begin{equation}\label{fields bound}
	p^{h d} + p^{(h - 1) (d - 1)} \Landaulcm(h, d, d)
	+ \floor{\log_p h}
	+ \floor{\log_p \max(h, d)}
	+ 3
\end{equation}
in~\cite{Rowland--Stipulanti--Yassawi}.

Explicit computations suggest that the bound \eqref{fields bound} for sequences of elements in $\F_p$ is asymptotically sharp but that the bound in Theorem~\ref{kernel size upper bound - ring} is not.
The appendix of this article contains the results of searches for large automata for comparison.
Bounds on the automaton size have implications for the time complexity of any algorithm that computes an automaton generating $(a(n) \bmod p^\alpha)_{n \geq 0}$.
In particular, the algorithm we describe in Section~\ref{section: vector space - ring} has been used to systematically answer number theoretic questions about sequences arising in combinatorics and number theory, such as the Catalan numbers and Ap\'ery numbers~\cite{Rowland--Yassawi}.
We mention that another approach to this same question, using sequences represented as constant terms of powers of Laurent polynomials rather than diagonals of rational functions, has also been quite successful~\cite{Rowland--Zeilberger, Henningsen--Straub, Straub, Beukers}.

The main innovation in this article is a new numeration system for a family of bivariate Laurent polynomials.
This numeration system behaves in many ways like base-$p$ representations of integers, except that the digits are Laurent polynomials whose degrees can increase from one digit to the next.
States of the automaton for $(a(n) \bmod p^\alpha)_{n \geq 0}$ are identified with bivariate Laurent polynomials, as in Section~\ref{section: vector space - ring}, and we show that each state has a unique base-$\frac{p}{Q}$ representation, as defined in Section~\ref{section: image - ring}, consisting of $\alpha$ digits.
Most Laurent polynomials are not representable in this numeration system and therefore do not represent states.
This allows us to avoid a doubly exponential upper bound on the number of states.
By bounding the degrees of the digits, we obtain the bound in Theorem~\ref{kernel size upper bound - ring}.

There are other benefits of base-$\frac{p}{Q}$ representations as well.
Representations of states in this numeration system are much more compact than their full expansions as Laurent polynomials, so using base-$\frac{p}{Q}$ representations greatly reduces the amount of memory and time required to compute an automaton.
For the sequence of Catalan numbers $C(n)_{n \geq 0}$, previously we were able to compute an automaton for $(C(n) \bmod 2^9)_{n \geq 0}$, which has $2403$ states, but not for larger powers of $2$~\cite{Rowland--Yassawi}.
With base-$\frac{p}{Q}$ representations, we are able to compute an automaton for $(C(n) \bmod 2^{14})_{n \geq 0}$; it has $174037$ states and required $34$GB of memory.
From this automaton, one computes that only $\frac{2990}{2^{14}} \approx 18.2\%$ of the residues modulo~$2^{14}$ are attained by the Catalan numbers, and only $\frac{2037}{2^{14}} \approx 12.4\%$ of residues are attained infinitely many times.
This agrees with results of Straub~\cite[Table~2]{Straub}, who previously computed the residues attained by $C(n)$ modulo~$2^{14}$ by representing $C(n)$ as a constant term and using scaling $p$-schemes rather than $p$-automata.
In particular, these computations establish an upper bound of $0.124$ on the density of the set $\{C(n) : n \geq 0\}$ in $\Z_2$.
It is not known whether this density is positive.

Unrelated to our main results, in Section~\ref{section: inverse limit} we give another application of the base-$\frac{p}{Q}$ numeration system.
For a given algebraic power series $F$, the automaton for $F \bmod p^\alpha$ projects naturally to the automaton for $F \bmod p^\beta$ for each $\beta \leq \alpha$, as was shown by the authors~\cite{Rowland--Yassawi profinite}.
Consequently, the inverse limit as $\alpha \to \infty$ of these automata exists.
However, no explicit description of its states was known.
We show that the base-$\frac{p}{Q}$ representations of the states of the automaton for $F \bmod p^\beta$ are simply truncations of those for $F \bmod p^\alpha$.
This gives explicit descriptions of the states of the inverse limit automaton as inverse limits of finite base-$\frac{p}{Q}$ representations.

Theorem~\ref{kernel size asymptotic bound - ring} requires $F$, annihilated by $P$, to be a Furstenberg series.
In Section~\ref{section: non-Furstenberg}, we consider the possibility that $F$ is not a Furstenberg series.
A standard modification of $F$ gives an algebraic series $G$ whose annihilating polynomial has a nonzero derivative at the origin.
If, in addition, this derivative is nonzero modulo~$p$, then we obtain an asymptotic bound in Theorem~\ref{kernel size upper bound - non-Furstenberg};
this bound is roughly $p^{\frac{2}{3} \alpha^3 h d^3}$ where $h = \deg_x P$ and $d = \deg_y P$, so it is slightly larger than the bound in Theorem~\ref{kernel size asymptotic bound - ring}.
However, the relevant derivative may be zero modulo~$p$.
For example, the polynomial $P = (x + 1) (3 x - 1) y^2 + 1$ annihilates the generating series of the central trinomial coefficients, and for $p=2$ the technique does not work~\cite[Section~4.2]{Rowland--Yassawi}.

We prove Theorem~\ref{kernel size upper bound - ring} in three steps.
The first step is carried out in Section~\ref{section: image - ring}, where we describe the base-$\frac{p}{Q}$ numeration system.
This first step is not present in the proof of the bound \eqref{fields bound}, since when $\alpha = 1$ the base-$\frac{p}{Q}$ representation consists only of a single digit and does not place any restrictions on Laurent polynomials that represent states.

The second step is to obtain a preliminary upper bound on the automaton size.
This is the subject of Section~\ref{section: first bounds}.
We define a vector space $\mathcal W$ and show that $\mathcal W$ contains the base-$\frac{p}{Q}$ representations of most of the automaton states.
To do this, we bound the degrees of the digits of the base-$\frac{p}{Q}$ representations of states.
The space $\mathcal W$ has dimension $N = \frac{1}{6} \alpha (\alpha + 1) ((2 h d - 1) \alpha + h d + 1)$, and this gives the main term in Theorem~\ref{kernel size upper bound - ring}.
The only states whose base-$\frac{p}{Q}$ representations do not belong to $\mathcal W$ are those reachable from the initial state by either a fixed small number of transitions or reading a sequence of $0$s.
The former are counted easily, and the latter correspond to the states in the orbit of the initial state under a certain linear transformation $\lambda_{0, 0}$.

The third and longest step in the proof of Theorem~\ref{kernel size upper bound - ring} is to bound the orbit size of the initial state under $\lambda_{0, 0}$.
We do this by studying the structure of $\lambda_{0, 0}$ as a linear transformation on bivariate Laurent polynomials.
We essentially decompose the space containing the automaton states into four subspaces --- three ``borders'' and an ``interior''.
On the interior, we have no control over the behavior of $\lambda_{0, 0}$ except what we get from base-$\frac{p}{Q}$ representations.
On the three borders, however, we have significant control.
In Section~\ref{section: structure - ring}, we show that $\lambda_{0, 0}$, when restricted to each of the borders, behaves like a linear transformation $\lambda_0$ on univariate Laurent polynomials.
This is analogous to part of the proof of the bound \eqref{fields bound}, but here we must also show that the base-$\frac{p}{Q}$ representations are compatible.
Next, we must bound orbit sizes under $\lambda_0$.
We do this by first bounding the period length of the coefficient sequence of a univariate rational power series of the form $\frac{1}{R^{p^{\alpha - 1}}} \bmod p^\alpha$ in Section~\ref{section: period lengths}.
We use results of Engstrom~\cite{Engstrom} that bound period lengths of coefficient sequences of rational series modulo~$p$ and modulo~$p^\alpha$.
Then, in Section~\ref{section: orbit size univariate - ring}, we transfer the bound on the period length of $\frac{1}{R^{p^{\alpha - 1}}} \bmod p^\alpha$ to a bound on the orbit size of a univariate Laurent polynomial under $\lambda_0$.
In Section~\ref{section: orbit size - ring}, we complete the proof of Theorem~\ref{kernel size upper bound - ring} by showing how the orbit sizes under $\lambda_0$ contribute to the orbit size under $\lambda_{0,0}$.
Theorem~\ref{kernel size asymptotic bound - ring} follows relatively easily.

Bounding the period lengths of coefficients of rational power series in Section~\ref{section: period lengths} and the orbit size under $\lambda_0$ in Section~\ref{section: orbit size univariate - ring} are considerably more involved over $\ring$ than in the proof of the bound \eqref{fields bound}, which relied on facts about finite fields.
One difficulty is that $\ring[z]$ does not have unique factorization.
We get around this by showing that the first bound essentially depends on the period length of the coefficient sequence modulo~$p$.
We use a lifting-the-exponent lemma throughout to show that the second bound also essentially depends on information modulo~$p$.

Finally, in Section~\ref{section: diagonals}, we generalize Theorem~\ref{kernel size upper bound - ring} from algebraic series to diagonals of rational functions in several variables.
The proof of Theorem~\ref{kernel size upper bound - ring} begins by converting the algebraic power series $F$ to the diagonal of a rational function (in $2$ variables).
By starting directly with the latter, the same approach allows us to bound the automaton size for the diagonal of a rational function in $m$ variables modulo~$p^\alpha$, where the diagonal operator $\mathcal{D}$ is defined on multivariate series analogously to bivariate series.

\begin{theorem}\label{kernel size upper bound - multivariate diagonals}
Let $p$ be a prime, let $\alpha \geq 1$, and let
\[
	F \colonequal \mathcal{D}\!\paren{\frac{P(x_1, \dots, x_m)}{Q(x_1, \dots, x_m)}}
\]
where $P(x_1, \dots, x_m)$ and $Q(x_1, \dots, x_m)$ are polynomials in $\Z_p[x_1, \dots, x_m]$ such that $Q(0, \dots, 0) \nequiv 0 \mod p$ and $m\geq 2$.
Write $F = \sum_{n \geq 0} a(n) x^n$.
Let $h_i = \max(\deg_{x_i}(P \bmod p), \deg_{x_i}(Q \bmod p))$, and assume that $h_i \geq 1$ for each $i$.
Let $M = \sum_{k = 0}^{\alpha - 1} \prod_{i = 1}^m ((k + 1) h_i + 1)$.
Then the minimal $p$-automaton that generates $(a(n) \bmod p^\alpha)_{n \geq 0}$ has size at most $p^M$.
\end{theorem}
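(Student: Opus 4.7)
The plan is to apply the machinery developed for Theorem~\ref{kernel size upper bound - ring} directly to the rational function $P/Q$ in $m$ variables, bypassing the step in the algebraic case that first converts the algebraic power series to a bivariate diagonal. The argument is actually simpler than the bivariate proof: the auxiliary correction terms appearing in Theorem~\ref{kernel size upper bound - ring} came from the ``border'' analysis of Sections~\ref{section: structure - ring}--\ref{section: orbit size - ring}, which is driven by the algebraic-series structure and is not needed here. In the direct diagonal setting, every state's base-$\frac{p}{Q}$ representation lies inside a single vector space of dimension at most $M$, and the bound $p^M$ comes from the size of this space alone.

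The first step is to identify the states. The minimal $p$-automaton reads the least significant base-$p$ digit first, so its states for $(a(n) \bmod p^\alpha)_{n \geq 0}$ correspond to the distinct sequences obtained by iterating the univariate Cartier operators $\Lambda_c \colon \sum_{n \geq 0} b(n) x^n \mapsto \sum_{n \geq 0} b(pn + c) x^n$, $c \in \{0, 1, \ldots, p - 1\}$, on $F \bmod p^\alpha$. Using the intertwining
\[
	\Lambda_c(\mathcal{D}(G)) = \mathcal{D}\!\paren{\Lambda^{(m)}_{(c, c, \ldots, c)}(G)},
\]
where $\Lambda^{(m)}_{\mathbf{c}} \colon \sum_{\mathbf{n}} g(\mathbf{n}) \mathbf{x}^{\mathbf{n}} \mapsto \sum_{\mathbf{n}} g(p \mathbf{n} + \mathbf{c}) \mathbf{x}^{\mathbf{n}}$ is the multivariate Cartier operator, the states are realized as diagonals of iterates of the diagonal-direction operators on $G = P/Q$. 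Since $Q(0, \ldots, 0)$ is a unit in $\ring$, each iterate is a rational function whose denominator can be chosen as a fixed polynomial related to $Q^{p^\alpha}$ (using the identity $Q^{p^\alpha} \equiv Q(\mathbf{x}^p)^{p^{\alpha - 1}} \bmod p^\alpha$, which follows from a Kummer-style valuation count on the binomial expansion of $(Q(\mathbf{x}^p) + p \epsilon)^{p^{\alpha - 1}}$). Each state is thus determined by a multivariate Laurent polynomial numerator in $\ring[x_1^{\pm 1}, \ldots, x_m^{\pm 1}]$.

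The second step is to encode each such numerator in the base-$\frac{p}{Q}$ numeration system of Section~\ref{section: image - ring}, generalized from two variables to $m$: each numerator is written uniquely as $\sum_{k = 0}^{\alpha - 1} (p/Q)^k W_k$ with each digit $W_k$ a Laurent polynomial in $\F_p[x_1^{\pm 1}, \ldots, x_m^{\pm 1}]$. Following Section~\ref{section: first bounds}, I would bound $\deg_{x_i} W_k \leq (k + 1) h_i$ (with matching lower bounds on the support, so that at most $(k + 1) h_i + 1$ admissible $x_i$-exponents occur): each application of $\Lambda^{(m)}_{(c, \ldots, c)}$ multiplies the current numerator by $Q^{p - 1}$ and then takes a $(c, \ldots, c)$-slice, and tracking these operations across the $\alpha$ digits of the representation yields the claimed bound. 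Hence $W_k$ lies in an $\F_p$-subspace of dimension at most $\prod_{i = 1}^m ((k + 1) h_i + 1)$, and the number of base-$\frac{p}{Q}$ representations, and hence the number of states, is at most
\[
	\prod_{k = 0}^{\alpha - 1} p^{\prod_{i = 1}^m ((k + 1) h_i + 1)} = p^M.
\]

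The main obstacle will be the $m$-variable generalization of the base-$\frac{p}{Q}$ representation and the associated degree bookkeeping. The bivariate construction in Sections~\ref{section: image - ring} and~\ref{section: first bounds} uses the explicit projections $\pi_{x, i}$ and $\pi_{y, j}$ to univariate Laurent polynomials along the two coordinate directions; extending this to $m$ coordinates requires analogous projections $\pi_{x_i, j}$ along each of the $m$ coordinates, verifying that each digit $W_k$ is well-defined and unique in the multivariate setting, and checking that the degree bound $(k + 1) h_i$ is preserved by each Cartier operator. Once this multivariate machinery is in place, the bound $p^M$ is immediate from the monomial count above, with no additional orbit analysis required.
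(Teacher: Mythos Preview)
Your proposal is correct and follows essentially the same route as the paper: define the multivariate base-$\tfrac{p}{Q}$ representation, show the initial state $S_0 = P Q^{p^{\alpha-1}-1} \bmod p^\alpha$ and all its images under the operators $\lambda_{r,\dots,r}$ have digits $T_k$ satisfying $\deg_{x_i} T_k \le (k+1)h_i$, and count the resulting space $\mathcal V$ of size $p^M$. A few small corrections: in the diagonal setting there is no shearing, so the states and digits are genuine polynomials in $D[x_1,\dots,x_m]$ rather than Laurent polynomials; the operator multiplies by $Q^{p^\alpha - p^{\alpha-1}}$, not $Q^{p-1}$; and the coefficient-projection maps $\pi_{x_i,j}$ play no role in this theorem --- they are used only for the border/orbit refinement in Theorem~\ref{kernel size upper bound - ring}, which you rightly note is not needed here.
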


The dominant factor in the previous bound is $p^{\alpha^{m + 1} h_1 \cdots h_m}$.
For polynomials $P, Q$ with integer coefficients, a result of Beukers~\cite[Corollary~4.2]{Beukers} implies that if $\deg_{x_i} P \leq \deg_{x_i} Q \equalcolon h_i$ and $\alpha - 1 \leq \rho - \ceil{\rho/p}$ for some integer $\rho$ then there is an automaton generating $(a(n) \bmod p^\alpha)_{n \geq 0}$ with size at most $p^{\alpha (\rho h_1 + 1) \cdots (\rho h_m + 1)}$.
In particular, if $\alpha \leq p$ then we can take $\rho = \alpha$ and obtain the bound $p^{\alpha (\alpha h_1 + 1) \cdots (\alpha h_m + 1)}$, which has the same dominant factor as the bound in Theorem~\ref{kernel size upper bound - multivariate diagonals} if $\deg_{x_i} Q = \deg_{x_i}(Q \bmod p)$ and $\deg_{x_i} P = \deg_{x_i}(P \bmod p)$, but which is asymptotically larger.
Adamczewski, Bostan, and Caruso~\cite[Corollary~1.4]{Adamczewski--Bostan--Caruso} \cite{Adamczewski--Bostan--Caruso sharper} investigated the automatic complexity of multivariate algebraic series $F$ over a perfect field of positive characteristic.
Taking the field to be $\F_p$ so that $\alpha = 1$, they obtained the same dimension $M = \prod_{i = 1}^m (h_i + 1)$ as in Theorem~\ref{kernel size upper bound - multivariate diagonals}, with a further refinement involving the total height of the annihilating polynomial of $F$.

For $m = 2$, in Theorem~\ref{kernel size upper bound - diagonals} we improve Theorem~\ref{kernel size upper bound - multivariate diagonals} to obtain the asymptotic bound $(1 + o(1)) p^{\alpha (\alpha + 1) (2 \alpha + 1) h d / 6}$.
New techniques would be required to extend this approach to $m \geq 3$.

\section{The module of possible states}\label{section: vector space - ring}

In this section, we recall the construction of the automaton generating $(a(n) \bmod p^\alpha)_{n \geq 0}$, where $p$ is a prime and $\alpha \geq 1$.
We assume the reader is familiar with deterministic finite automata with output;
see~\cite{Allouche--Shallit} for a comprehensive treatment and \cite{Rowland} for a short introduction.
An automaton with input alphabet $\{0, 1, \dots, p - 1\}$ generates the $p$-automatic sequence whose $n$th term is the output of the automaton when fed the standard base-$p$ representation of $n$, starting with the least significant digit.

Theorems~\ref{kernel size asymptotic bound - ring} and \ref{kernel size upper bound - ring} are concerned with automatic sequences with elements in $\ring$.
These sequences arise in the following result of Denef and Lipshitz~\cite[Lemma~6.3]{Denef--Lipshitz}, which extends Furstenberg's theorem~\cite{Furstenberg} to an integral domain.
We state it for the $p$-adic integers, along with an automaticity result proved by Denef and Lipshitz~\cite[Remark~6.6]{Denef--Lipshitz}; see also \cite[Theorem~2.1]{Rowland--Yassawi}.
The \emph{diagonal operator} $\mathcal{D}$, acting on bivariate power series, is defined by
\[
	\mathcal{D}\!\paren{\sum_{m \geq 0} \sum_{n \geq 0} a(m, n) x^m y^n}
	= \sum_{n \geq 0} a(n, n) x^n.
\]

\begin{theorem}[Denef and Lipshitz]\label{Furstenberg - ring}
Let $p$ be a prime.
Let $P \in \Z_p[x, y]$ such that $P(0, 0) = 0$ and $\frac{\partial P}{\partial y}(0, 0) \nequiv 0 \mod p$.
Let $F$ be the Furstenberg series associated with $P$.
Then
\[
	F = \mathcal{D}\!\paren{\frac{
		y \frac{\partial P}{\partial y}(x y, y)
	}{
		P(x y, y) / y
	}}.
\]
Moreover, for each $\alpha\geq 1$, the coefficient sequence of $F \bmod p^\alpha$ is $p$-automatic.
\end{theorem}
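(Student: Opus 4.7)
The plan is to prove the two assertions separately. For the diagonal identity, I would use the formal residue interpretation of the diagonal: for any bivariate power series $R(x,y)$ over $\Z_p$,
\[
	\mathcal{D}(R)(t) = [y^0]\, R(y,\, t/y),
\]
where the right-hand side is viewed as a Laurent expansion in $y$ over $\Z_p\doublebracket{t}$. Substituting the rational function in the theorem and changing variable $u = t/y$ reduces the claim to the classical Furstenberg residue formula
\[
	\operatorname{Res}_{u=0} \frac{u\, \frac{\partial P}{\partial y}(t,u)}{P(t,u)}\, du = F(t),
\]
which holds because $P(0,0)=0$ together with $\frac{\partial P}{\partial y}(0,0) \nequiv 0 \bmod p$ lets Hensel's lemma produce $F$ as the unique simple root of $P(t,\cdot)$ with $F(0)=0$, making $u=F(t)$ a simple pole of the integrand with residue $F(t)$. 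The only care needed is checking that the Laurent substitutions are legitimate, which follows from the fact that $P(xy,y)/y$ has unit constant term modulo $p$.

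For the automaticity statement, my plan is to use the Cartier operator approach of Denef and Lipshitz. With $A = y \frac{\partial P}{\partial y}(xy,y)$ and $B = P(xy,y)/y$, so that $F = \mathcal{D}(A/B)$, one has $a(n) = [x^n y^n]\,(A/B)$, so for each $0 \leq r < p$ the subsequence $(a(pn+r))_n$ equals the diagonal of the Cartier image
\[
	\Lambda_{r,r}(A/B) \colonequal \sum_{m,n \geq 0} \paren{[x^{pm+r} y^{pn+r}]\,(A/B)}\, x^m y^n.
\]
Iterating, the $p$-kernel of $(a(n) \bmod p^\alpha)_{n \geq 0}$ is finite as soon as the orbit of $A/B \bmod p^\alpha$ under the $p$ operators $\Lambda_{0,0}, \ldots, \Lambda_{p-1,p-1}$ lies in a finite subset of $\ring\doublebracket{x,y}$.

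The key tool for controlling this orbit is the lifting-the-exponent congruence
\[
	B(x,y)^{p^\alpha} \equiv B(x^p, y^p)^{p^{\alpha-1}} \bmod p^\alpha,
\]
which I would prove by induction on $\alpha$, using the Frobenius relation $B^p \equiv B(x^p,y^p) \bmod p$ together with the elementary fact that $(T + p^k E)^p \equiv T^p \bmod p^{k+1}$. This congruence lets me rewrite
\[
	\frac{A}{B} \equiv \frac{A\, B^{p^\alpha - 1}}{B(x^p,y^p)^{p^{\alpha-1}}} \bmod p^\alpha,
\]
so that $A/B$ admits, modulo $p^\alpha$, a representation of the form $N(x,y)/B(x^p,y^p)^{p^{\alpha-1}}$ with $N \in \ring[x,y]$. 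This shape is preserved by every Cartier operator via the identity
\[
	\Lambda_{r_1,r_2}\!\paren{N(x,y)/B(x^p,y^p)^{p^{\alpha-1}}} = \Lambda_{r_1,r_2}(N)/B(x,y)^{p^{\alpha-1}},
\]
after which I would reconvert to the standard form by multiplying numerator and denominator by $B^{p^\alpha - p^{\alpha-1}}$ and reducing modulo $p^\alpha$. A fixed-point estimate on the degree of $N$---the operator $\Lambda_{r_1,r_2}$ shrinks degrees by a factor of $p$ while the reconversion adds $(p^\alpha - p^{\alpha-1}) \deg B$---shows that every numerator arising under iteration lies in a fixed finite-dimensional subspace of $\ring[x,y]$, so the orbit is finite and $(a(n) \bmod p^\alpha)_n$ is $p$-automatic.

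The main obstacle is verifying the lifting-the-exponent congruence, which is the one place where working over $\Z_p$ rather than $\F_p$ genuinely changes the argument; the remaining bookkeeping on degrees is routine, though it is what yields the doubly exponential bound in $\alpha$ that the introduction refers to and that the rest of the paper is devoted to improving.
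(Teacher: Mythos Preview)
Your proposal is essentially correct and follows the standard arguments; note, however, that the paper does not actually prove Theorem~\ref{Furstenberg - ring} itself but cites it to Denef--Lipshitz~\cite[Lemma~6.3 and Remark~6.6]{Denef--Lipshitz} and Furstenberg~\cite{Furstenberg}. What the paper \emph{does} do, in Section~\ref{section: vector space - ring}, is lay out the automaticity construction in detail as the foundation for the rest of the article, and your argument for the second assertion matches that construction almost exactly: your lifting-the-exponent congruence is Lemma~\ref{lifting-the-exponent}, your Cartier identity $\Lambda_{r_1,r_2}(N/B(x^p,y^p)^{p^{\alpha-1}}) = \Lambda_{r_1,r_2}(N)/B^{p^{\alpha-1}}$ is Proposition~\ref{Cartier - ring}, and your fixed-point degree estimate is the reasoning behind the definition of the operators $\lambda_{r,0}$ and the finiteness of $\mathcal M_{p^\alpha}$.

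The only structural difference is that the paper shears by $x \mapsto x y^{-1}$, replacing the diagonal $\mathcal D$ with the center-row operator $\mathcal C$ and the operators $\Lambda_{r,r}$ with $\Lambda_{r,0}$; as the paper remarks, this is done so that later degree bounds separate the contributions of $h$ and $d$, but for the bare automaticity statement your unsheared version is equivalent and arguably cleaner. One small caution on your residue argument for the diagonal identity: writing ``$\operatorname{Res}_{u=0}$'' is a mild abuse, since the relevant pole is at $u = F(t)$ rather than at $u = 0$; what makes the argument work is that $F(t)$ is the unique root lying in $t\,\Z_p\doublebracket{t}$, so it is the only root captured by the formal $[u^{-1}]$ extraction when one expands over $\Z_p\doublebracket{t}$ in the correct order. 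This is the standard Furstenberg computation, but it is worth stating the domain of the Laurent expansion explicitly.
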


Since we will be working modulo~$p^\alpha$, we are mostly interested in sequences $a(n)_{n \geq 0}$ with entries from the set $\ring$.
We establish a correspondence between states of an automaton and Laurent polynomials in $\ring[x, y, y^{-1}]$.
We do this by identifying states first with sequences and then with power series.
Finally, Theorem~\ref{Furstenberg - ring} will allow us to identify automaton states with Laurent polynomials.
To do this we will use the \emph{$p$-kernel} of $a(n)_{n \geq 0}$, defined as
\[
	\ker_p(a(n)_{n \geq 0}) \colonequal \{a(p^e n + r)_{n \geq 0} : \text{$e \geq 0$ and $0 \leq r \leq p^e - 1$}\}.
\]
The smallest automaton that generates $a(n)_{n \geq 0}$ and that is not affected by leading $0$s is its \emph{minimal automaton}.
Eilenberg's theorem gives a bijection between the states of the minimal automaton and the elements of the $p$-kernel.

We represent kernel sequences $a(p^e n + r)_{n \geq 0}$ by their generating series $\sum_{n \geq 0} a(p^e n + r) x^n$.
Elements of the $p$-kernel can be accessed by applying the following operators.

\begin{definition*}
Let $n \in \Z$.
For each $r \in \{0, 1, \dots, p - 1\}$, define the \emph{Cartier operator} $\Lambda_r$ on the monomial $x^n$ by
\[
	\Lambda_r(x^n) =
	\begin{cases}
		x^\frac{n - r}{p}	& \text{if $n \equiv r \mod p$} \\
		0			& \text{otherwise}.
	\end{cases}
\]
Then extend $\Lambda_r$ linearly to polynomials, Laurent polynomials, and Laurent series in $x$ with coefficients in $\ring$.
In particular, for polynomials we have
\[
	\Lambda_r\!\paren{\sum_{n = 0}^N a(n) x^n}
	= \sum_{n = 0}^{\floor{N/p}} a(p n + r) x^n.
\]
Similarly, for $m, n \in \Z$ and $r, s \in \{0, 1, \dots, p - 1\}$, define the bivariate Cartier operator
\[
	\Lambda_{r, s}(x^m y^n) =
	\begin{cases}
		x^\frac{m - r}{p} y^\frac{n - s}{p}	& \text{if $m \equiv r \mod p$ and $n \equiv s \mod p$} \\
		0						& \text{otherwise},
	\end{cases}
\]
and extend $\Lambda_{r, s}$ linearly to bivariate polynomials, Laurent polynomials, and Laurent series.
\end{definition*}

The operator $\Lambda_r$ has the following useful property.

\begin{proposition}\label{Cartier - ring}
Let $p$ be a prime, let $\alpha \geq 1$, and let $r, s \in \{0, 1, \dots, p - 1\}$.
For all univariate Laurent series $F$ and $G$ with coefficients in $\ring$, we have $\Lambda_r(G F^{p^\alpha}) = \Lambda_r(G) F^{p^{\alpha - 1}}$.
Similarly, for all bivariate Laurent series $F$ and $G$ with coefficients in $\ring$, we have $\Lambda_{r, s}(G F^{p^\alpha}) = \Lambda_{r, s}(G) F^{p^{\alpha - 1}}$.
\end{proposition}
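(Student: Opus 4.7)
The plan is to reduce Proposition~\ref{Cartier - ring} to the Frobenius-like identity
\[
	F^{p^\alpha} \equiv F^{p^{\alpha - 1}}(x^p) \pmod{p^\alpha}
\]
for Laurent series over $\ring$; once this is in hand, the Cartier operator identity follows from a direct coefficient computation.

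First I would record an easy ``substitution compatibility'' lemma: for any univariate Laurent series $G$ and $H$ with coefficients in $\ring$,
\[
	\Lambda_r\paren{G \cdot H(x^p)} = \Lambda_r(G) \cdot H.
\]
This is immediate from the definition of $\Lambda_r$. Writing $G = \sum_n g_n x^n$ and $H = \sum_m h_m x^m$, the coefficient of $x^{pk + r}$ on the left equals $\sum_m g_{p(k - m) + r}\, h_m$, which is exactly the coefficient of $x^k$ in $\Lambda_r(G) \cdot H$.

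Second, I would establish the key identity $F^{p^\alpha} \equiv F^{p^{\alpha - 1}}(x^p) \pmod{p^\alpha}$ by induction on $\alpha$, using the standard lifting lemma: if $A \equiv B \pmod{p^k}$ in $\Z_p\doublebracket{x}$ with $k \geq 1$, then $A^p \equiv B^p \pmod{p^{k + 1}}$. This sublemma follows from expanding $(B + p^k J)^p$ binomially, using $p \mid \binom{p}{i}$ for $1 \leq i \leq p - 1$, and noting $p^{k p} \equiv 0 \pmod{p^{k + 1}}$ whenever $k \geq 1$. For the base case $\alpha = 1$, the Frobenius modulo~$p$ combined with Fermat's little theorem gives $F^p \equiv F(x^p) \pmod{p}$. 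For the inductive step, I raise both sides of the hypothesis $F^{p^{\alpha - 1}} \equiv F^{p^{\alpha - 2}}(x^p) \pmod{p^{\alpha - 1}}$ to the $p$-th power via the lifting sublemma; the left side becomes $F^{p^\alpha}$, and the right side becomes $\paren{F^{p^{\alpha - 2}}(x^p)}^p = F^{p^{\alpha - 1}}(x^p)$ because substitution in a single variable commutes with multiplication of series.

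Combining the two ingredients, $\ring$-linearity of $\Lambda_r$ gives
\[
	\Lambda_r\!\paren{G F^{p^\alpha}} = \Lambda_r\!\paren{G \cdot F^{p^{\alpha - 1}}(x^p)} = \Lambda_r(G) \cdot F^{p^{\alpha - 1}}.
\]
For the bivariate statement the same proof goes through verbatim, replacing $x \mapsto x^p$ by the simultaneous substitution $(x, y) \mapsto (x^p, y^p)$ and invoking the analogue $\Lambda_{r, s}\!\paren{G \cdot H(x^p, y^p)} = \Lambda_{r, s}(G) \cdot H$. No step here is a genuine obstacle; the only point requiring care is the condition $k \geq 1$ in the lifting sublemma, which is what ensures $k p \geq k + 1$ and hence that the whole induction can be started.
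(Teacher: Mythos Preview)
Your proof is correct and follows essentially the same route as the paper's (referenced) argument: reduce to the identity $F^{p^\alpha} \equiv F^{p^{\alpha-1}}(x^p) \pmod{p^\alpha}$, established via the lifting-the-exponent step (your sublemma is exactly the inductive core of Lemma~\ref{lifting-the-exponent}), and then use the elementary substitution compatibility $\Lambda_r(G \cdot H(x^p)) = \Lambda_r(G)\,H$. The paper simply packages your induction into Lemma~\ref{lifting-the-exponent} and cites \cite[Proposition~1.9]{Rowland--Yassawi} for the remaining details, but the content is the same.
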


A proof of Proposition~\ref{Cartier - ring} can be found in \cite[Proposition~1.9]{Rowland--Yassawi}.
It uses the following lifting-the-exponent lemma for (Laurent) polynomials, whose proof is analogous to the proof for integers.
We will use Lemma~\ref{lifting-the-exponent} throughout this article.

\begin{lemma}\label{lifting-the-exponent}
If $Q, \bar{Q} \in \ring[x_1, x_1^{-1}, \dots, x_m, x_m^{-1}]$ and $Q \equiv \bar{Q} \mod p$, then $Q^{p^{\alpha - 1}} = \bar{Q}^{p^{\alpha - 1}}$.
\end{lemma}

The final step is to establish a connection between a power series $\sum_{n \geq 0} a(p^e n + r) x^n$ corresponding to a kernel sequence and a (not necessarily unique) Laurent polynomial.
This Laurent polynomial will be the numerator of a rational function whose diagonal is the desired power series.
We do this using Theorem~\ref{Furstenberg - ring}.
We shear bivariate series by replacing $x$ with $x y^{-1}$.
When we do this, the diagonal operator is replaced by the \emph{center row operator} $\mathcal{C}$, defined by
\[
	\mathcal{C}\!\paren{\sum_{m \geq 0} \sum_{n \in \Z} a(m, n) x^m y^n}
	= \sum_{m \geq 0} a(m, 0) x^m.
\]
We have
\[
	\Lambda_r \mathcal{C}\!\paren{\frac{S}{Q^{p^{\alpha - 1}}}}
	= \mathcal{C} \Lambda_{r, 0}\!\paren{\frac{S}{Q^{p^{\alpha - 1}}}}
	= \mathcal{C} \Lambda_{r, 0}\!\paren{\frac{S Q^{p^\alpha - p^{\alpha - 1}}}{Q^{p^\alpha}}} \\
	= \mathcal{C}\!\paren{\frac{\Lambda_{r, 0}\!\paren{S Q^{p^\alpha - p^{\alpha - 1}}}}{Q^{p^{\alpha - 1}}}},
\]
where the last equality follows from Proposition~\ref{Cartier - ring}.
Note that the initial and final series in this equation have the same denominator $Q^{p^{\alpha - 1}}$.
Therefore the map $S \mapsto \Lambda_{r, 0}\!\paren{S Q^{p^\alpha - p^{\alpha - 1}}}$ on $\ring[x, y, y^{-1}]$ emulates the Cartier operator $\Lambda_r$ on $\ring\doublebracket{x}$.
We will represent states of the automaton by Laurent polynomials $S \in \ring[x, y, y^{-1}]$.
The main reason for shearing is that it separates the contributions of $h$ and $d$; otherwise, in Proposition~\ref{digit bounds under Cartier} below, the $y$-degree would be bounded by a function of $h + d$ rather than just of $d$.

This final step of converting a power series to a Laurent polynomial is not bijective.
This is because different rational functions can have the same diagonal.
Therefore the automaton we will construct is not necessarily minimal.

We introduce notation for the initial state of the automaton and the emulating map as follows.

\begin{notation*}
Let $p$ be prime and $\alpha\geq 1$.
Let $P \in \Z_p[x, y]$ be a polynomial such that $P(0, 0) = 0$ and $\frac{\partial P}{\partial y}(0, 0) \nequiv 0 \mod p$.
Define $h \colonequal \deg_x(P \bmod p)$ and $d \colonequal \deg_y(P \bmod p)$.
The Furstenberg series $F$ given by Theorem~\ref{Furstenberg - ring} has coefficients in $\Z_p$.
We define a Laurent polynomial $Q$ as follows.
Take $P/y \bmod p$, and let $Q \in \ring[x, y, y^{-1}]$ be a lift of $P/y \bmod p$ which has the same monomial support as $P/y \bmod p$; in particular $\deg_x Q = h$ and $\deg_y Q = d - 1$.
For example, an element of $\mathcal R_p[x, y, y^{-1}]$ can be lifted in a standard way to $\ring[x, y, y^{-1}]$ by identifying $\mathcal R_p$ with $D = \{0, 1, \dots, p - 1\}$.
This somewhat convoluted definition of $Q$ allows us obtain optimal bounds even in the situation that the $x$- or $y$-degree of $P/y \bmod p$ is less than that of $P/y \bmod p^\alpha$; if neither degree drops, then one can simply take $Q$ to be $P/y \bmod p^\alpha$.
We define an automaton whose initial state is
\[
	S_0 \colonequal \paren{y \tfrac{\partial P}{\partial y} \, (P/y)^{p^{\alpha - 1} - 1} \bmod p^\alpha}.
\]
For each $r \in \{0, 1, \dots, p - 1\}$, define $\lambda_{r, 0} \colon \ring[x, y, y^{-1}] \to \ring[x, y, y^{-1}]$ by
\begin{equation}\label{little lambda definition - ring}
	\lambda_{r, 0}(S)
	\colonequal
	\Lambda_{r, 0}\!\paren{S Q^{p^\alpha - p^{\alpha - 1}}}.
\end{equation}
Define $\mathcal M_{p^\alpha}$ to be the smallest subset of $\ring[x, y, y^{-1}]$ that contains $S_0$ and is closed under the operators $\lambda_{r,0}$.
The operators $\lambda_{r,0}$ define the transitions between states.
Finally, the output associated with the state $S \in \mathcal M_{p^\alpha}$ is the constant term of $S$ divided by the constant term of $Q^{p^{\alpha - 1}}$.
\end{notation*}

\begin{remark}\label{h and d positive}
The condition $\frac{\partial P}{\partial y}(0, 0) \nequiv 0 \mod p$ implies $d \geq 1$.
Furthermore, if $F \bmod p^\alpha$ is not a polynomial then $h \geq 1$.
This is because $h = 0$ implies that the coefficient of each monomial $x^i y^j$ in $P$ with $i \geq 1$ is $0$ modulo~$p$.
Therefore the denominator $P(x y, y)/y$ in Theorem~\ref{Furstenberg - ring} also has this property.
Using the geometric series formula to expand the rational expression, we obtain a bivariate series that contains only finitely many nonzero diagonal monomials modulo~$p^\alpha$.
\end{remark}

Note that, a priori, the operator $\lambda_{r, 0}$ in Equation~\eqref{little lambda definition - ring} should be defined with $P/y\bmod p^\alpha$ in place of $Q$.
However, Lemma~\ref{lifting-the-exponent} gives us the following.

\begin{proposition}\label{lifting-the-exponent lambda}
For all $S \in \ring[x, y, y^{-1}]$ and for each $r \in \{0, 1, \dots, p - 1\}$, we have
\[
	\Lambda_{r, 0}\!\paren{S (P/y \bmod p^\alpha)^{p^\alpha - p^{\alpha - 1}}}
	= \Lambda_{r, 0}\!\paren{S Q^{p^\alpha - p^{\alpha - 1}}}
	= \lambda_{r, 0}(S).
\]
\end{proposition}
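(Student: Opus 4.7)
The plan is to reduce the claim to a direct application of Lemma~\ref{lifting-the-exponent}. The second equality is just the definition~\eqref{little lambda definition - ring} of $\lambda_{r, 0}$, so everything rests on showing
\[
	\Lambda_{r, 0}\!\paren{S (P/y \bmod p^\alpha)^{p^\alpha - p^{\alpha - 1}}}
	= \Lambda_{r, 0}\!\paren{S Q^{p^\alpha - p^{\alpha - 1}}}
\]
in $\ring[x, y, y^{-1}]$. Since $\Lambda_{r, 0}$ is $\ring$-linear, it suffices to prove that
\[
	Q^{p^\alpha - p^{\alpha - 1}} \equiv (P/y)^{p^\alpha - p^{\alpha - 1}} \mod p^\alpha
\]
as Laurent polynomials over $\Z_p$, where we view $Q$ through its given lift to $\Z_p[x, y, y^{-1}]$.

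First I would observe that $P/y$ is a well-defined element of $\Z_p[x, y, y^{-1}]$, because the hypothesis $P(0, 0) = 0$ on the Furstenberg polynomial guarantees that $P$ has no constant term. By the definition of $Q$ stated in the Notation above, $Q$ is a lift to $\ring[x, y, y^{-1}]$ of $P/y \bmod p$, so after pulling $Q$ back to $\Z_p[x, y, y^{-1}]$ we have $Q \equiv P/y \mod p$. The next step is to factor the exponent as $p^\alpha - p^{\alpha - 1} = (p - 1) p^{\alpha - 1}$ and apply the bivariate form of Lemma~\ref{lifting-the-exponent} with $R = Q$ and $\bar R = P/y$, which yields
\[
	Q^{p^{\alpha - 1}} \equiv (P/y)^{p^{\alpha - 1}} \mod p^\alpha.
\]
Raising both sides to the $(p - 1)$-th power (which preserves congruences modulo $p^\alpha$) gives the displayed congruence for $Q^{p^\alpha - p^{\alpha - 1}}$.

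Finally, I would note that reducing the Laurent polynomial identity modulo $p^\alpha$ replaces $(P/y)^{p^\alpha - p^{\alpha - 1}}$ with $(P/y \bmod p^\alpha)^{p^\alpha - p^{\alpha - 1}}$, and multiplying by $S \in \ring[x, y, y^{-1}]$ and then applying $\Lambda_{r, 0}$ preserves the identity. No step presents a genuine obstacle; the only subtlety is recognizing that the Cartier operator is being applied to objects that differ from each other by a multiple of $p^\alpha$, and that the exponent $p^\alpha - p^{\alpha - 1}$ is exactly the form to which the lifting-the-exponent lemma applies after factoring out $p - 1$.
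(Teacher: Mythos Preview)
Your proof is correct and follows the same approach as the paper, which simply invokes Lemma~\ref{lifting-the-exponent} without further comment. One minor remark: the observation that $P(0,0)=0$ is not actually needed to ensure $P/y \in \Z_p[x, y, y^{-1}]$, since negative powers of $y$ are allowed regardless; otherwise your expansion of the details is exactly what the paper leaves implicit.
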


We will use the following elementary lemma.

\begin{lemma}\label{Minkowski}
The set of bivariate Laurent polynomials with the property that every nonzero monomial $c \, x^I y^J$ satisfies $J\geq -I$ is closed under addition, multiplication, and each Cartier operator $\Lambda_{r,0}$.
\end{lemma}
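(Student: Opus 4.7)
The plan is to verify each of the three closure properties monomial-by-monomial, since the defining condition $J \geq -I$ (equivalently, $I + J \geq 0$) is a condition on individual monomials. Let $\mathcal{S}$ denote the set in question. For addition, the nonzero monomials of $F + G$ form a subset of the union of monomials of $F$ and $G$, so the invariant passes immediately to the sum. For multiplication, I would expand bilinearly; a product of two monomials $x^{I_1} y^{J_1} \cdot x^{I_2} y^{J_2} = x^{I_1 + I_2} y^{J_1 + J_2}$ satisfies $(I_1 + I_2) + (J_1 + J_2) = (I_1 + J_1) + (I_2 + J_2) \geq 0$, so $\mathcal{S}$ is closed under multiplying monomials, and then closed under multiplying arbitrary elements of $\mathcal{S}$ by linearity.

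The only case requiring a small number-theoretic observation is the Cartier operator $\Lambda_{r,0}$. Take a monomial $x^I y^J$ with $I + J \geq 0$. If $I \nequiv r \bmod p$ or $J \nequiv 0 \bmod p$, then $\Lambda_{r,0}(x^I y^J) = 0 \in \mathcal{S}$. Otherwise, $\Lambda_{r,0}(x^I y^J) = x^{(I - r)/p} y^{J/p}$, and the sum of its exponents is $(I + J - r)/p$. Since $J \equiv 0 \bmod p$ and $I \equiv r \bmod p$, we have $I + J \equiv r \bmod p$; combined with $I + J \geq 0$ and $0 \leq r \leq p - 1$, the smallest nonnegative integer congruent to $r$ modulo $p$ is $r$ itself, so $I + J \geq r$ and therefore $(I + J - r)/p \geq 0$. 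Extending linearly gives closure of $\mathcal{S}$ under $\Lambda_{r,0}$.

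There is no real obstacle here; the only step worth flagging is the Cartier one, where one must notice that $I + J \geq 0$ together with $I + J \equiv r \bmod p$ forces $I + J \geq r$, which is exactly the slack needed for the divided exponents to again satisfy the invariant.
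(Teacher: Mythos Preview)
Your proof is correct. The paper states this lemma as an ``elementary lemma'' and does not give a proof, so there is nothing to compare; your monomial-by-monomial verification, including the observation that $I+J\geq 0$ and $I+J\equiv r\pmod p$ force $I+J\geq r$, is exactly the kind of routine check the authors are leaving to the reader.
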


\begin{proposition}
The constructed automaton is not sensitive to leading $0$s.
\end{proposition}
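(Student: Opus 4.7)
The plan is to show that for every reachable state $S \in \mathcal M_{p^\alpha}$, the output of $\lambda_{0,0}(S)$ equals the output of $S$. Since reading a leading $0$ of the base-$p$ representation of $n$ in the least-significant-first convention amounts to applying one additional $\lambda_{0,0}$ transition after the digits of $n$ have been processed, an induction on the number of extra zeros will then finish the proof.

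The central idea is to identify each state $S$ with the associated univariate series $g_S(x) \colonequal \mathcal{C}(S/Q^{p^{\alpha-1}})$ and show that the output of $S$ is exactly $g_S(0)$. Since $S_0$ has its monomial support in $\{x^m y^n : n \geq -m\}$, and Lemma~\ref{Minkowski} guarantees that this support condition is preserved under multiplication by $Q^{p^\alpha - p^{\alpha-1}}$ and under each $\Lambda_{r,0}$ (because $Q$ itself satisfies the condition, as $P(0,0) = 0$), every reachable $S$ lies in this half-plane as well. The shear $x \mapsto xy$ therefore turns $S$ and $Q$ into honest bivariate polynomials $\tilde S(x,y) \colonequal S(xy, y)$ and $\tilde Q(x,y) \colonequal Q(xy, y)$ with $\tilde Q(0,0) = Q(0,0) \nequiv 0 \mod p$, so $\tilde S / \tilde Q^{p^{\alpha-1}}$ has a well-defined power series expansion in $x, y$ whose diagonal is $g_S$. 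Matching constant terms gives
\[
	g_S(0) = \frac{\tilde S(0,0)}{\tilde Q(0,0)^{p^{\alpha-1}}} = \frac{[x^0 y^0] S}{[x^0 y^0] Q^{p^{\alpha-1}}},
\]
which is exactly the output assigned to $S$ in the construction.

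With this identification, the compatibility $\mathcal{C}(\lambda_{0,0}(S)/Q^{p^{\alpha-1}}) = \Lambda_0 \mathcal{C}(S/Q^{p^{\alpha-1}})$ derived from Proposition~\ref{Cartier - ring} just before the proposition statement reads $g_{\lambda_{0,0}(S)} = \Lambda_0 g_S$. But $\Lambda_0$ trivially fixes the constant term of any univariate power series: $\Lambda_0(\sum_{n \geq 0} c_n x^n) = \sum_{n \geq 0} c_{pn} x^n$ still has $c_0$ as its constant term. Consequently $g_{\lambda_{0,0}(S)}(0) = g_S(0)$, so the output of $\lambda_{0,0}(S)$ equals that of $S$, completing the induction.

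The main obstacle — and really the only point requiring care — is the identification of the output of $S$ with $g_S(0)$. This is precisely why Lemma~\ref{Minkowski} is stated immediately beforehand: it certifies that after the shear $x \mapsto xy$ every reachable state becomes a bona fide bivariate polynomial, so the Laurent expansion of $S/Q^{p^{\alpha-1}}$ is unambiguous and the ratio of constant terms of Laurent polynomials coincides with the constant term of the associated univariate power series.
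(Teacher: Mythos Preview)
Your proof is correct, but it takes a different route from the paper's. The paper argues directly at the level of Laurent polynomials: it analyzes which pairs of monomials $x^i y^j$ in $S$ and $x^I y^J$ in $Q^{p^\alpha - p^{\alpha-1}}$ can multiply to $x^0 y^0$, uses Lemma~\ref{Minkowski} together with $P(0,0)=0$ to rule out all cases except $i=j=I=J=0$, and then invokes Euler's theorem ($\phi(p^\alpha)=p^\alpha-p^{\alpha-1}$) to conclude that the constant term of $Q^{p^\alpha-p^{\alpha-1}}$ is $1$, so the constant term of $\lambda_{0,0}(S)$ equals that of $S$. Your argument instead pulls back along the shear to recognize the output of $S$ as the constant term of the univariate series $g_S=\mathcal{C}(S/Q^{p^{\alpha-1}})$, and then the claim reduces to the triviality that $\Lambda_0$ fixes constant terms of power series. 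Your approach is more conceptual and sidesteps the Euler-totient computation, at the cost of checking carefully that the shear lands in honest polynomials (which is exactly where you use Lemma~\ref{Minkowski}); the paper's approach stays entirely inside the Laurent-polynomial world and is slightly more self-contained.
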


\begin{proof}
Let $S \in \mathcal M_{p^\alpha}$, let $x^i y^j$ be a stripped monomial in $S$ (that is, a monomial without its coefficient), and let $x^I y^J$ be a stripped monomial in $Q^{p^\alpha - p^{\alpha - 1}}$;
we are interested in products $x^i y^j \cdot x^I y^J$ which equal $1$, since these contribute to the constant term and therefore the output value.
This implies $I = -i$ and $J = -j$.
Since $i \geq 0$ and $I \geq 0$, we have $i = 0 = -I$.
We consider the three cases $j < 0$, $j > 0$, and $j = 0$.
If $j > 0$, then $J < 0$; this implies $I > 0$ by the condition $P(0, 0) \neq 0$, which contradicts $I = 0$.
If $j < 0$, then $-i \leq j < 0$ by Lemma~\ref{Minkowski}, which implies $i < 0$ and contradicts $i = 0$.
Therefore $j = 0 = J$, so the constant term of $S Q^{p^\alpha - p^{\alpha - 1}}$ is the product of the constant term of $S$ and the constant term of $Q^{p^\alpha - p^{\alpha - 1}}$.
The assumption $\frac{\partial P}{\partial y}(0, 0) \nequiv 0 \mod p$ implies that the constant term $c \, x^0 y^0$ of $Q$ is nonzero modulo~$p$.
It follows that the constant term of $Q^{p^\alpha - p^{\alpha - 1}}$ is $c^{p^\alpha - p^{\alpha - 1}} = 1$, since the Euler totient function satisfies $\phi(p^\alpha) = p^\alpha - p^{\alpha - 1}$.
Therefore the constant term of $\lambda_{0, 0}(S)$ divided by the constant term of $Q$, which is the output assigned to the state $\lambda_{0, 0}(S)$, is the constant term of $S$ divided by the constant term of $Q$.
\end{proof}

\section{A numeration system for the automaton states}\label{section: image - ring}

In this section, we define a numeration system for a certain set of Laurent polynomials.
We then show in Theorems~\ref{closure under Cartier} and \ref{state base-p/Q representation} that all states of the automaton $\mathcal M_{p^\alpha}$ have a representation in this numeration system.

We continue to assume that $p$ is a prime number and $\alpha \geq 1$.
We also continue to use $Q$ as defined in the previous section, although the only property we need for a numeration system is that the constant term of $Q$ is nonzero modulo~$p$.

\begin{definition*}
Let $D = \{0, 1, \dots, p - 1\}$; we view $D \subseteq \ring$.
We say that $S \in \ring[x, y, y^{-1}]$ has a \emph{base-$\frac{p}{Q}$ representation} if there are Laurent polynomials $T_0, T_1, \dots, T_{\alpha - 1}$ such that $T_k\in D[x, y, y^{-1}]$ for each $k$ and
\begin{equation}\label{series form}
	S =
	\left(T_0
	+ T_1 \, \tfrac{p}{Q}
	+ T_2 \, (\tfrac{p}{Q})^2
	+ \dots
	+ T_{\alpha - 1} \, (\tfrac{p}{Q})^{\alpha - 1}\right)
	Q^{p^{\alpha - 1} - 1}.
\end{equation}
We refer to $T_0, T_1, \dots, T_{\alpha - 1}$ as \emph{digits}.
Since $p^{\alpha - 1} - 1 \geq \alpha - 1$, the right side of Equation~\eqref{series form} is a Laurent polynomial.
\end{definition*}

Because of the factor $Q^{p^{\alpha - 1} - 1}$, this numeration system is not exactly analogous to classical base-$p$ representations of integers.
In addition, the set of possible digits is currently infinite; later we will restrict it.
We start by showing that, like classical numeration systems, base-$\frac{p}{Q}$ representations have two desirable properties.

\begin{proposition}\label{uniqueness of representation}
If the Laurent polynomial $S\in \ring[x, y, y^{-1}]$ has a base-$\frac{p}{Q}$ representation, then this representation is unique.
\end{proposition}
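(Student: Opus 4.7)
The plan is to peel off the digits one at a time by reducing modulo $p$ and cancelling the appropriate power of $Q$.

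Suppose $S$ admits two base-$\tfrac{p}{Q}$ representations, with digit tuples $(T_k)$ and $(T_k')$. Expanding $(\tfrac{p}{Q})^k\,Q^{p^{\alpha-1}-1} = p^k\,Q^{p^{\alpha-1}-1-k}$ and subtracting yields
\[
\sum_{k=0}^{\alpha-1}(T_k - T_k')\,p^k\,Q^{p^{\alpha-1}-1-k}\equiv 0 \pmod{p^\alpha}
\]
in $\ring[x,y,y^{-1}]$. Reducing modulo $p$ annihilates every term with $k\geq 1$ and leaves $(T_0 - T_0')\,Q^{p^{\alpha-1}-1}\equiv 0 \bmod p$. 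The ring $\F_p[x,y,y^{-1}]$ is an integral domain, being the localization of $\F_p[x,y]$ at $y$, and $Q \bmod p$ is nonzero there because its constant term equals $\frac{\partial P}{\partial y}(0,0)\not\equiv 0 \bmod p$. We may therefore cancel $Q^{p^{\alpha-1}-1}$ to conclude $T_0 \equiv T_0' \bmod p$. Since both digits already lie in $D[x,y,y^{-1}]$ with $D=\{0,\dots,p-1\}$ a complete set of residues, this congruence is actually an equality $T_0 = T_0'$ in $\ring[x,y,y^{-1}]$.

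Once $T_0 = T_0'$ is established, the $k=0$ term disappears and every surviving term carries a factor of $p$; dividing by $p$ yields a congruence modulo $p^{\alpha-1}$ of identical shape. Iterating the previous paragraph extracts $T_1 = T_1'$, then $T_2 = T_2'$, and so on, until all $\alpha$ digits match. The only nontrivial ingredient is that $Q \bmod p$ is a non-zero-divisor in $\F_p[x,y,y^{-1}]$, and this is immediate from the choice of $Q$ as a lift of $P/y \bmod p$ preserving its monomial support, which in particular preserves its nonzero constant term. There is no substantive obstacle; the argument is essentially the same greedy digit-extraction one uses to prove uniqueness of ordinary base-$p$ expansions, with the role of "nonzero leading coefficient" played by the non-zero-divisor $Q$.
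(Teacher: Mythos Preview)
Your proof is correct and follows essentially the same inductive digit-extraction strategy as the paper: reduce modulo $p$, cancel the power of $Q$, conclude the lowest remaining digit agrees, and repeat. Your justification for the cancellation step---that $\F_p[x,y,y^{-1}]$ is an integral domain and $Q\bmod p$ is nonzero there---is arguably more explicit than the paper's appeal to $Q$ being ``invertible,'' but the two arguments are equivalent in substance.
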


\begin{proof}
Suppose that
\begin{multline}\label{unique representation}
	\paren{T_0
	+ T_1 \, \tfrac{p}{Q}
	+ \dots
	+ T_{\alpha - 1} \, (\tfrac{p}{Q})^{\alpha - 1}}
	Q^{p^{\alpha - 1} - 1} \\
	\equiv
	\paren{U_0
	+ U_1 \, \tfrac{p}{Q}
	+ \dots
	+ U_{\alpha - 1} \, (\tfrac{p}{Q})^{\alpha - 1}}
	Q^{p^{\alpha - 1} - 1}
	\mod p^\alpha,
\end{multline}
where the digits $T_k$ and $U_k$ belong to $D[x, y, y^{-1}]$.
Then $T_0 Q^{p^{\alpha - 1} - 1} \equiv U_0 Q^{p^{\alpha - 1} - 1} \mod p$.
Since $Q$ has a constant term which is nonzero modulo~$p$, it is invertible, so $T_0 \equiv U_0 \mod p$, which implies $T_0 = U_0$.
Inductively, suppose that $T_m=U_m$ for $0\leq m \leq k-1$.
Reducing Equation~\eqref{unique representation} modulo~$p^{k+1}$ and expanding gives us
\begin{multline*}
	T_0 Q^{p^{\alpha - 1} - 1} + \dots + T_k p^k Q^{p^{\alpha - 1} - k-1} \\
	\equiv U_0 Q^{p^{\alpha - 1} - 1} + \dots + U_k p^k Q^{p^{\alpha - 1} - k-1} \mod p^{k+1}.
\end{multline*}
Subtracting the first $k$ terms from both sides and dividing by $Q^{p^{\alpha - 1} - k-1}$, we conclude that $T_k = U_k$.
\end{proof}

The next proposition allows us to perform carries and therefore normalize base-$\frac{p}{Q}$ representations where the digit coefficients are not in $D$.
This implies that the set of Laurent polynomials with base-$\frac{p}{Q}$ representations is closed under addition and scalar multiplication; we will use this vector space structure in later sections.

\begin{proposition}\label{carries}
Suppose $S \in \ring[x, y, y^{-1}]$ is of the form
\[
	\left(T_0'
	+ T_1' \, \tfrac{p}{Q}
	+ T_2' \, (\tfrac{p}{Q})^2
	+ \dots
	+ T_{\alpha - 1}' \, (\tfrac{p}{Q})^{\alpha - 1}\right)
	Q^{p^{\alpha - 1} - 1}
\]
where $T_k' \in \ring[x, y, y^{-1}]$ for each $k \in \{0, 1, \dots, \alpha - 1\}$.
Then $S$ has a base-$\frac{p}{Q}$ representation.
\end{proposition}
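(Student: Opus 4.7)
The plan is to normalize the digits via a classical base-$p$ carry procedure, processed from the lowest index upward: at each step, split every coefficient of the current digit as $a + p b$ with $a \in D$, keep the $a$-part as the finished digit, and push the $b$-part (multiplied by $Q$) into the next slot.

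Formally I would induct on $k \in \{0, 1, \dots, \alpha-1\}$. At the start of stage $k$, I maintain the invariant that
\[
	S \equiv \sum_{j=0}^{k-1} T_j \, (\tfrac{p}{Q})^j \, Q^{p^{\alpha-1}-1} + \tilde T_k' \, (\tfrac{p}{Q})^k \, Q^{p^{\alpha-1}-1} + \sum_{j=k+1}^{\alpha-1} T_j' \, (\tfrac{p}{Q})^j \, Q^{p^{\alpha-1}-1} \mod p^\alpha,
\]
where $T_0, \dots, T_{k-1} \in D[x, y, y^{-1}]$ are already settled and $\tilde T_k', T_{k+1}', \dots, T_{\alpha-1}' \in \ring[x, y, y^{-1}]$. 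Writing each coefficient of $\tilde T_k'$ as $a + p b$ with $a \in D$ produces a decomposition $\tilde T_k' = T_k + p U_k$ with $T_k \in D[x, y, y^{-1}]$ and $U_k \in \ring[x, y, y^{-1}]$. The excess contribution rewrites as
\[
	p U_k \, (\tfrac{p}{Q})^k \, Q^{p^{\alpha-1}-1} = (U_k Q) \, (\tfrac{p}{Q})^{k+1} \, Q^{p^{\alpha-1}-1},
\]
so I absorb it into the $(k+1)$-st slot by setting $\tilde T_{k+1}' := T_{k+1}' + U_k Q \in \ring[x, y, y^{-1}]$, which completes stage $k$.

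After stage $\alpha - 1$, the only residual term is a final carry of the form $p^\alpha U_{\alpha-1} Q^{p^{\alpha-1}-\alpha}$. Since $p^{\alpha-1} \geq \alpha$ for every prime $p$ and every $\alpha \geq 1$ (the same inequality used in the definition of a base-$\frac{p}{Q}$ representation), the exponent of $Q$ is non-negative, so this expression is a genuine Laurent polynomial, and it vanishes in $\ring[x, y, y^{-1}]$ because of the factor $p^\alpha$. What remains is exactly
\[
	S \equiv \left(T_0 + T_1 \tfrac{p}{Q} + \dots + T_{\alpha - 1} \, (\tfrac{p}{Q})^{\alpha-1}\right) Q^{p^{\alpha-1}-1} \mod p^\alpha,
\]
which is the required base-$\frac{p}{Q}$ representation.

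No substantive obstacle arises; the argument is the familiar carry algorithm, and the only point worth checking is that the exponents of $Q$ appearing in the carries stay non-negative, which follows from the same elementary inequality $p^{\alpha-1} \geq \alpha$ noted immediately after the definition of base-$\frac{p}{Q}$ representation.
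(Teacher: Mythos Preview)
Your proof is correct and takes essentially the same approach as the paper: both normalize the digits by the classical base-$p$ carry procedure, writing each current digit as $R_k + pU_k$ with $R_k \in D[x,y,y^{-1}]$ and pushing $U_k Q$ into the next slot. Your write-up is slightly more explicit about the inductive invariant and the vanishing of the final carry, but the argument is identical in substance.
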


\begin{proof}
To put $S$ into the desired form, so that it has a representation with digits $T_k \in D[x, y, y^{-1}]$, it is sufficient to show how to perform a carry from one digit to the next.
We begin the procedure by setting $T_0''=T_0'$.
Given $T_k''$, we perform division to write it as $T_k''=pU_k+R_k$ with $R_k \in D[x, y, y^{-1}]$.
Then
\begin{align*}
	S
	&= \left(
		T_0 + \dots
		+ (R_k + p U_k) \, (\tfrac{p}{Q})^k
		+ T_{k + 1}' \, (\tfrac{p}{Q})^{k + 1} + \dots
		+ T_{\alpha - 1}' \, (\tfrac{p}{Q})^{\alpha - 1}
	\right)
	Q^{p^{\alpha - 1} - 1} \\
	&= \left(
		T_0 +
		\dots
		+ R_k \, (\tfrac{p}{Q})^k
		+ (U_k Q + T_{k + 1}') \, (\tfrac{p}{Q})^{k + 1}
		+ \dots
		+ T_{\alpha - 1}' \, (\tfrac{p}{Q})^{\alpha - 1}
	\right)
	Q^{p^{\alpha - 1} - 1},
\end{align*}
so that we can set $T_k = R_k$.
For $k = \alpha - 1$, the quotient $U_{\alpha - 1}$ plays no role since $U_{\alpha - 1} Q \, (\tfrac{p}{Q})^\alpha \equiv 0 \mod p^\alpha$.
\end{proof}

\begin{notation*}
If $S$ has a base-$\frac{p}{Q}$ representation, then $S$ can be written uniquely as $S = \left(\sum_{k = 0}^{\alpha - 1} T_k \, (\frac{p}{Q})^k\right) Q^{p^{\alpha - 1} - 1}$ by Proposition~\ref{uniqueness of representation}.
Define
\begin{equation}\label{rep definition}
	\rep_{p/Q}(S) \colonequal (T_{\alpha - 1}, \dots, T_1, T_0).
\end{equation}
Let $\dig_k(S)$ denote the $k$th digit $T_k$ in $\rep_{p/Q}(S)$.
Finally, define
\[
	\val_{p/Q}((T_{\alpha - 1}, \dots, T_1, T_0)) \colonequal \left(\sum_{k = 0}^{\alpha - 1} T_k \, (\tfrac{p}{Q})^k\right) Q^{p^{\alpha - 1} - 1}.
\]
\end{notation*}

\begin{example}\label{state base-p/Q representation example}
Let $P = x y^2 + (x + 1) y + x$.
The coefficient sequence $a(n)_{n \geq 0}$ of the Furstenberg series $F$ satisfying $P(x, F) = 0$ is $0, -1, 1, -2, 4, -9, 21, -51, \dots$ and is a signed, shifted variant of the sequence of Motzkin numbers~\seq{A001006}.
Let $p = 2$ and $\alpha = 3$.
Consider the initial state $S_0 \in \mathcal{R}_{8}[x, y, y^{-1}]$ of the automaton generating $(a(n) \bmod 8)_{n \geq 0}$.
By definition,
\begin{multline*}
	S_0
	= \paren{y \tfrac{\partial P}{\partial y} \, (P/y)^3 \bmod 8}
	=
		2 x^4 y^5
		+ \left(7 x^4 + 7 x^3\right) y^4
		+ \left(7 x^4 + 2 x^3 + x^2\right) y^3 \\
		+ \left(4 x^4 + 6 x^3 + 7 x^2 + 5 x\right) y^2
		+ \left(3 x^4 + 4 x^3 + 2 x^2 + 4 x + 1\right) y \\
		+ \left(4 x^4 + 2 x^3 + x^2 + 3 x\right)
		+ \left(5 x^4 + 6 x^3 + 3 x^2\right) y^{-1}
		+ \left(x^4 + x^3\right) y^{-2}.
\end{multline*}
By Theorem~\ref{state base-p/Q representation}, which we state below, the initial state has a base-$\frac{p}{Q}$ representation, where $Q = x y + (x + 1) + x y^{-1} \in \mathcal{R}_{8}[x, y, y^{-1}]$, namely
\[
	S_0 = \left((x + 1) y + \left(x^2 y^3 + (x^2 + x) y^2 + x^2 y\right) \tfrac{2}{Q} + 0 \cdot \tfrac{4}{Q^2}\right) Q^3,
\]
so that
\[
	\rep_{2/Q}(S_0) = \left(0, \, x^2 y^3 + (x^2 + x) y^2 + x^2 y, \, (x + 1) y\right).
\]
\end{example}

\begin{theorem}\label{closure under Cartier}
The set of Laurent polynomials in $\ring[x, y, y^{-1}]$ with a base-$\frac{p}{Q}$ representation is closed under $\lambda_{r,0}$ for $0\leq r \leq p-1$.
\end{theorem}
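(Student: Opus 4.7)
The plan is to use the Cartier identity (Proposition~\ref{Cartier - ring}) to rewrite $\lambda_{r,0}(S)$ in a form that is manifestly almost a base-$\frac{p}{Q}$ representation, then invoke Proposition~\ref{carries} to normalize it. Since the set of base-$\frac{p}{Q}$-expressible Laurent polynomials is closed under addition by Proposition~\ref{carries}, it suffices to exhibit a representation of $\lambda_{r,0}(S)$ where the digits lie in $\ring[x, y, y^{-1}]$ rather than in $D[x, y, y^{-1}]$.

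First I would establish the key identity
\[
	\lambda_{r,0}(S) = Q^{p^{\alpha-1}} \Lambda_{r,0}\!\paren{\frac{S}{Q^{p^{\alpha-1}}}},
\]
which follows from Proposition~\ref{Cartier - ring} applied to $F = Q$ and the Laurent series $G = S/Q^{p^{\alpha-1}}$, since $(S/Q^{p^{\alpha-1}}) Q^{p^\alpha} = S Q^{p^\alpha - p^{\alpha-1}}$. Expanding the base-$\frac{p}{Q}$ representation yields $S/Q^{p^{\alpha-1}} = \sum_{k=0}^{\alpha-1} T_k p^k / Q^{k+1}$, and distributing $\Lambda_{r,0}$ gives
\[
	\lambda_{r,0}(S) = \sum_{k=0}^{\alpha-1} V_k \, p^k \, Q^{p^{\alpha-1}-1-k},
	\qquad V_k \colonequal Q^{k+1} \Lambda_{r,0}\!\paren{\frac{T_k}{Q^{k+1}}}.
\]
If each $V_k$ lies in $\ring[x, y, y^{-1}]$, then $\lambda_{r,0}(S) = \paren{\sum_k V_k (p/Q)^k} Q^{p^{\alpha-1}-1}$, and Proposition~\ref{carries} completes the proof by converting this to the standard base-$\frac{p}{Q}$ form with digits in $D[x, y, y^{-1}]$.

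The heart of the argument, and the main obstacle, is showing that $V_k$ is actually a Laurent polynomial. Rewriting via Cartier once more, $V_k = \Lambda_{r,0}(T_k Q^{p^\alpha-1-k})/Q^{p^{\alpha-1}-1-k}$, so this reduces to proving the divisibility $Q^{p^{\alpha-1}-1-k} \mid \Lambda_{r,0}(T_k Q^{p^\alpha-1-k})$ in $\ring[x, y, y^{-1}]$. I would establish this by repeatedly applying Lemma~\ref{lifting-the-exponent} to write $Q^{p^j} \equiv Q(x^p, y^p)^{p^{j-1}} \pmod{p^j}$ and decomposing the exponent $p^\alpha - 1 - k$ so that successive outer factors of $Q$ can be pulled through $\Lambda_{r,0}$ via the commutation rule $\Lambda_{r,0}(G \cdot H(x^p, y^p)) = \Lambda_{r,0}(G) \cdot H(x, y)$. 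Each iteration reduces the modular level at which the congruence holds while accumulating further outer factors of $Q$; the delicate point is to verify by bookkeeping that the cumulative outer $Q$-power reaches $p^{\alpha-1}-1-k$ while the congruence still holds modulo a power of $p$ that, combined with the factor $p^k$ multiplying $V_k$, is valid modulo $p^\alpha$. Any remaining error terms at lower modular levels correspond to contributions that can be absorbed into higher-index digits and are handled by Proposition~\ref{carries}.
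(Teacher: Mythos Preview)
There is a genuine gap. Your central claim---that each $V_k = Q^{k+1}\Lambda_{r,0}(T_k/Q^{k+1})$ is a Laurent polynomial in $\ring[x,y,y^{-1}]$---is false in general. For a minimal univariate counterexample take $p=2$, $\alpha=2$, $Q=1+x$, $T_0=1$, $k=0$, $r=0$: in $\mathcal R_4\doublebracket{x}$ one has $\Lambda_0\bigl(1/(1+x)\bigr)=1/(1-x)$, so $V_0=(1+x)/(1-x)=1+2x+2x^2+\cdots$, which is not a polynomial modulo~$4$. Only $V_k\bmod p$ is guaranteed to be a Laurent polynomial; this is the $\beta=1$ case of the paper's Lemma~\ref{polynomial modulo prime power}, whose full statement is precisely that one needs $Q^{k+\beta}$, not $Q^{k+1}$, to clear denominators modulo~$p^\beta$.

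Your last sentence concedes this and proposes to ``absorb'' the error terms into higher-index digits via Proposition~\ref{carries}. But Proposition~\ref{carries} only normalizes an expression $\sum_m T'_m(p/Q)^m$ once each $T'_m$ is already a Laurent polynomial; it cannot handle an arbitrary Laurent series. So the actual content of the proof is to show that each $V_k$ expands as $V_k\equiv\sum_{j=0}^{\alpha-k-1}U_{k,j}(p/Q)^j\pmod{p^{\alpha-k}}$ with $U_{k,j}\in D[x,y,y^{-1}]$. The paper does exactly this: Lemma~\ref{polynomial modulo prime power} (proved, much as you suggest, by writing $Q(x^p,y^p)=Q^p+\Delta$ with $p\mid\Delta$ and expanding binomially) shows that $Q^{k+\beta}\Lambda_{r,0}(T_k/Q^{k+1})\bmod p^\beta$ is a Laurent polynomial for every $\beta$, and the proof of Theorem~\ref{closure under Cartier} then recursively defines the $U_{k,j}$ one at a time by increasing $\beta$. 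Your iterative lifting-the-exponent sketch is aimed at the same target, but the ``delicate bookkeeping'' you defer is the entire argument, and the divisibility $Q^{p^{\alpha-1}-1-k}\mid\Lambda_{r,0}(T_kQ^{p^\alpha-1-k})$ you set out to prove is simply not true in $\ring[x,y,y^{-1}]$.
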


The proof uses Lemma~\ref{polynomial modulo prime power} below.
For a Laurent polynomial $S \in \ring[x, y, y^{-1}]$, let $\mindeg_y S$ be the smallest $y$-degree of a monomial in $S$ with a nonzero coefficient.
If $\beta = 1$, then the statement of Lemma~\ref{polynomial modulo prime power} follows from Proposition~\ref{Cartier - ring}, since, modulo~$p$, we have $\Lambda_{r, 0}(\frac{T}{Q^{k + 1}}) Q^{k + 1} \equiv \Lambda_{r, 0}(\frac{T Q^{p (k + 1)}}{Q^{k + 1}}) = \Lambda_{r, 0}(T Q^{(p - 1) (k + 1)})$, and the latter is a Laurent polynomial; it can be verified that it satisfies the claimed degree bounds.

\begin{lemma}\label{polynomial modulo prime power}
Let $\beta \in \{1, 2, \dots, \alpha\}$, $T \in \mathcal R_{p^\beta}[x, y, y^{-1}]$, and $r \in \{0, 1, \dots, p - 1\}$.
For all $k \geq 0$, the Laurent series $S\colonequal\Lambda_{r, 0}(\frac{T}{(Q \bmod p^\beta)^{k + 1}}) (Q \bmod p^{\beta})^{k + \beta}$ is a Laurent polynomial.
Moreover, its degrees satisfy
\begin{align*}
	\deg_x S &\leq (k + \beta) h + \floor{\tfrac{\deg_x T - (k + 1) h - r}{p}} \\
	\deg_y S &\leq (k + \beta) (d-1) + \floor{\tfrac{\deg_y T - (k + 1) (d-1)}{p}} \\
	\mindeg_y S &\geq -(k + \beta) + \ceil{\tfrac{\mindeg_y T + (k + 1)}{p}}.
\end{align*}
Finally, if each nonzero monomial $c \, x^I y^J$ in $T$ satisfies $J \geq -I$, then so does each nonzero monomial in $S$.
\end{lemma}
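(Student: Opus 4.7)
The plan is to induct on $\beta$, with the case $\beta = 1$ handled as in the paragraph just before the lemma. Write $\bar Q$ for $Q \bmod p^\beta$ throughout. For the inductive step at $\beta \geq 2$, decompose $T = T_0 + p^{\beta - 1} T_1$, where $T_0 \in \mathcal{R}_{p^{\beta - 1}}[x, y, y^{-1}]$ is a fixed lift (using digits in $\{0, \dots, p^{\beta - 1} - 1\}$) and $T_1 \in \mathcal{R}_p[x, y, y^{-1}]$. The contribution of $T_1$ to $S$ equals $p^{\beta - 1}$ times $\bigl(\Lambda_{r, 0}(T_1/\bar Q^{k + 1}) \bar Q^{k + 1}\bigr) \bar Q^{\beta - 1}$, which by the base case applied to $T_1$ is $p^{\beta - 1}$ times a Laurent polynomial and hence a Laurent polynomial modulo $p^\beta$; its degrees are bounded by adding $(\beta - 1) h$, $(\beta - 1)(d - 1)$, and $-(\beta - 1)$ respectively to the $\beta = 1$ bounds, matching the claim.

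For the $T_0$-contribution, apply Proposition~\ref{Cartier - ring} with $\alpha = \beta$ and $F = \bar Q^{-1}$ (a well-defined Laurent series in $x$ because $Q(0, 0)$ is a unit modulo $p$ by the hypothesis on $\partial P/\partial y$) to rewrite
\[
    \Lambda_{r, 0}(T_0 / \bar Q^{k + 1}) \, \bar Q^{k + \beta}
    \equiv
    \Lambda_{r, 0}\bigl(T_0 \bar Q^{p^\beta - k - 1}\bigr) \, \bar Q^{k + \beta - p^{\beta - 1}}
    \pmod{p^\beta},
\]
assuming $k \leq p^\beta - 1$ (otherwise one inserts additional copies of $\bar Q^{p^\beta}$). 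When $k + \beta \geq p^{\beta - 1}$ this expression is manifestly a Laurent polynomial, and the degree bounds follow from the standard inequalities $\deg_x \Lambda_{r, 0}(U) \leq \floor{(\deg_x U - r)/p}$, $\deg_y \Lambda_{r, 0}(U) \leq \floor{\deg_y U / p}$, and $\mindeg_y \Lambda_{r, 0}(U) \geq \ceil{\mindeg_y U / p}$, combined with $\deg_x \bar Q = h$, $\deg_y \bar Q = d - 1$, and $\mindeg_y \bar Q \geq -1$. When $k + \beta < p^{\beta - 1}$ one must establish the compensating divisibility of $\Lambda_{r, 0}(T_0 \bar Q^{p^\beta - k - 1})$ by $\bar Q^{p^{\beta - 1} - k - \beta}$ modulo $p^\beta$. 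Modulo $p$ this divisibility is given by Proposition~\ref{Cartier - ring} at $\alpha = 1$, applied after writing $\bar Q^{p^\beta - k - 1} = \bar Q^s (\bar Q^q)^p$ with $q = \floor{(p^\beta - k - 1)/p} = p^{\beta - 1} - \ceil{(k + 1)/p} \geq p^{\beta - 1} - k - \beta$; the lift to modulo $p^\beta$ is then obtained by iterating this argument with higher values of $\alpha$ on intermediate factors of the form $\bar Q^{p^j}$, invoking Lemma~\ref{lifting-the-exponent} to transport the $\alpha = 1$ divisibility upward, and combining with the inductive hypothesis on $\beta$.

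The monomial-support condition ``every nonzero $c \, x^I y^J$ has $J \geq -I$'' is propagated throughout by Lemma~\ref{Minkowski}: the condition holds for $\bar Q$ because $P(0, 0) = 0$ forces $i + j \geq 1$ in every nonzero monomial $c_{i, j} x^i y^j$ of $P$, whence every monomial $c_{i, j} x^i y^{j - 1}$ of $Q = P/y$ satisfies $i + (j - 1) \geq 0$. Lemma~\ref{Minkowski} extends routinely to include the Laurent series $1/\bar Q$ (whose expansion involves only products of $\bar Q - c_{0,1}$ with the unit $c_{0,1}^{-1}$), and then every building block of the formula for $S$ preserves the condition.

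The main obstacle is the divisibility step above: Proposition~\ref{Cartier - ring} at $\alpha = \beta$ gives the cleanest rewriting modulo $p^\beta$ but introduces an apparent negative exponent on $\bar Q$, while the stronger divisibility visible at $\alpha = 1$ only holds modulo $p$. Reconciling the two using the inductive hypothesis and Lemma~\ref{lifting-the-exponent}, so that the apparent negative power of $\bar Q$ is absorbed into a genuine polynomial factor modulo $p^\beta$, is the technical heart of the proof.
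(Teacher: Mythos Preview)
Your inductive scheme handles the $T_1$ summand cleanly, and the degree bookkeeping is correct. The gap is exactly where you flag it: the divisibility of $\Lambda_{r,0}(T_0\,\bar Q^{\,p^\beta-k-1})$ by $\bar Q^{\,p^{\beta-1}-k-\beta}$ modulo $p^\beta$. Your rewriting via Proposition~\ref{Cartier - ring} is \emph{equivalent} to the claim you are trying to prove (multiply both sides by $\bar Q^{\,p^{\beta-1}-k-\beta}$ and you recover the original expression), so it yields no progress on its own. Your sketch for the lift --- ``iterate with higher values of $\alpha$ on factors $\bar Q^{p^j}$, invoke Lemma~\ref{lifting-the-exponent}, combine with the inductive hypothesis'' --- does not go through: divisibility by $\bar Q^m$ modulo $p$ does not promote to divisibility by $\bar Q^m$ modulo $p^\beta$ without extra structure, and the inductive hypothesis at $\beta-1$ only controls the expression modulo $p^{\beta-1}$, leaving the top $p$-adic layer unconstrained. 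Concretely, for $p=3$, $\beta=2$, $k=0$ one needs $\Lambda_{r,0}(T_0\bar Q^8)$ divisible by $\bar Q$ modulo $9$; writing $\Lambda_{r,0}(T_0\bar Q^8)=\bar Q W_0+3W_1'$ from the mod-$3$ divisibility, nothing you have written forces $W_1'\equiv 0\pmod{\bar Q}$ over $\F_3$.

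The paper bypasses this entirely with a direct, non-inductive identity. Set $\Delta := Q(x^p,y^p)-Q^p$, so $\Delta\equiv 0\pmod p$ and hence $\Delta^\beta=0$ in $\mathcal R_{p^\beta}$. Expanding $(Q^p+\Delta)^{k+\beta}$ binomially and discarding the terms with $\Delta$-exponent $\geq\beta$ gives
\[
\frac{Q(x^p,y^p)^{k+\beta}}{Q^{k+1}}=\sum_{m=k+1}^{k+\beta}\binom{k+\beta}{m}Q^{\,pm-k-1}\Delta^{\,k+\beta-m}=:Z,
\]
a genuine Laurent polynomial since $pm-k-1\geq 0$ for $m\geq k+1$. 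Then Frobenius-linearity of $\Lambda_{r,0}$ (the fact that $Q(x^p,y^p)^{k+\beta}$ pulls through as $Q^{k+\beta}$) gives $\Lambda_{r,0}(T/Q^{k+1})\,Q^{k+\beta}=\Lambda_{r,0}(TZ)$, which is manifestly a Laurent polynomial, and all degree bounds follow from $\deg_z\Delta\leq p\deg_z Q$. This single identity replaces your entire induction and resolves the divisibility issue in one stroke; I recommend adopting it.
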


\begin{proof}
For ease of notation, in this proof we use $Q$ to refer to $Q\bmod p^\beta$.
Since $Q(x^p, y^p) \equiv Q(x, y)^p \mod p$, we have $\Delta \colonequal Q(x^p, y^p) - Q(x, y)^p \equiv 0 \mod p$.
Therefore
\[
	\frac{Q(x^p, y^p)^{k + \beta}}{Q^{k + 1}}
	= \frac{(Q^p + \Delta)^{k + \beta}}{Q^{k + 1}}
	= \sum_{m = 0}^{k + \beta} \binom{k + \beta}{m} Q^{p m - k - 1} \Delta^{k + \beta - m}.
\]
For $m \in \{0, 1, \dots, k\}$, the summand is $0$ since $\Delta^\beta = 0$.
Therefore
\begin{equation}\label{auxiliary polynomial}
	\frac{Q(x^p, y^p)^{k + \beta}}{Q^{k + 1}}
	= \sum_{m = k + 1}^{k + \beta} \binom{k + \beta}{m} Q^{p m - k - 1} \Delta^{k + \beta - m}
	\equalcolon Z.
\end{equation}
This Laurent series $Z$ is a Laurent polynomial since, for all $m \geq k + 1$, we have $p m - k - 1 \geq p (k + 1) - k - 1 \geq 0$.
Let $z \in \{x, y\}$.
Since $\deg_z \Delta \leq p \deg_z Q$, the degree of $Q^{p m - k - 1} \Delta^{k + \beta - m}$ is at most $((k + \beta) p - k - 1) \deg_z Q$, so this is also an upper bound on $\deg_z Z$.
Analogously, $\mindeg_y Z\geq ((k + \beta) p - k - 1) \mindeg_y Q \geq - ((k + \beta) p - k - 1)$.

Multiplying both sides by $T$, we have $\frac{TQ(x^p,y^p)^{k + \beta}}{Q^{k + 1}} = TZ$.
Since the Laurent polynomial $Q(x^p, y^p)^{k + \beta}$ consists of terms with exponents that are multiples of $p$, applying $\Lambda_{r, 0}$ to both sides gives $\Lambda_{r, 0}(\frac{T}{Q^{k + 1}}) Q^{k + \beta} = \Lambda_{r, 0}(TZ)$.
In particular, $\Lambda_{r, 0}(\frac{T}{Q^{k + 1}}) Q^{k + \beta}$ is a Laurent polynomial.
Moreover, its $z$-degree and $y$-min-degree are as claimed, using the bounds in the previous paragraph on $\deg_z Z$ and $\mindeg_y Z$.
Finally, since the coefficient of $x^0 y^{-1}$ in $Q$ is $0$, it follows from Lemma~\ref{Minkowski} that each nonzero monomial $c \, x^I y^J$ in $\Delta$ and $Z$ satisfies $J \geq -I$.
Therefore if each nonzero monomial in $T$ also satisfies this constraint, then, again by Lemma~\ref{Minkowski}, so does each nonzero monomial in $S$.
\end{proof}

We can now establish closure under $\lambda_{r,0}$.

\begin{proof}[Proof of Theorem~\ref{closure under Cartier}]
Let $S$ be a Laurent polynomial with a base-$\frac{p}{Q}$ representation, and let $\rep_{p/Q}(S) = (T_{\alpha - 1}, \dots, T_1, T_0)$, so that
\[
	S =
	\left(T_0
	+ T_1 \, \tfrac{p}{Q}
	+ T_2 \, (\tfrac{p}{Q})^2
	+ \dots
	+ T_{\alpha - 1} \, (\tfrac{p}{Q})^{\alpha - 1}\right)
	Q^{p^{\alpha - 1} - 1}.
\]
We show that $\lambda_{r, 0}(S)$ also has this form.
To do this, for each $k$, we construct the base-$\frac{p}{Q}$ representation of $\Lambda_{r, 0}\!\left(T_k Q^{-k - 1}\right) Q^\alpha$.
The digits of these representations are Laurent polynomials $U_{k,j}\in D[x, y, y^{-1}]$, and we will recombine these Laurent polynomials to obtain the base-$\frac{p}{Q}$ representation of $\lambda_{r, 0}(S)$.

Let $k \in \{0, 1, \dots, \alpha - 1\}$.
We define Laurent polynomials $U_{k, j}$ as follows.
Lemma~\ref{polynomial modulo prime power} with $\beta = j+1$ implies that $\Lambda_{r, 0}\!\paren{T_k Q^{-k - 1}} Q^{k+j+1} \bmod p^{j+1}$ is a Laurent polynomial for all $j \geq 0$.
For each $j \in \{0,1, \dots, \alpha -k-1\}$, define $U_{k, j} \in D[x, y, y^{-1}]$ to be the Laurent polynomial satisfying
\begin{align*}
	\Lambda_{r, 0}\!\left(T_k Q^{-k-1}\right) Q^{k+1} &\equiv U_{k,0} \mod p \\
	\Lambda_{r, 0}\!\left(T_k Q^{-k-1}\right) Q^{k+2} &\equiv U_{k,0} Q + p U_{k,1} \mod p^2 \\
	\Lambda_{r, 0}\!\left(T_k Q^{-k-1}\right) Q^{k+3} &\equiv U_{k,0} Q^2 + p U_{k,1} Q + p^2 U_{k,2} \mod p^3 \\
	&\;\;\vdots \\
	\Lambda_{r, 0}\!\left(T_k Q^{-k - 1}\right) Q^\alpha &\equiv U_{k,0} Q^{\alpha - k - 1} + p U_{k,1} Q^{\alpha - k - 2} + \dots + p^{\alpha - k - 1} U_{k, \alpha - k - 1} \mod p^{\alpha - k}.
\end{align*}
The Laurent polynomials $U_{k,j}$ can be recursively computed using the proof of Lemma~\ref{polynomial modulo prime power}.
Dividing the previous congruence by $Q^{\alpha - 1}$, we obtain the Laurent series congruence
\begin{equation}\label{digit series}
	\Lambda_{r, 0}\!\left(T_k Q^{-k - 1}\right) Q \equiv \sum_{j = 0}^{\alpha - k - 1} \frac{U_{k, j}}{Q^{k + j}} \, p^j \mod p^{\alpha - k}.
\end{equation}

Using Proposition~\ref{Cartier - ring}, we have
\begin{align*}
	\lambda_{r, 0}(S)
	&= \lambda_{r, 0}\!\left(
		\paren{\sum_{k = 0}^{\alpha - 1} T_k \, (\tfrac{p}{Q})^k} Q^{p^{\alpha - 1} - 1}
	\right) \\
	&= \Lambda_{r, 0}\!\left(
		\paren{\sum_{k = 0}^{\alpha - 1} T_k \, (\tfrac{p}{Q})^k} Q^{-1} Q^{p^\alpha}
	\right) \\
	&= \Lambda_{r, 0}\!\left(
		\paren{\sum_{k = 0}^{\alpha - 1} T_k \, (\tfrac{p}{Q})^k} Q^{-1}
	\right) Q^{p^{\alpha - 1}} \\
	&= \paren{\sum_{k = 0}^{\alpha - 1} \Lambda_{r, 0}\!\left(T_k Q^{-k - 1}\right) p^k Q} Q^{p^{\alpha - 1} - 1}.
\end{align*}
By Equation~\eqref{digit series}, we have
\begin{align*}
	\lambda_{r, 0}(S)
	&= \paren{\sum_{k = 0}^{\alpha - 1} \sum_{j = 0}^{\alpha - k - 1} U_{k, j} \, \tfrac{p^{k + j}}{Q^{k + j}}} Q^{p^{\alpha - 1} - 1} \\
	&= \paren{\sum_{k = 0}^{\alpha - 1} \sum_{m = k}^{\alpha - 1} U_{k, m - k} \, (\tfrac{p}{Q})^m} Q^{p^{\alpha - 1} - 1}
\end{align*}
after substituting $j = m - k$.
Switching the order of summation gives
\[
	\lambda_{r, 0}(S)
	= \paren{\sum_{m = 0}^{\alpha - 1} \paren{\sum_{k = 0}^m U_{k, m - k}} (\tfrac{p}{Q})^m} Q^{p^{\alpha - 1} - 1}.
\]
For each $m \in \{0, 1, \dots, \alpha - 1\}$, set $T_m' = \sum_{k = 0}^m U_{k, m - k}$.
The coefficients in the Laurent polynomial $T_m'$ do not necessarily belong to $D$.
To obtain the base-$\frac{p}{Q}$ representation of $\lambda_{r, 0}(S)$, we perform carries as in Proposition~\ref{carries}.
\end{proof}

\begin{example}\label{closure under Cartier example}
As in Example~\ref{state base-p/Q representation example}, let $P = x y^2 + (x + 1) y + x$, $p = 2$, $\alpha = 3$, and $Q = x y + (x + 1) + x y^{-1} \in \mathcal{R}_{8}[x, y, y^{-1}]$; we saw that $\rep_{2/Q}(S_0)$ is
\[
	(T_2, T_1, T_0) \colonequal \left(0, \, x^2 y^3 + (x^2 + x) y^2 + x^2 y, \, (x + 1) y\right).
\]
We follow the proof of Theorem~\ref{closure under Cartier} to compute $\rep_{2/Q}(\lambda_{0, 0}(S_0))$.
The first step is to compute $U_{0, 0}, U_{0, 1}, U_{0, 2}, U_{1, 0}, U_{1, 1}, U_{2, 0} \in D[x, y, y^{-1}]$.
For $U_{0, 0}$, we can use Proposition~\ref{Cartier - ring} to obtain
\[
	U_{0, 0}
	\equiv \Lambda_{0, 0}\!\left(T_0 Q^{-1}\right) Q
	\equiv \Lambda_{0, 0}(T_0 Q)
	\equiv x y + x
	\mod 2.
\]
Therefore $U_{0, 0} = x y + x$.
For the others, we use Lemma~\ref{polynomial modulo prime power}.
Set
\[
	\Delta = Q(x^2, y^2) - Q(x, y)^2
	= \left(6 x^2 + 6 x\right) y + \left(6 x^2 + 6 x\right) + \left(6 x^2 + 6 x\right) y^{-1}.
\]
The Laurent polynomial $U_{0,1}$ is defined by $\Lambda_{0, 0}\!\left(T_0 Q^{-1}\right) Q^2 \equiv U_{0,0} Q + 2 U_{0,1} \mod 4$.
To compute $\Lambda_{r, 0}\!\left(T_0 Q^{-1}\right) Q^2$, we use the proof of Lemma~\ref{polynomial modulo prime power} with $k = 0$ and $\beta = 2$.
Let
\[
	Z \colonequal \sum_{m = 1}^{2} \binom{2}{m} Q^{2 m - 1} \Delta^{2 - m} = 2 Q \Delta + Q^3.
\]
Then
\[
	\Lambda_{0, 0}\!\left(T_0 Q^{-1}\right) Q^2
	\equiv \Lambda_{0, 0}\!\left(T_0 Z\right)
	\equiv x^2 y^2 + \left(2 x^2 + x\right) y + \left(2 x^2 + x\right) + x^2 y^{-1}
	\mod 4.
\]
This implies $2 U_{0,1} \equiv \Lambda_{0, 0}\!\left(T_0 Q^{-1}\right) Q^2 - U_{0,0} Q \equiv 0 \mod 4$, so $U_{0,1} = 0$.
The remaining $U_{k, j}$ are computed analogously, and we find
\begin{align*}
	U_{0, 0} &= x y + x						& U_{1, 0} &= x^2 y^2 + \left(x^2 + x\right) y + x^2 \\
	U_{0, 1} &= 0							& U_{1, 1} &= x^3 y^3 + \left(x^3 + x\right) y + x^3 y^{-1} \\
	U_{0, 2} &= x^2 y^2 + x^2 y + x^2 + x^2 y^{-1}	& U_{2, 0} &= 0.
\end{align*}
The second step is to compute the new digits $T_m' = \sum_{k = 0}^m U_{k, m - k}$.
We obtain
\begin{align*}
	T_0' &= U_{0, 0} = x y + x \\
	T_1' &= U_{0, 1} + U_{1, 0} = x^2 y^2 + \left(x^2 + x\right) y + x^2 \\
	T_2' &= U_{0, 2} + U_{1, 1} + U_{2, 0} = x^3 y^3 + x^2 y^2 + \left(x^3 + x^2 + x\right) y + x^2 + \left(x^3 + x^2\right) y^{-1}.
\end{align*}
The third step is to perform carries to normalize the base-$\frac{2}{Q}$ representation.
In this example, it happens that no carries are necessary.
Therefore $\rep_{2/Q}(\lambda_{0, 0}(S_0)) = (T_2', T_1', T_0')$.
\end{example}

Theorem~\ref{closure under Cartier} gets us most of the way to the following result.
Recall that $\mathcal M_{p^\alpha}$ is the set of states of an (unminimized) automaton that generates the sequence $(a(n) \bmod p^\alpha)_{n \geq 0}$.

\begin{theorem}\label{state base-p/Q representation}
Every state in $\mathcal M_{p^\alpha}$ has a unique base-$\frac{p}{Q}$ representation.
\end{theorem}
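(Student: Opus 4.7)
The plan is to combine three ingredients already in hand: Proposition~\ref{uniqueness of representation} for uniqueness, Theorem~\ref{closure under Cartier} for closure under the transition operators, and a direct construction showing that the initial state $S_0$ itself has a base-$\frac{p}{Q}$ representation. Since $\mathcal{M}_{p^\alpha}$ is defined as the smallest subset of $\ring[x, y, y^{-1}]$ containing $S_0$ and closed under the $\lambda_{r, 0}$, an induction on the length of the word read by the automaton will immediately propagate the property from $S_0$ to every state, provided we can establish the base case.

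The key step, and really the only substantive one, is to exhibit a base-$\frac{p}{Q}$ representation for
\[
	S_0 = y \tfrac{\partial P}{\partial y} \, (P/y)^{p^{\alpha - 1} - 1} \bmod p^\alpha.
\]
Because $Q$ is a lift of $P/y \bmod p$, we have $P/y \equiv Q \mod p$ in $\ring[x, y, y^{-1}]$, so there exists $E \in \ring[x, y, y^{-1}]$ with $P/y \equiv Q + p E \mod p^\alpha$. Expanding with the binomial theorem and discarding terms divisible by $p^\alpha$ gives
\[
	(P/y)^{p^{\alpha - 1} - 1}
	\equiv \sum_{k = 0}^{\alpha - 1} \binom{p^{\alpha - 1} - 1}{k} Q^{p^{\alpha - 1} - 1 - k} p^k E^k
	\mod p^\alpha.
\]
Factoring out $Q^{p^{\alpha - 1} - 1}$, one obtains
\[
	S_0 \equiv \left( \sum_{k = 0}^{\alpha - 1} T_k' \left(\tfrac{p}{Q}\right)^{\!k} \right) Q^{p^{\alpha - 1} - 1} \mod p^\alpha,
\]
with $T_k' \colonequal \binom{p^{\alpha - 1} - 1}{k} y \frac{\partial P}{\partial y} E^k \in \ring[x, y, y^{-1}]$. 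The $T_k'$ need not have digits in $D$, but Proposition~\ref{carries} then produces the (normalized) base-$\frac{p}{Q}$ representation of $S_0$.

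For the inductive step, if $S \in \mathcal{M}_{p^\alpha}$ has a base-$\frac{p}{Q}$ representation, then so does $\lambda_{r, 0}(S)$ for every $r \in \{0, 1, \dots, p - 1\}$ by Theorem~\ref{closure under Cartier}. Hence every state reachable from $S_0$ has a base-$\frac{p}{Q}$ representation; uniqueness of that representation is Proposition~\ref{uniqueness of representation}.

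I do not expect any serious obstacle: Theorem~\ref{closure under Cartier} does the heavy lifting, and the base case reduces to a binomial expansion in the ring $\ring[x, y, y^{-1}]$ together with a carry normalization. The only subtlety worth checking carefully is that the Laurent polynomial $E$ genuinely lives in $\ring[x, y, y^{-1}]$ (which follows because $Q$ and $P/y \bmod p^\alpha$ share the same constraint on monomial supports relative to Lemma~\ref{Minkowski}, so the difference is $p$ times a Laurent polynomial), and that the right-hand side of the display really is a Laurent polynomial (which follows, as in the definition, from $p^{\alpha - 1} - 1 \geq \alpha - 1$).
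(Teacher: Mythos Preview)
Your proof is correct and follows the same global architecture as the paper's: uniqueness via Proposition~\ref{uniqueness of representation}, propagation via Theorem~\ref{closure under Cartier}, and a direct verification for $S_0$. The difference is in how you handle the base case. The paper constructs the normalized digits $T_k$ of $S_0$ recursively, invoking Lemma~\ref{polynomial quotient} to ensure that expressions of the form $\tfrac{Q^{k+1}}{P/y}\bmod p^{k+1}$ are Laurent polynomials; this yields digits in $D[x,y,y^{-1}]$ directly, with explicit degree control that is reused in Lemma~\ref{initial state}. Your route is more elementary: write $P/y\equiv Q+pE$, expand $(Q+pE)^{p^{\alpha-1}-1}$ binomially, truncate at $k=\alpha-1$, and then appeal to Proposition~\ref{carries} to normalize. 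This avoids Lemma~\ref{polynomial quotient} entirely, at the cost of not producing the normalized digits explicitly (so you would have to redo some work if you later need the degree bounds of Lemma~\ref{initial state}). One small remark: your justification that $E\in\ring[x,y,y^{-1}]$ via Lemma~\ref{Minkowski} is beside the point; the real reason is simply that $P/y\bmod p^\alpha$ and $Q$ both lie in $\ring[x,y,y^{-1}]$ and agree modulo~$p$, so their difference is $p$ times a Laurent polynomial over~$\ring$.
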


To prove Theorem~\ref{state base-p/Q representation}, and later Proposition~\ref{transient - ring}, we will use the following lemma.

\begin{lemma}\label{polynomial quotient}
For each $k \in \{0, 1, \dots, \alpha - 1\}$, the Laurent series $S \colonequal \left(\frac{Q^{k+1}}{P/y} \bmod p^{k+1}\right)$ is a Laurent polynomial.
Moreover, its degrees satisfy
\begin{align*}
	\deg_x S &\leq k \deg_x(P \bmod p^{k + 1}) \\
	\deg_y S &\leq k \left(\deg_y(P \bmod p^{k + 1}) - 1\right) \\
	\mindeg_y S &\geq -k.
\end{align*}
\end{lemma}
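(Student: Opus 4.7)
The plan is to write $P/y = Q - \Delta$, where $\Delta$ is divisible by $p$, and then extract $S$ from a short telescoping identity. By the construction of $Q$ as a lift of $P/y \bmod p$ with the same monomial support, the Laurent polynomial $\Delta \colonequal Q - (P/y \bmod p^{k+1})$ has all coefficients divisible by $p$. In particular, $\Delta^{k+1} \equiv 0 \bmod p^{k+1}$.

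The central identity is the telescoping sum
\[
	(P/y) \sum_{j=0}^{k} Q^{k-j} \Delta^j = Q^{k+1} - \Delta^{k+1},
\]
which is verified directly by expanding $(Q - \Delta)$ against the sum and cancelling the interior terms. Because the constant term of $P/y$ equals $\frac{\partial P}{\partial y}(0, 0)$, which is a unit modulo $p$ by hypothesis, $P/y$ is invertible in the appropriate $p$-adic Laurent series ring. Dividing the identity by $P/y$ and reducing modulo $p^{k+1}$, so that $\Delta^{k+1}$ vanishes, yields
\[
	S \equiv \sum_{j=0}^{k} Q^{k-j} \Delta^j \bmod p^{k+1}.
\]
Every summand has $k - j \geq 0$, so the right-hand side is visibly a Laurent polynomial, which proves the first assertion.

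The degree bounds then follow by propagating the obvious estimates through the finite sum. The monomial-support property gives $\deg_x Q = h$, $\deg_y Q = d - 1$, and $\mindeg_y Q \geq -1$ (since $P/y \bmod p$ has $\mindeg_y \geq -1$), and we similarly obtain $\deg_x \Delta \leq \deg_x(P \bmod p^{k+1})$, $\deg_y \Delta \leq \deg_y(P \bmod p^{k+1}) - 1$, and $\mindeg_y \Delta \geq -1$. Each summand $Q^{k-j}\Delta^j$ then has $x$-degree at most $(k-j)h + j \deg_x(P \bmod p^{k+1}) \leq k \deg_x(P \bmod p^{k+1})$, with analogous inequalities in the $y$-degree and $y$-min-degree; maximizing over $j \in \{0, \dots, k\}$ produces the three stated bounds.

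No serious obstacle is anticipated; the argument is essentially a finite geometric series expansion. The only point deserving modest care is justifying that $P/y$ may be inverted in the ambient Laurent series ring in order to divide the telescoping identity, but this is immediate from the unit constant term.
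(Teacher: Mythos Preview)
Your proof is correct and follows essentially the same strategy as the paper: set $\Delta = Q - (P/y \bmod p^{k+1})$, use that $\Delta \equiv 0 \bmod p$, and rewrite $Q^{k+1}/(P/y)$ as an explicit Laurent polynomial in $Q$, $P/y$, and $\Delta$. The only cosmetic difference is the identity used---you take the geometric-series/telescoping factorization $Q^{k+1}-\Delta^{k+1}=(Q-\Delta)\sum_{j=0}^{k}Q^{k-j}\Delta^{j}$, whereas the paper expands $Q^{k+1}=(P/y+\Delta)^{k+1}$ binomially and drops the $m=0$ term; the resulting degree bookkeeping is the same.
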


\begin{proof}
Fix $k$.
Since $Q \equiv P/y \mod p$, we have $\Delta \colonequal (Q - P/y \bmod p^{k + 1}) \equiv 0 \mod p$.
Therefore
\[
	\frac{Q^{k + 1}}{P/y}
	\equiv \frac{(P/y + \Delta)^{k + 1}}{P/y}
	\equiv \sum_{m = 0}^{k + 1} \binom{k + 1}{m} (P/y)^{m - 1} \Delta^{k + 1 - m}
	\mod p^{k + 1}.
\]
For $m = 0$, the summand is $0$ modulo~$p^{k + 1}$ since $\Delta$ is divisible by $p$.
Therefore $\frac{Q^{k+1}}{P/y} \bmod p^{k+1}$ is a Laurent polynomial.
Finally, the degree bounds follow from the fact that $\deg_z \Delta \leq \deg_z(P/y \bmod p^{k + 1})$ for $z \in \{x, y\}$, and $\mindeg_y \Delta \geq \mindeg_y(P/y \bmod p^{k + 1})$.
\end{proof}

We now use Lemma~\ref{polynomial quotient} to prove Theorem~\ref{state base-p/Q representation}.

\begin{proof}[Proof of Theorem~\ref{state base-p/Q representation}]
We show that the initial state $S_0 = \paren{y \frac{\partial P}{\partial y} \, (P/y)^{p^{\alpha - 1} - 1} \bmod p^\alpha}$ has a base-$\frac{p}{Q}$ representation; the result then follows from Theorem~\ref{closure under Cartier} and Proposition~\ref{uniqueness of representation}.
We use the convention that whenever we multiply or divide a polynomial in $\mathcal R_{p^k}[x, y, y^{-1}]$ by a polynomial in $\Z_p[x, y, y^{-1}]$, we project the latter to $\mathcal R_{p^k}[x, y, y^{-1}]$.
Conversely, we will define the digits $T_k$ of $S_0$ to be in $\mathcal R_p[x, y, y^{-1}]$, but then we identify $\mathcal R_p$ with $D$ so that we can do arithmetic in $\ring$ as in Equation~\eqref{series form}.
We will use $(P/y)^{p^{\alpha - 1}} \equiv Q^{p^{\alpha - 1}} \mod p^\alpha$, which follows from Lemma~\ref{lifting-the-exponent}.
Set $T_0 = \paren{y \frac{\partial P}{\partial y} \bmod p}$; then $S_0 \equiv y \frac{\partial P}{\partial y} \, (P/y)^{p^{\alpha - 1} - 1} \equiv T_0 Q^{p^{\alpha - 1} - 1} \mod p$.
To define the digit $T_1$, we use $0 \equiv y \frac{\partial P}{\partial y} - T_0 \equiv y \frac{\partial P}{\partial y} \cdot \frac{Q}{P/y} - T_0 \mod p$, and set
\[
	T_1 = \paren{\frac{y \tfrac{\partial P}{\partial y} \cdot \frac{Q}{P/y} - T_0}{p} Q \bmod p}.
\]
We have $T_1 \in \mathcal R_p[x, y, y^{-1}]$ since $\frac{Q^2}{P/y} \in \mathcal R_{p^2}[x, y, y^{-1}]$ by Lemma~\ref{polynomial quotient}.
Then
\[
	y \tfrac{\partial P}{\partial y} \cdot \tfrac{Q^{p^{\alpha - 1}}}{P/y} \equiv \left(T_0 + T_1 \tfrac{p}{Q}\right) Q^{p^{\alpha - 1}-1} \mod p^2,
\]
so we have $S_0 \equiv y \frac{\partial P}{\partial y} \, (P/y)^{p^{\alpha - 1} - 1} \equiv \left(T_0 + T_1 \frac{p}{Q}\right) Q^{p^{\alpha - 1} - 1} \mod p^2$.
Recursively, for each $k \in \{2, \dots, \alpha - 1\}$, define the Laurent polynomial
\[
	T_k = \paren{\frac{y \tfrac{\partial P}{\partial y} \cdot \frac{Q}{P/y} - T_0 - T_1 \, (\tfrac{p}{Q}) - \dots - T_{k - 1} \, (\tfrac{p}{Q})^{k - 1}}{p^k} Q^k \bmod p}.
\]
By Lemma~\ref{polynomial quotient}, we have $T_k \in \mathcal R_p[x, y, y^{-1}]$.
Then
$S_0 \equiv
	\left(T_0
	+ T_1 \frac{p}{Q}
	+ \dots + T_k \, (\frac{p}{Q})^k\right)
	Q^{p^{\alpha - 1} - 1} \mod p^{k + 1}
$ for each $k$.
In particular, $S_0$ has a base-$\frac{p}{Q}$ representation.
\end{proof}

\section{Compatibility of automata}\label{section: inverse limit}

Let $\mathcal M_{p^\alpha}$ be the automaton in Theorem~\ref{state base-p/Q representation} that generates the coefficients of $F \bmod p^\alpha$.
In this section, we show that the base-$\frac{p}{Q}$ representations of states in $\mathcal M_{p^{\alpha+1}}$ project onto those of states in $\mathcal M_{p^\alpha}$.
This implies that, when we vary $\alpha$, the automata $\mathcal M_{p^\alpha}$ support an inverse limit.
In a previous article~\cite{Rowland--Yassawi profinite}, the authors proved this for a sequence of automata reading most significant digit first instead, but no explicit description of its states was known.
The advantage of the construction below is that it gives a description of the states as sequences of base-$\frac{p}{Q}$ digits.
The results of this section are not needed for the remainder of the article.
We begin with an example.

\begin{example}
Let $P$ be the polynomial in Examples~\ref{state base-p/Q representation example} and \ref{closure under Cartier example} with $p = 2$, but now let $\alpha = 2$.
We consider the set of states $\mathcal M_{4}$ in the automaton generating the sequence $(a(n) \bmod 4)_{n \geq 0}$.
Let $S_0 = \paren{y \frac{\partial P}{\partial y} \, (P/y) \bmod 4}$.
The orbit of $S_0$ under $\lambda_{0, 0}$ begins
\begin{align*}
	\rep_{p/Q}(S_0) &= \left(x^2 y^3 + (x^2 + x) y^2 + x^2 y, \, (x + 1) y\right) \\
	\rep_{p/Q}(\lambda_{0, 0}(S_0)) &= \left(x^2 y^2 + (x^2 + x) y + x^2, \, x y + x\right).
\end{align*}
For each of these two states, the two digits $T_0$ and $T_1$ are the same as for the corresponding states modulo~$8$ in Examples~\ref{state base-p/Q representation example} and \ref{closure under Cartier example}.
\end{example}

The following notation allows us to simultaneously work with Cartier operators defined on different rings.

\begin{notation*}
Define $\hat{Q} \in \Z_p[x, y, y^{-1}]$ to be a lift of $P/y \bmod p$ which has the same monomial support as $P/y \bmod p$.
For all $\alpha \geq 1$ and $S \in \ring[x, y, y^{-1}]$, define
\[
	\lambda_{r, 0}^{(\alpha)}(S)
	\colonequal
	\Lambda_{r, 0}\!\paren{S \, (\hat{Q} \bmod p^\alpha)^{p^\alpha - p^{\alpha - 1}}}.
\]
\end{notation*}

For the remainder of this section, for $\alpha\geq 1$, we use the convention of writing $\rep_{p/\hat{Q}}(\lambda_{r, 0}^{(\alpha)}(S^{(\alpha)}))$ as shorthand for $\rep_{p/(\hat{Q} \bmod p^\alpha)}(\lambda_{r, 0}^{(\alpha)}(S^{(\alpha)}))$.

\begin{theorem}\label{digit compatibility}
Let $\beta \in \{1, 2, \dots, \alpha\}$.
Let $S^{(\beta)} \in \mathcal M_{p^\beta}$ and $S^{(\alpha)} \in \mathcal M_{p^\alpha}$ such that the first $\beta$ digits of $\rep_{p/\hat{Q}}(S^{(\alpha)})$ agree with
$\rep_{p/\hat{Q}}(S^{(\beta)})$.
Then the first $\beta$ digits of $\rep_{p/\hat{Q}}(\lambda_{r, 0}^{(\alpha)}(S^{(\alpha)}))$ agree with
$\rep_{p/\hat{Q}}(\lambda_{r, 0}^{(\beta)}(S^{(\beta)}))$.
\end{theorem}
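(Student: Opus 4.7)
The plan is to trace the explicit construction of $\rep_{p/\hat{Q}}(\lambda_{r,0}^{(\alpha)}(S^{(\alpha)}))$ given in the proof of Theorem~\ref{closure under Cartier} and observe that each of the first $\beta$ output digits depends only on the first $\beta$ input digits and on $\hat Q$ modulo~$p^\beta$. Since the hypothesis supplies exactly this agreement between the $\alpha$-setting and the $\beta$-setting, the outputs will coincide in positions $0, 1, \dots, \beta - 1$.

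Let $(T_{\alpha-1}, \dots, T_1, T_0) \colonequal \rep_{p/\hat Q}(S^{(\alpha)})$, and let $U_{k, j}$ and $T_m' = \sum_{k=0}^m U_{k, m-k}$ be the auxiliary Laurent polynomials from the proof of Theorem~\ref{closure under Cartier}, computed in the $\alpha$-setting. The key locality statement I would first verify is that $U_{k, j}$ depends only on $T_k$ and on $\hat Q \bmod p^{j+1}$. This follows from Lemma~\ref{polynomial modulo prime power}, which expresses $\Lambda_{r, 0}(T_k \hat Q^{-k-1}) \, \hat Q^{k + j + 1} \bmod p^{j+1}$ as $\Lambda_{r, 0}(T_k Z_{j, k})$ for a polynomial $Z_{j, k}$ determined by $\hat Q \bmod p^{j+1}$ (using Lemma~\ref{lifting-the-exponent}), together with a recursion on $j$ to isolate $U_{k, j}$ from its defining congruence.

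Next I would observe that, for each $m \in \{0, 1, \dots, \beta - 1\}$, every summand $U_{k, m-k}$ in $T_m' = \sum_{k=0}^m U_{k, m-k}$ satisfies $(m-k) + 1 \leq \beta$ and $k \leq \beta - 1$. By the hypothesis that $(T_{\beta-1}, \dots, T_0)$ agrees with $\rep_{p/\hat Q}(S^{(\beta)})$, and since $\hat Q \bmod p^\beta$ is the same in both settings, each such $U_{k, m-k}$ equals the corresponding quantity in the $\beta$-setting. Hence the pre-normalization digits $T_0', T_1', \dots, T_{\beta - 1}'$ agree across the two settings. A short induction on $m$ then transfers this agreement to the normalized digits: the normalized digit at position $m$ is $R_m = T_m'' \bmod p$, where $T_m''$ is $T_m'$ plus a carry of the form $C \hat Q$ from position $m-1$, and since $T_m'$, $C$, and $\hat Q \bmod p$ all agree by the inductive hypothesis, so does $R_m$. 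The carry out of position $\beta - 1$ is discarded in the $\beta$-setting but lands in position $\beta$ in the $\alpha$-setting; this does not affect positions $0, 1, \dots, \beta - 1$.

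The main obstacle is purely bookkeeping: one must track carefully that the defining congruence for each $U_{k, j}$, the recursive extraction of $U_{k, j}$ from it, and the propagation of carries are all genuinely local in the sense described, so that reducing $\hat Q$ from modulus $p^\alpha$ to modulus $p^\beta$ has no effect on the first $\beta$ digits of the output. Once this locality is established, the uniqueness clause of Proposition~\ref{uniqueness of representation} guarantees that the first $\beta$ digits of $\rep_{p/\hat Q}(\lambda_{r, 0}^{(\alpha)}(S^{(\alpha)}))$ and $\rep_{p/\hat Q}(\lambda_{r, 0}^{(\beta)}(S^{(\beta)}))$ coincide.
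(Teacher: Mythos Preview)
Your proposal is correct and follows essentially the same route as the paper's proof: both trace the construction of Theorem~\ref{closure under Cartier}, establish that each auxiliary Laurent polynomial $U_{k,j}$ depends only on $T_k$ and on $\hat Q$ modulo a power of $p$ no larger than $p^\beta$, deduce that the unnormalized digits $T_0',\dots,T_{\beta-1}'$ agree in the two settings, and then pass to normalized digits. Your version is in fact slightly sharper in its locality claim (you note $U_{k,j}$ depends only on the single digit $T_k$ and on $\hat Q \bmod p^{j+1}$, rather than on all $T_0,\dots,T_{k+j}$) and more explicit about the carry-propagation step, but the argument is the same.
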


\begin{proof}
Let $\rep_{p/\hat{Q}}(S^{(\alpha)}) = (T_{\alpha - 1}, \dots, T_1, T_0)$;
by assumption,
\[
	S^{(\beta)}
	= \left(
	\left(T_0
	+ T_1 \, \tfrac{p}{\hat{Q}}
	+ \dots
	+ T_{\beta - 1} \, (\tfrac{p}{\hat{Q}})^{\beta - 1}\right)
	\hat{Q}^{p^{\beta - 1} - 1}
	\bmod p^\beta\right).
\]
By Theorem~\ref{state base-p/Q representation}, $\lambda_{r, 0}^{(\alpha)}(S^{(\alpha)})$ has a base-$\frac{p}{\hat{Q}}$ representation.
Let $T_m'$ be the $m$th base-$\frac{p}{\hat{Q}}$ digit of $\lambda_{r, 0}^{(\alpha)}(S^{(\alpha)})$.
We show that $T_0', T_1', \dots, T_{\beta - 1}'$ are defined from $T_0, T_1, \dots, T_{\beta - 1}$ and $(\hat{Q} \bmod p^\beta)$ in the same way as the digits of $\lambda_{r, 0}^{(\beta)}(S^{(\beta)})$.
This implies the statement of the theorem.

We use the proof of Theorem~\ref{closure under Cartier}.
Define $U_{k, j}$ as in Equation~\eqref{digit series}.
From the proof of Theorem~\ref{closure under Cartier}, the $m$th unnormalized digit of $\lambda_{r, 0}^{(\alpha)}(S^{(\alpha)})$ is $\sum_{k = 0}^m U_{k, m - k}$.
If we can show that the $m$th unnormalized digits of $\lambda_{r, 0}^{(\alpha)}(S^{(\alpha)})$ and $\lambda_{r, 0}^{(\beta)}(S^{(\beta)})$ agree, then their first $\beta$ normalized digits also agree.

For $m = 0$, we have $T_0' = U_{0, 0} \equiv \Lambda_{r, 0}\!\left(T_k \hat{Q}^{-1}\right) \hat{Q} \mod p$.
Therefore the $0$th digits satisfy $\dig_0(\lambda_{r, 0}^{(\alpha)}(S^{(\alpha)})) = \dig_0(\lambda_{r, 0}^{(\beta)}(S^{(\beta)}))$.

Inductively, let $m \in \{1, 2, \dots, \beta - 2\}$ and suppose that each $U_{k, j}$ for $k + j < m$ depends only on $T_0, T_1, \dots, T_{m - 1}$ and $(\hat{Q} \bmod p^m)$.
This implies that $\dig_k(\lambda_{r, 0}^{(\alpha)}(S^{(\alpha)})) = \dig_k(\lambda_{r, 0}^{(\beta)}(S^{(\beta)}))$ for $k \in \{0, 1, \dots, m - 1\}$.
We will show that each $U_{k, j}$ for $k + j = m$ depends only $T_0, T_1, \dots, T_m$ and $(\hat{Q} \bmod p^{m + 1})$;
this will imply $\dig_m(\lambda_{r, 0}^{(\alpha)}(S^{(\alpha)})) = \dig_m(\lambda_{r, 0}^{(\beta)}(S^{(\beta)}))$.
Assume $k + j = m$.
By the construction of $U_{k, j}$ in the proof of Theorem~\ref{closure under Cartier}, we have
\[
	\Lambda_{r, 0}\!\left(T_k \hat{Q}^{-k - 1}\right) \hat{Q}^{k + j + 1}
	\equiv U_{k,0} \hat{Q}^j + \dots + p^{j - 1} U_{k, j - 1} \hat{Q} + p^j U_{k, j} \mod p^{j + 1}.
\]
Therefore
\[
	\frac{\Lambda_{r, 0}\!\left(T_k \hat{Q}^{-k - 1}\right) \hat{Q}^{k + j + 1} - \left(U_{k,0} \hat{Q}^j + \dots + p^{j - 1} U_{k, j - 1} \hat{Q}\right)}{p^j}
	\equiv U_{k, j} \mod p.
\]
The left side depends only on $T_0, T_1, \dots, T_m$ and $(\hat{Q} \bmod p^{m + 1})$.
\end{proof}

\begin{corollary}
Let $p$ be a prime.
The inverse limit of the automata $\mathcal M_{p^\alpha}$ exists, and each state in the inverse limit is identified with an infinite sequence of base-$\frac{p}{\hat{Q}}$ digits.
\end{corollary}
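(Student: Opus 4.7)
The plan is to exhibit a bijection between the inverse limit $\mathcal M_\infty \colonequal \varprojlim_\alpha \mathcal M_{p^\alpha}$ and the set of infinite sequences of base-$\frac{p}{\hat{Q}}$ digits in $D[x, y, y^{-1}]$ that arise by extending some $\rep_{p/\hat{Q}}(S^{(\alpha)})$. The key observation is that truncating a base-$\frac{p}{\hat{Q}}$ representation from $\alpha$ digits to $\beta$ digits provides precisely the projection $\pi_\alpha^\beta \colon \mathcal M_{p^\alpha} \to \mathcal M_{p^\beta}$ of the inverse system.

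First I would define, for each pair $\beta \leq \alpha$, the map $\pi_\alpha^\beta$ by sending the state $S^{(\alpha)} \in \mathcal M_{p^\alpha}$ with $\rep_{p/\hat{Q}}(S^{(\alpha)}) = (T_{\alpha-1}, \dots, T_1, T_0)$ to the element of $\mathcal M_{p^\beta}$ with base-$\frac{p}{\hat{Q}}$ representation $(T_{\beta-1}, \dots, T_1, T_0)$. The verification that this lands in $\mathcal M_{p^\beta}$ is the only substantive point. The recursive construction of digits in the proof of Theorem~\ref{state base-p/Q representation} shows that the $k$th digit $T_k$ of the initial state $S_0^{(\alpha)}$ depends only on $P \bmod p^{k+1}$; hence truncating $\rep_{p/\hat{Q}}(S_0^{(\alpha)})$ to its last $\beta$ digits yields exactly $\rep_{p/\hat{Q}}(S_0^{(\beta)})$. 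Theorem~\ref{digit compatibility} then upgrades this to the functorial identity
\[
	\pi_\alpha^\beta\!\paren{\lambda_{r, 0}^{(\alpha)}(S^{(\alpha)})}
	= \lambda_{r, 0}^{(\beta)}\!\paren{\pi_\alpha^\beta(S^{(\alpha)})}
\]
for every $r \in \{0, 1, \dots, p-1\}$. Inducting on the number of transitions needed to reach $S^{(\alpha)}$ from $S_0^{(\alpha)}$, truncation therefore sends each reachable state in $\mathcal M_{p^\alpha}$ to the state in $\mathcal M_{p^\beta}$ reached by the same input, and in particular $\pi_\alpha^\beta$ is a well-defined surjective automaton morphism. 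Since on the level of tuples this is literal truncation, the cocycle condition $\pi_\beta^\gamma \circ \pi_\alpha^\beta = \pi_\alpha^\gamma$ for $\gamma \leq \beta \leq \alpha$ is immediate, so $(\mathcal M_{p^\alpha}, \pi_\alpha^\beta)$ is an inverse system of finite automata, and agrees with the system of \cite{Rowland--Yassawi profinite}. Its inverse limit $\mathcal M_\infty$ therefore exists.

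Finally I would identify elements of $\mathcal M_\infty$ with digit sequences. By definition, an element of $\mathcal M_\infty$ is a compatible sequence $(S^{(\alpha)})_{\alpha \geq 1}$ satisfying $\pi_\alpha^\beta(S^{(\alpha)}) = S^{(\beta)}$ for every $\beta \leq \alpha$. Writing $\rep_{p/\hat{Q}}(S^{(\alpha)}) = (T_{\alpha-1}^{(\alpha)}, \dots, T_0^{(\alpha)})$, compatibility asserts exactly that $T_k^{(\alpha)} = T_k^{(\beta)}$ whenever $k < \beta \leq \alpha$. Hence the digits stabilize to a single sequence $(T_0, T_1, T_2, \dots) \in D[x, y, y^{-1}]^{\mathbb N}$, and conversely every such arising sequence reconstructs a unique compatible system by setting $S^{(\alpha)} \colonequal \val_{p/\hat{Q}}((T_{\alpha-1}, \dots, T_0))$.

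The main obstacle is precisely establishing that truncation of digits commutes with the transition operators $\lambda_{r, 0}$, but this is exactly the content of Theorem~\ref{digit compatibility}; everything else is formal bookkeeping about inverse systems and the uniqueness of base-$\frac{p}{\hat{Q}}$ representations from Proposition~\ref{uniqueness of representation}.
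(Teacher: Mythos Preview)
Your proposal is correct and follows essentially the same approach as the paper: both arguments verify that the base-$\frac{p}{\hat{Q}}$ digits of the initial state are independent of $\alpha$ via the recursive construction in the proof of Theorem~\ref{state base-p/Q representation}, invoke Theorem~\ref{digit compatibility} to propagate this compatibility through the transitions, define the projections as digit truncation, and check the cocycle condition. Your treatment is slightly more explicit in spelling out the bijection between $\varprojlim_\alpha \mathcal M_{p^\alpha}$ and infinite digit sequences, but the substance is the same.
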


\begin{proof}
Let $\alpha\geq 1$, and let $\beta \in \{1, 2, \dots, \alpha\}$.
Denote the initial states in $\mathcal M_{p^\beta}$ and $\mathcal M_{p^\alpha}$ by $S_0^{(\beta)}$ and $S_0^{(\alpha)}$.
We will show that for each $k \in \{0, 1, \dots, \beta - 1\}$, we have $\dig_k(S_0^{(\alpha)}) = \dig_k(S_0^{(\beta)})$.
Then Theorem~\ref{digit compatibility} tells us that for $n \geq 0$ and $r_1, r_2, \dots, r_n \in \{0, 1, \dots, p - 1\}$, the first $\beta$ digits of $\rep_{p/\hat{Q}}((\lambda_{r_n, 0}^{(\alpha)} \circ \dots \circ \lambda_{r_2, 0}^{(\alpha)} \circ \lambda_{r_1, 0}^{(\alpha)})(S_0^{(\alpha)}))$ agree with
$\rep_{p/\hat{Q}}((\lambda_{r_n, 0}^{(\beta)} \circ \dots \circ \lambda_{r_2, 0}^{(\beta)} \circ \lambda_{r_1, 0}^{(\beta)})(S_0^{(\beta)}))$.
This allows us to define $\pi_{\alpha,\beta}\colon \mathcal M_{p^\alpha} \rightarrow \mathcal M_{p^\beta}$ as follows.
Identifying a state $S$ with $\rep_{p/\hat{Q}}(S)$, define $\pi_{\alpha,\beta}((T_{\alpha - 1}, \dots, T_1, T_0)) = (T_{\beta - 1}, \dots, T_1, T_0)$.
It is clear that if there is a state transition from $S$ to $S'$ in $\mathcal M_{p^\alpha}$ then there is a transition labeled $r$ from $\pi_{\alpha,\beta}(S)$ to $\pi_{\alpha,\beta}(S')$ in $\mathcal M_{p^\beta}$.
Finally, if $\gamma \leq \beta \leq \alpha$, we have $\pi_{\alpha,\gamma} = \pi_{\beta,\gamma} \circ \pi_{\alpha,\beta}$.
This means that we have a well defined inverse limit $\varprojlim \mathcal M_{p^\alpha}$ whose states have the claimed structure.

It remains to show that for each $k \in \{0, 1, \dots, \beta - 1\}$, we have $\dig_k(S_0^{(\alpha)}) = \dig_k(S_0^{(\beta)})$.
From the proof of Theorem~\ref{state base-p/Q representation}, the $0$th digit of $S_0^{(\alpha)}$ is $T_0 = \paren{y \frac{\partial P}{\partial y} \bmod p}$, and this is independent of $\alpha$.
Recursively, for $k\geq 1$, the $k$th digit of $S_0^{(\alpha)}$ is
\[
	\frac{y \tfrac{\partial P}{\partial y} \cdot \frac{\hat{Q} \bmod p^\alpha}{P/y} - T_0 - T_1 \, (\tfrac{p}{\hat{Q} \bmod p^\alpha}) - \dots - T_{k - 1} \, (\tfrac{p}{\hat{Q} \bmod p^\alpha})^{k - 1}}{p^k} (\hat{Q} \bmod p^\alpha)^k \bmod p,
\]
similarly for the $k$th digit of $S_0^{(\beta)}$.
By definition of $\hat{Q}$, the numerators of these expressions are congruent to each other modulo~$p^{k+1}$.
Dividing by $p^k$, they are congruent modulo~$p$.
Therefore these expressions are congruent modulo~$p$, so $\dig_k(S_0^{(\alpha)}) = \dig_k(S_0^{(\beta)})$.
\end{proof}

\section{First bounds on the size of the automaton}\label{section: first bounds}

By Theorem~\ref{state base-p/Q representation}, every state in $\mathcal M_{p^\alpha}$ has a base-$\frac{p}{Q}$ representation.
In this section, we bound the digits of these representations, to give preliminary bounds on the size of $\mathcal M_{p^\alpha}$ in Corollaries~\ref{kernel initial upper bound - ring} and \ref{kernel preliminary upper bound - ring}.
To do this, we will show that the base-$\frac{p}{Q}$ representations of most states live in the spaces $\mathcal W$ or $\mathcal V$ defined below in Equations~\eqref{W definition - ring} and \eqref{V definition - ring}.

\begin{notation*}
Recall $D = \{0, 1, \dots, p - 1\}$.
For $k \in \{0, 1, \dots, \alpha - 1\}$, let
\[
	W_k \colonequal
	\left\{
		\sum_{i = 0}^{(k + 1) h - 1} \sum_{j = \max(-k, -i)}^{(k + 1) (d - 1)} c_{i, j} x^i y^j
		: \text{$c_{i, j} \in D$ for each $i, j$}
	\right\}
\]
and
\[
	V_k \colonequal \left\{
		\sum_{i = 0}^{(k + 1) h} \sum_{j = \max(-k, -i)}^{(k + 1)(d - 1)} c_{i, j} x^i y^j
		: \text{$c_{i, j} \in D$ for each $i, j$}
	\right\}.
\]
Define
\begin{align}\label{W definition - ring}
	\mathcal W &\colonequal \{
		(T_{\alpha - 1}, \dots, T_1, T_0)
		: \text{$T_k \in W_k$ for each $k \in \{0, 1, \dots, \alpha - 1\}$}
	\} \\
	\label{V definition - ring}
	\mathcal V &\colonequal \{
		(T_{\alpha - 1}, \dots, T_1, T_0)
		: \text{$T_k \in V_k$ for each $k \in \{0, 1, \dots, \alpha - 1\}$}
	\}.
\end{align}
\end{notation*}

The proof of Proposition~\ref{carries} implies that $\mathcal W$ is a $D$-vector space, where the vector space addition operation is addition of the corresponding Laurent polynomials.
The set of all tuples of the form $(0, \dots, 0, x^i y^j, 0, \dots, 0)$ with the appropriate bounds on $i$ and $j$ forms a basis of $\mathcal W$, in that every element of $\mathcal W$ is a unique $D$-linear combination of this basis.
Figure~\ref{monomials} depicts the geometry of the monomials in $\mathcal W$ for a particular choice of values for $p$, $\alpha$, $h$, and $d$.

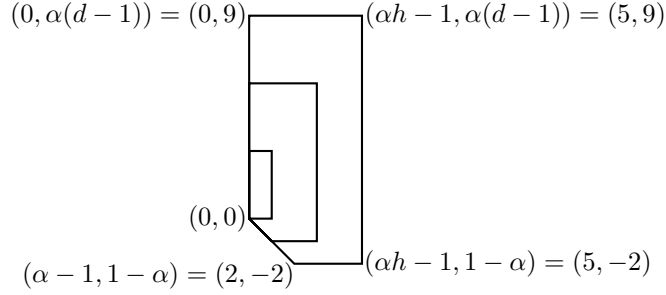
\begin{figure}
\begin{tikzpicture}
\draw[thick]
(0, 0) --
(.3, 0) --
(.3, .9) --
(0, .9) --
cycle
(0, 0) --
(.3, -.3) --
(.9, -.3) --
(.9, 1.8) --
(0, 1.8) --
cycle
node[xshift = -1.2em] {$(0, 0)$}(0, 0) --
node[xshift = -4.3em, yshift = -1.3em] {$(\alpha - 1, 1 - \alpha) = (2, -2)$}(.6, -.6) --
node[xshift = 6.8em] {$(\alpha h - 1, 1 - \alpha) = (5, -2)$}(1.5, -.6) --
node[xshift = 5.8em, yshift = 4.7em] {$(\alpha h - 1, \alpha (d - 1)) = (5, 9)$}(1.5, 2.7) --
node[xshift = -6.7em] {$(0, \alpha (d - 1)) = (0, 9)$}(0, 2.7) --
cycle;
\end{tikzpicture}
\caption{Nested polygons for $k\in \{0,1,2\}$ containing pairs of exponents $(i, j)$ corresponding to monomials $x^i y^j$ in the basis of $W_k$, with $(p, \alpha, h, d) = (3,3,2,4)$.}
\label{monomials}
\end{figure}

We compute the size $N$ of the basis of $\mathcal W$ by partitioning the Newton polygon of each $T_k$ into a rectangle and a trapezoid:
\begin{align}\label{W size}
	N
	&\colonequal \sum_{k = 0}^{\alpha - 1} \paren{
		(k + 1) h \cdot \paren{(k + 1) (d - 1) + 1}
		+ \sum_{j = -k}^{-1} \paren{(k + 1) h + j}
	} \nonumber \\
	&= \tfrac{1}{6} \alpha (\alpha + 1) ((2 h d - 1) \alpha + h d + 1).
\end{align}
Similarly, the dimension of $\mathcal V$ is
\begin{multline}\label{V size}
	\sum_{k = 0}^{\alpha - 1} \paren{
		((k + 1) h + 1) \cdot \paren{(k + 1) (d - 1) + 1}
		+ \sum_{j = -k}^{-1} \paren{(k + 1) h + 1 + j}
	} \\
	= \tfrac{1}{6} \alpha (\alpha + 1) ((2 h d - 1) \alpha + (h + 3) d + 1).
\end{multline}

We work with states whose base-$\frac{p}{Q}$ representation belongs to $\mathcal V$.
For these states, one checks the following elementary result.

\begin{lemma}\label{degree bounds}
If $S \in \val_{p/Q}(\mathcal V)$, then $\deg_x S \leq p^{\alpha - 1} h$, $\deg_y S \leq p^{\alpha - 1} (d - 1)$, and $1 - p^{\alpha - 1} \leq \mindeg_y S$.
\end{lemma}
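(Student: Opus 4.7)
\medskip

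The plan is to unpack the base-$\frac{p}{Q}$ representation of $S$ and bound the degrees term by term; there is no subtle step, since the factor $Q^{p^{\alpha-1}-1}$ absorbs the factors of $p^{-k}$ cleanly.

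Write the representation as
\[
    S \;=\; \sum_{k=0}^{\alpha-1} T_k \, p^k \, Q^{p^{\alpha-1}-1-k},
\]
where, by the definition of $\mathcal V$, each digit $T_k$ satisfies $\deg_x T_k \leq (k+1)h$, $\deg_y T_k \leq (k+1)(d-1)$, and $\mindeg_y T_k \geq -k$. I would next record the degree data for $Q$ itself: since $Q$ has the same monomial support as $P/y \bmod p$, one has $\deg_x Q = h$, $\deg_y Q = d-1$, and $\mindeg_y Q \geq -1$ (the last because $P(0,0)=0$ prevents a constant term in $P$, so $P/y$ and hence $Q$ has no monomial of $y$-degree less than $-1$).

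The heart of the argument is then to note that for each $k\in\{0,1,\dots,\alpha-1\}$, standard sub/super-additivity of degrees under multiplication gives
\[
    \deg_x\!\bigl(T_k Q^{p^{\alpha-1}-1-k}\bigr) \;\leq\; (k+1)h + (p^{\alpha-1}-1-k)h \;=\; p^{\alpha-1} h,
\]
\[
    \deg_y\!\bigl(T_k Q^{p^{\alpha-1}-1-k}\bigr) \;\leq\; (k+1)(d-1) + (p^{\alpha-1}-1-k)(d-1) \;=\; p^{\alpha-1}(d-1),
\]
\[
    \mindeg_y\!\bigl(T_k Q^{p^{\alpha-1}-1-k}\bigr) \;\geq\; -k + \bigl(-(p^{\alpha-1}-1-k)\bigr) \;=\; 1 - p^{\alpha-1}.
\]
These bounds are independent of $k$, so they pass to the sum: since the $x$-degree of a sum is bounded by the maximum of the $x$-degrees of the summands, and similarly for $\deg_y$ and $\mindeg_y$, we conclude $\deg_x S \leq p^{\alpha-1} h$, $\deg_y S \leq p^{\alpha-1}(d-1)$, and $\mindeg_y S \geq 1 - p^{\alpha-1}$, as desired.

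There is no real obstacle to overcome; the only thing worth being careful about is that one is working with a \emph{Laurent} polynomial, so one must explicitly track the $y$-min-degree alongside the usual upper degrees, and the exponent $p^{\alpha-1}-1-k$ on $Q$ must be non-negative, which holds because $k \leq \alpha-1 \leq p^{\alpha-1}-1$.
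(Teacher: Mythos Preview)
Your proof is correct and is exactly the elementary verification the paper has in mind; the paper itself omits the argument, saying only ``one checks the following elementary result.'' One small quibble: your justification for $\mindeg_y Q \geq -1$ is slightly off --- this bound holds simply because $P$ is a polynomial in $y$, so $P/y$ has $y$-min-degree at least $-1$; the condition $P(0,0)=0$ is not needed here (it only tells you that the coefficient of $x^0 y^{-1}$ in $Q$ is zero, which is a finer fact you don't use).
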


We will see in Proposition~\ref{transient - ring} that most states belong to $\mathcal W$.
Furthermore, in Corollary~\ref{module - ring} we show that $\val_{p/Q}(\mathcal W)$ and $\val_{p/Q}(\mathcal V)$ are invariant under $\lambda_{r, 0}$ for each $r$.
To do this we use the following proposition.

\begin{proposition}\label{digit bounds under Cartier}
Let $r \in \{0, 1, \dots, p - 1\}$, and suppose that $S$ has a base-$\frac{p}{Q}$ representation $\rep_{p/Q}(S)= (T_{\alpha - 1}, \dots, T_1, T_0)$.
Then the $m$th base-$\frac{p}{Q}$ digit $T_m'$ of $\lambda_{r, 0}(S)$ satisfies
\begin{align*}
	\deg_x T_m' &\leq \max_{0\leq k \leq m} \left(\floor{\tfrac{\deg_x T_k - (k + 1) h - r}{p}}\right) + (m + 1) h \\
	\deg_y T_m' &\leq \max_{0\leq k \leq m} \left(\floor{\tfrac{\deg_y T_k - (k + 1) (d - 1)}{p}}\right) + (m + 1) (d-1)\\
	\mindeg_y T_m' &\geq \min_{0\leq k \leq m} \left(\ceil{\tfrac{\mindeg_y T_k + (k + 1)}{p}}\right) - (m+1).
\end{align*}
\end{proposition}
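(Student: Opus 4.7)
The plan is to extract degree information directly from the construction of the new digits given in the proof of Theorem~\ref{closure under Cartier} and then show that the normalization-by-carries step preserves the resulting bounds. Recall from that proof that, before normalization, the $m$th digit of $\lambda_{r,0}(S)$ is $\sum_{k=0}^{m} U_{k,m-k}$, where the Laurent polynomials $U_{k,j} \in D[x,y,y^{-1}]$ are defined by the congruences
\[
	\Lambda_{r,0}\!\paren{T_k Q^{-k-1}} Q^{k+j+1}
	\equiv \sum_{i=0}^{j} p^i U_{k,i} Q^{j-i} \mod p^{j+1}.
\]
The actual digits $T_m'$ produced by $\lambda_{r,0}$ are obtained from this sum by the carry procedure of Proposition~\ref{carries}.

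First, I would bound $\deg_x U_{k,j}$, $\deg_y U_{k,j}$, and $\mindeg_y U_{k,j}$ by induction on $j$. The base case $j=0$ follows from Lemma~\ref{polynomial modulo prime power} with $\beta=1$, which gives the bounds $(k+1) h + \floor{(\deg_x T_k - (k+1) h - r)/p}$, $(k+1)(d-1) + \floor{(\deg_y T_k - (k+1)(d-1))/p}$, and $-(k+1) + \ceil{(\mindeg_y T_k + (k+1))/p}$. For the inductive step, I would solve for $p^j U_{k,j}$ in the defining congruence, apply Lemma~\ref{polynomial modulo prime power} with $\beta = j+1$ to the left-hand side, use $\deg_x Q = h$, $\deg_y Q = d-1$, $\mindeg_y Q \geq -1$ together with the induction hypothesis to bound each $p^i U_{k,i} Q^{j-i}$, and observe that reducing modulo $p^{j+1}$ and dividing by $p^j$ preserves degrees. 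This yields the uniform bounds $\deg_x U_{k,j} \leq (k+j+1) h + \floor{(\deg_x T_k - (k+1) h - r)/p}$, and analogously for $\deg_y$ and $\mindeg_y$. Setting $j = m-k$ and taking the max (or min) over $k \in \{0,\dots,m\}$ of the degrees of $U_{k,m-k}$ then gives the proposition's bounds for the pre-normalization digit.

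Finally, I would verify that the carry procedure preserves these bounds. A single carry from position $k$ replaces the current content $T_k^{(\mathrm{old})}$ by its coefficient-wise reduction modulo $p$ (which cannot increase any degree or decrease the min-$y$-degree) and adds $\floor{T_k^{(\mathrm{old})}/p} \cdot Q$ to position $k+1$, contributing at most $+h$ to $\deg_x$, at most $+(d-1)$ to $\deg_y$, and at most $-1$ to $\mindeg_y$. Iterating, the final $m$th digit is bounded by $\max_{0 \leq k \leq m}(\deg_x T_k^{\mathrm{pre}} + (m-k) h)$, with analogous expressions for $\deg_y$ and $\mindeg_y$. Substituting the pre-carry bounds, the $(k+1) h + (m-k) h = (m+1) h$ terms are independent of $k$, and the inner extremum over $i \leq k$ is monotone in $k$ in the correct direction in each of the three bounds, so the nested extrema collapse to the single extremum over $i \in \{0,\dots,m\}$ as stated. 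The main obstacle is the bookkeeping in coordinating the three chains of induction (over $j$ for $U_{k,j}$, over $k$ in the sum, and over $m$ in the carry cascade) so that the $\pm 1$ shift per carry step matches exactly the $(m+1)$ appearing in the claim; the $\mindeg_y$ bound is the most delicate since the relevant inner extremum is non-increasing in $k$, but the sign of the carry contribution works out because $\mindeg_y Q \geq -1$.
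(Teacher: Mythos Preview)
Your proposal is correct and follows essentially the same route as the paper: bound $\deg_x U_{k,j}$ (and the $y$-analogues) by induction on $j$ using Lemma~\ref{polynomial modulo prime power} with $\beta=j+1$ and the defining congruence for the $U_{k,j}$, then pass to $T_m'=\sum_{k=0}^m U_{k,m-k}$ and check that the carry step of Proposition~\ref{carries} only adds $+h$, $+(d-1)$, and $-1$ per level, which is absorbed by the stated bounds. Your treatment of the carry cascade is in fact more explicit than the paper's (which simply observes that the carried term already satisfies the bound at the next level), but the underlying argument is identical.
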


\begin{proof}
We prove the first inequality; the others follow similarly, the only difference being that $r$ is replaced by 0.
Define Laurent polynomials $U_{k, m}$ as in the proof of Theorem~\ref{closure under Cartier}, so that Equation~\eqref{digit series} is satisfied modulo~$p^{j + 1}$, namely
\begin{equation}\label{rephrased digit series}
	\Lambda_{r, 0}\!\left(T_k Q^{-k - 1}\right) Q^{k + j + 1} \equiv \sum_{m = 0}^j U_{k, m} Q^{j - m} p^m \mod p^{j + 1}.
\end{equation}
We claim that
\begin{equation}\label{degree bound}
	\deg_x U_{k, j} \leq \floor{\tfrac{\deg_x T_k - (k + 1) h - r}{p}} + (k + j + 1) h
\end{equation}
for all $j \in \{0, 1, \dots, \alpha - k - 1\}$.
By Lemma~\ref{polynomial modulo prime power} (or indeed just by Proposition~\ref{Cartier - ring}), the Laurent polynomial $U_{k, 0}$ satisfies
\[
	\deg_x U_{k, 0} \leq \floor{\tfrac{\deg_x T_k - (k + 1) h - r}{p}} + (k + 1) h.
\]
Inductively, assume the claim is true for $U_{k, 0}, \dots, U_{k, j - 1}$.
For each $m \in \{0, 1, \dots, j - 1\}$, this implies
\[
	\deg_x(U_{k, m} Q^{j - m}) \leq \floor{\tfrac{\deg_x T_k - (k + 1) h - r}{p}} + (k+j+1) h,
\]
which is independent of $m$.
By Lemma~\ref{polynomial modulo prime power} with $\beta = j + 1$, we have
\[
	\deg_x(\Lambda_{r, 0}\!\left(T_k Q^{-k-1}\right) Q^{k+j+1}) \leq \floor{\tfrac{\deg_x T_k - (k + 1) h - r}{p}} + (k + j + 1) h.
\]
The claim follows by combining the previous two sets of inequalities using Equation~\eqref{rephrased digit series}.
Finally, for each $m$, by Lemma~\ref{Minkowski}, Lemma~\ref{polynomial modulo prime power}, and the conditions on $T_k$, we have that each nonzero monomial $c \, x^I y^J$ in $U_{k,m}$ satisfies $J\geq -I$.

It remains to transfer the bounds on the Laurent polynomials $U_{k, j}$ to the digits $T_m' = \sum_{k = 0}^m U_{k, m - k}$.
Since the condition~\eqref{degree bound} holds for each $k \in \{0, 1, \dots, \alpha - 1\}$, we have
\[
	\deg_x T_m' \leq \max_{0\leq k \leq m} \left(\floor{\tfrac{\deg_x T_k - (k + 1) h - r}{p}} + (m + 1) h\right).
\]
Also, for each $m$, by Lemma~\ref{Minkowski}, each nonzero monomial $c \, x^I y^J$ in $T_m'$ satisfies $J\geq -I$.

The carried digit from $T_m'$, when multiplied by $Q$, has $x$-degree at most $\deg_x T_m' + h$;
this is already the $x$-degree bound on $T_{m+1}'$.
Therefore the digits of the state $\lambda_{r, 0}(S)$ satisfy the desired degree bounds.
\end{proof}

\begin{corollary}\label{module - ring}
For each $r \in \{0, 1, \dots, p - 1\}$, we have $\lambda_{r, 0}(\val_{p/Q}(\mathcal W)) \subseteq \val_{p/Q}(\mathcal W)$ and $\lambda_{r, 0}(\val_{p/Q}(\mathcal V)) \subseteq \val_{p/Q}(\mathcal V)$.
Furthermore if $r\neq 0$ then $\lambda_{r, 0}(\val_{p/Q}(\mathcal V)) \subseteq \val_{p/Q}(\mathcal W)$.
\end{corollary}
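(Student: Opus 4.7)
The plan is to apply Proposition~\ref{digit bounds under Cartier} directly to the digit-degree constraints defining $\mathcal W$ and $\mathcal V$, and to verify that the resulting bounds on the new digits $T_m'$ of $\lambda_{r,0}(S)$ still lie in $W_m$ or $V_m$ respectively. Essentially all of the work has already been done in the previous proposition; the corollary is a bookkeeping check.

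For the $\mathcal W$ inclusion, I would take $S \in \val_{p/Q}(\mathcal W)$ with digits satisfying $\deg_x T_k \leq (k+1) h - 1$, $\deg_y T_k \leq (k+1)(d-1)$, and $\mindeg_y T_k \geq -k$. Substituting these into the three bounds of Proposition~\ref{digit bounds under Cartier}, the $k$-dependence inside each max or min drops out, and using the elementary identities $\floor{(-1-r)/p} = -1$ (valid for all $r \in \{0, \dots, p-1\}$ and $p \geq 2$) together with $\ceil{1/p} = 1$, the new digit satisfies $\deg_x T_m' \leq (m+1) h - 1$, $\deg_y T_m' \leq (m+1)(d-1)$, and $\mindeg_y T_m' \geq -m$. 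The side condition $j \geq -i$ on nonzero monomials is preserved through $\lambda_{r,0}$ by Lemma~\ref{Minkowski} (as observed inside the proof of Proposition~\ref{digit bounds under Cartier}); combined with $j \geq -m$, this gives $j \geq \max(-m,-i)$ and places $T_m' \in W_m$.

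For the $\mathcal V$ inclusions only the $x$-degree bound changes. With $\deg_x T_k \leq (k+1) h$, Proposition~\ref{digit bounds under Cartier} yields $\deg_x T_m' \leq \floor{-r/p} + (m+1) h$. When $r = 0$ this floor is $0$, so $T_m' \in V_m$; when $r \in \{1, \dots, p-1\}$ the floor is $-1$, which drops the bound to $(m+1) h - 1$ and places $T_m' \in W_m$, giving the sharper third inclusion. The $y$-degree and $y$-mindegree bounds are identical to the $\mathcal W$ case, since $V_k$ and $W_k$ have the same vertical envelope.

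I do not expect any real obstacle here. The envelopes defining $W_k$ and $V_k$ were clearly chosen so that the per-step division-by-$p$ in Proposition~\ref{digit bounds under Cartier} is absorbed exactly; the only subtlety is the case split between $r=0$ and $r \neq 0$ in the $x$-degree count, which is what produces the asymmetry between the second and third claims of the corollary.
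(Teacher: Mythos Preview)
Your proposal is correct and follows essentially the same approach as the paper: both apply Proposition~\ref{digit bounds under Cartier} directly to the digit-degree constraints defining $\mathcal W$ and $\mathcal V$, and both isolate the $\lfloor -r/p\rfloor$ case split to obtain the sharper $\mathcal V \to \mathcal W$ inclusion when $r \neq 0$. If anything, you are slightly more explicit than the paper in checking the $j \geq \max(-m,-i)$ lower envelope via the preserved $J \geq -I$ side condition, which the paper's proof leaves implicit in its citation of Proposition~\ref{digit bounds under Cartier}.
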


\begin{proof}First let $S \in \ring[x, y, y^{-1}]$ such that $(T_{\alpha - 1}, \dots, T_1, T_0) \colonequal \rep_{p/Q}(S) \in \mathcal W$.
By assumption, we have
\begin{align*}
	\deg_x T_k &\leq (k + 1) h - 1 \\
	\deg_y T_k &\leq (k + 1) (d-1) \\
	\mindeg_y T_k &\geq - k.
\end{align*}
Proposition~\ref{digit bounds under Cartier} now implies that $\lambda_{r, 0}(S) \in \mathcal W$.
The proof that $\val_{p/Q}(\mathcal V)$ is closed under $\lambda_{r, 0}$ is similar.

If $r\neq 0$, $\rep_{p/Q}(S) \in \mathcal V$, and $(T_{\alpha - 1}', \dots, T_1', T_0') \colonequal \rep_{p/Q}(\lambda_{r,0}(S))$, then Proposition~\ref{digit bounds under Cartier} tells us that
\begin{align*}
	\deg_x T_m' &\leq \max_{0\leq k \leq m} \left(\floor{\tfrac{\deg_x T_k - (k + 1) h - r}{p}}\right) + (m + 1) h \\
	&\leq \max_{0\leq k \leq m} \left(\floor{\tfrac{-r}{p}}\right) + (m + 1) h = (m+1)h -1,
\end{align*}
Therefore $\rep_{p/Q}(\lambda_{r,0}(S)) \in \mathcal W$.
\end{proof}

Proposition~\ref{transient - ring} below tells us that the representations of most states belong to $\mathcal W$.
For this we need the following lemma.
Define $h_k = \deg_x(P \bmod p^{k + 1})$ and $d_k = \deg_y(P \bmod p^{k + 1})$; we have $h_0 = h$ and $d_0=d$.

\begin{lemma}\label{initial state}
The base-$\frac{p}{Q}$ digits $T_k$ of the initial state $S_0$ satisfy
\begin{align*}
	\deg_x T_k &\leq (k + 1) h_k \nonumber \\
	\deg_y T_k &\leq (k + 1) (d_k - 1) + 1 \\
	\mindeg_y T_k &\geq -k, \nonumber
\end{align*}
and every nonzero monomial $c \, x^I y^J$ that appears in $T_k$ satisfies $J\geq -I$.
\end{lemma}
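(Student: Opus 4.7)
The plan is to induct on $k \in \{0, 1, \dots, \alpha - 1\}$, using the explicit recursive description of the digits produced in the proof of Theorem~\ref{state base-p/Q representation}. Multiplying that formula through by $Q^k$ and rearranging gives the key congruence
\[
	p^k T_k \equiv y \tfrac{\partial P}{\partial y} \cdot \tfrac{Q^{k+1}}{P/y} - \sum_{j=0}^{k-1} T_j \, p^j \, Q^{k-j} \pmod{p^{k+1}},
\]
so $T_k$ is obtained by dividing the right-hand side by $p^k$ and reducing modulo $p$. Since this operation can only shrink the monomial support, it suffices to bound the $x$-degree, $y$-degree, $y$-min-degree, and to verify the support condition $J \geq -I$ for each term on the right.

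For the base case $k = 0$, one has $T_0 = (y \frac{\partial P}{\partial y} \bmod p)$, which is an honest polynomial in $x, y$ with $\deg_x T_0 \leq h_0$, $\deg_y T_0 \leq d_0 = (d_0 - 1) + 1$, $\mindeg_y T_0 \geq 1$, and trivially $J \geq 0 \geq -I$. For the inductive step I will use two elementary monotonicity facts: first, $h_j$ and $d_j$ are nondecreasing in $j$, because reducing $P$ modulo a higher power of $p$ only exposes additional monomials; second, $\deg_x Q = h \leq h_k$, $\deg_y Q = d - 1 \leq d_k - 1$, and $\mindeg_y Q \geq -1$. The leading term $y \frac{\partial P}{\partial y} \cdot \frac{Q^{k+1}}{P/y}$ is then controlled via Lemma~\ref{polynomial quotient}, giving $\deg_x \leq h_k + k h_k = (k+1) h_k$, $\deg_y \leq d_k + k(d_k - 1) = (k+1)(d_k - 1) + 1$, and $\mindeg_y \geq 1 - k \geq -k$. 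Each correction term $T_j p^j Q^{k-j}$ with $j < k$ is handled by the inductive hypothesis together with the monotonicity: $\deg_x \leq (j+1) h_j + (k-j) h \leq (k+1) h_k$, $\deg_y \leq (j+1)(d_j-1) + 1 + (k-j)(d-1) \leq (k+1)(d_k - 1) + 1$, and $\mindeg_y \geq -j - (k-j) = -k$. Taking the maximum across all terms gives the claimed bounds on $T_k$.

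The monomial support condition $J \geq -I$ is handled uniformly by Lemma~\ref{Minkowski}. The hypothesis $P(0, 0) = 0$ implies that $P/y$, and hence its mod-$p$ truncation $Q$, satisfies $J \geq -I$ for each nonzero monomial; $y \frac{\partial P}{\partial y}$ trivially satisfies it; and the Laurent polynomial $\frac{Q^{k+1}}{P/y} \bmod p^{k+1}$ from Lemma~\ref{polynomial quotient} inherits this property by Lemma~\ref{Minkowski} applied to its explicit expansion in terms of $P/y$ and $\Delta = Q - P/y$. The inductive hypothesis supplies the same property for each $T_j$ with $j < k$, and Lemma~\ref{Minkowski}'s closure under products and sums propagates the condition through every term of the right-hand side, and hence to $T_k$.

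The main subtlety I expect is bookkeeping: the degree bounds on $T_j$ provided by the inductive hypothesis are stated in terms of $h_j, d_j$, while the factor $Q^{k-j}$ contributes degrees in $h, d$, and the final bound is phrased in terms of $h_k, d_k$. The monotonicity $h_0 \leq h_1 \leq \cdots \leq h_{\alpha-1}$ and $d_0 \leq d_1 \leq \cdots \leq d_{\alpha - 1}$ is what makes all of these bounds collapse into the uniform expressions $(k+1) h_k$ and $(k+1)(d_k - 1) + 1$, so once this observation is in hand the verification is essentially routine.
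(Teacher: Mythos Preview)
Your proof is correct and follows essentially the same approach as the paper: both argue by induction on $k$ using the recursive formula for $T_k$ from the proof of Theorem~\ref{state base-p/Q representation}, bounding the leading term $y\frac{\partial P}{\partial y}\cdot\frac{Q^{k+1}}{P/y}$ via Lemma~\ref{polynomial quotient}, bounding each correction term $T_j\,Q^{k-j}$ via the inductive hypothesis together with the monotonicity of $h_j,d_j$, and invoking Lemma~\ref{Minkowski} for the support condition. Your presentation is in fact slightly cleaner, since you write the key congruence explicitly and use the sharper factor $\deg_x Q = h$ rather than $h_k$ when estimating the correction terms.
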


\begin{proof}
Recall that $S_0 = \paren{y \frac{\partial P}{\partial y} \, (P/y)^{p^{\alpha - 1} - 1} \bmod p^\alpha}$.
Define the digits $T_k$ as in the proof of Theorem~\ref{state base-p/Q representation}, namely
\[
	T_k = \paren{\paren{y \tfrac{\partial P}{\partial y} \cdot \tfrac{Q}{P/y} - T_0 - T_1 \, (\tfrac{p}{Q}) - \dots - T_{k - 1} \, (\tfrac{p}{Q})^{k - 1}} \paren{\tfrac{Q}{p}}^k \bmod p}.
\]

First we consider the $x$-degree.
For $T_0 = \paren{y \frac{\partial P}{\partial y} \bmod p}$, we have $\deg_x T_0 \leq h_0$.
For $T_1$, we use Lemma~\ref{polynomial quotient} to obtain $\deg_x\!\left(\frac{Q^2}{P/y} \bmod p^2\right) \leq h_1$, which gives
\[
	\deg_x T_1 \leq \max(h_1 + h_1, h_0 + h_1) = 2 h_1.
\]
Inductively, for $k \geq 1$ we have
\begin{align*}
	\deg_x T_k &\leq \max(
		h_k + k h_k,
		h_0 + k h_k,
		2 h_1 + (k - 1) h_k,
		\dots,
		k h_{k - 1} + h_k
	) \\
	&= (k + 1) h_k.
\end{align*}

We now consider the $y$-degree.
For $T_0$, we have $\deg_y T_0 \leq d_0$.
For $T_1$, we use Lemma~\ref{polynomial quotient} to obtain $\deg_y\!\left(\frac{Q^2}{P/y} \bmod p^2\right) \leq d_1 -1$, which gives
\[
	\deg_y T_1 \leq \max(d_1 + d_1 - 1, d_0 + d_1 - 1) = 2 d_1 - 1.
\]
Inductively, for $k \geq 1$ we have
\begin{align*}
	\deg_y T_k &\leq \max(
		d_k + k (d_k - 1),
		d_0+k(d_k-1),
		\dots,
		k (d_{k-1} - 1) + 1 + d_k-1
	) \\
	&= (k + 1) (d_k - 1) + 1.
\end{align*}

Finally, we consider the $y$-min-degree.
We have $\mindeg_y T_0 \geq 0$.
Inductively, $\mindeg_y T_k \geq -k$ by Lemma~\ref{polynomial quotient}.

Lemma~\ref{Minkowski} gives the final condition.
\end{proof}

\begin{proposition}\label{transient - ring}
Let $S_0$ be the initial state, and let $(T_{\alpha - 1}, \dots, T_1, T_0) \colonequal \rep_{p/Q}(S_0)$.
Let
\[
	u = \floor{\log_p \max\!\paren{\alpha (h_{\alpha - 1} - h), \alpha (d_{\alpha - 1} - d) + 1}} + 1.
\]
Then, for all $r_1, r_2, \dots, r_u \in \{0, 1, \dots, p - 1\}$, we have $\rep_{p/Q}((\lambda_{r_u, 0} \circ \dots \circ \lambda_{r_2, 0} \circ \lambda_{r_1, 0})(S_0)) \in \mathcal V$.
\end{proposition}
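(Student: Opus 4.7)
The plan is to iterate the digit-degree bounds of Proposition~\ref{digit bounds under Cartier}, starting from the initial-state bounds of Lemma~\ref{initial state}. The key observation is that each application of a Cartier operator $\lambda_{r, 0}$ divides the excess of $\deg_x T_k$ over $(k+1)h$ (and of $\deg_y T_k$ over $(k+1)(d-1)$) by roughly a factor of $p$, so after $u$ applications these excesses drop below $1$ and hence become nonpositive, placing the state in $\mathcal V$.

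Write $(T_{\alpha - 1}, \dots, T_1, T_0) \colonequal \rep_{p/Q}(S_0)$. Lemma~\ref{initial state} gives the initial excess bounds
\[
	\deg_x T_k - (k+1) h \leq (k+1)(h_k - h) \leq \alpha(h_{\alpha - 1} - h) \equalcolon E_x
\]
and
\[
	\deg_y T_k - (k+1)(d-1) \leq (k+1)(d_k - d) + 1 \leq \alpha(d_{\alpha - 1} - d) + 1 \equalcolon E_y,
\]
along with $\mindeg_y T_k \geq -k$ and the condition that every nonzero monomial $x^I y^J$ appearing in $T_k$ satisfies $J \geq -I$. The last two conditions are preserved under every $\lambda_{r, 0}$: the $y$-min-degree by the third inequality of Proposition~\ref{digit bounds under Cartier}, since $\ceil{(-k + k + 1)/p} - (m+1) = -m$; the monomial-support condition by Lemmas~\ref{Minkowski} and \ref{polynomial modulo prime power}, as in the proof of Theorem~\ref{closure under Cartier}. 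So only the $x$- and $y$-degree excesses need to be tracked.

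For these, Proposition~\ref{digit bounds under Cartier} implies that if $\deg_x T_k - (k+1)h \leq E$ for all $k$, then for each $m$,
\[
	\deg_x T_m' - (m+1) h \leq \max_{0 \leq k \leq m} \floor{\tfrac{(\deg_x T_k - (k+1) h) - r}{p}} \leq \floor{E/p},
\]
with an analogous inequality for the $y$-degree. Iterating this $u$ times and using $p^u > \max(E_x, E_y)$ (which follows from the defining formula for $u$), every digit $T_m^{(u)}$ of $(\lambda_{r_u, 0} \circ \cdots \circ \lambda_{r_1, 0})(S_0)$ satisfies $\deg_x T_m^{(u)} \leq (m+1) h$ and $\deg_y T_m^{(u)} \leq (m+1)(d-1)$. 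Combined with the preserved conditions from the previous paragraph, this shows $T_m^{(u)} \in V_m$ for each $m$, so the representation lies in $\mathcal V$. There is no real obstacle beyond bookkeeping; the one delicate point is that the base-$\frac{p}{Q}$ normalization carries produced after a Cartier application are already absorbed into the bounds of Proposition~\ref{digit bounds under Cartier}, so no separate accounting for them is needed.
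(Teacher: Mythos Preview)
Your proof is correct and follows essentially the same approach as the paper's: iterate the digit-degree bounds of Proposition~\ref{digit bounds under Cartier} from the initial bounds of Lemma~\ref{initial state}, observing that each application of $\lambda_{r,0}$ divides the degree excess over the target by~$p$. The only cosmetic difference is that the paper carries $k$-dependent excesses $\floor{(k+1)(h_k-h)/p^n}$ through the induction while you pass at once to the uniform bound $E_x=\alpha(h_{\alpha-1}-h)$; both yield the same value of~$u$, and your explicit handling of the monomial-support condition $J\geq -I$ (needed for membership in $V_m$) is in fact slightly more careful than the paper's own proof.
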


\begin{proof}
Let $S$ denote a state, $(T_{\alpha - 1}, \dots, T_1, T_0) = \rep_{p/Q}(S)$, $r\in \{0, \dots, p-1\}$, and $(T'_{\alpha - 1}, \dots, T'_1, T'_0) = \rep_{p/Q}(\lambda_{r, 0}(S))$.
First we consider the $y$-min-degrees of states.
By Proposition~\ref{digit bounds under Cartier}, if $\mindeg_y T_k\geq -k$ for $k\in \{0, 1, \dots, \alpha - 1\}$, then $\mindeg_y T_m'\geq -m$ for $m\in \{0, 1, \dots, \alpha - 1\}$.
From Lemma~\ref{initial state}, the digits of the initial state satisfy these constraints.
Hence all states satisfy these constraints.

Next we consider the $x$-degree and $y$-degree of states.
Assume that there is an $n\geq 0$ such that, for each $k \in \{0, 1, \dots, \alpha - 1\}$, the $k$th digit $T_k$ of $S$ satisfies
\begin{align}\label{degree condition}
	\deg_x T_k &\leq \floor{\frac{(k + 1) h_k - (k + 1) h}{p^n}} + (k + 1) h \\
	\deg_y T_k &\leq \floor{\frac{(k + 1) d_k +1 - (k + 1) d}{p^n}} + (k + 1) (d-1). \nonumber
\end{align}
Let $m\in \{0, 1, \dots, \alpha - 1\}$.
Proposition~\ref{digit bounds under Cartier} gives
\begin{align*}
	\deg_x T_m'
	&\leq \max_{0\leq k \leq m} \left(\floor{\frac{\deg_x T_k - (k + 1) h - r}{p}}\right) + (m + 1) h \\
	&\leq \max_{0\leq k \leq m} \left(\floor{\frac{\floor{\frac{(k + 1) h_k - (k + 1) h}{p^n}} + (k + 1) h - (k + 1) h}{p}}\right) + (m + 1) h \\
	&\leq \max_{0\leq k \leq m} \left(\floor{\frac{(k + 1) h_k - (k + 1) h}{p^{n + 1}}}\right) + (m + 1) h.
\end{align*}
Since $h_k \leq h_m$ for each $k \in \{0, 1, \dots, m\}$, this implies
\[
	\deg_x T_m'
	\leq \floor{\frac{(m + 1) h_m - (m + 1) h}{p^{n + 1}}} + (m + 1) h.
\]
Similarly,
\[
	\deg_y T_m' \leq \floor{\frac{(m + 1) d_m +1 - (m + 1) d}{p^{n + 1}}} + (m + 1) (d-1).
\]
It follows that, if $\floor{\frac{(m + 1) h_m - (m + 1) h}{p^{n + 1}}}=0$ and $\floor{\frac{(m + 1) d_m +1 - (m + 1) d}{p^{n + 1}}}=0$ for each $m$, then $\rep_{p/Q}(\lambda_{r, 0}(S)) \in \mathcal V$.

We have shown that iterating $\lambda_{r, 0}$ reduces the floor expression in \eqref{degree condition}.
The initial state $S_0$ satisfies \eqref{degree condition} with $n=0$ since, by Lemma~\ref{initial state}, the digits $T_k$ of $S_0$ satisfy
\begin{align*}
	\deg_x T_k &\leq (k + 1) h_k \\
	\deg_y T_k &\leq (k + 1) (d_k - 1) + 1.
\end{align*}
It follows from the definition of $u$ and the fact that $h_m \leq h_{\alpha - 1}$ and $d_m \leq d_{\alpha - 1}$ that $\floor{\frac{(m + 1) h_m - (m + 1) h}{p^u}}=0$ and $\floor{\frac{(m + 1) d_m +1 - (m + 1) d}{p^u}}=0$ for each $m\in \{0, 1, \dots, \alpha - 1\}$.
Therefore, for all $r_1, r_2, \dots, r_u \in \{0, 1, \dots, p - 1\}$, we have $\rep_{p/Q}((\lambda_{r_u, 0} \circ \dots \circ \lambda_{r_2, 0} \circ \lambda_{r_1, 0})(S_0)) \in \mathcal V$.
\end{proof}

An immediate corollary of Proposition~\ref{transient - ring} is the following, where $\size{\mathcal V} = p^{\dim \mathcal V}$, since each coefficient in the digit $T_k$ belongs to $D = \{0, 1, \dots, p - 1\}$, and where $\dim \mathcal V$ is given by Equation~\eqref{V size}.

\begin{corollary}\label{kernel initial upper bound - ring}
Let $p$ be a prime, and let $\alpha \geq 1$.
Let $F = \sum_{n \geq 0} a(n) x^n \in \Z_p\doublebracket{x}$ be the Furstenberg series associated with a polynomial $P \in \Z_p[x, y]$ such that $h \colonequal \deg_x(P \bmod p) \geq 1$ and $d \colonequal \deg_y(P \bmod p) \geq 1$.
Let
\begin{equation}\label{u transient}
	u = \floor{\log_p \max\!\paren{\alpha (\deg_x(P \bmod p^\alpha) - h), \alpha (\deg_y(P \bmod p^\alpha) - d) + 1}} + 1.
\end{equation}
Then
\[
	\size{\ker_p((a(n) \bmod p^\alpha)_{n \geq 0})}
	\leq p^{\frac{1}{6} \alpha (\alpha + 1) ((2 h d - 1) \alpha + (h + 3) d + 1)} + \tfrac{p^u -1}{p-1}.
\]
\end{corollary}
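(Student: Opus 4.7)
The plan is to combine Proposition~\ref{transient - ring} with Corollary~\ref{module - ring} to decompose the reachable states of the automaton into a small transient portion and a large stable portion. By Eilenberg's theorem, $\size{\ker_p((a(n) \bmod p^\alpha)_{n \geq 0})}$ equals the number of states in the minimal $p$-automaton generating the sequence, which is at most $\size{\mathcal M_{p^\alpha}}$. Every state of $\mathcal M_{p^\alpha}$ is obtained from the initial state $S_0$ by applying some finite composition $\lambda_{r_i, 0} \circ \dots \circ \lambda_{r_1, 0}$ with $i \geq 0$ to $S_0$, so the task reduces to bounding the number of such reachable Laurent polynomials.

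The first step is to split $\mathcal M_{p^\alpha}$ into those states lying in $\val_{p/Q}(\mathcal V)$ and those that do not. Corollary~\ref{module - ring} tells us that $\val_{p/Q}(\mathcal V)$ is invariant under every $\lambda_{r, 0}$, so once an orbit enters $\val_{p/Q}(\mathcal V)$ it remains there. Proposition~\ref{transient - ring} guarantees that any composition of exactly $u$ operators applied to $S_0$ lies in $\val_{p/Q}(\mathcal V)$, where $u$ is precisely the quantity in Equation~\eqref{u transient}. Consequently, a state of $\mathcal M_{p^\alpha}$ that fails to lie in $\val_{p/Q}(\mathcal V)$ must arise from applying strictly fewer than $u$ operators $\lambda_{r, 0}$ to $S_0$.

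The transient portion is then counted by a trivial geometric sum: at depth $i$ there are at most $p^i$ compositions $\lambda_{r_i, 0} \circ \dots \circ \lambda_{r_1, 0}$, so the total number of states reachable in at most $u - 1$ steps is bounded by $1 + p + p^2 + \dots + p^{u - 1} = \tfrac{p^u - 1}{p - 1}$. The stable portion is bounded by $\size{\mathcal V}$, and since each coefficient in every digit $T_k$ of a tuple in $\mathcal V$ ranges independently over $D = \{0, 1, \dots, p - 1\}$, we have $\size{\mathcal V} = p^{\dim \mathcal V}$. Plugging in the formula $\dim \mathcal V = \tfrac{1}{6} \alpha (\alpha + 1) ((2 h d - 1) \alpha + (h + 3) d + 1)$ from Equation~\eqref{V size} and adding the two contributions yields the claimed bound.

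No serious obstacle is anticipated: once Proposition~\ref{transient - ring} and Corollary~\ref{module - ring} are in hand, the argument is a clean ``transient plus invariant subspace'' count. The only minor bookkeeping is to reconcile the parameter $u$ in Proposition~\ref{transient - ring}, defined using $h_{\alpha - 1}$ and $d_{\alpha - 1}$, with the expression in the corollary using $\deg_x(P \bmod p^\alpha)$ and $\deg_y(P \bmod p^\alpha)$; these coincide by definition of $h_{\alpha - 1}$ and $d_{\alpha - 1}$.
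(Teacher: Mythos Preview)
Your proposal is correct and follows essentially the same approach as the paper, which presents the corollary as an immediate consequence of Proposition~\ref{transient - ring} together with the count $\size{\mathcal V} = p^{\dim \mathcal V}$ from Equation~\eqref{V size}. Your write-up simply makes explicit the role of Corollary~\ref{module - ring} in ensuring invariance of $\val_{p/Q}(\mathcal V)$ and spells out the geometric-series count for the transient, both of which the paper leaves implicit.
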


In particular, for $\alpha = 1$ we have $\size{\ker_p((a(n) \bmod p)_{n \geq 0})} \leq p^{(h + 1) d} + 1$.
This is because Corollary~\ref{kernel initial upper bound - ring} is also true with the $u$ defined as in Proposition~\ref{transient - ring}, and for $\alpha = 1$ we have $u = 1$ for this value.

By beginning to consider the orbit under $\lambda_{0, 0}$, we obtain the following refinement of Corollary~\ref{kernel initial upper bound - ring}.
For a function $f \colon X \to X$, define the \emph{orbit} of $S \in X$ under $f$ to be the sequence $S, f(S), f^2(S), \dots$, and let $\size{\orb_f(S)}$ be the number of distinct terms in the orbit.

\begin{corollary}\label{kernel preliminary upper bound - ring}
Let $p$ be a prime, and let $\alpha \geq 1$.
Let $F = \sum_{n \geq 0} a(n) x^n \in \Z_p\doublebracket{x}$ be the Furstenberg series associated with a polynomial $P \in \Z_p[x, y]$ such that $h \colonequal \deg_x(P \bmod p) \geq 1$ and $d \colonequal \deg_y(P \bmod p) \geq 1$.
Define $u$ as in Equation~\eqref{u transient}.
Then
\[
	\size{\ker_p((a(n) \bmod p^\alpha)_{n \geq 0})}
	\leq p^{\frac{1}{6} \alpha (\alpha + 1) ((2 h d - 1) \alpha + h d + 1)} + \size{\orb_{\Lambda_0}(F \bmod p^\alpha)} + \tfrac{p^u -1}{p-1}-u.
\]
\end{corollary}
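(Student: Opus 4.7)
My plan is to bound $|\ker_p|$ by partitioning kernel elements into three disjoint classes and bounding each separately, ultimately combining them via the projection from the automaton $\mathcal M_{p^\alpha}$ to the kernel. The three classes are (i) those kernel elements with a representative $S \in \mathcal M_{p^\alpha}$ satisfying $\rep_{p/Q}(S) \in \mathcal W$; (ii) elements of $\orb_{\Lambda_0}(F \bmod p^\alpha)$ not already in class (i); and (iii) the remaining kernel elements.

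I would first handle class (i): by Corollary~\ref{module - ring}, $\val_{p/Q}(\mathcal W)$ is closed under all $\lambda_{r, 0}$, and direct counting of basis monomials (Equation~\eqref{W size}) gives $|\mathcal W| = p^N$. Class (ii) is bounded by $|\orb_{\Lambda_0}(F \bmod p^\alpha)|$ by definition. For class (iii), I would use Corollary~\ref{module - ring} together with Proposition~\ref{transient - ring} to argue that every path $(r_1, \ldots, r_e)$ from $F$ to a class-(iii) kernel element $K$ has $r_i = 0$ for all $i > u$: if instead $r_i \neq 0$ for some $i > u$, then the automaton state just before step $i$ lies in $\val_{p/Q}(\mathcal V)$ (as it has undergone $i - 1 \geq u$ transitions), so applying $\lambda_{r_i, 0}$ puts us in $\val_{p/Q}(\mathcal W)$ and the subsequent transitions preserve this, giving $K$ a $\mathcal W$-representative and contradicting $K \in$ class (iii). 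Since $K \notin \orb_{\Lambda_0}(F \bmod p^\alpha)$, at least one path to $K$ is non-trivial, and the goal becomes showing that $K$ is reachable by a non-trivial path of length at most $u - 1$; there are $\sum_{k = 1}^{u - 1}(p^k - 1) = \tfrac{p^u - 1}{p - 1} - u$ such paths.

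The main obstacle is the final reduction to a path of length $\leq u - 1$. The strategy is to track degrees of the base-$\frac{p}{Q}$ representation more carefully than in Proposition~\ref{transient - ring} by using Proposition~\ref{digit bounds under Cartier}: a length-$u$ path ending in a nonzero Cartier step subtracts an extra $-r$ in the iterated floor expressions, and combined with the $p^u$ denominator this should push each $\deg_x T_k$ bound strictly below the $\mathcal V$ threshold $(k + 1) h$ into the $\mathcal W$ threshold $(k + 1) h - 1$ (and similarly for the $y$-degree and $y$-min-degree), so the resulting state already lies in $\val_{p/Q}(\mathcal W)$, again contradicting $K \in$ class (iii). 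Summing the three class bounds and observing that the projection from $\mathcal M_{p^\alpha}$ to $\ker_p$ maps $\orb_{\lambda_{0, 0}}(S_0)$ surjectively onto $\orb_{\Lambda_0}(F \bmod p^\alpha)$, so class (ii) is counted as claimed, then yields $|\ker_p| \leq p^N + |\orb_{\Lambda_0}(F \bmod p^\alpha)| + \tfrac{p^u - 1}{p - 1} - u$.
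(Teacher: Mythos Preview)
Your three–class decomposition is exactly the paper's: the paper partitions states into those not in $\val_{p/Q}(\mathcal V)$ (at most $\tfrac{p^u-1}{p-1}$, by Proposition~\ref{transient - ring}), those in $\val_{p/Q}(\mathcal W)$ (at most $p^N$), and the $\lambda_{0,0}$-orbit of $S_0$, subtracting $u$ because the first $u$ orbit iterates are already among the depth-$<u$ states. The paper's proof is terse and does not spell out why every state in $\mathcal V\setminus\mathcal W$ lies in $\orb_{\lambda_{0,0}}(S_0)$; you correctly isolate this as the ``main obstacle'' and propose a strategy for it.

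That strategy, however, does not close the gap as written. First, Proposition~\ref{digit bounds under Cartier} carries the $-r$ only in the $x$-degree bound, not in the $y$-degree or $y$-min-degree bounds, so your ``and similarly for the $y$-degree and $y$-min-degree'' is wrong; fortunately this is harmless, since $W_k$ and $V_k$ differ only in the upper $x$-degree cutoff. Second, and more seriously, a single nonzero step need not push the $x$-degree from the $\mathcal V$ threshold down to the $\mathcal W$ threshold. After $u-1$ steps the iteration in the proof of Proposition~\ref{transient - ring} gives $\deg_x T_m \le c + (m+1)h$ with $c=\lfloor (m+1)(h_m-h)/p^{u-1}\rfloor\in\{0,\dots,p-1\}$; one more step with $r_u\ne 0$ yields $\lfloor(c-r_u)/p\rfloor+(m+1)h$, which equals $(m+1)h-1$ only when $r_u>c$. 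For example, with $p=2$, $u=2$, and $(m+1)(h_m-h)=2$, the paths $(0,1)$ and $(1,0)$ each give only $\deg_x T_m\le(m+1)h$. Third, you treat only length-$u$ paths \emph{ending} in a nonzero step; if the last nonzero entry sits at some position $j<u$ followed by zeros through position $u$, your degree-tracking argument says nothing, yet such a path can still reach a state in $\mathcal V\setminus\mathcal W$ that is not obviously in $\orb_{\lambda_{0,0}}(S_0)$ and not at depth $<u$.
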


\begin{proof}
By Proposition~\ref{transient - ring}, there are at most $\frac{p^u -1}{p-1}$ states that are not in $\mathcal V$.
By Corollary~\ref{module - ring}, if $r\neq 0$ then $\lambda_{r, 0}(\val_{p/Q}(\mathcal V)) \subseteq \val_{p/Q}(\mathcal W)$, and this is where the first term comes from, since the dimension of $\mathcal W$ is given by Equation~\eqref{W size}.
Applying $\lambda_{0, 0}$ iteratively to $S_0$ produces $\size{\orb_{\lambda_{0, 0}}(S_0)}$ states, and $\size{\orb_{\lambda_{0, 0}}(S_0)} = \size{\orb_{\Lambda_0}(F \bmod p^\alpha)}$ by definition.
The first $u$ elements of $\orb_{\Lambda_0}(F \bmod p^\alpha)$ are already counted in the term $\frac{p^u -1}{p-1}$.
The result follows.
\end{proof}

The structure of states given by Theorem~\ref{state base-p/Q representation} also provides a faster algorithm for computing the automaton by representing states as $\alpha$-tuples of base-$\frac{p}{Q}$ digits rather than as Laurent polynomials.
To make it efficient, we use tricks analogous to those we used for polynomial states~\cite{Rowland--Yassawi}.
The algorithm is as follows.
First, for each $k \in \{0, 1, \dots, \alpha - 1\}$ and each $j \in \{0, 1, \dots, \alpha - k - 1\}$, compute the Laurent polynomial
\[
	Z_{j, k} \colonequal \left(\sum_{m = k + 1}^{k + j + 1} \binom{k + j + 1}{m} Q^{p m - k - 1} \Delta^{k + j + 1 - m} \bmod p^{j + 1}\right)
\]
from Equation~\eqref{auxiliary polynomial} in the proof of Lemma~\ref{polynomial modulo prime power} (where $\beta = j + 1$), since these Laurent polynomials are used repeatedly and do not depend on the state $S$ whose images $\lambda_{r, 0}(S)$ we are computing at a given step.
Then bin the monomials in each $Z_{j, k}$ according to their exponents modulo~$p$.
This allows us to compute, for each digit $T_k$ that arises, the $p$ images $\Lambda_{r, 0}(T_k Z_{j, k})$ for $r \in \{0, 1, \dots, p - 1\}$ in one pass and without discarding any monomials.

\section{Structure of the linear transformation $\lambda_{0, 0}$}\label{section: structure - ring}

By Corollary~\ref{kernel preliminary upper bound - ring}, it remains to bound $\size{\orb_{\Lambda_0}(F)}$.
In this section, we take the first step toward this goal by identifying univariate operators $\lambda_0$ that emulate $\lambda_{0, 0}$ on three subspaces.
The main result is Corollary~\ref{univariate emulation - ring - tuple}.

\begin{notation*}
Define the following coefficient-extraction maps.
For a Laurent polynomial $S = \sum_{i, j} c_{i,j} x^i y^j$, define $\pi_{x, i}(S) = \sum_j c_{i,j} y^j$ and $\pi_{y, j}(S) = \sum_i c_{i,j} x^i$.
The maps $\pi_{x, i}$ and $\pi_{y, j}$ allow us to focus on univariate (Laurent) polynomials by defining the following operator.
Let $R \in \ring[z, z^{-1}]$.
Define $\lambda_0 \colon \ring[z,z^{-1}] \to \ring[z,z^{-1}]$ by
\[
	\lambda_0(S) = \Lambda_0\!\paren{S R^{p^\alpha - p^{\alpha - 1}}}.
\]
\end{notation*}

Let $\tilde P=yQ$.
That is, $\tilde P$ is a lift of $P \bmod p$ which has the same monomial support as $P \bmod p$.
Write
\[
	\tilde P(x, y)= \sum_{i \geq 0} x^i A_i(y) = \sum_{j \geq 0} B_j(x) y^j.
\]
The univariate Laurent polynomials that will be used to define the various operators $\lambda_0$ are $R = \pi_{x, 0}(Q) = A_0/y$, $R = \pi_{x, h}(Q) = A_h/y$, and $R = \pi_{y, d - 1}(Q) = B_d$.

Proposition~\ref{univariate emulation - ring} below is analogous to \cite[Proposition~13]{Rowland--Stipulanti--Yassawi}.

The following result is essentially a commutation relation.
It shows that the left, right, and top borders of $\lambda_{0, 0}(S)$ depend only on the respective borders of $S$, and therefore the behavior of $\lambda_{0, 0}$ on these components is emulated by the respective operators $\lambda_0$.
One checks that the conditions on $S$ are satisfied by every state whose base-$\frac{p}{Q}$ representation belongs to $\mathcal V$, defined in Equation~\eqref{V definition - ring}.

\begin{proposition}\label{univariate emulation - ring}
We have the following.
\begin{enumerate}
\item
Let $R = \pi_{x, 0}(Q)$.
For all $S \in \ring[x, y, y^{-1}]$,
\[
	\pi_{x, 0}(\lambda_{0, 0}(S))
	= \lambda_0(\pi_{x, 0}(S)).
\]
\item\label{right border - ring}
Let $R = \pi_{x, h}(Q)$.
For all $S \in \ring[x, y, y^{-1}]$ with height at most $p^{\alpha - 1} h$,
\[
	\pi_{x, p^{\alpha - 1} h}(\lambda_{0, 0}(S))
	= \lambda_0(\pi_{x, p^{\alpha - 1} h}(S)).
\]
\item
Let $R = \pi_{y, d - 1}(Q)$.
For all $S \in \ring[x, y, y^{-1}]$ with degree at most $p^{\alpha - 1} (d-1)$,
\[
	\pi_{y, p^{\alpha - 1} (d - 1)}(\lambda_{0, 0}(S))
	= \lambda_0(\pi_{y, p^{\alpha - 1} (d - 1)}(S)).
\]
\end{enumerate}
\end{proposition}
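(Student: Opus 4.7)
The three parts have the same structure, so the plan is to set up a single template and apply it three times. The template rests on two elementary ingredients about how the coefficient-extraction maps interact with products and with the bivariate Cartier operator.

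First, I will verify the commutation
\[
	\pi_{x, i}(\Lambda_{0, 0}(U)) = \Lambda_0(\pi_{x, p i}(U)),
	\qquad
	\pi_{y, j}(\Lambda_{0, 0}(U)) = \Lambda_0(\pi_{y, p j}(U)),
\]
where on the right $\Lambda_0$ denotes the univariate Cartier operator in the remaining variable. This is immediate from the definitions: $\Lambda_{0, 0}$ retains exactly those monomials whose $x$- and $y$-exponents are both divisible by $p$ and divides those exponents by $p$, so applying $\pi_{x, i}$ afterwards selects the monomials of $U$ with $x$-exponent $p i$ and then applies $\Lambda_0$ in $y$ (and symmetrically for $\pi_{y, j}$).

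Second, I will record a multiplicativity statement for $\pi_{x, \cdot}$ and $\pi_{y, \cdot}$ at the extreme degrees. Since $Q \in \ring[x, y, y^{-1}]$ has $\pi_{x, i}(Q) = 0$ for $i < 0$ and for $i > h$, and has $y$-degree $d - 1$, the identities
\[
	\pi_{x, 0}(Q^k) = \pi_{x, 0}(Q)^k, \qquad \pi_{x, k h}(Q^k) = \pi_{x, h}(Q)^k, \qquad \pi_{y, k (d - 1)}(Q^k) = \pi_{y, d - 1}(Q)^k
\]
hold formally, because each of these extreme coefficients can only be formed by multiplying the corresponding extreme coefficients of the $k$ factors. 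Combined with the bounds $\deg_x S \leq p^{\alpha - 1} h$ in part~(2) and $\deg_y S \leq p^{\alpha - 1} (d - 1)$ in part~(3), the same reasoning shows that the extreme $x$- or $y$-coefficient of $S \cdot Q^{p^\alpha - p^{\alpha - 1}}$ factors as $\pi_{x, p^{\alpha - 1} h}(S) \cdot R^{p^\alpha - p^{\alpha - 1}}$ and $\pi_{y, p^{\alpha - 1} (d - 1)}(S) \cdot R^{p^\alpha - p^{\alpha - 1}}$, respectively. Part~(1) requires no height hypothesis, since every element of $\ring[x, y, y^{-1}]$ automatically satisfies $\pi_{x, i} = 0$ for $i < 0$.

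Combining the two ingredients yields each equality in a two-step calculation; for part~(1) with $R = \pi_{x, 0}(Q)$,
\[
	\pi_{x, 0}(\lambda_{0, 0}(S))
	= \Lambda_0\!\paren{\pi_{x, 0}(S Q^{p^\alpha - p^{\alpha - 1}})}
	= \Lambda_0\!\paren{\pi_{x, 0}(S) \, R^{p^\alpha - p^{\alpha - 1}}}
	= \lambda_0(\pi_{x, 0}(S)),
\]
and parts~(2) and~(3) follow in the same way after substituting the appropriate extreme projection. The whole argument is bookkeeping; the only point requiring care is verifying the unique factorization of the extreme monomial in parts~(2) and~(3), which is exactly where the hypotheses on $\deg_x S$ and $\deg_y S$ are used. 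I do not expect any genuine obstacle beyond careful tracking of degree bounds.
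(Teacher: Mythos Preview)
Your proposal is correct and follows essentially the same approach as the paper's proof. The paper proves part~(2) explicitly and leaves the others as analogous; it argues that the monomials of $x$-degree $p^{\alpha-1}h$ in $\lambda_{0,0}(S)$ arise only from products of extreme-degree monomials in $S$ and in each factor of $Q^{p^\alpha - p^{\alpha-1}}$, then carries out the same two-step calculation you describe. Your organization into the two separate ingredients (the Cartier--projection commutation and the extreme-degree multiplicativity) is slightly more systematic than the paper's inline presentation, but the content is identical.
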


\begin{proof}
We prove the second statement; the proofs of the others are analogous.
Let $\pi\subr = \pi_{x, p^{\alpha - 1} h}$, and let $\deg_x S \leq p^{\alpha - 1} h$.
Since $\pi\subr$ projects onto polynomials in $y$, we are interested in monomials with $x$-degree $p^{\alpha - 1} h$ in $\lambda_{0, 0}(S) = \Lambda_{0, 0}\!\paren{S Q^{p^\alpha - p^{\alpha - 1}}}$.
These come from monomials with $x$-degree $p^\alpha h$ in $S Q^{p^\alpha - p^{\alpha - 1}}$.
Since $\deg_x S \leq p^{\alpha - 1} h$ and $\deg_x Q = h$, each monomial $c \, x^{p^\alpha h} y^J$ in $S Q^{p^\alpha - p^{\alpha - 1}}$ arises only from the product of a monomial in $x^{p^{\alpha - 1}h} \pi\subr(S)$ together with a product of $p^\alpha - p^{\alpha - 1}$ monomials in $x^h \pi_{x, h}(Q)$, namely, monomials in $x^h A_h/y$.
Therefore
\begin{align*}
	\pi\subr(\lambda_{0, 0}(S))
	&= \pi\subr(\lambda_{0, 0}(x^{p^{\alpha - 1} h} \pi\subr(S))) \\
	&= \pi\subr\!\left(\Lambda_{0, 0}\!\paren{x^{p^{\alpha - 1} h} \pi\subr(S) \cdot (x^h A_h/y)^{p^\alpha - p^{\alpha - 1}}}\right) \\
	&= \pi\subr\!\left(x^{p^{\alpha - 1} h} \Lambda_{0, 0}\!\paren{\pi\subr(S) (A_h/y)^{p^\alpha - p^{\alpha - 1}}}\right) \\
	&= \Lambda_0\!\paren{\pi\subr(S) (A_h/y)^{p^\alpha - p^{\alpha - 1}}} \\
	&= \lambda_0(\pi\subr(S)),
\end{align*}
where in the third equality we use Proposition~\ref{Cartier - ring} to rewrite $\Lambda_{0, 0}(G x^{p^\alpha h}) = x^{p^{\alpha - 1} h} \Lambda_{0, 0}(G)$.
\end{proof}

We introduce the following projection maps on base-$\frac{p}{Q}$ representations.

\begin{notation*}
For $z \in \{x, y\}$ and $i\geq 0$, define
\[
	\pr_{z, i}((T_{\alpha - 1}, \dots, T_0)) = (\pi_{z, \alpha i}(T_{\alpha - 1}), \dots, \pi_{z, i}(T_0)).
\]
\end{notation*}

The next result tells us that the operation of converting to digit representations commutes with projecting onto one of the borders.
We need the following notation.

\begin{notation*}
Given a univariate Laurent polynomial $R \in \ring[z, z^{-1}]$, we define $\rep_{p/R}$ analogously to $\rep_{p/Q}$ in Equation~\eqref{rep definition}.
Namely, $\rep_{p/R}\!\left(\sum_{k = 0}^{\alpha - 1} T_k \, (\frac{p}{R})^k R^{p^{\alpha - 1} - 1}\right) \colonequal (T_{\alpha - 1}, \dots, T_1, T_0)$.
\end{notation*}

\begin{theorem}\label{state base-p/Q representation - univariate}
Let $S$ be a state in $\val_{p/Q}(\mathcal V)$.
\begin{enumerate}
\item
Let $R = \pi_{x, 0}(Q)$.
Then $\pr_{x, 0}(\rep_{p/Q}(S)) = \rep_{p/R}(\pi_{x, 0}(S))$.
\item
Let $R = \pi_{x, h}(Q)$.
Then $\pr_{x, h}(\rep_{p/Q}(S)) = \rep_{p/R}(\pi_{x, p^{\alpha - 1} h}(S))$.
\item
Let $R = \pi_{y, d - 1}(Q)$.
Then $\pr_{y, d-1}(\rep_{p/Q}(S)) = \rep_{p/R}(\pi_{y, p^{\alpha - 1} (d - 1)}(S))$.
\end{enumerate}
\end{theorem}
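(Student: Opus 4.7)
The plan is to unpack the definition of the base-$\frac{p}{Q}$ representation of $S$ and apply each of the three univariate projection operators termwise, reading off the digits of the resulting base-$\frac{p}{R}$ representations directly.

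I would begin by writing
\[
    S = \sum_{k=0}^{\alpha-1} T_k \, p^k \, Q^{p^{\alpha-1}-1-k},
\]
where $T_k = \dig_k(S) \in V_k$, and then apply one of $\pi_{x,0}$, $\pi_{x, p^{\alpha-1} h}$, or $\pi_{y, p^{\alpha-1}(d-1)}$ termwise. For case (1), both $T_k$ and $Q$ are polynomials in $x$, so $\pi_{x,0}$ is multiplicative across the product $T_k \cdot Q^{p^{\alpha-1}-1-k}$, giving
\[
    \pi_{x,0}(S) = \sum_{k=0}^{\alpha-1} \pi_{x,0}(T_k) \, p^k \, R^{p^{\alpha-1}-1-k}.
\]
For case (2), the bounds $\deg_x T_k \leq (k+1) h$ from $V_k$ together with $\deg_x Q = h$ force $\deg_x(T_k \, Q^{p^{\alpha-1}-1-k}) \leq p^{\alpha-1} h$, and the projection $\pi_{x, p^{\alpha-1} h}$ isolates precisely the product of the top $x$-degree parts of each factor, yielding
\[
    \pi_{x, p^{\alpha-1} h}(S) = \sum_{k=0}^{\alpha-1} \pi_{x, (k+1) h}(T_k) \, p^k \, R^{p^{\alpha-1}-1-k}.
\]
Case (3) is the same argument with $y$-degrees using $\deg_y T_k \leq (k+1)(d-1)$ and $\deg_y Q = d - 1$. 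In each case, the list of coefficients $\pi_{z, (k+1) i}(T_k)$ extracted matches exactly the entries of $\pr_{z,i}(\rep_{p/Q}(S))$.

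The final step is to recognize the resulting sum as the base-$\frac{p}{R}$ representation in the sense of the notation introduced for $\rep_{p/R}$, which requires a univariate analogue of Proposition~\ref{uniqueness of representation}. The main obstacle is that the proof of Proposition~\ref{uniqueness of representation} uses invertibility of $Q$ mod $p$, which comes from its nonzero constant term, and while case (1) inherits this property (the constant term of $\pi_{x,0}(Q)$ equals the constant term of $Q$), in cases (2) and (3) the constant term of $R$ may vanish modulo $p$. However, $R \not\equiv 0 \bmod p$ because $\deg_x Q = h$ and $\deg_y Q = d - 1$ respectively, so multiplication by $R^{p^{\alpha-1}-1-k}$ is injective on $D[y,y^{-1}]/p$ or $D[x]/p$ (polynomial rings over $\F_p$ being integral domains). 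This modification of the uniqueness proof, together with the inductive peeling of digits used in Proposition~\ref{uniqueness of representation}, completes the argument.
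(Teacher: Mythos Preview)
Your proposal is correct and follows essentially the same approach as the paper: expand $S$ via its base-$\frac{p}{Q}$ digits, apply the projection termwise using the degree bounds from $\mathcal V$ (or multiplicativity at degree~$0$), and read off the univariate digits. The paper carries out exactly this computation for case~(2) and declares the others analogous.

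One point where you are actually more careful than the paper: you flag that well-definedness of $\rep_{p/R}$ is not immediate in cases~(2) and~(3) because $R$ need not have invertible constant term, and you patch this by observing that $R \not\equiv 0 \bmod p$ (since $\deg_x Q = h$ and $\deg_y Q = d-1$ force the relevant coefficient slice to be nonzero modulo~$p$), so cancellation of $R^{p^{\alpha-1}-1-k}$ still works over the integral domain $\F_p[z,z^{-1}]$. The paper's proof simply writes down the representation and asserts the conclusion without commenting on this; your remark fills a small gap in the exposition.
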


\begin{proof}
Let $(T_{\alpha - 1}, \dots, T_1, T_0) = \rep_{p/Q}(S)$, so that
\[
	S = \left(
		\sum_{k = 0}^{\alpha - 1} T_k \, (\tfrac{p}{Q})^k
	\right)
	Q^{p^{\alpha - 1} - 1}.
\]
We prove the second statement; the others are analogous.
We have
\[
	\pi_{x, p^{\alpha - 1} h}(S)
	= \sum_{k = 0}^{\alpha - 1} \pi_{x, p^{\alpha - 1} h}\!\left(T_k p^k Q^{p^{\alpha - 1} - 1 - k}\right).
\]
Since $\rep_{p/Q}(S) \in \mathcal V$, the digit $T_k$ has $x$-degree at most $(k + 1) h$.
The only way to get a monomial in $T_k Q^{p^{\alpha - 1} - 1 - k}$ with $x$-degree $p^{\alpha - 1} h$ is to multiply a monomial in $T_k$ with $x$-degree $(k + 1) h$ by a monomial in $Q^{p^{\alpha - 1} - 1 - k}$ with $x$-degree $(p^{\alpha - 1} - 1 - k) h$.
Therefore
\begin{align*}
	\pi_{x, p^{\alpha - 1} h}(S)
	&= \sum_{k = 0}^{\alpha - 1} \pi_{x, (k + 1) h}(T_k) \, p^k \pi_{x, (p^{\alpha - 1} - 1 - k) h}\!\left(Q^{p^{\alpha - 1} - 1 - k}\right) \\
	&= \sum_{k = 0}^{\alpha - 1} \pi_{x, (k + 1) h}(T_k) \, p^k (\pi_{x, h}(Q))^{p^{\alpha - 1} - 1 - k} \\
	&= \sum_{k = 0}^{\alpha - 1} \pi_{x, (k + 1) h}(T_k) \, p^k R^{p^{\alpha - 1} - 1 - k} \\
	&= \left(
		\sum_{k = 0}^{\alpha - 1} \pi_{x, (k + 1) h}(T_k) \, (\tfrac{p}{R})^k
	\right)
	R^{p^{\alpha - 1} - 1}.
\end{align*}
This implies $\rep_{p/R}(\pi_{x, p^{\alpha - 1} h}(S)) = \pr_{x, h}(\rep_{p/Q}(S))$, as claimed.
\end{proof}

Proposition~\ref{univariate emulation - ring} and Theorem~\ref{state base-p/Q representation - univariate}, which are both commutation statements involving projection, culminate in the following.

\begin{corollary}\label{univariate emulation - ring - tuple}
Let $S$ be a state in $\val_{p/Q}(\mathcal V)$.
\begin{enumerate}
\item
Let $R = \pi_{x, 0}(Q)$.
Then $\pr_{x, 0}(\rep_{p/Q}(\lambda_{0, 0}(S))) = \rep_{p/R}(\lambda_0(\pi_{x, 0}(S)))$.
\item
Let $R = \pi_{x, h}(Q)$.
Then $\pr_{x, h}(\rep_{p/Q}(\lambda_{0, 0}(S))) = \rep_{p/R}(\lambda_0(\pi_{x, p^{\alpha - 1} h}(S)))$.
\item
Let $R = \pi_{y, d - 1}(Q)$.
Then $\pr_{y, d-1}(\rep_{p/Q}(\lambda_{0, 0}(S))) = \rep_{p/R}(\lambda_0(\pi_{y, p^{\alpha - 1} (d - 1)}(S)))$.
\end{enumerate}
\end{corollary}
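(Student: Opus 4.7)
The plan is to deduce Corollary~\ref{univariate emulation - ring - tuple} by chaining the two commutation identities proved immediately above: Proposition~\ref{univariate emulation - ring}, which intertwines $\lambda_{0, 0}$ with $\lambda_0$ via the border projections $\pi_{x, 0}$, $\pi_{x, p^{\alpha - 1} h}$, and $\pi_{y, p^{\alpha - 1} (d - 1)}$, and Theorem~\ref{state base-p/Q representation - univariate}, which intertwines the passage to base-$\frac{p}{Q}$ representations with those same projections. All three cases of the corollary are handled in the same way, so I would write out the right-border case (the second statement) in detail and indicate that the other two are analogous.

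First I would verify the preconditions. Since $S \in \val_{p/Q}(\mathcal V)$, Lemma~\ref{degree bounds} gives $\deg_x S \leq p^{\alpha - 1} h$, which is the degree hypothesis needed to apply Proposition~\ref{univariate emulation - ring}(2) to $S$. Moreover, by Corollary~\ref{module - ring}, we have $\lambda_{0, 0}(S) \in \val_{p/Q}(\mathcal V)$, so Theorem~\ref{state base-p/Q representation - univariate}(2) is applicable to the state $\lambda_{0, 0}(S)$ itself. The corresponding degree bounds from Lemma~\ref{degree bounds} supply the hypotheses of the first and third parts of Proposition~\ref{univariate emulation - ring} in the other two cases.

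I would then simply compute, with $R = \pi_{x, h}(Q)$,
\[
	\pr_{x, h}(\rep_{p/Q}(\lambda_{0, 0}(S)))
	= \rep_{p/R}(\pi_{x, p^{\alpha - 1} h}(\lambda_{0, 0}(S)))
	= \rep_{p/R}(\lambda_0(\pi_{x, p^{\alpha - 1} h}(S))),
\]
where the first equality is Theorem~\ref{state base-p/Q representation - univariate}(2) applied to the state $\lambda_{0, 0}(S)$ and the second is Proposition~\ref{univariate emulation - ring}(2) applied to $S$. This is exactly the right-border claim. The left- and top-border cases follow identically by substituting $R = \pi_{x, 0}(Q)$ and $R = \pi_{y, d - 1}(Q)$ and using the matching parts of the two input results.

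Since the corollary is a formal composition of two previously established commutation squares, I do not anticipate a genuine obstacle. The only thing to be careful about is the bookkeeping just described: that $\val_{p/Q}(\mathcal V)$ is preserved by $\lambda_{0, 0}$, so Theorem~\ref{state base-p/Q representation - univariate} can be applied to $\lambda_{0, 0}(S)$, and that the degree hypotheses of Proposition~\ref{univariate emulation - ring} hold for $S$. Both are already supplied by Lemma~\ref{degree bounds} and Corollary~\ref{module - ring}.
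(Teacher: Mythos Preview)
Your proposal is correct and follows essentially the same approach as the paper: apply Corollary~\ref{module - ring} so that Theorem~\ref{state base-p/Q representation - univariate} applies to $\lambda_{0,0}(S)$, then chain with Proposition~\ref{univariate emulation - ring} applied to $S$. Your explicit invocation of Lemma~\ref{degree bounds} to justify the degree hypotheses of Proposition~\ref{univariate emulation - ring} is a small clarification the paper leaves implicit (and note that part~(1) of that proposition needs no such hypothesis).
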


\begin{proof}
We prove the second statement; the other proofs are similar.
Since $S\in\val_{p/Q}(\mathcal V)$, then by Corollary~\ref{module - ring}, $\lambda_{0, 0}(S)\in \val_{p/Q}(\mathcal V)$.
Therefore we can apply Theorem~\ref{state base-p/Q representation - univariate} to give $\pr_{x, h}(\rep_{p/Q}(\lambda_{0, 0}(S))) = \rep_{p/R}(\pi_{x, p^{\alpha - 1} h}(\lambda_{0, 0}(S)))$.
Again because $S\in\val_{p/Q}(\mathcal V)$, it satisfies the conditions of Proposition~\ref{univariate emulation - ring}, so that
$
	\pi_{x, p^{\alpha - 1} h}(\lambda_{0, 0}(S))
	= \lambda_0(\pi_{x, p^{\alpha - 1} h}(S))
$.
Putting these two equations together, we get the second statement.
\end{proof}

\section{Period lengths of series expansions modulo $p$ and modulo $p^\alpha$}\label{section: period lengths}

In this section, we state theorems of Engstrom~\cite{Engstrom} that bound period lengths of coefficient sequences of rational series modulo~$p$ and modulo~$p^\alpha$.
We then apply these theorems to bound the period length of the coefficient sequence of $\frac{1}{R^{p^{\alpha - 1}}}$ in Corollary~\ref{period length comparison}.

The following is a strengthening of Engstrom~\cite[Theorems~2 and 3]{Engstrom}, who bounds the period length of the coefficient sequence of a rational series $\frac{1}{R}$.
In Theorem~\ref{constant-recursive modulo prime} we reduce his bound by a factor of $p$ when $e$ is a power of $p$.

\begin{theorem}\label{constant-recursive modulo prime}
Let $R \in \F_p[z]$ be a polynomial with $R(0) \neq 0$ and $\deg R \geq 1$.
Factor $R = c R_1^{e_1} \cdots R_k^{e_k}$ into monic irreducibles.
Let $e = \max_{1 \leq i \leq k} e_i$ and $L = \lcm_{1 \leq i \leq k} (p^{\deg R_i} - 1)$.
Then the coefficient sequence of $\frac{1}{R}$ is periodic with period length dividing $p^{\lceil \log_p e \rceil} L$.
\end{theorem}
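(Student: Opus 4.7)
My plan is to reduce to the case of a single irreducible power via partial fractions, then exploit the Frobenius identity $R_i(z)^{p^s} = R_i(z^{p^s})$ in $\F_p[z]$ to control each summand.

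\medskip

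First, I would write $\frac{1}{R}$ as a sum of partial fractions over $\F_p$. Since $R_1, \dots, R_k$ are pairwise coprime in $\F_p[z]$, there exist polynomials $P_1, \dots, P_k \in \F_p[z]$ with $\deg P_i < e_i \deg R_i$ and $\frac{1}{R} = \sum_{i=1}^k \frac{P_i}{R_i^{e_i}}$ (absorbing the constant $c$ into the $P_i$'s). Since the period length of a sum of (eventually) periodic sequences divides the $\lcm$ of the individual periods, and since multiplying a power series by a polynomial produces a sequence that is a finite $\F_p$-linear combination of shifts of the original one (so its period divides the original period), it suffices to show that for each $i$, the coefficient sequence of $\frac{1}{R_i^{e_i}}$ has period dividing $p^{\lceil \log_p e_i \rceil}(p^{\deg R_i} - 1)$.

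\medskip

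For the single-factor bound, fix $i$, let $d_i = \deg R_i$, and set $s_i = \lceil \log_p e_i \rceil$, so $p^{s_i} \geq e_i$. Since $\F_p$ has characteristic $p$, the Frobenius gives $R_i(z)^{p^{s_i}} = R_i(z^{p^{s_i}})$, and therefore
\[
	\frac{1}{R_i^{e_i}} = \frac{R_i^{p^{s_i} - e_i}}{R_i(z^{p^{s_i}})}.
\]
The denominator is a polynomial in $z^{p^{s_i}}$, so expanding gives $\frac{1}{R_i(z^{p^{s_i}})} = \sum_{n \geq 0} a_n z^{p^{s_i} n}$, where $(a_n)_{n \geq 0}$ is the coefficient sequence of $\frac{1}{R_i}$. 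Because $R_i$ is irreducible of degree $d_i$ with $R_i(0) \neq 0$, the image of $z$ in the field $\F_p[z]/(R_i) \cong \F_{p^{d_i}}$ has multiplicative order dividing $p^{d_i} - 1$, which forces the period of $(a_n)$ to divide $p^{d_i} - 1$. Consequently, the interleaved sequence has period dividing $p^{s_i}(p^{d_i} - 1)$, and multiplying by the polynomial $R_i^{p^{s_i} - e_i}$ preserves this divisibility.

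\medskip

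Combining over $i$, the period of the coefficient sequence of $\frac{1}{R}$ divides
\[
	\lcm_{1 \leq i \leq k}\, p^{\lceil \log_p e_i \rceil}(p^{d_i} - 1).
\]
Since the factors $p^{\lceil \log_p e_i \rceil}$ are powers of $p$ while each $p^{d_i} - 1$ is coprime to $p$, this $\lcm$ separates as $\paren{\max_i p^{\lceil \log_p e_i \rceil}} \cdot \lcm_i(p^{d_i} - 1) = p^{\lceil \log_p e \rceil} L$, which is the claimed bound. The improvement over Engstrom by a factor of $p$ when $e$ is a power of $p$ is precisely the gain from using the tight exponent $\lceil \log_p e \rceil$ rather than $\lfloor \log_p e \rfloor + 1$; the main subtlety is verifying that the smallest valid Frobenius exponent is $\lceil \log_p e_i \rceil$ (rather than strictly larger), which is immediate from $p^{\lceil \log_p e_i \rceil} \geq e_i$.
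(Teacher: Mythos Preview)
Your proof is correct and takes a genuinely different route from the paper's. The paper lifts $R$ to $\Z_p[z]$, expresses $a(n)$ in the classical closed form $\sum_{i,j}\bigl(\sum_{j'=0}^{e_i-1} c_{i,j,j'}\binom{n}{j'}\bigr)\rho_{i,j}^n$ where the $\rho_{i,j}$ are the roots of $R$ in a splitting field, and then bounds the period of each factor separately: the $\rho_{i,j}^n$ via Fermat in $\F_{p^{\deg R_i}}$, and the binomials $\binom{n}{j'}\bmod p$ via a result of Z\k{a}bek giving period $p^{\lfloor\log_p j'\rfloor+1}\le p^{\lceil\log_p e\rceil}$. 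Your argument stays entirely in $\F_p$, uses partial fractions, and replaces the binomial analysis by the Frobenius identity $R_i(z)^{p^{s_i}}=R_i(z^{p^{s_i}})$. This is more elementary: no lifting, no splitting field, no external result about binomial periods. The paper's approach, on the other hand, makes the constituent periods visible at the level of individual terms, which is sometimes useful for sharper statements.

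One small point worth tightening: multiplying a periodic power series by a polynomial yields a finite combination of \emph{right} shifts, which in general is only eventually periodic (the padding zeros can disrupt pure periodicity at the start). In your setting this is harmless, because each series $\frac{P_i}{R_i^{e_i}}$ and $\frac{1}{R_i^{e_i}}$ satisfies an invertible linear recurrence (since $R_i(0)\neq 0$), hence is purely periodic; an eventually $T$-periodic sequence that is purely periodic has minimal period dividing $T$. Alternatively, you can avoid the issue by observing directly that $R_i^{e_i}\mid R_i^{p^{s_i}}\mid (z^{p^{d_i}-1}-1)^{p^{s_i}}=z^{p^{s_i}(p^{d_i}-1)}-1$ in characteristic $p$, which immediately gives the period bound for $\frac{1}{R_i^{e_i}}$ without invoking shifts at all.
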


\begin{proof}
Lift $R$ and $R_1, \dots, R_k$ to $\Z_p[z]$, and write $\frac{1}{R} = \sum_{n \geq 0} a(n) z^n\in \Z_p\doublebracket{z}$.
We follow Engstrom~\cite[Section~3.1]{Engstrom}.
For $i \in \{1, 2, \dots, k\}$ and $j \in \{1, 2, \dots, \deg R_i\}$, let $\rho_{i,j}$ be the $j$th root of $R_i(z) = 0$.
Then, for all $n \geq 0$, we have
\[
	a(n) = \sum_{i, j} \left(c_{i, j, 0} \binom{n}{0} + c_{i, j, 1} \binom{n}{1} + \dots + c_{i, j, e_i - 1} \binom{n}{e_i - 1}\right) \rho_{i, j}^n
\]
for some elements $c_{i, j, j'}$ in the splitting field of $R$ over $\Q_p$.
Engstrom~\cite[Lemma~4]{Engstrom} implies that each $c_{i, j, j'}$ is a $p$-adic algebraic integer.
The appropriate generalization of Fermat's little theorem implies that $\rho_{i, j}^{p^{\deg R_i} - 1} \equiv 1 \mod p$.
Therefore the period length of $(\rho_{i, j}^n \bmod p)_{n \geq 0}$ divides $L$.
It remains to bound the period lengths modulo~$p$ of $\binom{n}{0}, \dots, \binom{n}{e_i - 1}$.
A result of Z\k{a}bek~\cite[Th\'eor\`eme~3]{Zabek} implies that the period length of $(\binom{n}{j'} \bmod p)_{n \geq 0}$ is $p^{\lfloor \log_p j' \rfloor + 1} \leq p^{\lfloor \log_p(e_i - 1) \rfloor + 1} \leq p^{\lfloor \log_p(e - 1) \rfloor + 1} = p^{\lceil \log_p e \rceil}$.
The result follows.
\end{proof}

\begin{theorem}[{Engstrom~\cite[Theorem~8]{Engstrom}}]\label{constant-recursive modulo prime power}
Let $T \in \ring[z]$ be a polynomial with $t \colonequal \deg T \geq 1$ such that the coefficients of $z^0$ and $z^t$ in $T$ are nonzero modulo~$p$.
Write $\frac{1}{T} = \sum_{n \geq 0} a(n) z^n$, and let $m$ be the period length of $(a(n) \bmod p)_{n \geq 0}$.
Then $a(n)_{n \geq 0}$ is periodic with period length dividing $p^{\alpha - 1} m$.
\end{theorem}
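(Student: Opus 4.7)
The plan is to translate the periodicity question into an equation in the quotient ring $R \colonequal \ring[z]/(T)$ and bound the multiplicative order of $z$. The central observation is the equivalence: the sequence $(a(n))_{n \geq 0}$ is purely periodic with period dividing $N$ if and only if $T$ divides $1 - z^N$ in $\ring[z]$, if and only if $z^N = 1$ in $R$. One direction is explicit: if $T A = 1 - z^N$ with $\deg A < N$, then $\frac{1}{T} = \frac{A}{1 - z^N} = A(1 + z^N + z^{2N} + \cdots)$ exhibits the coefficient sequence as purely periodic with period dividing $N$. The converse comes from clearing denominators in a purely periodic expansion of $1/T$. Pure periodicity of $(a(n))_{n \geq 0}$ itself holds because both $T(0)$ and the leading coefficient of $T$ are units in $\ring$, so the defining linear recurrence is bijective on the finite state space $\ring^t$ where $t = \deg T$; these same hypotheses also make $z$ a unit in $R$.

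Under this equivalence, the hypothesis that the period of $(a(n) \bmod p)_{n \geq 0}$ is $m$ translates to $z^m = 1$ in $R/pR \cong \F_p[z]/(T \bmod p)$. Lifting to $R$, we have $z^m = 1 + p u$ for some $u \in R$. The next step is to show $z^{m p^{\alpha - 1}} = 1$ in $R$ via a multiplicative version of Lemma~\ref{lifting-the-exponent}: if $w \in R$ satisfies $w \equiv 1 \pmod{p}$, then $w^{p^{\alpha - 1}} = 1$ in $R$. This is proved by an elementary induction using the binomial theorem: if $w \equiv 1 + p^k v \pmod{p^{k+1}}$, then in $(1 + p^k v)^p = \sum_{j = 0}^{p} \binom{p}{j} p^{k j} v^j$, every summand with $1 \leq j \leq p$ has $p$-adic valuation at least $k+1$, so $w^p \equiv 1 \pmod{p^{k+1}}$; iterating $\alpha - 1$ times starting from $k = 1$ yields $w^{p^{\alpha - 1}} \equiv 1 \pmod{p^\alpha}$.

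Applying this with $w = z^m$ gives $z^{m p^{\alpha - 1}} = 1$ in $R$, and by the opening equivalence this says $T$ divides $1 - z^{m p^{\alpha - 1}}$ in $\ring[z]$, equivalently that $(a(n) \bmod p^\alpha)_{n \geq 0}$ is purely periodic with period length dividing $m p^{\alpha - 1}$. The main subtlety is establishing pure, rather than merely eventual, periodicity on both sides of the equivalence; this is where the unit hypothesis on both the constant and leading coefficients of $T$ is essential. Once that is secured, the rest of the proof is a clean application of the binomial theorem inside the finite ring $R$.
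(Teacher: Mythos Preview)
The paper does not supply its own proof of this theorem; it is quoted directly from Engstrom and used as a black box. Your argument is therefore not a reproof of something in the paper but a standalone proof, and it is correct.

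A few remarks on the approach. Your translation of ``period divides $N$'' into ``$z^N = 1$ in $R = \ring[z]/(T)$'' is valid precisely because the leading coefficient of $T$ is a unit in $\ring$ (so division with remainder by $T$ works and the ideal $(T)$ behaves well), and your justification of \emph{pure} periodicity via the bijectivity of the companion map on $\ring^{t}$ is exactly the right use of the hypothesis that both extreme coefficients are units. The lifting step --- $w \equiv 1 \pmod{p^k}$ implies $w^p \equiv 1 \pmod{p^{k+1}}$ for $k \geq 1$ --- is the multiplicative analogue of the paper's Lemma~\ref{lifting-the-exponent} and goes through in any commutative ring; the only point worth a line of care is the $p=2$, $k=1$ case, where the top binomial term $(p^k v)^p = p^{pk} v^p$ has valuation exactly $pk = 2 = k+1$, still sufficient.

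In short, your proof is a clean, modern rendering of the result, arguably more transparent than the classical formulation via explicit linear recurrences; it fits naturally with the lifting-the-exponent philosophy the paper already relies on elsewhere.
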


\begin{example}\label{Fibonacci mod 4}
Let $p = 2$, $\alpha = 2$, and $R = -z^2 - z + 1 \in \mathcal{R}_4[z]$.
Let $T \colonequal R^{p^{\alpha - 1}} = (-z^2 - z + 1)^2$.
Since $R \bmod 2$ is irreducible, the quantities in Theorem~\ref{constant-recursive modulo prime} are $e = 2$ and $L = 2^2 - 1 = 3$.
Theorem~\ref{constant-recursive modulo prime} implies that the coefficient sequence of $\frac{1}{T} \bmod p$ is periodic with period length $m$ dividing $p^{\lceil \log_p e \rceil} L = 2^1 \cdot 3 = 6$, and in fact the period length is $m = 6$.
Theorem~\ref{constant-recursive modulo prime power} then implies that the coefficient sequence of $\frac{1}{T}$ is periodic with period length dividing $p^{\alpha - 1} m = 12$, and in fact the period length is $12$.
\end{example}

We will also use the following lemma, which is interesting in its own right.
It establishes that the period of the coefficient sequence of the series $\frac{1}{T}$ ends with zeros.
For example, consider $T = -z^2 - z + 1 \in \Q[z]$, whose coefficient sequence of $\frac{1}{T}$ is the shifted Fibonacci sequence $1, 1, 2, 3, 5, 8, \dots$;
the following lemma implies that its period modulo~$p$ ends with exactly $1$ zero.

\begin{lemma}\label{periodic series zeros}
Let $T \in \ring[z]$ be a polynomial with $t \colonequal \deg T \geq 1$.
If the coefficients of $z^0$ and $z^t$ in $T$ are nonzero modulo~$p$, then the coefficient sequence of $\frac{1}{T}$ is periodic, and its period ends with exactly $t - 1$ zeros.
\end{lemma}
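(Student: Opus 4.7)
My plan is to first establish pure periodicity of the coefficient sequence $(a(n))_{n\geq 0}$, and then read off the period-ending zeros from a simple generating-function identity. Since the leading coefficient $t_t$ and the constant coefficient $t_0$ of $T$ are units in $\ring$ (being nonzero modulo $p$), the order-$t$ linear recurrence $\sum_{i=0}^{t} t_i\, a(n-i) = 0$ that $(a(n))$ satisfies for $n\geq t$ can be run both forward and backward. The forward shift on the state $(a(n),\ldots,a(n+t-1))$ is therefore a bijection on the finite set $\ring^t$, so every state lies on a cycle and the sequence is purely periodic starting from $n=0$.

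Let $m$ denote the least period. Pure periodicity gives the rational expression
\[
\frac{1}{T(z)} \;=\; \sum_{n\geq 0} a(n)\, z^n \;=\; \frac{A(z)}{1-z^m}, \qquad A(z) \colonequal \sum_{n=0}^{m-1} a(n)\, z^n,
\]
and clearing denominators yields the polynomial identity $T(z)\, A(z) = 1 - z^m$ in $\ring[z]$. Note that $A\neq 0$, since its constant coefficient is $a(0) = t_0^{-1} \neq 0$.

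To finish, I compare degrees. Because $t_t$ is a unit in $\ring$, there is no leading-term cancellation in the product $TA$, so $\deg(TA) = t + \deg A$, forcing $\deg A = m - t$ (in particular $m \geq t$). Since $\deg A \leq m-1$ with equality exactly when $a(m-1)\neq 0$, the identity $\deg A = m-t$ is precisely the statement that $a(m-t+1) = \cdots = a(m-1) = 0$. Matching the coefficient of $z^m$ on the two sides of $T(z)A(z) = 1 - z^m$ then gives $t_t \cdot a(m-t) = -1$, hence $a(m-t) = -t_t^{-1}$, which is nonzero modulo $p$. Thus the period ends with exactly $t-1$ zeros. The only delicate point in the argument is establishing pure (as opposed to eventual) periodicity, for which we genuinely need $t_t$ to be a unit; once that is in hand, everything else is a one-line degree count.
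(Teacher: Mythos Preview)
Your proof is correct, and it takes a genuinely different route from the paper's.

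Both arguments begin the same way: pure periodicity follows because the constant and leading coefficients of $T$ are units, so the shift map on the state vector $(a(n),\ldots,a(n+t-1))\in\ring^t$ is a bijection of a finite set. After that, the paper proceeds iteratively: it substitutes $n=m-1,m-2,\ldots,m-t$ into the recurrence $\sum_{i=0}^t c_i\,a(n+t-i)=0$, using periodicity and the initial relations $\sum_{i=0}^{t-j} c_i\,a(t-j-i)=0$ to peel off one zero at a time, finally arriving at $c_t\,a(m-t)=-1$. You instead package the pure periodicity into the single polynomial identity $T(z)A(z)=1-z^m$ with $A(z)=\sum_{n=0}^{m-1}a(n)z^n$, and then a one-line degree comparison (using that $t_t$ is a unit, so there is no leading cancellation) forces $\deg A=m-t$, which simultaneously gives all $t-1$ trailing zeros and the lower bound $m\geq t$. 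Reading off the $z^m$ coefficient then recovers the same exact value $a(m-t)=-t_t^{-1}$ that the paper obtains.

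Your approach is shorter and more structural; the paper's step-by-step argument makes the role of the individual recurrence relations more explicit, which perhaps aligns better with how the lemma is later used (in Corollary~\ref{periodic series zeros - Laurent} one wants to track where the zeros sit after shifting by $z^{-i}$). Either way, the mathematical content is the same.
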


\begin{proof}
Write $T = \sum_{i = 0}^t c_i z^i$ and $\frac{1}{T} = \sum_{n \geq 0} a(n) z^n$.
The sequence $a(n)_{n \geq 0}$ is periodic by a standard argument using the invertibility of $c_0$ and $t\geq 1$.
In what follows, we use periodicity to relate the end of the period to the beginning of the sequence.
We iterate an argument which gives one zero at each step.

Comparing coefficients on both sides of the equation $T \sum_{n \geq 0} a(n) z^n = 1$, we obtain $c_0 a(0) = 1$ and
\begin{equation}\label{initial coefficients}
	c_{t - i} a(0) + c_{t - i - 1} a(1) + \dots + c_0 a(t - i) = 0
\end{equation}
for all $i$ in the range $1 \leq i \leq t - 1$.
Moreover, for all $n \geq 0$ we have the recurrence
\begin{equation}\label{recurrence}
	c_t a(n) + c_{t - 1} a(n + 1) + c_{t - 2} a(n + 2) + \dots + c_0 a(n + t) = 0.
\end{equation}
Let $m$ be the period length of $a(n)_{n \geq 0}$.

As $m \geq 1$, we can set $n = m - 1$ in Equation~\eqref{recurrence}.
The periodicity of $a(n)_{n \geq 0}$ and Equation~\eqref{initial coefficients} with $i = 1$ gives
\begin{align*}
	c_t a(m - 1)
	&= -c_{t - 1} a(m) - c_{t - 2} a(m + 1) - \dots - c_0 a(m - 1 + t) \\
	&= -c_{t - 1} a(0) - c_{t - 2} a(1) - \dots - c_0 a(t - 1) \\
	&= 0.
\end{align*}
Since $c_t$ is invertible, we have $a(m - 1) = 0$.
As $a(0) \neq 0$, this implies $m \geq 2$.

Therefore, we can set $n = m - 2$ in Equation~\eqref{recurrence}.
Periodicity, $a(m - 1) = 0$, and Equation~\eqref{initial coefficients} with $i = 2$ gives
\begin{align*}
	c_t a(m - 2)
	&= -c_{t - 1} a(m - 1) - c_{t - 2} a(m) - \dots - c_0 a(m - 2 + t) \\
	&= 0 - c_{t - 2} a(0) - \dots - c_0 a(t - 2) \\
	&= 0.
\end{align*}
Iterating this argument for $i \in \{3, 4, \dots, t - 2\}$, we obtain $a(n) = 0$ for all $n$ satisfying $m - (t - 2) \leq n \leq m - 1$ and $m \geq t - 1$.

Setting $n = m - (t - 1)$ and $i = t - 1$ gives
\begin{align*}
	c_t a(m - t + 1)
	&= -c_{t - 1} a(m - t + 2) - \dots - c_2 a(m - 1) - c_1 a(m) - c_0 a(m + 1) \\
	&= 0 + \dots + 0 - c_1 a(0) - c_0 a(1) \\
	&= 0.
\end{align*}
Therefore $a(n) = 0$ for all $n$ satisfying $m - (t - 1) \leq n \leq m - 1$ and $m \geq t$.

Finally, setting $n = m - t$ gives
\begin{align*}
	c_t a(m - t)
	&= -c_{t - 1} a(m - t + 1) - \dots - c_1 a(m - 1) - c_0 a(m) \\
	&= 0 + \dots + 0 - c_0 a(0) \\
	&= -1,
\end{align*}
leaving exactly $t - 1$ zeros at the end of the period of $a(n)_{n \geq 0}$.
\end{proof}

The following corollary of Lemma~\ref{periodic series zeros} tells us that certain Laurent polynomials produce periodic series expansions.

\begin{corollary}\label{periodic series zeros - Laurent}
Let $T \in \ring[z]$ such that $t \colonequal \deg T \geq 1$ and the coefficients of $z^0$ and $z^t$ in $T$ are nonzero modulo~$p$.
For all $i$ in the range $0 \leq i \leq t - 1$, the coefficient sequence of $\frac{1}{z^{-i} T}$ is periodic, and its period begins with $i$ zeros and ends with $t - 1 - i$ zeros.
\end{corollary}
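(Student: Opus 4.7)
The key observation is that $\frac{1}{z^{-i} T} = \frac{z^i}{T}$, so if we write $\frac{1}{T} = \sum_{n \geq 0} a(n) z^n$, then the coefficient sequence of $\frac{1}{z^{-i}T}$ is the sequence $b(n)_{n \geq 0}$ defined by $b(n) = 0$ for $0 \leq n < i$ and $b(n) = a(n - i)$ for $n \geq i$. Thus the corollary reduces to a routine shift argument applied to the output of Lemma~\ref{periodic series zeros}.

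The plan is as follows. First, apply Lemma~\ref{periodic series zeros} to $T$: it tells us that $a(n)_{n \geq 0}$ is purely periodic with some period length $m$, that $a(m - t) \neq 0$, and that $a(m - t + 1) = a(m - t + 2) = \cdots = a(m - 1) = 0$ (exactly $t - 1$ trailing zeros in a single period). Note also that $a(0) \neq 0$, since the $z^0$ coefficient of $T$ is a unit in $\ring$.

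Next, verify that $b$ is purely periodic with period $m$. For $n \geq i$ this is immediate from $b(n + m) = a(n + m - i) = a(n - i) = b(n)$. For $0 \leq n < i$, the value $n + m - i$ lies in $\{m - i, m - i + 1, \dots, m - 1\}$, and since $i \leq t - 1$ all these indices fall inside the block of trailing zeros of $a$'s period, giving $b(n + m) = 0 = b(n)$. So $b(n)_{n \geq 0}$ is purely periodic with period length (dividing) $m$.

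Finally, count zeros at the two ends of one period. The leading zeros are $b(0) = \cdots = b(i - 1) = 0$, and $b(i) = a(0) \neq 0$, giving exactly $i$ leading zeros. For the trailing zeros, write $b(m - 1 - k) = a(m - 1 - k - i)$ for $0 \leq k \leq m - 1 - i$; this vanishes precisely when $m - 1 - k - i$ lies in $\{m - t + 1, \dots, m - 1\}$, i.e.\ when $k \in \{0, 1, \dots, t - 2 - i\}$, which contributes $t - 1 - i$ values. The next index back gives $b(m - t + i) = a(m - t) \neq 0$, so there are exactly $t - 1 - i$ trailing zeros, as claimed. There is no real obstacle here; the only thing to be careful about is verifying that the condition $i \leq t - 1$ is exactly what is needed to keep the prepended zeros aligned with the trailing zero block of $a$'s period.
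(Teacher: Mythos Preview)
Your proof is correct and follows exactly the approach the paper intends: the corollary is stated without proof as an immediate consequence of Lemma~\ref{periodic series zeros} via the identity $\tfrac{1}{z^{-i}T} = z^i \cdot \tfrac{1}{T}$, and you carry out precisely this shift argument. One small point worth tightening: you note that $b$ has period length \emph{dividing} $m$, but then analyze the block $b(0),\dots,b(m-1)$ as ``one period''; to justify this you should observe that the minimal period of $b$ is in fact exactly $m$, since the tail $b(i),b(i+1),\dots$ coincides with $a(0),a(1),\dots$, which has minimal period $m$ by Lemma~\ref{periodic series zeros}.
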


We next combine Corollary~\ref{periodic series zeros - Laurent} with Engstrom's bounds to get information on the period lengths of the coefficient sequences of $\frac{1}{R \bmod p}$ and $\frac{1}{R^{p^{\alpha - 1}}}$ in the case that $R \in z^{-1} \ring[z]$.
These period lengths depend on $R \bmod p$.
Its \emph{factorization into irreducibles} is $(R \bmod p) = c z^{e_0} R_1^{e_1} \cdots R_k^{e_k}$, where $z, R_1, \dots, R_k \in \F_p[z]$ are distinct, monic, irreducible polynomials, $c \in \F_p$, $e_0 \geq -1$, and $e_i \geq 1$ for all $i \in \{1, \dots, k\}$.
If $(R \bmod p) \neq 0$, define $\deg(R \bmod p)$ to be the largest exponent of $z$ with a nonzero coefficient in the expansion of $R \bmod p$ in the monomial basis.
Finally, let $\nu_p(m)$ denote the $p$-adic valuation of $m$.

\begin{corollary}\label{period length comparison}
Let $R \in z^{-1} \ring[z]$ be a nonzero Laurent polynomial.
Factor $(R \bmod p) = c z^{e_0} R_1^{e_1} \cdots R_k^{e_k}$ into irreducibles.
Suppose that $e_0 \in \{-1, 0\}$ and $\deg(R \bmod p) \geq 1$.
Then the coefficient sequence of $\frac{1}{R \bmod p}$ is periodic, and its period length $m$ satisfies $\nu_p(m) \leq \ceil{\log_p \max(e_1, \dots, e_k)}$.
Moreover, the coefficient sequence of $\frac{1}{R^{p^{\alpha - 1}}}$ is periodic, and its period length divides $p^{2 (\alpha - 1)} m$.
\end{corollary}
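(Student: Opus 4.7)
My overall plan is to reduce both claims to the two Engstrom-type theorems (Theorems~\ref{constant-recursive modulo prime} and \ref{constant-recursive modulo prime power}) after cleanly separating the monomial factor $z^{e_0}$, the only subtlety being the compatibility between periodicity and monomial shifts. Set $\tilde R \colonequal R_1^{e_1} \cdots R_k^{e_k} \in \F_p[z]$, so that $R \bmod p = c z^{e_0} \tilde R$ with $\tilde R(0) \neq 0$ and $\deg \tilde R = \deg(R \bmod p) - e_0 \geq 1$.

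For the first claim, I would apply Theorem~\ref{constant-recursive modulo prime} directly to $\tilde R$ to see that the period length of $1/\tilde R$ divides $p^{\ceil{\log_p \max(e_1, \ldots, e_k)}} L$, where $L \colonequal \lcm_i(p^{\deg R_i} - 1)$ is coprime to $p$. Since $1/(R \bmod p) = c^{-1} z^{-e_0}/\tilde R$ differs from $1/\tilde R$ by a scalar and a shift of $-e_0 \in \{0,1\}$ positions, a short argument shows that the two sequences have equal period length---provided that, when $e_0 = -1$, the single prepended zero lines up with a trailing zero in the period of $1/\tilde R$. This is exactly what Lemma~\ref{periodic series zeros} provides, yielding $\deg \tilde R - 1 \geq 1$ trailing zeros in this case. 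Hence $m$ equals the period length of $1/\tilde R$, and the desired bound on $\nu_p(m)$ follows.

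For the second claim, Lemma~\ref{lifting-the-exponent} gives $R^{p^{\alpha-1}} \equiv c^{p^{\alpha-1}} z^{e_0 p^{\alpha-1}} \tilde R^{p^{\alpha-1}} \mod p^\alpha$, so $R^{p^{\alpha-1}}$ factors in $\ring[z, z^{-1}]$ as $z^{e_0 p^{\alpha-1}} T$ with $T \in \ring[z]$ having both constant term and leading coefficient nonzero modulo~$p$. Theorem~\ref{constant-recursive modulo prime power} then bounds the period length of $1/T$ by $p^{\alpha-1} m'$, where $m'$ is the period length of $1/(T \bmod p) = 1/(c^{p^{\alpha-1}} \tilde R^{p^{\alpha-1}})$ in $\F_p$. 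The key step is the divisibility $m' \mid p^{\alpha-1} m$, which I would obtain from the Frobenius identity in $\F_p[z]$: since $\tilde R \mid z^m - 1$, raising to the $p^{\alpha-1}$th power yields $\tilde R^{p^{\alpha-1}} \mid (z^m - 1)^{p^{\alpha-1}} = z^{m p^{\alpha-1}} - 1$, so $m p^{\alpha-1}$ is a period of $1/\tilde R^{p^{\alpha-1}}$. Chaining the two divisibilities, the period length of $1/T$ divides $p^{2(\alpha-1)} m$.

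Finally, transferring this to $1/R^{p^{\alpha-1}} = z^{-e_0 p^{\alpha-1}}/T$ is another shift argument: when $e_0 = -1$, multiplication by $z^{p^{\alpha-1}}$ prepends $p^{\alpha-1}$ zeros to the coefficient sequence, and these must align with trailing zeros in the period of $1/T$. Corollary~\ref{periodic series zeros - Laurent} supplies $\deg T - 1 = (d+1) p^{\alpha-1} - 1 \geq p^{\alpha-1}$ trailing zeros, which suffices. I expect the Frobenius divisibility step to be the conceptual heart of the argument, while the main technical nuisance will be the shift-consistency bookkeeping at each stage, uniformly handled by the trailing-zero lemmas.
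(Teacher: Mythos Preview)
Your proposal is correct and follows essentially the same route as the paper: both reduce to Theorems~\ref{constant-recursive modulo prime} and~\ref{constant-recursive modulo prime power}, handle the monomial factor $z^{e_0}$ via the trailing-zero Lemma~\ref{periodic series zeros}/Corollary~\ref{periodic series zeros - Laurent}, and invoke Frobenius to obtain the key divisibility $m' \mid p^{\alpha-1} m$ (the paper phrases this as the dilation $\frac{1}{R(z)^p} \equiv \frac{1}{R(z^p)} \bmod p$, you as $\tilde R \mid z^m - 1 \Rightarrow \tilde R^{p^{\alpha-1}} \mid z^{mp^{\alpha-1}} - 1$, which are equivalent). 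Your explicit factorization $R^{p^{\alpha-1}} \equiv z^{e_0 p^{\alpha-1}} T$ with $T$ a genuine polynomial is arguably slightly cleaner, since the paper applies Theorem~\ref{constant-recursive modulo prime power} directly to $T = R^{p^{\alpha-1}}$, which is a Laurent polynomial when $e_0 = -1$; one minor quibble is that your symbol ``$d$'' in the final paragraph is undefined (the paper uses $r = \deg(R \bmod p)$).
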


\begin{proof}
First we address the series $\frac{1}{R \bmod p}$.
If $e_0 = 0$, then Theorem~\ref{constant-recursive modulo prime} already tells us that its coefficient sequence is periodic and that its period length $m$ satisfies $\nu_p(m) \leq \ceil{\log_p \max(e_1, \dots, e_k)}$.
If $e_0 = -1$, we write $\frac{1}{R \bmod p} = \frac{1}{z^{-1}(z R \bmod p)}$ and apply Corollary~\ref{periodic series zeros - Laurent} to conclude that the coefficient sequence of $\frac{1}{R \bmod p}$ is periodic with the same period length as $\frac{1}{zR \bmod p}$;
since $zR$ is a polynomial, the period length $m$ satisfies $\nu_p(m) \leq \ceil{\log_p \max(e_1, \dots, e_k)}$ by Theorem~\ref{constant-recursive modulo prime}.

Next we show that the coefficient sequence of $\frac{1}{R^{p^{\alpha - 1}}}$ is periodic.
Since the coefficient of $z^{e_0}$ is nonzero in $R \bmod p$ and $e_0 \leq 0$, we have a power series expansion for $\frac{1}{R^{p^{\alpha - 1}}}$.
Let $r \colonequal \deg(R \bmod p)$.

If $e_0 = 0$, we apply Lemma~\ref{lifting-the-exponent} to see that $\frac{1}{R^{p^{\alpha - 1}}} \equiv \frac{1}{(R \bmod p)^{p^{\alpha - 1}}} \mod p^\alpha$.
Since the leading coefficient of $R\bmod p$ is nonzero modulo~$p$, the leading coefficient of $(R \bmod p)^{p^{\alpha - 1}}$ is nonzero modulo~$p^\alpha$.
Then, since $r \geq 1$, the coefficient sequence of $\frac{1}{(R \bmod p)^{p^{\alpha - 1}}}$ is periodic.
This implies that the coefficient sequence of $\frac{1}{R^{p^{\alpha - 1}}}$ is periodic.

If $e_0 = -1$, (using Lemma~\ref{lifting-the-exponent} again) we apply Lemma~\ref{periodic series zeros} to $T \colonequal (zR)^{p^{\alpha - 1}} = (z R \bmod p)^{p^{\alpha - 1}}$, whose degree is $p^{\alpha - 1} (r + 1)$.
Since $r \geq 1$, we deduce that the period of the coefficient sequence of $\frac{1}{T}$ ends with $p^{\alpha - 1} (r + 1) - 1 \geq p^{\alpha - 1}$ zeros.
As $R^{p^{\alpha - 1}} = z^{-p^{\alpha - 1}} T$, we can apply Corollary~\ref{periodic series zeros - Laurent} with $i = p^{\alpha - 1}$ to conclude that the coefficient sequence of $\frac{1}{R^{p^{\alpha - 1}}}$ is periodic.

Finally we bound the period length of $\frac{1}{R^{p^{\alpha - 1}}}$.
To do this, we first bound the period length of $\frac{1}{R^{p^{\alpha - 1}}} \bmod p$.
Since $\F_p$ has characteristic~$p$, we have $\frac{1}{R(z)^p} \equiv \frac{1}{R(z^p)} \mod p$, so that raising $\frac{1}{R}$ to the power $p$ causes its coefficient sequence modulo~$p$ to dilate by a factor of $p$.
Iterating, we get $\frac{1}{R(z)^{p^{\alpha - 1}}} \equiv \frac{1}{R(z^{p^{\alpha - 1}})} \mod p$.
Therefore the period length of $\frac{1}{R^{p^{\alpha - 1}}} \bmod p$ divides $p^{\alpha - 1} m$.

Now we use the period length of $\frac{1}{R^{p^{\alpha - 1}}} \bmod p$ to bound the period length of $\frac{1}{R^{p^{\alpha - 1}}}$ modulo~$p^\alpha$.
We apply Theorem~\ref{constant-recursive modulo prime power} with $T = R^{p^{\alpha - 1}}$ to see that the period length of the coefficient sequence of $\frac{1}{R^{p^{\alpha - 1}}}$ divides $p^{2 (\alpha - 1)} m$.
\end{proof}

\section{Orbit size of a univariate Laurent polynomial under $\lambda_0$}\label{section: orbit size univariate - ring}

In this section, our goal is to prove Corollary~\ref{combined univariate orbit size upper bound - ring}, which tells us that the orbit under a univariate Cartier operator $\lambda_0$ is finite, and which gives bounds on the transient and period length of this orbit.

We begin with the following result, which lets us restrict attention to Laurent polynomials $S$ with $\deg S \leq p^{\alpha - 1} \deg(R \bmod p)$ in Theorems~\ref{square-free orbit size upper bound - ring - positive degree}, \ref{univariate orbit size upper bound - ring}, and \ref{square-free orbit size upper bound - ring}.
The proof uses the fact that if $f(x) = \floor{\frac{x - p^{\alpha - 1} r}{p}} + p^{\alpha - 1} r$, then, for every $x \geq p^{\alpha - 1} r$ and $n \geq \floor{\log_p(x - p^{\alpha - 1} r)} + 1$, we have $f^n(x) = p^{\alpha - 1} r$.

\begin{lemma}\label{fixed point - ring}
Let $R \in z^{-1} \ring[z]$ be a Laurent polynomial, let $r = \deg(R \bmod p)$, and define $\lambda_0$ on $\ring[z,z^{-1}]$ by $\lambda_0(S) = \Lambda_0\!\paren{S R^{p^\alpha - p^{\alpha - 1}}}$.
Let $S \in \ring[z, z^{-1}]$, let $s = \deg S$, and suppose that $s>p^{\alpha - 1} r$.
If $n\geq \floor{\log_p(s - p^{\alpha - 1} r)} + 1$, then $\deg \lambda_0^n(S)\leq p^{\alpha - 1} r$.
\end{lemma}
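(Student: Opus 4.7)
The plan is to establish a one-step degree bound $\deg \lambda_0(S) \leq f(\deg S)$ for the function $f(x) = \lfloor (x - p^{\alpha-1} r)/p \rfloor + p^{\alpha-1} r$ from the hint, and then iterate using the monotonicity of $f$. The key tool for the one-step bound is Lemma~\ref{lifting-the-exponent}, which lets me control the degree of $R^{p^\alpha - p^{\alpha-1}}$ modulo~$p^\alpha$ in terms of $r = \deg(R \bmod p)$ rather than the degree of $R$ itself as a Laurent polynomial over $\ring$; these can a priori differ if some top-degree coefficient of $R$ happens to be divisible by $p$.

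First I would write $R^{p^\alpha - p^{\alpha-1}} = (R^{p-1})^{p^{\alpha-1}}$ and apply Lemma~\ref{lifting-the-exponent} with $\bar R = R \bmod p$ to conclude that $R^{p^\alpha - p^{\alpha-1}} \equiv (R \bmod p)^{p^\alpha - p^{\alpha-1}} \mod p^\alpha$. Consequently, as an element of $\ring[z, z^{-1}]$, the Laurent polynomial $R^{p^\alpha - p^{\alpha-1}}$ has degree at most $(p^\alpha - p^{\alpha-1}) r$. Multiplying by $S$ yields a Laurent polynomial of degree at most $s + (p^\alpha - p^{\alpha-1}) r$, and $\Lambda_0$ divides this degree by $p$ with a floor. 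Thus
\[
\deg \lambda_0(S) \leq \left\lfloor \frac{s + (p^\alpha - p^{\alpha-1}) r}{p} \right\rfloor = \left\lfloor \frac{s - p^{\alpha-1} r}{p} \right\rfloor + p^{\alpha-1} r = f(s),
\]
after the identity $s + (p^\alpha - p^{\alpha-1}) r = (s - p^{\alpha-1} r) + p^\alpha r$.

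Next I would note that $f$ is non-decreasing and that $f(p^{\alpha-1} r) = p^{\alpha-1} r$, so the half-line $\{x : x \leq p^{\alpha-1} r\}$ is forward-invariant. By induction on $n$, $\deg \lambda_0^n(S) \leq \max(f^n(s),\, p^{\alpha-1} r)$ for every $n \geq 0$: if $\deg \lambda_0^{n-1}(S) \leq p^{\alpha-1} r$ then the one-step bound keeps $\deg \lambda_0^n(S) \leq p^{\alpha-1} r$, while otherwise applying the one-step bound followed by monotonicity of $f$ gives $\deg \lambda_0^n(S) \leq f(\deg \lambda_0^{n-1}(S)) \leq f(f^{n-1}(s)) = f^n(s)$. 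Applying the fact from the hint with $x = s$ (which satisfies $s > p^{\alpha-1} r$ by hypothesis), we get $f^n(s) = p^{\alpha-1} r$ whenever $n \geq \lfloor \log_p(s - p^{\alpha-1} r) \rfloor + 1$, and the conclusion follows. I do not anticipate a genuine obstacle; the only delicate point is the appeal to Lemma~\ref{lifting-the-exponent} rather than a naive estimate $\deg(R^{p^\alpha - p^{\alpha-1}}) \leq (p^\alpha - p^{\alpha-1}) \deg R$, which would give a weaker bound whenever $\deg R > r$.
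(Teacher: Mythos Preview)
Your proposal is correct and follows exactly the approach the paper indicates: the paper states only the iterated-floor fact about $f$ before the lemma and omits the proof, and you have filled in precisely the two missing ingredients --- the one-step bound $\deg \lambda_0(S) \leq f(\deg S)$ via Lemma~\ref{lifting-the-exponent} (which is indeed the right tool here, as Proposition~\ref{lifting-the-exponent lambda} already uses it for the same purpose), and the straightforward induction using the monotonicity of $f$.
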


Our strategy is to transfer periodicity of a series expansion to eventual periodicity of the orbit under $\lambda_0$.
We first illustrate with an example.

\begin{example}
As in Example~\ref{Fibonacci mod 4}, let $p = 2$, $\alpha = 2$, and $R = -z^2 - z + 1 \in \mathcal{R}_4[z]$.
Write $\frac{1}{R^{p^{\alpha - 1}}} = \frac{1}{R^2} = \sum_{n \geq 0} a(n) z^n \in \mathcal{R}_4\doublebracket{z}$.
The sequence $a(n)_{n \geq 0}$ is periodic with period length~$12$.
In light of Lemma~\ref{fixed point - ring}, we consider Laurent polynomials $S \in \mathcal{R}_4[z]$ such that $-1 = 1-p^{\alpha - 1}\leq \mindeg S$ and $\deg S \leq p^{\alpha - 1} \deg(R \bmod p) = 4$.
Let $j \in \{-1, 0, 1, 2, 3, 4\}$, so that each monomial in $S$ is of the form $c \, z^j$.
By Proposition~\ref{Cartier - ring},
\[
	\lambda_0(z^j) = \Lambda_0(z^j R^2) = \Lambda_0(\tfrac{z^j}{R^2} R^4) = \Lambda_0(\tfrac{z^j}{R^2}) R^2.
\]
If $j = -1$, it can be verified that $\lambda_0(z^{-1}) = 0$.
For $j \in \{0, 1, 2, 3, 4\}$, we show that $\Lambda_0^4(\frac{z^j}{R^2}) = \Lambda_0^2(\frac{z^j}{R^2})$, so that
\[
	\lambda_0^4(z^j)
	= \Lambda_0^4(\tfrac{z^j}{R^2}) R^2
	= \Lambda_0^2(\tfrac{z^j}{R^2}) R^2
	= \lambda_0^2(z^j).
\]
Since $\frac{z^j}{R^2} = \sum_{n \geq j} a(n - j) z^n$ and $\Lambda_0^2(z^n) = 0$ if $n \nequiv 0 \mod 4$, we have
\[
	\Lambda_0^2\!\paren{\frac{z^j}{R^2}}
	= \Lambda_0^2\!\paren{\sum_{n \geq \ceil{j/4}} a(4 n - j) z^{4 n}}
	= \sum_{n \geq \ceil{j/4}} a(4 n - j) z^n.
\]
Similarly,
\[
	\Lambda_0^4\!\paren{\frac{z^j}{R^2}}
	= \Lambda_0^2\!\paren{\sum_{n \geq \ceil{j/4}} a(4 n - j) z^n}
	= \sum_{n \geq \ceil{j/16}} a(16 n - j) z^n.
\]
Since $4 n - j \equiv 16 n - j \mod 12$ and $\ceil{j/4} = \ceil{j/16}$ for $j \in \{0, 1, 2,3,4\}$, it follows that $\Lambda_0^4(\frac{z^j}{R^2}) = \Lambda_0^2(\frac{z^j}{R^2})$, as desired.
By linearity, this implies $\lambda_0^4(S) = \lambda_0^2(S)$ for all $S \in \mathcal{R}_4[z]$ with $-1\leq \mindeg S$ and $\deg S \leq 4$.
\end{example}

Let $\ord_m(p)$ be the eventual period length of the sequence $(p^n \bmod m)_{n \geq 0}$.
That is, $\ord_m(p)$ is the smallest integer $k \geq 1$ such that $p^{n + k} \equiv p^n \mod m$ for all sufficiently large $n$.
When $p$ and $m$ are relatively prime, $\ord_m(p)$ is the usual multiplicative order of $p$ modulo~$m$.
When $p$ and $m$ are not relatively prime, then $\ord_m(p) = \ord_{m'}(p)$ where $m = p^{\nu_p(m)} m'$.

Engstrom's Theorems~\ref{constant-recursive modulo prime} and \ref{constant-recursive modulo prime power} let us bound the eventual period length $m$ of $\frac{1}{R\bmod p}$, and in Corollary~\ref{period length comparison} we showed that the period length of the expansion of $\frac{1}{R^{p^{\alpha - 1}}}$ divides $p^{2 (\alpha - 1)} m$.
We will see that the orbit size under $\lambda_0$ is related to $m$.
Since $\Lambda_0^k\!\paren{\sum_{n \geq 0} a(n) z^n} = \sum_{n \geq 0} a(p^k n) z^n$, if $a(n)_{n \geq 0}$ has period length $m$ then $\ord_m(p)$ determines the generic orbit size under $\Lambda_0$.
We will use the next lemma to evaluate it.

\begin{lemma}\label{order}
Let $p \geq 2$ be an integer, let $r_1, \dots, r_k$ be positive integers, and let $L = \lcm_{1 \leq i \leq k} (p^{r_i} - 1)$.
Then $\ord_L(p) = \lcm(r_1, \dots, r_k)$.
\end{lemma}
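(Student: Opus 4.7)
The plan is to prove the equality by showing divisibility in both directions, using the elementary fact that $\ord_{p^r - 1}(p) = r$ for every positive integer $r$.

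First I would observe that since each $p^{r_i} - 1 \equiv -1 \mod p$ is coprime to $p$, the integer $L$ is coprime to $p$, so $\ord_L(p)$ is the usual multiplicative order. Set $r \colonequal \lcm(r_1, \dots, r_k)$. For the easier direction ($\ord_L(p) \mid r$), I would use that for each $i$ we have $r_i \mid r$, and hence $p^{r_i} - 1 \mid p^r - 1$ (from the factorization $p^r - 1 = (p^{r_i} - 1)(p^{r - r_i} + p^{r - 2 r_i} + \dots + 1)$). Taking the lcm over $i$ gives $L \mid p^r - 1$, so $p^r \equiv 1 \mod L$, which yields $\ord_L(p) \mid r$.

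For the reverse direction ($r \mid \ord_L(p)$), set $d \colonequal \ord_L(p)$, so $L \mid p^d - 1$. Since $p^{r_i} - 1 \mid L$, we get $p^{r_i} - 1 \mid p^d - 1$, that is, $p^d \equiv 1 \mod p^{r_i} - 1$. The claim is that this forces $r_i \mid d$. Writing $d = q r_i + s$ with $0 \leq s < r_i$, we obtain $p^s \equiv p^d \equiv 1 \mod p^{r_i} - 1$, which for $s < r_i$ forces $p^s - 1 = 0$ (since $0 \leq p^s - 1 < p^{r_i} - 1$), i.e.\ $s = 0$. Hence $r_i \mid d$ for every $i$, and therefore $r = \lcm(r_1, \dots, r_k) \mid d$.

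Combining the two divisibilities gives $\ord_L(p) = r = \lcm(r_1, \dots, r_k)$. There is no real obstacle here — the argument is entirely elementary; the only subtlety worth spelling out is checking that $L$ is prime to $p$ so that $\ord_L(p)$ matches the definition used in the paper (eventual period length of $(p^n \bmod L)_{n \geq 0}$) without any $p$-adic correction.
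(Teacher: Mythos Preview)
Your proof is correct and follows essentially the same two-direction divisibility argument as the paper: both show $\ord_L(p) \mid \lcm(r_1,\dots,r_k)$ via $L \mid p^{\lcm(r_i)} - 1$, and both show the reverse using that $p^{r_i} - 1 \mid p^d - 1$ forces $r_i \mid d$. You spell out the latter implication via the division algorithm, whereas the paper simply invokes the fact that $p^a - 1 \mid p^b - 1$ implies $a \mid b$, but the content is the same.
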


\begin{proof}
For each $i$, we have that $p^{r_i} - 1$ divides $p^{\lcm(r_1, \dots, r_k)} - 1$.
Therefore $L$ divides $p^{\lcm(r_1, \dots, r_k)} - 1$.
It follows that $p^{\lcm(r_1, \dots, r_k)} - 1 \equiv 0 \mod L$, and therefore $\ord_L(p)$ divides $\lcm(r_1, \dots, r_k)$.

It remains to show that $\lcm(r_1, \dots, r_k)$ divides $\ord_L(p)$.
For each $i$, the definition of $\ord_{p^{r_i} - 1}(p)$ implies $p^{\ord_{(p^{r_i} - 1)}(p)} \equiv 1 \mod p^{r_i} - 1$.
It follows that $p^{r_i} - 1$ divides $p^{\ord_{(p^{r_i} - 1)}(p)} - 1$.
Since $p^a - 1 \mid p^b - 1$ implies $a \mid b$, we have $r_i \mid \ord_{p^{r_i} - 1}(p)$.
By the definition of $L$, we have $p^{r_i} - 1 \mid L$, so this implies $r_i \mid \ord_L(p)$.
Every prime power dividing $\lcm(r_1, \dots, r_k)$ divides some $r_i$, so $\lcm(r_1, \dots, r_k)$ divides $\ord_L(p)$.
\end{proof}

The next several results establish the orbit size under $\lambda_0$ in various settings.
We start with Theorem~\ref{square-free orbit size upper bound - ring - positive degree}, where we assume that $\deg(R \bmod p)\geq 1$ and $\mindeg (R \bmod p)\in \{-1,0\}$.
Then we consider $\mindeg (R \bmod p)\geq 1$ in Theorem~\ref{univariate orbit size upper bound - ring}.
Finally in Theorems~\ref{square-free orbit size upper bound - ring} and \ref{univariate orbit size upper bound - ring - degree 0}, we deal with the remaining possibilities.
The statement of Corollary~\ref{combined univariate orbit size upper bound - ring} covers all cases.

\begin{theorem}\label{square-free orbit size upper bound - ring - positive degree}
Let $R \in z^{-1} \ring[z]$ be a nonzero Laurent polynomial.
Factor $(R \bmod p) = c z^{e_0} R_1^{e_1} \cdots R_k^{e_k}$ into irreducibles.
Suppose that $e_0 \in \{-1, 0\}$ and $\deg(R \bmod p) \geq 1$.
Let $\ell = \lcm(\deg R_1, \dots, \deg R_k)$.
Define $\lambda_0$ on $\ring[z, z^{-1}]$ by $\lambda_0(S) = \Lambda_0\!\paren{S R^{p^\alpha - p^{\alpha - 1}}}$.
If $S \in \ring[z, z^{-1}]$ with $1 - p^{\alpha - 1} \leq \mindeg S$ and $\deg S \leq p^{\alpha - 1} \deg(R \bmod p)$, then
\[
	\lambda_0^{n + \ell}(S) = \lambda_0^n(S)
\]
for all $n \geq \ceil{\log_p \max(e_1, \dots, e_k)} + 2 (\alpha - 1)$.
\end{theorem}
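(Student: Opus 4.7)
The plan is to reduce the periodicity of the orbit $\{\lambda_0^n(S)\}_{n \geq 0}$ to periodicity of the coefficient sequence of the Laurent series $G \colonequal S/R^{p^{\alpha-1}}$. Iterating Proposition~\ref{Cartier - ring}, one obtains $\lambda_0^n(S) = R^{p^{\alpha-1}} \Lambda_0^n(G)$, so the claim $\lambda_0^{n+\ell}(S) = \lambda_0^n(S)$ is equivalent to $\Lambda_0^{n+\ell}(G) = \Lambda_0^n(G)$. Writing $G = \sum_i c(i) z^i$, the coefficient of $z^t$ in $\Lambda_0^n(G)$ is $c(p^n t)$, so the goal becomes $c(p^n t) = c(p^{n+\ell} t)$ for every $t \in \Z$. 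The case $t = 0$ is trivial, and for $t \leq -1$ both sides vanish once $n \geq \alpha - 1$, since then $p^n t$ drops below $\mindeg G \geq 1 - p^{\alpha-1}$.

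Under the hypothesis $\deg S \leq p^{\alpha-1} r$ with $r \colonequal \deg(R \bmod p)$, I claim that $c(i)$ is periodic with period $m'$ on the whole range $i \geq 1$, where $m'$ is the period length of the coefficient sequence $b(i)$ of $1/R^{p^{\alpha-1}}$. The naive convolution argument only yields periodicity of $c$ past a threshold comparable to $\deg S$, which is too weak. To sharpen it, I would exploit Lemma~\ref{periodic series zeros} together with Corollary~\ref{periodic series zeros - Laurent}: each period of $b$ begins with $p^{\alpha-1}$ zeros (in the case $e_0 = -1$) and ends with $p^{\alpha-1} r - 1$ zeros, producing a contiguous zero block of length at least $p^{\alpha-1}(r+1) - 1$ around each multiple of $m'$ in the periodic extension $\tilde b$. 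Writing $c(i) = \sum_j s_j b(i - j)$ with $j \leq \deg S \leq p^{\alpha-1} r$, every summand for which $i - j$ falls below the support of $b$ has its shifted index inside this zero block, so those summands contribute nothing to $\tilde c(i) \colonequal \sum_j s_j \tilde b(i - j)$ either. A direct calculation confirms this works precisely once $i \geq 1$, so $c(i) = \tilde c(i)$ is periodic of period $m'$ on that range.

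With this in hand, for $t \geq 1$ the equalities $c(p^n t) = c(p^{n+\ell} t)$ reduce to the single congruence $p^{n+\ell} \equiv p^n \pmod{m'}$. Writing $m' = p^a m''$ with $\gcd(m'', p) = 1$, this requires $n \geq a$ and $p^\ell \equiv 1 \pmod{m''}$. Corollary~\ref{period length comparison} gives $m' \mid p^{2(\alpha-1)} m$ with $\nu_p(m) \leq \ceil{\log_p \max(e_1, \dots, e_k)}$, so $a \leq 2(\alpha - 1) + \ceil{\log_p \max(e_1, \dots, e_k)}$; and since the $p$-coprime part of $m$ divides $L \colonequal \lcm(p^{\deg R_1} - 1, \dots, p^{\deg R_k} - 1)$ by Theorem~\ref{constant-recursive modulo prime}, Lemma~\ref{order} gives $p^\ell \equiv 1 \pmod L$ and hence modulo $m''$. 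Combining delivers the desired threshold. The main obstacle is the zero-block step in the second paragraph --- without it the threshold would pick up an unwanted $\log_p r$ term --- while the remaining work is largely bookkeeping, treating the cases $e_0 \in \{-1, 0\}$ in parallel.
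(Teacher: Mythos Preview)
Your proposal is correct and follows essentially the same approach as the paper: both reduce via Proposition~\ref{Cartier - ring} to showing $\Lambda_0^{n+\ell}(S/R^{p^{\alpha-1}}) = \Lambda_0^n(S/R^{p^{\alpha-1}})$, invoke Corollary~\ref{period length comparison} for the period of $1/R^{p^{\alpha-1}}$, and use the end-of-period zeros from Lemma~\ref{periodic series zeros}/Corollary~\ref{periodic series zeros - Laurent} to eliminate the boundary discrepancy that would otherwise cost an extra $\log_p r$ in the threshold. The only difference is organizational: the paper reduces to monomials $S = z^j$ and shows the difference sum in its Equation~\eqref{equation: Cartier difference} vanishes term by term, whereas you keep $S$ general and phrase the same zero-block cancellation as periodicity of the convolved sequence $c(i)$ on $i \geq 1$; the underlying mechanism and the supporting lemmas are identical.
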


In particular, the eventual period length of the orbit is independent of $\alpha$.

\begin{proof}
Let $r \colonequal \deg(R \bmod p)$.
By Corollary~\ref{period length comparison}, the coefficient sequence of $\frac{1}{R \bmod p}$ is periodic, and its period length $m$ satisfies $\nu_p(m) \leq \ceil{\log_p \max(e_1, \dots, e_k)}$.
Let $t \colonequal \nu_p(m)+ 2 (\alpha - 1)$.
We will show that if $n\geq t$, then $\lambda_0^{n + \ord_m(p)}(S) = \lambda_0^n(S)$.
Then, by Theorem~\ref{constant-recursive modulo prime}, $m$ divides $p^{\lceil \log_p \max(e_1, \dots, e_k) \rceil} L$, where $L = \lcm_{1 \leq i \leq k} (p^{\deg R_i} - 1)$.
Since $\gcd(L, p) = 1$, we have $\ord_m(p) = \ord_L(p)$.
Applying Lemma~\ref{order} with $r_i = \deg R_i$ gives that $\ord_m(p) = \ell$, and the theorem statement will follow.

Corollary~\ref{period length comparison} also tells us that the coefficient sequence of $\frac{1}{R^{p^{\alpha - 1}}}$ is periodic with period length dividing $p^{2 (\alpha - 1)} m$.
Therefore $\Lambda_0^t\!\paren{\frac{1}{R^{p^{\alpha - 1}}}}$ has period length dividing $m$.
Write $\frac{1}{R^{p^{\alpha - 1}}} = \sum_{n \geq 0} a(n) z^n \in \ring\doublebracket{z}$.
The period of $a(n)_{n \geq 0}$ ends with exactly $p^{\alpha - 1} r - 1$ zeros; this follows from Corollary~\ref{periodic series zeros - Laurent} when $e_0 = 0$ and is proved in the proof of Corollary~\ref{period length comparison} when $e_0 = -1$.
In other words, we have
\begin{equation}\label{zeros}
	a(p^{2 (\alpha - 1)} m - i) = 0
\end{equation}
for all $i \in \{1, \dots, p^{\alpha - 1} r - 1\}$.

Let $j \in \{1 - p^{\alpha - 1}, \dots, p^{\alpha - 1} r\}$.
By Proposition~\ref{Cartier - ring},
\begin{equation}\label{equation: monomial image}
	\lambda_0(z^j)
	= \Lambda_0\!\left(z^j R^{p^\alpha - p^{\alpha - 1}}\right)
	= \Lambda_0\!\left(\frac{z^j}{R^{p^{\alpha - 1}}}\right) R^{p^{\alpha - 1}}.
\end{equation}
Therefore, by iterating, $\lambda_0^n(z^j) = \Lambda_0^n\!\left(\frac{z^j}{R^{p^{\alpha - 1}}}\right) R^{p^{\alpha - 1}}$ for all $n \geq 0$.
We show
\[
	\Lambda_0^{t + \ord_m(p)}\!\left(\frac{z^j}{R^{p^{\alpha - 1}}}\right)
	= \Lambda_0^t\!\left(\frac{z^j}{R^{p^{\alpha - 1}}}\right);
\]
this implies $\lambda_0^{t + \ord_m(p)}(z^j) = \lambda_0^t(z^j)$, and the statement will follow from the linearity of $\lambda_0$.
Since $\Lambda_0^k(z^n) = 0$ if $n \nequiv 0 \mod p^k$, we have
\begin{align*}
	\Lambda_0^t\!\paren{\frac{z^j}{R^{p^{\alpha - 1}}}}
	&= \Lambda_0^t\!\paren{\sum_{n \geq j} a(n - j) z^n}
	= \Lambda_0^t\!\paren{\sum_{n \geq \ceil{j/p^t}} a(p^t n - j) z^{p^t n}} \\
	&= \sum_{n \geq \ceil{j/p^t}} a(p^t n - j) z^n.
\end{align*}
Similarly,
\[
	\Lambda_0^{t + \ord_m(p)}\!\paren{\frac{z^j}{R^{p^{\alpha - 1}}}}
	= \sum_{n \geq \ceil{j/p^{t + \ord_m(p)}}} a(p^{t + \ord_m(p)} n - j) z^n.
\]

Recall that $a(n)_{n \geq 0}$ is periodic with period length dividing $p^{2 (\alpha - 1)} m$.
Since $p^{t + \ord_m(p)} \equiv p^t \mod p^{2 (\alpha - 1)} m$, we have $a(p^{t + \ord_m(p)} n - j) = a(p^t n - j)$ for all $n$ such that $p^t n - j\geq 0$, that is, when $n \geq \ceil{j/p^t}$.
Therefore
\begin{equation}\label{equation: Cartier difference}
	\Lambda_0^{t + \ord_m(p)}\!\left(\frac{z^j}{R^{p^{\alpha - 1}}}\right) - \Lambda_0^t\!\left(\frac{z^j}{R^{p^{\alpha - 1}}}\right)
	= \sum_{n = \ceil{j/p^{t + \ord_m(p)}}}^{\ceil{j/p^t} - 1} a(p^{t + \ord_m(p)} n - j) z^n.
\end{equation}
If $j \leq 1$, then this sum is empty and therefore $0$.
So assume $j \in \{2, \dots, p^{\alpha - 1} r\}$.
It remains to show that $a(p^{t + \ord_m(p)} n - j) = 0$ for all $n$ in the range of summation of the sum in Equation~\eqref{equation: Cartier difference}.

Take $n$ in the range of summation $\{\ceil{j/p^{t + \ord_m(p)}},\dots, \ceil{j/p^t} - 1\}$ in Equation~\eqref{equation: Cartier difference}.
We have $j - n \in \{j+1 - \ceil{j/p^t}, \dots, j- \ceil{j/p^{t + \ord_m(p)}}\}$.
Since $2 \leq j \leq p^{\alpha - 1} r$, it follows with generosity that $j - n \in \{1, \dots, p^{\alpha - 1} r - 1\}$.
The period length of $a(n)_{n \geq 0}$ divides $p^{2 (\alpha - 1)} m$, so $a(p^{2 (\alpha - 1)} m n + n - j) = a(p^{2 (\alpha - 1)} m - (j - n))$.
Since $a(p^{2 (\alpha - 1)} m - (j - n)) = 0$ by Equation~\eqref{zeros}, this implies $a(p^{2 (\alpha - 1)} m n + n - j) = 0$, as desired.
\end{proof}

Next we show that, if $R$ is a polynomial and is divisible by $z$, then elements sufficiently far out in orbits under $\lambda_0$ are also divisible by a certain power of $z$.

\begin{proposition}\label{power of y transient size upper bound - ring}
Let $R \in \ring[z]$ be a nonzero polynomial such that $(R \bmod p) = z^{e_0} G$ for some $G \in \F_p[z]$ where $e_0 \geq 1$ and $G$ is not divisible by $z$.
If $S \in \ring[z, z^{-1}]$ with $1 - p^{\alpha - 1} \leq \mindeg S$ and $\deg S \leq p^{\alpha - 1} \deg(R \bmod p)$, then $\lambda_0^n(S)$ is a polynomial for all $n \geq 1$.
Moreover, the polynomial $\lambda_0^n(S)$ is divisible by $z^{p^{\alpha - 1} e_0}$ for all $n \geq \floor{\log_p e_0} + \alpha$.
\end{proposition}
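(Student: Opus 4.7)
The plan is to use Lemma~\ref{lifting-the-exponent} to simplify $R^{p^\alpha - p^{\alpha - 1}}$ modulo $p^\alpha$, and then to track the minimum $z$-degree of $\lambda_0^n(S)$ as $n$ grows.

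First I would lift $G \in \F_p[z]$ to $\tilde G \in \ring[z]$, so that $R \equiv z^{e_0} \tilde G \mod p$. Applying Lemma~\ref{lifting-the-exponent} and raising to the $(p-1)$st power gives $R^{p^\alpha - p^{\alpha - 1}} \equiv z^{p^{\alpha - 1} e_0 (p - 1)} \, \tilde G^{p^{\alpha - 1}(p - 1)} \mod p^\alpha$, so
\[
    \lambda_0(S) = \Lambda_0\!\paren{S \cdot z^{p^{\alpha - 1} e_0 (p - 1)}\, \tilde G^{p^{\alpha - 1}(p - 1)}}.
\]
The argument of $\Lambda_0$ has minimum degree at least $\mindeg S + p^{\alpha - 1} e_0 (p - 1) \geq (1 - p^{\alpha - 1}) + p^{\alpha - 1} e_0 (p - 1) = 1 + p^{\alpha - 1}(e_0 (p - 1) - 1) \geq 1$, using $e_0 \geq 1$ and $p \geq 2$. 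Hence it is a polynomial, and $\Lambda_0$ preserves polynomiality, so $\lambda_0(S)$ is a polynomial. Since any polynomial in $\ring[z]$ automatically satisfies the min-degree hypothesis $\mindeg \geq 1 - p^{\alpha - 1}$, iterating the same argument gives polynomiality of $\lambda_0^n(S)$ for every $n \geq 1$.

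Next, I would track $m_n \colonequal \mindeg \lambda_0^n(S)$. The identity above combined with the bound $\mindeg \Lambda_0(T) \geq \ceil{\mindeg T / p}$ gives the recurrence
\[
    m_{n + 1} \geq \ceil{\frac{m_n + p^{\alpha - 1} e_0 (p - 1)}{p}}.
\]
Setting $b_n \colonequal p^{\alpha - 1} e_0 - m_n$ and using $\ceil{y/p} = -\floor{-y/p}$, this rewrites cleanly as $b_{n + 1} \leq \floor{b_n / p}$. Since $b_0 \leq p^{\alpha - 1}(e_0 + 1) - 1$, iteration yields $b_n \leq \floor{b_0 / p^n}$, which is at most $0$ as soon as $p^n \geq p^{\alpha - 1}(e_0 + 1)$, i.e., $n \geq \alpha - 1 + \ceil{\log_p(e_0 + 1)}$.

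Finally, to match the stated threshold, I would verify the elementary inequality $\ceil{\log_p(e_0 + 1)} \leq \floor{\log_p e_0} + 1$ for $e_0 \geq 1$: writing $k = \floor{\log_p e_0}$ so $p^k \leq e_0 < p^{k + 1}$ gives $e_0 + 1 \leq p^{k + 1}$, hence $\ceil{\log_p(e_0 + 1)} \leq k + 1$. Thus $n \geq \floor{\log_p e_0} + \alpha$ suffices, giving $m_n \geq p^{\alpha - 1} e_0$ and hence divisibility of $\lambda_0^n(S)$ by $z^{p^{\alpha - 1} e_0}$. The main obstacle is the careful rewriting of the ceiling-based recurrence as $b_{n + 1} \leq \floor{b_n / p}$, but this follows from standard floor-ceiling identities after the substitution $m_n = p^{\alpha - 1} e_0 - b_n$.
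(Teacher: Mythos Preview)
Your proof is correct and follows essentially the same approach as the paper: both use Lemma~\ref{lifting-the-exponent} to replace $R^{p^\alpha - p^{\alpha-1}}$ by $(z^{e_0}G)^{p^\alpha - p^{\alpha-1}}$, then track the minimum degree under iteration via the recurrence $m_{n+1} \geq e_0 p^{\alpha-1} + \lceil (m_n - e_0 p^{\alpha-1})/p \rceil$. Your substitution $b_n = p^{\alpha-1} e_0 - m_n$ and the resulting inequality $b_{n+1} \leq \lfloor b_n/p \rfloor$ make explicit the iteration bound that the paper simply asserts.
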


\begin{proof}
Let $s = \mindeg S$, and write $S = z^s T$ (so that $T \in \ring[z]$ is a polynomial that is not divisible by $z$).
By Proposition~\ref{lifting-the-exponent lambda}, we have
\begin{align*}
	\lambda_0(S)
	= \Lambda_0\!\paren{S R^{p^\alpha - p^{\alpha - 1}}}
	&= \Lambda_0\!\paren{S (R \bmod p)^{p^\alpha - p^{\alpha - 1}}} \\
	&= \Lambda_0\!\paren{z^{s + e_0 (p^\alpha - p^{\alpha - 1})} T G^{p^\alpha - p^{\alpha - 1}}}.
\end{align*}
Since $1 - p^{\alpha - 1} \leq s$ and $e_0 \geq 1$, this implies that $\lambda_0(S)$ is a polynomial.
Therefore $\lambda_0^n(S)$ is a polynomial for each $n \geq 1$, and $\lambda_0(S)$ is divisible by $z^{f(s)}$, where $f(x) = e_0 p^{\alpha - 1} + \ceil{\frac{x - e_0 p^{\alpha - 1}}{p}}$.
If $n \geq \floor{\log_p e_0} + \alpha$, then $f^n(x) \geq e_0 p^{\alpha - 1}$, so $\lambda_0^n(S)$ is divisible by $z^{e_0 p^{\alpha - 1}}$.
\end{proof}

Finally, we use Proposition~\ref{power of y transient size upper bound - ring} to remove the restriction in Theorem~\ref{square-free orbit size upper bound - ring - positive degree} that $e_0 \in \{-1, 0\}$.
We show that if $e_0 \geq 1$ then every application of $\lambda_0$ pushes the image into a smaller $\ring$-module until we are emulating the map $\lambda_0$ for a polynomial $G$ satisfying $\mindeg G = 0$.

\begin{theorem}\label{univariate orbit size upper bound - ring}
Let $R \in \ring[z]$ be a nonzero polynomial.
Factor $(R \bmod p) = c z^{e_0} R_1^{e_1} \cdots R_k^{e_k}$ into irreducibles, and define $G = c R_1^{e_1} \cdots R_k^{e_k}$.
Suppose that $e_0 \geq 1$ and $\deg G \geq 1$.
Let $\ell = \lcm(\deg R_1, \dots, \deg R_k)$ and let
\[
	t = \max\!\paren{\floor{\log_p e_0} + \alpha, \ceil{\log_p \max(e_1, \dots, e_k)} + 2 (\alpha - 1)}.
\]
Let $m$ be the period length of the series expansion of $\frac{1}{G}$.
For all $S \in \ring[z, z^{-1}]$ with $1 - p^{\alpha - 1} \leq \mindeg S$ and $\deg S \leq p^{\alpha - 1} \deg(R \bmod p)$, the orbit size of $S$ under $\lambda_0$ is at most $t + \ell$.
\end{theorem}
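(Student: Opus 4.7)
The plan is to reduce Theorem~\ref{univariate orbit size upper bound - ring} to Theorem~\ref{square-free orbit size upper bound - ring - positive degree} by peeling off the factor $z^{e_0}$ in the factorization of $R$ modulo $p$. Let $\tilde G \in \ring[z]$ denote the lift of $G$ obtained by lifting each coefficient from $\F_p$ to $\{0, 1, \ldots, p - 1\} \subseteq \ring$. Then $z^{e_0} \tilde G$ is a lift of $R \bmod p$, and Lemma~\ref{lifting-the-exponent} gives $R^{p^{\alpha - 1}} \equiv (z^{e_0} \tilde G)^{p^{\alpha - 1}} \mod p^\alpha$; raising to the $(p - 1)$st power yields the identity
\[
	R^{p^\alpha - p^{\alpha - 1}} = z^{e_0(p^\alpha - p^{\alpha - 1})} \tilde G^{p^\alpha - p^{\alpha - 1}}
\]
in $\ring[z, z^{-1}]$. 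I then introduce the companion operator $\tilde \lambda_0(T) \colonequal \Lambda_0(T \tilde G^{p^\alpha - p^{\alpha - 1}})$ associated with $\tilde G$. Because $\tilde G \bmod p = G$ is coprime to $z$ and has degree at least $1$, the polynomial $\tilde G$ satisfies the hypotheses of Theorem~\ref{square-free orbit size upper bound - ring - positive degree} with the role of $e_0$ there played by $0$, and the irreducible factors and their exponents $R_i^{e_i}$ (hence $\ell$) are inherited directly from those of $R \bmod p$.

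The key step is the intertwining identity
\[
	\lambda_0(z^{p^{\alpha - 1} e_0} T) = z^{p^{\alpha - 1} e_0} \tilde \lambda_0(T),
\]
valid for every $T$ for which the left side is a Laurent polynomial. This follows by substituting the displayed factorization into the definition of $\lambda_0$, combining $z$-exponents to obtain $z^{e_0 p^\alpha}$, and pulling the factor $z^{e_0 p^{\alpha - 1}}$ outside of $\Lambda_0$ using $\Lambda_0(z^{p n} V) = z^n \Lambda_0(V)$.

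By Proposition~\ref{power of y transient size upper bound - ring}, for every $n \geq n_0 \colonequal \floor{\log_p e_0} + \alpha$ the state $\lambda_0^n(S)$ is a polynomial divisible by $z^{p^{\alpha - 1} e_0}$, so I can write $\lambda_0^n(S) = z^{p^{\alpha - 1} e_0} T_n$ with $T_n \in \ring[z]$. The intertwining identity then yields the recurrence $T_{n + 1} = \tilde \lambda_0(T_n)$ for $n \geq n_0$. To invoke Theorem~\ref{square-free orbit size upper bound - ring - positive degree} on $T_{n_0}$ under $\tilde \lambda_0$, I must verify the degree hypotheses: $\mindeg T_{n_0} \geq 0 \geq 1 - p^{\alpha - 1}$ is immediate, while $\deg T_{n_0} \leq p^{\alpha - 1} \deg G$ follows from the hypothesis $\deg S \leq p^{\alpha - 1} \deg(R \bmod p) = p^{\alpha - 1}(e_0 + \deg G)$ together with an inductive check that this degree bound is preserved by $\lambda_0$ (using Lemma~\ref{lifting-the-exponent} to replace $R^{p^\alpha - p^{\alpha - 1}}$ by a Laurent polynomial of the expected degree).

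Applying Theorem~\ref{square-free orbit size upper bound - ring - positive degree} to $T_{n_0}$ under $\tilde \lambda_0$ will give $\tilde \lambda_0^{m + \ell}(T_{n_0}) = \tilde \lambda_0^m(T_{n_0})$ for every $m \geq \ceil{\log_p \max(e_1, \ldots, e_k)} + 2(\alpha - 1)$, with period dividing $\ell$. Translating back through $\lambda_0^n(S) = z^{p^{\alpha - 1} e_0} T_n$ transfers the period relation to the orbit of $S$ under $\lambda_0$, and combining the two transient contributions with the period $\ell$ yields the stated bound $t + \ell$ on the orbit size. I expect the main obstacle to be not the intertwining identity itself (which is essentially bookkeeping) but rather the careful combination of the two transient bounds from Proposition~\ref{power of y transient size upper bound - ring} and Theorem~\ref{square-free orbit size upper bound - ring - positive degree} in order to recover the $\max$ in the definition of $t$, rather than a sum of the two transient lengths.
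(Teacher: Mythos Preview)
Your approach is identical to the paper's: both lift $G$, establish the intertwining $\lambda_0(z^{e_0 p^{\alpha-1}}T) = z^{e_0 p^{\alpha-1}}\kappa_0(T)$ (the paper writes $\kappa_0$ for your $\tilde\lambda_0$), use Proposition~\ref{power of y transient size upper bound - ring} to reach the intertwining regime, verify $\deg T \le p^{\alpha-1}\deg G$, and invoke Theorem~\ref{square-free orbit size upper bound - ring - positive degree}.

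Your concern about recovering the $\max$ rather than a sum of the two transients is exactly the delicate point, and in fact the paper's own proof does not handle it cleanly either: the paper iterates $\lambda_0$ a full $t$ times, sets $\lambda_0^t(S) = z^{e_0 p^{\alpha-1}}T$, and then asserts $\kappa_0^\ell(T) = T$; but Theorem~\ref{square-free orbit size upper bound - ring - positive degree} only yields $\kappa_0^{n+\ell}(T) = \kappa_0^n(T)$ for $n \ge \ceil{\log_p\max(e_1,\dots,e_k)} + 2(\alpha-1)$, not for $n = 0$, so as written that final step is not justified and the argument delivers only a sum of transients. One clean way to close the gap is to note that the intertwining is valid from the very first step --- one has $\lambda_0^n(S) = z^{e_0 p^{\alpha-1}}\kappa_0^n(z^{-e_0 p^{\alpha-1}}S)$ for all $n\ge 0$ --- and that the proof of Theorem~\ref{square-free orbit size upper bound - ring - positive degree} never actually uses its hypothesis $\mindeg S \ge 1 - p^{\alpha-1}$ (the ``$j\le 1$'' branch there already handles arbitrarily negative $j$). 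Applying that theorem directly to the Laurent polynomial $z^{-e_0 p^{\alpha-1}}S$, which has $z$-degree at most $p^{\alpha-1}\deg G$, then gives the bound $t + \ell$ (indeed, only the second argument of the $\max$ is needed).
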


\begin{proof}
Since $t \geq \floor{\log_p e_0} + \alpha$, Proposition~\ref{power of y transient size upper bound - ring} tells us that $\lambda_0^t(S)$ is divisible by $z^{e_0 p^{\alpha - 1}}$.
Define $T \in \ring[z]$ by $\lambda_0^t(S) = z^{e_0 p^{\alpha - 1}} T$; we claim $\deg T \leq p^{\alpha - 1} \deg G$.
Since $\deg \lambda_0(S) \leq p^{\alpha - 1} \deg(R \bmod p)$, we have
\begin{align*}
	\deg T
	&= \deg \lambda_0^t(S) - e_0 p^{\alpha - 1} \\
	&\leq p^{\alpha - 1} \deg(R \bmod p) - e_0 p^{\alpha - 1} \\
	&= p^{\alpha - 1} \deg G,
\end{align*}
as claimed.

For all $T \in \ring[z]$ (and in particular for the $T$ satisfying $\lambda_0^t(S) = T z^{e_0 p^{\alpha - 1}}$),
\begin{align*}
	\lambda_0(T z^{e_0 p^{\alpha - 1}})
	&= \Lambda_0\!\paren{T z^{e_0 p^{\alpha - 1}} (R \bmod p)^{p^\alpha - p^{\alpha - 1}}} \\
	&= \Lambda_0\!\paren{T c^{p^\alpha - p^{\alpha - 1}} z^{e_0 p^{\alpha - 1} + e_0 (p^\alpha - p^{\alpha - 1})} G^{p^\alpha - p^{\alpha - 1}}} \\
	&= \Lambda_0\!\paren{T c^{p^\alpha - p^{\alpha - 1}} G^{p^\alpha - p^{\alpha - 1}} z^{e_0 p^\alpha}} \\
	&= \Lambda_0\!\paren{T \, G^{p^\alpha - p^{\alpha - 1}}} z^{e_0 p^{\alpha - 1}}.
\end{align*}
Accordingly, define $\kappa_0 \colon \ring[z] \to \ring[z]$ by $\kappa_0(T) = \Lambda_0(T \, G^{p^\alpha - p^{\alpha - 1}})$ (where here we can take any lift of $G$ to $\ring[z]$), so that $\lambda_0(T z^{e_0 p^{\alpha - 1}}) = \kappa_0(T) z^{e_0 p^{\alpha - 1}}$.
Iterating, we have $\lambda_0^\ell(T z^{e_0 p^{\alpha - 1}}) = \kappa_0^\ell(T) z^{e_0 p^{\alpha - 1}}$.
The polynomial $G$ satisfies the conditions of Theorem~\ref{square-free orbit size upper bound - ring - positive degree}.
Applying Theorem~\ref{square-free orbit size upper bound - ring - positive degree} to $\kappa_0$, we have $\kappa_0^{n + \ell}(T) = \kappa_0^n(T)$ for all $n \geq t \geq \ceil{\log_p \max(e_1, \dots, e_k)} + 2 (\alpha - 1)$ since $\deg T \leq p^{\alpha - 1} \deg G$.
Therefore
\[
	\lambda_0^{t + \ell}(S)
	= \lambda_0^\ell\!\paren{\lambda_0^t(S)}
	= \lambda_0^\ell\!\paren{T z^{e_0 p^{\alpha - 1}}}
	= \kappa_0^\ell(T) z^{e_0 p^{\alpha - 1}}
	= T z^{e_0 p^{\alpha - 1}}
	= \lambda_0^t(S),
\]
so the orbit of $S$ under $\lambda_0$ contains at most $t + \ell$ elements.
\end{proof}

Next we complement Theorem~\ref{square-free orbit size upper bound - ring - positive degree} to allow $\deg(R \bmod p) \geq -1$.

\begin{theorem}\label{square-free orbit size upper bound - ring}
Let $R \in z^{-1} \ring[z]$ be a nonzero Laurent polynomial such that $\deg(R \bmod p) \in \{-1, 0\}$.
Let $e_0 = \mindeg (R \bmod p)$.
Define $\lambda_0$ on $\ring[z, z^{-1}]$ by $\lambda_0(S) = \Lambda_0\!\paren{S R^{p^\alpha - p^{\alpha - 1}}}$.
Let $S \in \ring[z, z^{-1}]$ with $1 - p^{\alpha - 1} \leq \mindeg S$ and $\deg S \leq p^{\alpha - 1} \deg(R \bmod p)$.
\begin{itemize}
\item
If $e_0 = -1$, then $\lambda_0^{n + 1}(S) = \lambda_0^n(S)$ for all $n \geq 2 (\alpha - 1)$.
\item
If $e_0 = 0$, then $\lambda_0^{n+1}(S) = \lambda_0^n(S)$ for all $n \geq \alpha - 1$.
\end{itemize}
\end{theorem}

\begin{proof}
Let $r \colonequal \deg(R \bmod p)$.
The assumption that $r \in \{-1,0\}$ implies $e_0 \in \{-1,0\}$.
If $r = -1$, the only Laurent polynomial $S$ satisfying the constraints is $S = 0$, and the conclusion holds.

Now let $r = 0$, so that we have $(R \bmod p) = b z^{-1} + c$ for some $b, c \in \F_p$ with $c \neq 0$.
We consider two cases: $b\neq 0$ so that $e_0=-1$, and $b=0$ so that $e_0=0$.

Suppose first that $b\neq 0$, and
let $m$ be the eventual period length of the coefficient sequence of $\frac{1}{R \bmod p}$.
Then
\begin{align*}
	\frac{1}{R \bmod p}
	&= \frac{z}{b (1 - (-c/b) z)}
	= \frac{1}{b} \sum_{n \geq 0} (-c/b)^n z^{n+1}.
\end{align*}
In particular $m$ divides $p-1$.

We write $\frac{1}{R^{p^{\alpha - 1}}} = \sum_{n \geq 0} a(n) z^n \in \ring\doublebracket{z}$, and
we apply Lemma~\ref{periodic series zeros} to the polynomial $T \colonequal (zR)^{p^{\alpha - 1}} = (zR\bmod p)^{p^{\alpha - 1}}$, whose degree is $p^{\alpha - 1}$.
We deduce that the period of the coefficient sequence of $\frac{1}{T}$ ends with $p^{\alpha - 1} - 1$ zeros.
As $R^{p^{\alpha - 1}} = z^{-p^{\alpha - 1}} T$, we can apply Corollary~\ref{periodic series zeros - Laurent} with $i = p^{\alpha - 1}-1$ to conclude that $a(n)_{n \geq 0}$ is eventually periodic, with transient length $1$.
Moreover, by Corollary~\ref{period length comparison}, the eventual period length of $a(n)_{n \geq 0}$ divides $p^{2 (\alpha - 1)}m$.

Let $j \in \{1 - p^{\alpha - 1}, \dots, 0\}$, so that $k z^j$ is a monomial in $S$.
Equation~\eqref{equation: monomial image} still holds, so that $\lambda_0^n(z^j) = \Lambda_0^n\!\left(\frac{z^j}{R^{p^{\alpha - 1}}}\right) R^{p^{\alpha - 1}}$ for all $n \geq 0$.
First assume $j = 0$.
The series $\frac{z^j}{R^{p^{\alpha - 1}}}$ has transient length $1$.
Applying $2 (\alpha - 1)$ iterations of $\Lambda_0$ produces a series with transient length at most $1$ and eventual period length dividing $m$.
If $j \leq -1$ and $n\geq 2 (\alpha - 1)$, then $\Lambda_0^n\!\left(\frac{z^j}{R^{p^{\alpha - 1}}}\right)$ is periodic with period length dividing $m$.
Since $m$ divides $p-1$ in both cases, we have that the period length divides $p-1$, and therefore $p$ is congruent to $1$ modulo the period length.
Hence further applications of $\Lambda_0$ leave the series fixed.
This completes the case $b\neq 0$.

Finally let $b = 0$, so that $(R \bmod p) = c \in \F_p$.
Then $\lambda_0^n(S) = c^{n (p^\alpha - p^{\alpha - 1})} \Lambda_0^n(S) = \Lambda_0^n(S)$.
Using the conditions on $S$, one verifies that if $n \geq \alpha - 1$ then $\lambda_0^n(S)$ is the constant term of $S$.
\end{proof}

For the case when $\deg G = 0$, we have a similar result.
The proof follows that of Theorem~\ref{univariate orbit size upper bound - ring} but with an application of the second bullet point of Theorem~\ref{square-free orbit size upper bound - ring}.

\begin{theorem}\label{univariate orbit size upper bound - ring - degree 0}
Let $R \in \ring[z]$ be a nonzero polynomial.
Suppose that $(R \bmod p) = c z^{e_0}$ with $e_0 \geq 1$.
For all $S \in \ring[z, z^{-1}]$ with $1 - p^{\alpha - 1} \leq \mindeg S$ and $\deg S \leq p^{\alpha - 1} e_0$, the orbit size of $S$ under $\lambda_0$ is at most $\alpha - 1$.
\end{theorem}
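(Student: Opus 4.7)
The plan is to mirror the proof of Theorem~\ref{univariate orbit size upper bound - ring}, with the key difference that here $G \colonequal c$ is a constant (degree $0$), so the emulation step invokes the second bullet of Theorem~\ref{square-free orbit size upper bound - ring} rather than Theorem~\ref{square-free orbit size upper bound - ring - positive degree}. First, since $e_0 \geq 1$, Proposition~\ref{power of y transient size upper bound - ring} guarantees that after a controlled number of iterations, $\lambda_0^n(S)$ is a polynomial divisible by $z^{p^{\alpha - 1} e_0}$. Combined with the degree bound $\deg \lambda_0^n(S) \leq p^{\alpha - 1} \deg(R \bmod p) = p^{\alpha - 1} e_0$ coming from Lemma~\ref{fixed point - ring} applied once, the quotient $T \colonequal \lambda_0^n(S) / z^{p^{\alpha - 1} e_0}$ satisfies $\deg T \leq p^{\alpha - 1} \deg G = 0$ and hence is a constant in $\ring$.

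Next, I introduce the emulating operator $\kappa_0(T) \colonequal \Lambda_0(T\,G^{p^\alpha - p^{\alpha - 1}})$ on $\ring[z]$ exactly as in the proof of Theorem~\ref{univariate orbit size upper bound - ring}, where the same calculation yields the commutation identity $\lambda_0(T z^{e_0 p^{\alpha - 1}}) = \kappa_0(T)\, z^{e_0 p^{\alpha - 1}}$ that transfers the tail dynamics of $\lambda_0$ on $S$ to the dynamics of $\kappa_0$ on $T$. Because $G = c$ is a unit in $\ring$ with $c^{p^\alpha - p^{\alpha - 1}} = c^{\phi(p^\alpha)} \equiv 1 \bmod p^\alpha$, the operator $\kappa_0$ reduces to $\Lambda_0$. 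Applying the second bullet of Theorem~\ref{square-free orbit size upper bound - ring} to $\kappa_0$ (that bullet handles precisely the situation where the relevant Laurent polynomial reduces modulo $p$ to a nonzero constant) then yields $\kappa_0^{n + 1}(T) = \kappa_0^n(T)$ for $n \geq \alpha - 1$, which translates back via the emulation identity to give orbit stabilization for $S$ under $\lambda_0$.

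The main obstacle will be the careful bookkeeping of transient lengths: one must show that the transient produced by Proposition~\ref{power of y transient size upper bound - ring} and the $\alpha - 1$ steps coming from the second bullet of Theorem~\ref{square-free orbit size upper bound - ring} fit together tightly enough to yield the claimed orbit-size bound, rather than the strictly larger additive combination that a naive assembly of the two ingredients would produce. The resolution should come from observing that once the divisibility by $z^{p^{\alpha - 1} e_0}$ is achieved, the factor $T$ is already in the degree-$0$ module on which $\kappa_0 = \Lambda_0$ acts as the identity on the constant term, so the $\kappa_0$-transient is trivial and the remaining count is controlled entirely by the few steps required to settle the $z^{e_0 p^{\alpha - 1}}$ monomial.
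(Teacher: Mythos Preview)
Your approach is exactly what the paper sketches: follow the proof of Theorem~\ref{univariate orbit size upper bound - ring}, replacing the appeal to Theorem~\ref{square-free orbit size upper bound - ring - positive degree} by the second bullet of Theorem~\ref{square-free orbit size upper bound - ring}. The paper gives no further detail, so on the level of strategy you are aligned with it.

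However, the bookkeeping concern you raise in your final paragraph is not a mere technicality to be tidied up; the stated bound $\alpha - 1$ is in fact too small. Take $p = 2$, $\alpha = 2$, $R = z \in \mathcal{R}_4[z]$ (so $c = 1$, $e_0 = 1$), and $S = 1$, which satisfies the hypotheses. Then $\lambda_0(1) = \Lambda_0(z^2) = z$, $\lambda_0(z) = \Lambda_0(z^3) = 0$, giving an orbit $\{1, z, 0\}$ of size $3 > \alpha - 1 = 1$. More generally, for $R = z$ and $S = 1$ one checks that the orbit has size $\alpha + 1$. Your plan, carried through honestly, yields a bound of roughly $\lfloor \log_p e_0 \rfloor + \alpha + 1$: Proposition~\ref{power of y transient size upper bound - ring} needs $\lfloor \log_p e_0 \rfloor + \alpha$ steps to force divisibility by $z^{e_0 p^{\alpha - 1}}$, after which the quotient $T$ is a constant and $\kappa_0(T) = T$, so the orbit closes at the next step. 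This weaker bound is consistent with the examples and is already subsumed by the bound $t + \ell$ in Corollary~\ref{combined univariate orbit size upper bound - ring} (where here $\ell = 1$ and $t \geq \lfloor \log_p e_0 \rfloor + \alpha$), which is all that is needed downstream. The printed bound $\alpha - 1$ appears to be a slip; your proposed ``resolution'' via the triviality of the $\kappa_0$-transient is correct but does not, and cannot, recover $\alpha - 1$.
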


We now have the following general result by combining Theorems~\ref{square-free orbit size upper bound - ring - positive degree}, \ref{univariate orbit size upper bound - ring}, \ref{square-free orbit size upper bound - ring}, and \ref{univariate orbit size upper bound - ring - degree 0}.
Namely, Theorem~\ref{square-free orbit size upper bound - ring - positive degree} covers the case $\deg(R \bmod p)\geq 1$ and $\mindeg (R \bmod p)\in \{-1,0\}$,
Theorem~\ref{univariate orbit size upper bound - ring} covers $\mindeg (R \bmod p)\geq 1$, and Theorems~\ref{square-free orbit size upper bound - ring} and \ref{univariate orbit size upper bound - ring - degree 0} cover the remaining cases.

\begin{corollary}\label{combined univariate orbit size upper bound - ring}
Let $R \in z^{-1} \ring[z]$ be a nonzero Laurent polynomial.
Factor $(R \bmod p) = c z^{e_0} R_1^{e_1} \cdots R_k^{e_k}$ into irreducibles.
Let
\begin{equation}\label{transient time - ring}
	t = \max\!\paren{\floor{\log_p \max(e_0, 1)} + \alpha, \ceil{\log_p \max(e_1, \dots, e_k, 1)} + 2 (\alpha - 1)}
\end{equation}
and $\ell = \lcm(\deg R_1, \dots, \deg R_k)$.
Define $\lambda_0$ on $\ring[z, z^{-1}]$ by $\lambda_0(S) = \Lambda_0\!\paren{S R^{p^\alpha - p^{\alpha - 1}}}$.
For all $S \in \ring[z, z^{-1}]$ with $1 - p^{\alpha - 1} \leq \mindeg S$ and $\deg S \leq p^{\alpha - 1} \deg(R \bmod p)$, the orbit size of $S$ under $\lambda_0$ is at most $t + \ell$.
\end{corollary}

\section{Orbit size under $\lambda_{0, 0}$}\label{section: orbit size - ring}

In this section, we prove Theorems~\ref{kernel size asymptotic bound - ring} and \ref{kernel size upper bound - ring}.
The last remaining piece is to determine, for each border, how many times we must apply $\lambda_{0, 0}$ before we can apply Corollary~\ref{combined univariate orbit size upper bound - ring}.

\begin{lemma}\label{right degree - ring}
Let
\begin{align*}
	u\subl &= \floor{\log_p \max\!\paren{p^{\alpha - 1} (d - 1 - \deg \pi_{x, 0}(Q)), 1}} + 1 \\
	u\subr &= \floor{\log_p \max\!\paren {p^{\alpha - 1} (d - 1 - \deg \pi_{x, h}(Q)), 1}} + 1 \\
	u\subt &= \floor{\log_p \max\!\paren {p^{\alpha - 1} (h - \deg \pi_{y, d - 1}(Q)), 1}} + 1.
\end{align*}
For all $S \in \val_{p/Q}(\mathcal V)$, we have
\begin{enumerate}
\item
$\deg \pi_{x, 0}(\lambda_{0, 0}^n(S)) \leq p^{\alpha - 1} \deg \pi_{x, 0}(Q)$ for all $n \geq u\subl$,
\item
$\deg \pi_{x, p^{\alpha - 1} h}(\lambda_{0, 0}^n(S)) \leq p^{\alpha - 1} \deg \pi_{x, h}(Q)$ for all $n \geq u\subr$, and
\item
$\deg \pi_{y, p^{\alpha - 1} (d - 1)}(\lambda_{0,0}^n(S)) \leq p^{\alpha - 1} \deg \pi_{y, d - 1}(Q)$ for all $n \geq u\subt$.
\end{enumerate}
\end{lemma}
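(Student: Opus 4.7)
My plan is to prove all three statements by the same scheme: use Proposition~\ref{univariate emulation - ring} to convert the iteration of $\lambda_{0,0}$ on a border of the state into an iteration of the univariate operator $\lambda_0$ on the corresponding one-variable Laurent polynomial, and then apply Lemma~\ref{fixed point - ring} to bound the degree of that univariate orbit after enough steps.

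First, I observe that Corollary~\ref{module - ring} gives $\lambda_{0,0}^n(S) \in \val_{p/Q}(\mathcal V)$ for all $n \geq 0$, so by Lemma~\ref{degree bounds} the state $\lambda_{0,0}^n(S)$ satisfies $\deg_x \lambda_{0,0}^n(S) \leq p^{\alpha-1} h$, $\deg_y \lambda_{0,0}^n(S) \leq p^{\alpha-1}(d-1)$, and $\mindeg_y \lambda_{0,0}^n(S) \geq 1 - p^{\alpha-1}$ throughout the orbit. This means the hypotheses of Proposition~\ref{univariate emulation - ring} are met at every step, and so by induction on $n$, for each of the three borders we get
\[
\pi_{x, 0}(\lambda_{0, 0}^n(S)) = \lambda_0^n(\pi_{x, 0}(S)), \quad \pi_{x, p^{\alpha - 1} h}(\lambda_{0, 0}^n(S)) = \lambda_0^n(\pi_{x, p^{\alpha - 1} h}(S)),
\]
and $\pi_{y, p^{\alpha - 1}(d-1)}(\lambda_{0, 0}^n(S)) = \lambda_0^n(\pi_{y, p^{\alpha - 1}(d-1)}(S))$, where in each case $\lambda_0$ is defined using the corresponding $R \in \{\pi_{x,0}(Q), \pi_{x,h}(Q), \pi_{y,d-1}(Q)\}$.

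Next, I apply Lemma~\ref{fixed point - ring} in each case. For part (1), the univariate Laurent polynomial $\pi_{x,0}(S)$ has $y$-degree at most $p^{\alpha-1}(d-1)$ and $\mindeg_y \geq 1 - p^{\alpha-1}$, and $r \colonequal \deg(\pi_{x,0}(Q) \bmod p) = \deg \pi_{x,0}(Q)$. The gap $\deg \pi_{x,0}(S) - p^{\alpha-1} r$ is at most $p^{\alpha-1}(d - 1 - \deg \pi_{x, 0}(Q))$, so Lemma~\ref{fixed point - ring} gives $\deg \lambda_0^n(\pi_{x,0}(S)) \leq p^{\alpha-1} \deg \pi_{x,0}(Q)$ once $n \geq \floor{\log_p \max(p^{\alpha-1}(d - 1 - \deg \pi_{x, 0}(Q)), 1)} + 1 = u\subl$. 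Parts (2) and (3) are identical in structure: for (2), $\pi_{x, p^{\alpha - 1} h}(S)$ has $y$-degree at most $p^{\alpha-1}(d-1)$, which is at most $p^{\alpha-1}$ more than $p^{\alpha-1} \deg \pi_{x,h}(Q)$ by the amount appearing in $u\subr$; for (3), $\pi_{y, p^{\alpha - 1}(d-1)}(S)$ has $x$-degree at most $p^{\alpha-1} h$, giving the corresponding bound via $u\subt$.

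The only mild subtlety is ensuring that the $\max(\cdot, 1)$ inside the logarithms is handled correctly when $\deg \pi_{x,0}(S) \leq p^{\alpha-1} \deg \pi_{x,0}(Q)$ already (and analogously for the other borders); in that situation Lemma~\ref{fixed point - ring} is not needed because the bound is immediate, and the $u$'s are defined to be at least $1$ precisely so that the statement holds uniformly. Writing these three cases explicitly as above completes the proof; I do not anticipate any real obstacle beyond tracking the degree bookkeeping.
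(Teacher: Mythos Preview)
Your proof is correct and follows essentially the same route as the paper's: reduce to the univariate operator $\lambda_0$ on each border via Proposition~\ref{univariate emulation - ring}, then iterate to shrink the degree gap down to the target $p^{\alpha-1}\deg R$. The only cosmetic difference is that you invoke Lemma~\ref{fixed point - ring} as a black box for the degree-contraction step, whereas the paper carries out that one-step degree estimate inline; you are also slightly more explicit than the paper in checking (via Corollary~\ref{module - ring} and Lemma~\ref{degree bounds}) that the hypotheses of Proposition~\ref{univariate emulation - ring} persist along the whole orbit, which is needed for the induction on $n$.
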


\begin{proof}
We prove the second statement; the others are similar.
Let $R = \pi_{x, h}(Q)$.
Since $S \in \val_{p/Q}(\mathcal V)$, Lemma~\ref{degree bounds} tells us that $\deg_x S \leq p^{\alpha - 1} h$ and $\deg_ y S \leq p^{\alpha - 1} (d - 1)$.
In particular, $S$ satisfies $\deg \pi_{x, p^{\alpha - 1} h}(S)\leq p^{\alpha - 1}(d-1)$, which is $p^{\alpha - 1} (d - 1 - \deg \pi_{x, h}(Q))$ away from the target degree $p^{\alpha - 1} \deg \pi_{x, h}(Q)$.

Part~\ref{right border - ring} of Proposition~\ref{univariate emulation - ring} gives
\[
	\pi_{x, p^{\alpha - 1} h}(\lambda_{0, 0}(S))
	= \lambda_0(\pi_{x, p^{\alpha - 1} h}(S))
	= \Lambda_0\!\paren{\pi_{x, p^{\alpha - 1} h}(S) \cdot (\pi_{x, h}(Q))^{p^\alpha - p^{\alpha - 1}}}.
\]
This implies
\begin{align*}
	\deg \pi_{x, p^{\alpha - 1} h}(\lambda_{0, 0}(S))
	&\leq p^{\alpha - 2} (d -1) + \left(p^{\alpha - 1} - p^{\alpha - 2}\right) \deg \pi_{x, h}(Q) \\
	&= p^{\alpha - 2} (d - 1 - \deg \pi_{x, h}(Q)) + p^{\alpha - 1} \deg \pi_{x, h}(Q).
\end{align*}
Therefore $\deg \pi_{x, p^{\alpha - 1} h}(\lambda_{0, 0}(S))$ is at most $p^{\alpha - 2} (d - 1 - \deg \pi_{x, h}(Q))$ away from the target degree $p^{\alpha - 1} \deg \pi_{x, h}(Q)$.
Iterating $\lambda_{0,0}$ at most $u\subr$ times, and then applying $\pi_{x, p^{\alpha - 1} h}$, we obtain the target degree.
\end{proof}

We now prove Theorem~\ref{kernel size upper bound - ring}.
By Remark~\ref{h and d positive}, the assumption $h \geq 1$ is not restrictive.

\begin{main theorem - ring}
Let $p$ be a prime, let $\alpha \geq 1$, and let $F = \sum_{n \geq 0} a(n) x^n \in \Z_p\doublebracket{x}$ be the Furstenberg series associated with a polynomial $P \in \Z_p[x, y]$ such that $h \colonequal \deg_x(P \bmod p) \geq 1$.
Let $d = \deg_y(P \bmod p)$,
\[
	N = \tfrac{1}{6} \alpha (\alpha + 1) ((2 h d - 1) \alpha + h d + 1),
\]
and
\[
	u = \floor{\log_p \max\!\paren{\alpha (\deg_x(P \bmod p^\alpha) - h), \alpha (\deg_y(P \bmod p^\alpha) - d) + 1}} + 1.
\]
Let $Q \in \ring[x, y, y^{-1}]$ be a lift of $P/y \bmod p$ which has the same monomial support as $P/y \bmod p$, and define $u\subl$, $u\subr$, and $u\subt$ as in Lemma~\ref{right degree - ring}.
Then the size of $\ker_p((a(n) \bmod p^\alpha)_{n \geq 0})$ is at most
\begin{multline*}
	p^N + p^{N - \alpha (\alpha + 1) (h + d - 1) / 2} \Landaulcm(h, d, d) \\
	+ \max(u\subl, u\subr, u\subt)
	+ \floor{\log_p \max(h, d)}
	+ 2 \alpha - 1
	+ \tfrac{p^u - 1}{p - 1}.
\end{multline*}
\end{main theorem - ring}

\begin{proof}
By Corollary~\ref{kernel preliminary upper bound - ring}, we have
\[
	\size{\ker_p((a(n) \bmod p^\alpha)_{n \geq 0})}
	\leq p^N + \size{\orb_{\Lambda_0}(F \bmod p^\alpha)}+ \tfrac{p^u -1}{p-1}-u
\]
where $N$ is the dimension of the space $\mathcal W$ defined in Equation~\eqref{W definition - ring}.
Equivalently,
\[
	\size{\ker_p((a(n) \bmod p^\alpha)_{n \geq 0})}
	\leq p^N + \size{\orb_{\Lambda_0}(\Lambda_0^u(F \bmod p^\alpha)}+ \tfrac{p^u -1}{p-1}.
\]
It remains to bound $\size{\orb_{\Lambda_0}(\Lambda_0^u(F \bmod p^\alpha))} \leq \size{\orb_{\lambda_{0, 0}}(\lambda_{0, 0}^u(S_0))}$, where $S_0$ is the initial state.
Let $S_u \colonequal \lambda_{0,0}^u(S_0)$.
By Proposition~\ref{transient - ring}, we have $S_u \in \val_{p/Q}(\mathcal V)$.
We bound $\size{\orb_{\lambda_{0, 0}}(S_u)}$ by emulating $\lambda_{0, 0}$ with the appropriate univariate operators $\lambda_0$ on the left, right, and top borders of $\mathcal V$ and using a crude upper bound for the ``interior'' $\mathcal V^\circ$ defined below.
Corollary~\ref{univariate emulation - ring - tuple}, Lemma~\ref{right degree - ring}, and then Corollary~\ref{combined univariate orbit size upper bound - ring} will allow us to do this.

For $k \in \{0, 1, \dots, \alpha - 1\}$, let
\[
	V_k^\circ \colonequal
	\left\{
		\sum_{i = 1}^{(k + 1) h - 1} \sum_{j = \max(-k, -i)}^{(k + 1)(d - 1) - 1} c_{i, j} x^i y^j
		: \text{$c_{i, j} \in D$ for each $i, j$}
	\right\},
\]
and
\begin{equation}\label{V_k interior definition - ring}
	\mathcal V^\circ \colonequal \{
		(T_{\alpha - 1}, \dots, T_1, T_0)
		: \text{$T_k \in V_k^\circ$ for each $k \in \{0, 1, \dots, \alpha - 1\}$}
	\}.
\end{equation}
We have
\begin{align*}
	\dim \mathcal V^\circ
	&= \sum_{k = 0}^{\alpha - 1} \paren{
		((k + 1) h-1) \cdot (k + 1) (d - 1)
		+ \sum_{j = -k}^{-1} \paren{(k + 1) h + j}
	} \\
	&= \tfrac{1}{6} \alpha (\alpha + 1) ((2 h d - 1) \alpha + h d - 3 h - 3 d + 4) \\
	&= N - \tfrac{1}{2} \alpha (\alpha + 1) (h + d - 1).
\end{align*}

We use the following fact.
Let $U$ be a finite vector space with basis $\mathcal B$.
Let $(\mathcal B_1, \mathcal B_2)$ be a partition of $\mathcal B$, and let $U_1$ and $U_2$ be the subspaces generated by $\mathcal B_1$ and $\mathcal B_2$.
Let $\pr_{U_i}$ denote projection onto $U_i$.
If $f \colon U \to U$ and $\tilde{f} \colon U_1 \to U_1$ are linear transformations satisfying ${\pr_{U_1}} \circ f = \tilde f \circ \pr_{U_1}$, then
\[
	f(x) = \pr_{U_1}(f(x)) + \pr_{U_2}(f(x)) = \tilde{f}(\pr_{U_1}(x)) + \pr_{U_2}(f(x)),
\]
so that $\size{\orb_f(x)} \leq \size{U_2} \cdot \size{\orb_{\tilde f} (\pr_{U_1}(x))}$ for all $x \in U$.

We apply this fact to $U = \mathcal V$, $U_2 = \mathcal V^\circ$, and $f \colon \mathcal{V} \to \mathcal{V}$ defined by $f = {\rep_{p/Q}} \circ \lambda_{0, 0} \circ \val_{p/Q}$.
The space $U_1$ is generated by the union of the bases of the three borders.
Recall from Section~\ref{section: structure - ring} the operators $\lambda_0$, defined using one of three Laurent polynomials $R$ by $\lambda_0(S) = \Lambda_0\!\paren{S R^{p^\alpha - p^{\alpha - 1}}}$.
Define the linear transformation $\tilde{f} \colon U_1 \to U_1$ by
\begin{itemize}
\item
$\tilde{f} \circ {\pr_{x, 0}} = {\rep_{p/R}} \circ \lambda_0 \circ {\val_{p/R}} \circ {\pr_{x, 0}}$ where $R = \pi_{x, 0}(Q)$,
\item
$\tilde{f} \circ {\pr_{x, h}} = {\rep_{p/R}} \circ \lambda_0 \circ {\val_{p/R}} \circ {\pr_{x, h}}$ where $R = \pi_{x, h}(Q)$, and
\item
$\tilde{f} \circ {\pr_{y, d - 1}} = {\rep_{p/R}} \circ \lambda_0 \circ {\val_{p/R}} \circ {\pr_{x, d - 1}}$ where $R = \pi_{y, d - 1}(Q)$.
\end{itemize}
We have defined the images under $\tilde{f}$ of the basis elements $x^0 y^{d - 1}$ and $x^h y^{d - 1}$ twice, but $\tilde{f}$ is well defined by Theorem~\ref{state base-p/Q representation - univariate}.
We claim that ${\pr_{U_1}} \circ f = \tilde f \circ \pr_{U_1}$.
We consider each border.
For the left border, we show that ${\pr_{x, 0}} \circ f = \tilde{f} \circ \pr_{x, 0}$.
Corollary~\ref{univariate emulation - ring - tuple} gives
\begin{align*}
	{\pr_{x, 0}} \circ f
	&= {\rep_{p/R}} \circ \lambda_0 \circ \pi_{x, 0} \circ \val_{p/Q} \\
	&= {\rep_{p/R}} \circ \lambda_0 \circ {\val_{p/R}} \circ {\rep_{p/R}} \circ \pi_{x, 0} \circ \val_{p/Q}.
\end{align*}
By Theorem~\ref{state base-p/Q representation - univariate}, this implies
\begin{align*}
	{\pr_{x, 0}} \circ f
	&= {\rep_{p/R}} \circ \lambda_0 \circ {\val_{p/R}} \circ \pr_{x, 0} \\
	&= \tilde{f} \circ \pr_{x, 0}.
\end{align*}
Similarly for the other two borders.
The fact in the previous paragraph now implies $\size{\orb_{\lambda_{0, 0}}(S)} = \size{\orb_f(\rep_{p/Q}(S))} \leq \size{\mathcal V^\circ} \cdot \size{\orb_{\tilde f} (\pr_{U_1}(\rep_{p/Q}(S)))}$ for all $S \in \val_{p/Q}(\mathcal V)$.
That is,
\begin{equation}\label{intermediate - ring}
	\size{\orb_{\lambda_{0, 0}}(S)}
	\leq p^{N - \alpha (\alpha + 1) (h + d - 1) / 2} \cdot \size{\orb_{\tilde{f}} (\pr_{U_1}(\rep_{p/Q}(S)))}
\end{equation}
for all $S \in \val_{p/Q}(\mathcal V)$.
In particular, this is true for each state $\lambda_{0, 0}^n(S_u)$ where $n \geq 0$, since $\lambda_{0, 0}^n(S_u) \in \val_{p/Q}(\mathcal V)$ by Corollary~\ref{module - ring}.

It remains to bound the orbit size $\size{\orb_{\tilde{f}} (\pr_{U_1}(\rep_{p/Q}(S_u)))}$.
We do this by going from base-$\frac{p}{Q}$ representations back to Laurent polynomials and bounding the orbit sizes of the three projections $\pi_{x, 0}(S_u)$, $\pi_{x, p^{\alpha - 1} h}(S_u)$, and $\pi_{y, p^{\alpha - 1} (d - 1)}(S_u)$.
The definition of $\tilde{f}$ involves three functions $\tilde{f}\subl \colon \pr_{x, 0}(\mathcal V) \to \pr_{x, 0}(\mathcal V)$, $\tilde{f}\subr \colon \pr_{x, h}(\mathcal V) \to \pr_{x, h}(\mathcal V)$, and $\tilde{f}\subt \colon \pr_{y, d - 1}(\mathcal V) \to \pr_{y, d - 1}(\mathcal V)$.
By definition, an orbit under one of these functions is in bijection with the orbit of the corresponding Laurent polynomial under the respective $\lambda_0$ operator.

Corollary~\ref{combined univariate orbit size upper bound - ring} allows us to bound an orbit size under $\lambda_0$.
To satisfy the conditions in Corollary~\ref{combined univariate orbit size upper bound - ring}, we use Lemma~\ref{right degree - ring} to obtain
\begin{align*}
	\deg \pi_{x, 0}(S_{u + \max(u\subl, u\subr, u\subt)}) &\leq p^{\alpha - 1} \deg \pi_{x, 0}(Q) \\
	\deg \pi_{x, p^{\alpha - 1} h}(S_{u + \max(u\subl, u\subr, u\subt)}) &\leq p^{\alpha - 1} \deg \pi_{x, h}(Q) \\
	\deg \pi_{y, p^{\alpha - 1} (d - 1)}(S_{u + \max(u\subl, u\subr, u\subt)}) &\leq p^{\alpha - 1} \deg \pi_{y, d - 1}(Q).
\end{align*}
Therefore, the orbits under $\lambda_0$, of the three Laurent polynomials obtained by projecting $S_{u + \max(u\subl, u\subr, u\subt)}$ onto the three borders, are eventually periodic with transient lengths given by Equation~\eqref{transient time - ring}.
Define $t\subl$, $t\subr$, and $t\subt$ to be these transient lengths.
Recall that $\deg \pi_{x, 0}(Q) \leq d - 1$, $\deg \pi_{x, h}(Q) \leq d - 1$, and $\deg \pi_{y, d - 1}(Q) \leq h$.
We have
\begin{align*}
	t &\colonequal \max(u\subl, u\subr, u\subt)+\max(t\subl, t\subr, t\subt) \\
	&\leq\max(u\subl, u\subr, u\subt) + \max\!\left(
		\floor{\log_p \max(h, d - 1)} + \alpha,
		\ceil{\log_p \max(h, d)} + 2 (\alpha - 1)
	\right) \\
	&\leq \max(u\subl, u\subr, u\subt) + \max\!\left(
		\floor{\log_p \max(h, d)} + \alpha,
		\floor{\log_p \max(h, d)} + 2 \alpha - 1
	\right) \\
	&= \max(u\subl, u\subr, u\subt) + \floor{\log_p \max(h, d)} + 2 \alpha - 1.
\end{align*}
These terms are present in the claimed bound.
Set $S_{u + t}\colonequal \lambda_{0,0}^{u + t}(S_0)$.
We will bound the sizes of the periodic orbits
\begin{align*}
	\orb\subl(S_{u + t}) &\colonequal \{\lambda_0^n(\pi_{x, 0}(S_{u + t})) : n \geq 0\} \\
	\orb\subr(S_{u + t}) &\colonequal \{\lambda_0^n(\pi_{x, p^{\alpha - 1}h}(S_{u + t})) : n \geq 0\} \\
	\orb\subt(S_{u + t}) &\colonequal \{\lambda_0^n(\pi_{y, p^{\alpha - 1}(d-1)}(S_{u + t})) : n \geq 0\}
\end{align*}
where again the three operators $\lambda_0$ are defined with the respective $R$.
By Corollary~\ref{combined univariate orbit size upper bound - ring}, we have $\size{\orb\subl(S_{u + t})} = \lcm(\sigma)$ for some integer partition $\sigma \in \partitions(\deg \pi_{x, 0}(Q))$.
Similarly, for $\size{\orb\subr(S_{u + t})}$ and $\size{\orb\subt(S_{u + t})}$ we obtain two integer partitions in $\partitions(\deg \pi_{x, h}(Q))$ and $\partitions(\deg \pi_{y, d - 1}(Q))$.
We have
\[
	\size{\orb_{\lambda_{0, 0}}(S_{u + t})}
	\leq \lcm(\size{\orb\subl(S_{u + t})} \cdot \size{\orb\subr(S_{u + t})} \cdot \size{\orb\subt(S_{u + t})}).
\]
Therefore $\size{\orb_{\tilde{f}}(S)}\leq \Landaulcm(h, d, d)$.
Finally, Equation~\eqref{intermediate - ring} gives
\[
	\size{\orb_{\lambda_{0,0}}(S_{u + t})}
	\leq p^{N - \alpha (\alpha + 1) (h + d - 1) / 2} \cdot \Landaulcm(h, d, d)
\]
as desired.
\end{proof}

We can now prove Theorem~\ref{kernel size asymptotic bound - ring}.

\begin{main theorem - asymptotic - ring}
Let $p$ be a prime, let $\alpha \geq 1$, and let $F = \sum_{n \geq 0} a(n) x^n \in \Z_p\doublebracket{x} \setminus \{0\}$ be the Furstenberg series associated with a polynomial $P \in \Z_p[x, y]$.
Let $h \colonequal \deg_x(P \bmod p)$ and $d \colonequal \deg_y(P \bmod p)$, and assume $h = \deg_x P$ and $d = \deg_y P$.
Then $\size{\ker_p((a(n) \bmod p^\alpha)_{n \geq 0})}$ is in
\[
	(1 + o(1)) \, p^{\frac{1}{6} \alpha (\alpha + 1) ((2 h d - 1) \alpha + h d + 1)}
\]
as any of $p$, $\alpha$, $h$, or $d$ tends to infinity and the others remain constant.
\end{main theorem - asymptotic - ring}

\begin{proof}
We use the upper bound from Theorem~\ref{kernel size upper bound - ring}.
To bound $\Landaulcm(h, d, d)$, recall the Landau function $\Landau(n)$ from Section~\ref{section: introduction}.
The set of triples of integer partitions of $h,d,d$ gives rise to a subset of integer partitions of $h + 2 d$.
Therefore $\Landaulcm(h, d, d) \leq \Landau(h + 2 d)$.

By assumption, $\deg_x P = h$ and $\deg_y P = d$.
This simplifies the value of $u$ defined in Theorem~\ref{kernel size upper bound - ring} to $u = 1$.
Therefore, by Theorem~\ref{kernel size upper bound - ring}, the size of $\ker_p((a(n) \bmod p^\alpha)_{n \geq 0})$ is at most
\begin{multline*}
	p^N + p^{N - \alpha (\alpha + 1) (h + d - 1) / 2} \Landau(h + 2 d) \\
	+ \max(u\subl, u\subr, u\subt)
	+ \floor{\log_p \max(h, d)}
	+ 2 \alpha - 1
	+ 1.
\end{multline*}
The expression $\max(u\subl, u\subr, u\subt) + \floor{\log_p \max(h, d)} + 2 \alpha$ is clearly in $o(1) p^N$ as $p$, $\alpha$, $h$, or $d$ tends to infinity.
It remains to show that $p^{N - \alpha (\alpha + 1) (h + d - 1) / 2} \Landau(h + 2 d)$ is also in $o(1) p^N$.
Landau~\cite{Landau} proved that $\log \Landau(n) \sim \sqrt{n \log n}$, that is, $\Landau(n) = e^{(1 + \epsilon(n)) \sqrt{n \log n}}$, where $\epsilon(n) \to 0$ as $n \to \infty$.
It follows that
\[
	\frac{p^{N - \alpha (\alpha + 1) (h + d - 1) / 2} \Landau(h + 2 d)}{p^N}
	= \frac{\Landau(h + 2 d)}{p^{\alpha (\alpha + 1) (h + d - 1) / 2}}
	= \frac{e^{(1+\epsilon(h + 2 d)) \sqrt{(h + 2 d) \log (h + 2 d)}}}{p^{\alpha (\alpha + 1) (h + d - 1) / 2}},
\]
and this tends to $0$ as $p$, $\alpha$, $h$, or $d$ tends to infinity and the others remain constant.
\end{proof}

\section{Non-Furstenberg series}\label{section: non-Furstenberg}

In this section, we consider algebraic series annihilated by a polynomial $P \in \Z_p[x, y]$ that does not have an associated Furstenberg series.
We use a standard technique, given in Lemma~\ref{separation}, to obtain an integer $e\geq 0$ and a series $G$ with $F = \sum_{n=0}^e a(n) x^n + x^e G$, where $G(0) = 0$ and $G$ is annihilated by a polynomial whose derivative at the origin is nonzero.
However, we cannot guarantee that its derivative at the origin is nonzero modulo~$p$, i.e., that $G$ is its Furstenberg series.
For example, the polynomial $P = (x + 1) (3 x - 1) y^2 + 1$, which annihilates the generating series of the central trinomial coefficients, suffers from this defect for $p=2$ \cite[Section~4.2]{Rowland--Yassawi}, and we cannot overcome this obstacle with this technique.

If $G$ is indeed a Furstenberg series, then the size of the minimal automaton generating $(a(n) \bmod p^\alpha)_{n \geq 0}$ satisfies Theorem~\ref{kernel size upper bound - non-Furstenberg} below, which is the main result of this section.
It uses the following notation.

\begin{notation*}
Let $p$ be prime and $\alpha\geq 1$.
Let $P \in \Z_p[x, y]$ such that $h \colonequal \deg_x P \geq 1$ and $d \colonequal \deg_y P \geq 1$.
(Note that these definitions of $h$ and $d$ are different than in previous sections.)
Let $\res_y(P, \frac{\partial P}{\partial y})$ denote the resultant, with respect to $y$, of $P$ and $\frac{\partial P}{\partial y}$.
We assume that $P$ is square-free so that this resultant is not $0$.
Let $e = \mindeg_x(\res_y(P, \frac{\partial P}{\partial y}))$.
Since the resultant is the determinant of the Sylvester matrix of $P$ and $\frac{\partial P}{\partial y}$, we have
\[
	e \leq h (2 d - 1).
\]
Let $F = \sum_{n\geq0} a(n) x^n \in \Z_p\doublebracket{x}$ such that $P(x, F) = 0$.
Define
\begin{align*}
	V &\colonequal \sum_{n=0}^e a(n) x^n \\
	G &\colonequal \tfrac{1}{x^e}(F - V) = \sum_{n \geq 1} a(n + e) x^n \in x \Z_p\doublebracket{x} \\
	P_e &\colonequal P(x, V + x^e z) \in \Z_p[x, z].
\end{align*}
We have $G(0)=0$ and $P_e(x,G)=0$.
\end{notation*}

\begin{example}\label{Catalan non-Furstenberg series}
Let $p = 2$ and $P = x y^2 - y + 1$.
The generating series $F = \sum_{n \geq 0} C(n) x^n$ of the sequence of Catalan numbers satisfies $P(x, F) = 0$.
We have $\frac{\partial P}{\partial y} = 2 x y - 1$ and $\res_y(P, \frac{\partial P}{\partial y}) = x (4 x - 1)$.
Therefore $e = 1$, $V = 1 + x$, and
\[
	P_e = x^3 z^2 + (2 x^3 + 2 x^2 - x) z + x^3 + 2 x^2.
\]
Since $\frac{\partial P_e}{\partial z}(0, 0) = 0$, the polynomial $P_e$ does not have an associated Furstenberg series.
\end{example}

The following lemma, whose proof appears below, shows that dividing $P_e$ by a power of $x$ gives a polynomial whose derivative at the origin is nonzero.
This alone does not guarantee that $G$ is a Furstenberg series, however.
Recall that $\mathcal{C}$ is the center row operator, defined in Section~\ref{section: vector space - ring}.

\begin{lemma}\label{separation}
There exists an integer $s \in \{e, e+1, \dots, 2 e\}$ such that $P_e / x^s$ is a polynomial satisfying $\frac{\partial (P_e / x^s)}{\partial z}(0,0)\neq 0$, so that
\[
	G
	= \mathcal{C}\!\left(\frac{z \frac{\partial P_e}{\partial z}}{P_e / z}\right).
\]
If, moreover, $\frac{\partial (P_e / x^s)}{\partial z}(0, 0) \nequiv 0 \mod p$, then $G$ is the Furstenberg series associated with $P_e(x, z) / x^s$.
\end{lemma}

\begin{example}\label{Catalan Furstenberg series}
Continuing Example~\ref{Catalan non-Furstenberg series}, the polynomial $P_e / x$ has constant term $0$, and its derivative at the origin is $-1 \nequiv 0 \mod 2$.
Therefore $P_e / x$ has an associated Furstenberg series, namely $G = \sum_{n \geq 1} C(n + 1) x^n$.
\end{example}

\begin{theorem}\label{kernel size upper bound - non-Furstenberg}
With the notation above, suppose that $\frac{\partial (P_e / x^s)}{\partial z}(0, 0) \nequiv 0 \mod p$, $\deg_x P_e = \deg_x(P_e \bmod p)$, and $\deg_z P_e = \deg_z(P_e \bmod p)$.
Then $\size{\ker_p((a(n) \bmod p^\alpha)_{n \geq 0})}$ is in
\[
	(1 + o(1)) \, p^{\frac{1}{6} \alpha (\alpha + 1) ((2 h d (2 d^2 - d + 1) - 1) \alpha + h d (2 d^2 - d + 1) + 1)}
\]
as any of $p$, $\alpha$, $h$, or $d$ tends to infinity and the others remain constant.
\end{theorem}

We suspect that this bound can be improved, using the techniques in \cite[Section~4]{Adamczewski--Yassawi} where the expressions $x^i (V+x^ez)^j$ replace the monomials $x^i y^j$ as generators of the relevant vector space.
Doing this here would require extending the base-$\frac{p}{Q}$ numeration system to such expressions; we do not undertake this.

Write $G = \sum_{n\geq 0} b(n) x^n$.
To prove Theorem~\ref{kernel size upper bound - non-Furstenberg}, we will first bound $\size{\ker_p((a(n) \bmod p^\alpha)_{n\geq 0})}$ by the sum of $\size{\ker_p((b(n) \bmod p^\alpha)_{n\geq 0})}$ and some secondary terms;
we do this using elementary automaton techniques in Lemma~\ref{relating kernels}.
We will then use the results of Sections~\ref{section: structure - ring}--\ref{section: orbit size univariate - ring} to show that the secondary terms are small.
Finally, we will apply Theorem~\ref{kernel size asymptotic bound - ring} to $G$ to obtain a bound on $\size{\ker_p((b(n) \bmod p^\alpha)_{n\geq 0})}$ in terms of the height and degree of $P_e$.
The height of $P_e$ is at most $h + e d$, and its degree is $d$.
Using the fact that $e \leq h (2 d - 1)$, Theorem~\ref{kernel size asymptotic bound - ring} tells us that $\size{\ker_p((b(n) \bmod p^\alpha)_{n \geq 0})}$ is in
\[
	(1 + o(1)) \, p^{\frac{1}{6} \alpha (\alpha + 1) ((2 (h+h(2d-1)d) d - 1) \alpha + (h+h(2d-1)d) d + 1)}.
\]
This leads to the bound that appears for $\size{\ker_p((a(n) \bmod p^\alpha)_{n\geq 0})}$ in Theorem~\ref{kernel size upper bound - non-Furstenberg}.

We begin by proving Lemma~\ref{separation}.
The proof is adapted from the article of Adamczewski and Bell~\cite[proof of Lemma~6.2]{Adamczewski--Bell}.

\begin{proof}[Proof of Lemma~\ref{separation}]
We first determine the coefficients in the expansion of $P_e$ as a polynomial in $z$.
Write
\[
	P(x, y) = \sum_{j = 0}^d B_j(x) y^j.
\]
Considering the Taylor expansion of $P$ as a series in $y$ centered at $0$, we obtain
\[
	B_j(x) = \frac{1}{j!} \frac{\partial^j P}{\partial y^j}(x,0).
\]
We have $P_e(x,z) = P(x, V + x^e z)$, so the Taylor coefficients of $P_e$ are, by the chain rule,
\[
	\frac{1}{j!} \frac{\partial^j P_e}{\partial z^j}(x,0)
	= \frac{1}{j!} \frac{\partial^j P}{\partial y^j}(x, V) x^{e j}
	\equalcolon C_j(x) x^{e j}.
\]
Therefore
\[
	P_e(x,z)= \sum_{j=0}^d x^{e j} C_j(x) z^j.
\]
We have $\res_y(P, \frac{\partial P}{\partial y})=x^eT$ with $T(0) \neq 0$.
Note that
\begin{align*}
	P(x, V) &= P(x, V) - P(x, F)\\
	&= (V - F) C
\end{align*}
where $C = \sum_{j=1}^d B_j \left(\sum_{k=0}^{j-1} V^k F^{j - k - 1}\right) \in \Z_p\doublebracket{x}$.
Therefore
\[
	P(x,V) = -x^e G C.
\]
This implies that $x^{e+1}$ divides $P(x, V)$ since $G(0)=0$.
On the other hand, by a property of resultants, there exist two polynomials $A(x,y)$ and $B(x,y)$ such that
\[
	x^e T=\res_y(P, \tfrac{\partial P}{\partial y}) = A(x,y)P(x,y) + B(x,y)\tfrac{\partial P}{\partial y}(x,y).
\]
It follows that
\[
	B(x,V)\tfrac{\partial P}{\partial y}(x,V) = x^eT -A(x,V)P(x,V) = x^eT +A(x,V) x^e G C.
\]
Since $T(0)\neq 0$ and $G(0)=0$, we have $\mindeg_x(B(x,V) \frac{\partial P}{\partial y}(x,V)) = e$.
Therefore $s \colonequal \mindeg_x(x^e C_1) = \mindeg_x(x^e \frac{\partial P}{\partial y}(x,V)) \leq 2 e$.
Since the mindeg of $x^{ej} C_j$, which is the coefficient of $z^j$ in $P_e$, is at least $2e$ for all $j$ in the range $2\leq j\leq d$, we obtain that $\mindeg_x(C_0) \geq s$.
It follows that $P_e(x,z) / x^s$ is a polynomial and $\frac{\partial (P_e / x^s)}{\partial z}(0,0) \neq 0$.
Furthermore, since $G(0) = 0$, we have $\mindeg_x(C_0) \geq s + 1$, so $C_0 / x^s$ is divisible by $x$.
This implies that the constant term of $P_e(x, z) / x^s$ is $0$.
If, additionally, $\frac{\partial (P_e / x^s)}{\partial z}(0, 0) \nequiv 0 \mod p$, then $G$ is the Furstenberg series associated with $P_e(x, z) / x^s$.
\end{proof}

The following preparatory lemma gives a commutation relation between the operator $\Lambda_r$ and multiplication by a power of $x$.
Its proof is straightforward.

\begin{lemma}\label{intertwining}
Let $r \in \{0, 1, \dots, p - 1\}$ and $i \geq 0$.
For every power series $G \in \ring\doublebracket{x}$, we have
\[
	\Lambda_r(x^i G)
	= x^{\ceil{(i - r)/p}} \Lambda_{(r - i) \bmod p}(G).
\]
\end{lemma}

In the next lemma, we relate the kernel size of the coefficient sequence of $F \bmod p^\alpha$ to that of $G \bmod p^\alpha$.
Applying sufficiently many Cartier operators to $F= V + x^e G$, the image of $V$ is reduced to a constant.
Since $\deg V \leq e$, the number of Cartier operators we need to apply is logarithmic in $e\leq h(2d-1)$.
At the same time, we use Lemma~\ref{intertwining} to replace a composition of Cartier operators applied to $F\bmod p^\alpha$ by a composition of Cartier operators applied to $G \bmod p^\alpha$.
Example~\ref{relating kernels example} illustrates this process.

\begin{lemma}\label{relating kernels}
Write $G = \sum_{n\geq 0} b(n) x^n$.
Then $\size{\ker_p((a(n)\bmod p^\alpha)_{n \geq 0})}$ is at most
\[
	\size{\ker_p((b(n) \bmod p^\alpha)_{n \geq 0})} +
	\size{\orb_{\Lambda_0}(G \bmod p^\alpha)}
	+ \sum_{q=0}^{e - 1} \size{\orb_{\Lambda_{p-1}}(G_q \bmod p^\alpha)}
	+ p e
\]
for some series $G_0, G_1, \dots, G_{e - 1}$ whose coefficient sequences belong to $\ker_p(b(n)_{n \geq 0})$.
\end{lemma}

\begin{proof} We essentially express the states of an automaton for $(a(n)\bmod p^\alpha)_{n \geq 0}$ in terms of an automaton for $(b(n)\bmod p^\alpha)_{n \geq 0}$.
We have $F=V+x^e G$.
Let $t_0 = \floor{\log_p \max(e, 1)}$.
Let $t \geq 0$, and let $(r_t, r_{t - 1}, \dots, r_0)$ be a tuple of digits in $\{0, 1, \dots, p - 1\}$.
We consider $(\Lambda_{r_t} \circ \Lambda_{r_{t-1}} \circ \dots \circ \Lambda_{r_0})(F\bmod p^\alpha)$.

First let $t < t_0$.
We have
\[
	\size{\{F\bmod p^\alpha\} \cup \{(\Lambda_{r_t} \circ \dots \circ \Lambda_{r_0})(F\bmod p^\alpha): t < t_0\}}
	\leq \frac{p^{t_0 + 1} - 1}{p - 1}
	\leq p^{t_0 + 1}
	\leq p e
\]
by the definition of $t_0$.
This contributes the last term to the sum in the statement of the lemma.

Now let $t \geq t_0$.
Given $(r_t, \dots, r_0)$, iterating Lemma~\ref{intertwining} gives a unique tuple $(\tilde r_t, \dots, \tilde r_0)$ and a unique $i$ such that
\[
	(\Lambda_{r_t} \circ \dots \circ \Lambda_{r_0})(x^e G \bmod p^\alpha)
	= x^i \, (\Lambda_{\tilde r_t} \circ \dots \circ \Lambda_{\tilde r_0})(G \bmod p^\alpha).
\]
Moreover, we have $i \in \{0,1\}$ since applying a Cartier operator reduces the degree of a polynomial by a factor of $p$.
Since $F=V+x^e G$, this implies
\begin{multline}\label{bijection}
	(\Lambda_{r_t} \circ \dots \circ \Lambda_{r_0})(F \bmod p^\alpha) \\
	= (\Lambda_{r_t} \circ \dots \circ \Lambda_{r_0})(V \bmod p^\alpha)
	+ x^i \, (\Lambda_{\tilde r_t} \circ \dots \circ \Lambda_{\tilde r_0})(G \bmod p^\alpha)
\end{multline}
for a unique tuple $(\tilde r_t, \dots, \tilde r_0)$.
Let $q = r_t p^t+ r_{t-1} p^{t-1}+ \dots + r_0$; note that the leading digits can be $0$.
We consider three cases.
\begin{enumerate}
\item\label{less than e}
If $0 \leq q < e$, then the definition of $t_0$ implies that
\[
	(\Lambda_{r_{t_0}} \circ \dots \circ \Lambda_{r_0})(F\bmod p^\alpha)
	= a(q) + x \, (\Lambda_{\tilde r_{t_0}} \circ \dots \circ \Lambda_{\tilde r_0})(G\bmod p^\alpha).
\]
Moreover if $t>t_0$, then $(r_t, r_{t - 1}, \dots, r_{t_0}) = (0, 0, \dots, 0)$, so that
\begin{align*}
	(\Lambda_{r_t} \circ \dots \circ \Lambda_{r_0})(F\bmod p^\alpha)
	& = a(q) + \Lambda_0^{t - t_0}\!\left(x \, (\Lambda_{\tilde r_{t_0}} \circ \dots \circ \Lambda_{\tilde r_0})(G\bmod p^\alpha)\right) \\
	& = a(q) + x \, \Lambda_{p-1}^{t - t_0}\!\left((\Lambda_{\tilde r_{t_0}} \circ \dots \circ \Lambda_{\tilde r_0})(G\bmod p^\alpha)\right)
\end{align*}
by Lemma~\ref{intertwining}.
Let $G_q = (\Lambda_{\tilde r_{t_0}} \circ \dots \circ \Lambda_{\tilde r_0})(G)$.
Then the contribution from tuples with $q < e$ to the sum in the statement of the lemma is at most $\sum_{q=0}^{e - 1} \size{\orb_{\Lambda_{p-1}}(G_q \bmod p^\alpha)}$.
\item\label{equal to e}
If $q = e$, then Equation~\eqref{bijection} becomes
\[
	(\Lambda_{r_t} \circ \dots \circ \Lambda_{r_0})(F\bmod p^\alpha)
	= a(e) + \Lambda_0^{t+1}(G\bmod p^\alpha).
\]
This contributes at most $\size{\orb_{\Lambda_0}(G \bmod p^\alpha)}$.
\item\label{greater than e}
If $q > e$, then Equation~\eqref{bijection} becomes
\[
	(\Lambda_{r_t} \circ \dots \circ \Lambda_{r_0})(F\bmod p^\alpha)
	= (\Lambda_{\tilde r_t} \circ \dots \circ \Lambda_{\tilde r_0})(G\bmod p^\alpha).
\]
That is, the set of coefficient sequences of $(\Lambda_{r_t} \circ \dots \circ \Lambda_{r_0})(F\bmod p^\alpha)$, where $q > e$, is equal to the set of coefficient sequences of $(\Lambda_{\tilde r_t} \circ \dots \circ \Lambda_{\tilde r_0})(G\bmod p^\alpha)$.
This contributes at most $\size{\ker_p((b(n) \bmod p^\alpha)_{n \geq 0})}$.
\end{enumerate}
The result follows.
\end{proof}

\begin{example}\label{relating kernels example}
Continuing Example~\ref{Catalan Furstenberg series}, we have $F = V + x^e G = 1 + x + x G$.
The argument in the proof of Lemma~\ref{relating kernels} is independent of $\alpha$, so to illustrate the mechanics we work in $\Z_p\doublebracket{x}$ rather than $\ring\doublebracket{x}$.
We work out several images of $F$ under compositions of Cartier operators, using Lemma~\ref{intertwining}.
The value of $t_0 = \floor{\log_p \max(e, 1)}$ is $0$.
For Case~\ref{less than e} in the proof of Lemma~\ref{relating kernels}, we consider all base-$2$ expansions of $q = 0$.
Let $k \geq 1$.
We have
\[
	\Lambda_0^k(F)
	= \Lambda_0^k(1 + x + x G)
	= 1 + \Lambda_0^k(x G)
	= 1 + x \Lambda_1^k(G).
\]
For Case~\ref{equal to e}, we consider all base-$2$ expansions of $q = 1$.
Let $k \geq 0$.
We have
\[
	\Lambda_0^k(\Lambda_1(F))
	= \Lambda_0^k(\Lambda_1(1 + x + x G))
	= \Lambda_0^k(1 + \Lambda_0(G))
	= 1 + \Lambda_0^{k + 1}(G).
\]
We illustrate Case~\ref{greater than e} by considering all base-$2$ expansions of $q = 2$.
Let $k \geq 0$.
We have
\[
	\Lambda_0^k(\Lambda_1(\Lambda_0(F)))
	= \Lambda_0^k(\Lambda_1(\Lambda_0(1 + x + x G)))
	= \Lambda_0^k(\Lambda_1(1 + x \Lambda_1(G)))
	= \Lambda_0^{k + 1}(\Lambda_1(G)).
\]
\end{example}

We now prove the main result of the section.
This involves bounding the sizes of the orbits in Lemma~\ref{relating kernels}.
We bound orbits under $\Lambda_0$ as we do in the proof of Theorem~\ref{kernel size upper bound - ring}.
To bound orbit sizes under $\Lambda_{p-1}$, which so far we have not needed to do in this article, we use Corollary~\ref{module - ring}, which tells us that these orbits already live in a smaller space, along with a modification of some of the results in Section~\ref{section: structure - ring}.

\begin{proof}[Proof of Theorem~\ref{kernel size upper bound - non-Furstenberg}]
Let
\[
	 N\colonequal \tfrac{1}{6} \alpha (\alpha + 1) ((2 h d (2 d^2 - d + 1) - 1) \alpha + h d (2 d^2 - d + 1) + 1),
\]
let $h' \colonequal h + e d - s$, and let
\[
	M \colonequal \tfrac{1}{6} \alpha (\alpha + 1) ((2 h' d - 1) \alpha + h' d + 1).
\]
We bound each term of the sum
\[
	\size{\ker_p((b(n) \bmod p^\alpha)_{n \geq 0})} +
	\size{\orb_{\Lambda_0}(G \bmod p^\alpha)}
	+ \sum_{q=0}^{e - 1} \size{\orb_{\Lambda_{p-1}}(G_q \bmod p^\alpha)}
	+ p e
\]
from Lemma~\ref{relating kernels}.

For the term $p e$, one checks that $N \geq 2$, so $p e$ is in $o(1) p^N$ as any of $p$, $\alpha$, $h$, or $d$ tends to infinity and the others remain constant.
To bound the first term, we use Theorem~\ref{kernel size asymptotic bound - ring}.
As $P_e$ has height at most $h'$ and degree $d$,
Theorem~\ref{kernel size asymptotic bound - ring} implies that the size of $\ker_p((b(n) \bmod p^\alpha)_{n \geq 0})$ is in $(1 + o(1)) p^M$ as any of $p$, $\alpha$, $h'$, or $d$ tends to infinity and the others remain constant.
We have $M\leq N$ since $e\leq h(2d-1)$ and $s\geq 0$.
Therefore the size of $\ker_p((b(n) \bmod p^\alpha)_{n \geq 0})$ is in $(1 + o(1)) p^N$ as any of $p$, $\alpha$, $h$, or $d$ tends to infinity and the others remain constant.

For the second term, the last inequality in the proof Theorem~\ref{kernel size upper bound - ring} gives us
\[
	\size{\orb_{\Lambda_0}(G \bmod p^\alpha)} \leq
	 p^{M- \alpha (\alpha + 1) (h' + d - 1) / 2} \Landaulcm(h', d, d) + \text{small terms}.
\]
Then
\[
	\frac{p^{M- \alpha (\alpha + 1) (h' + d - 1) / 2} \Landaulcm(h', d, d)}{p^M} \leq
	\frac{e^{(1+\epsilon(h' + 2 d)) \sqrt{(h'+ 2 d) \log (h' + 2 d)}}}{p^{\alpha (\alpha + 1) (h'+ d - 1) / 2}}
\]
and this is in $o(1) p^M$ as any of $p$, $\alpha$, $h'$, or $d$ tends to infinity and the others remain constant.
Therefore $\size{\orb_{\Lambda_0}(G \bmod p^\alpha)}$ is in $o(1)p^N$ as any of $p$, $\alpha$, $h$, or $d$ tends to infinity and the others remain constant.

It remains to bound $\size{\orb_{\Lambda_{p-1}}(G_q \bmod p^\alpha)}$ for each $q \in \{1, 2, \dots, e\}$, where $G_q$ is defined in Case~\ref{less than e} in the proof of Lemma~\ref{relating kernels}.
If $e = 0$, then there are no such terms and we are done.
Assume $e \geq 1$.
Since $G$ is a Furstenberg series, Theorem~\ref{Furstenberg - ring} implies that $G$ is the diagonal of a rational function with denominator $Q_e \colonequal P_e / (x^s z)$.
Furthermore, as in Section~\ref{section: vector space - ring}, this implies that every element of $\ker_p((b(n) \bmod p^\alpha)_{n \geq 0})$ is the coefficient sequence of the diagonal of a rational function with denominator $Q_e^{p^{\alpha - 1}}$.
In particular, $G_q$ is the diagonal of such a rational function; let its numerator be $S_q$.
Therefore we can access each $\Lambda_{p - 1}(G_q)$ by working with the following operator.
Define $\lambda_{p-1, 0} \colon \ring[x, z, z^{-1}] \to \ring[x, z, z^{-1}]$ by
\[
	\lambda_{p-1, 0}(S)
	\colonequal
	\Lambda_{p-1, 0}\!\paren{S Q_e^{p^\alpha - p^{\alpha - 1}}}.
\]
Since by assumption $\deg_x P_e = \deg_x(P_e \bmod p)$ and $\deg_z P_e = \deg_z(P_e \bmod p)$, the value of $u$ in Proposition~\ref{transient - ring} is $1$.
Furthermore, inspecting Case~\ref{less than e} in the proof of Lemma~\ref{relating kernels}, we see that we apply at least one Cartier operator $\Lambda_{p - 1}$ to each of the series $G_q \bmod p^\alpha$ defined there.
By Corollary~\ref{module - ring}, the base-$\frac{p}{Q_e}$ representation of $S_q Q_e^{p^\alpha - p^{\alpha - 1}}$ belongs to the space $\mathcal W$ defined by
\[
	\mathcal W \colonequal \{
		(T_{\alpha - 1}, \dots, T_1, T_0)
		: \text{$T_k \in W_k$ for each $k \in \{0, 1, \dots, \alpha - 1\}$}
	\}
\]
where
\[
	W_k \colonequal
	\left\{
		\sum_{i = 0}^{(k + 1) h' - 1} \sum_{j = \max(-k, -i)}^{(k + 1) (d - 1)} c_{i, j} x^i z^j
		: \text{$c_{i, j} \in D$ for each $i, j$}
	\right\}
\]
analogously to Equation~\eqref{W definition - ring}.
Accordingly, Equation~\eqref{W size} implies that the size of the basis of $\mathcal W$ is $M$.

We modify the part of the proof in Theorem~\ref{kernel size upper bound - ring} where we bounded an orbit size under $\lambda_{0,0}$ to bound an orbit size under $\lambda_{p-1,0}$.
The key idea there was to use the results in Section~\ref{section: structure - ring} in order to restrict the spaces in which the base-$\frac{p}{Q}$ representations of elements in the orbit live.
We moved from the larger space $\mathcal V$ defined in Equation~\eqref{V definition - ring} to the smaller space $\mathcal V^\circ$ defined in Equation~\eqref{V_k interior definition - ring}.
Here we do something similar; namely, we move from $\mathcal W$ to a space $\mathcal W^\circ$ defined by
\[
	\mathcal W^\circ \colonequal \{
		(T_{\alpha - 1}, \dots, T_1, T_0)
		: \text{$T_k \in W_k^\circ$ for each $k \in \{0, 1, \dots, \alpha - 1\}$}
	\}
\]
where
\[
	W_k^\circ \colonequal
	\left\{
		\sum_{i = 0}^{(k + 1) h' - 1} \sum_{j = \max(-k, -i)}^{(k + 1)(d - 1) - 1} c_{i, j} x^i z^j
		: \text{$c_{i, j} \in D$ for each $i, j$}
	\right\}
\]
for each $k \in \{0, 1, \dots, \alpha - 1\}$.
(Since only some of the results in Section~\ref{section: structure - ring} extend to $\lambda_{p-1,0}$, the space $\mathcal W^\circ$ is slightly larger than $\mathcal V^\circ$ as defined in Equation~\eqref{V_k interior definition - ring}.)
We have
\begin{align*}
	\dim \mathcal W^\circ
	&= \sum_{k = 0}^{\alpha - 1} \paren{
		(k + 1) h' \cdot (k + 1) (d - 1)
		+ \sum_{j = -k}^{-1} \paren{(k + 1) h' + j}
	} \\
	&= \tfrac{1}{6} \alpha (\alpha + 1) ((2 h' d - 1) \alpha + h' d - 3 h' + 1) \\
	&= M - \tfrac{1}{2} \alpha (\alpha + 1) h'.
\end{align*}

The results that allow us to move from $\mathcal W$ to the smaller space $\mathcal W^{\circ}$ are the following.
The first result is a variant of the third part of Proposition~\ref{univariate emulation - ring}.
Using the notation from Section~\ref{section: structure - ring}, let $R = \pi_{z, d - 1}(Q_e)$.
Then for all $S \in \ring[x, z, z^{-1}]$ with degree at most $p^{\alpha - 1} (d-1)$, we have
\[
	\pi_{z, p^{\alpha - 1} (d - 1)}(\lambda_{p-1, 0}(S))
	= \lambda_{p-1}(\pi_{z, p^{\alpha - 1} (d - 1)}(S)).
\]
The second result is a variant of the third part of Theorem~\ref{state base-p/Q representation - univariate}.
Namely, if $S$ is a state in $\val_{p/Q_e}(\mathcal V)$ and $R = \pi_{z, d - 1}(Q_e)$, then $\pr_{z, d-1}(\rep_{p/Q_e}(S)) = \rep_{p/R}(\pi_{z, p^{\alpha - 1} (d - 1)}(S))$.
With these two results, the proof in Theorem~\ref{kernel size upper bound - ring} also applies to $\lambda_{p-1,0}$, giving
\[
	\size{\orb_{\lambda_{p-1,0}} (S_q)} \leq p^{M - \alpha (\alpha + 1) h' / 2} \Landau(h') + \text{small terms}.
\]

We now proceed as in the proof of Theorem~\ref{kernel size asymptotic bound - ring} to compare the size of $\orb_{\lambda_{p-1,0}} (S_q)$ to $M$.
Ignoring the small terms, we have
\[
	\frac{\size{\orb_{\lambda_{p-1,0}} (S_q)}}{p^M}
	\leq \frac{p^{M - \alpha (\alpha + 1) h' / 2} \Landau(h')}{p^M}
	= \frac{\Landau(h')}{p^{\alpha (\alpha + 1) h' / 2}}
	= \frac{e^{(1+\epsilon(h')) \sqrt{h' \log h'}}}{p^{\alpha (\alpha + 1) h' / 2}}.
\]
This expression tends to $0$ as $p$, $\alpha$, or $h'$ tends to infinity and the others remain constant, so $\size{\orb_{\lambda_{p-1,0}} (S_q)}\in o(1)p^M$.
Since $e\geq 1$, if $h$ or $d$ tends to infinity then $h'$ tends to infinity.
Since $M\leq N$, then $\size{\orb_{\lambda_{p-1,0}} (S_q)}\in o(1)p^N$ as $p$, $\alpha$, $h$, or $d$ tend to infinity and the others remain constant.
\end{proof}

\section{Diagonals of rational functions}\label{section: diagonals}

In this section, we widen our scope from algebraic series to diagonals of multivariate rational functions.
Over a field of nonzero characteristic, Furstenberg~\cite{Furstenberg} showed that the diagonal of a multivariate rational function is algebraic.
This is a special case of the following result due to Denef and Lipshitz~\cite[Theorem~6.2 and Remark~6.6]{Denef--Lipshitz}.

\begin{theorem}\label{automatic diagonal}
Let $p$ be a prime.
Let $P(x_1, \dots, x_m)$ and $Q(x_1, \dots, x_m)$ be polynomials in $\Z_p[x_1, \dots, x_m]$ such that $Q(0,\dots,0) \nequiv 0 \mod p$, and let
\[
	F \colonequal \mathcal{D}\!\paren{\frac{P(x_1, \dots, x_m)}{Q(x_1, \dots, x_m)}}.
\]
Then, for each $\alpha\geq 1$, the coefficient sequence of $F \bmod p^\alpha$ is $p$-automatic.
\end{theorem}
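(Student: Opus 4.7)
The plan is to construct a (not necessarily minimal) automaton whose states are parametrized by multivariate polynomials in $\ring[x_1, \dots, x_m]$, mirroring the bivariate construction of Section~\ref{section: vector space - ring}. First I would extend Cartier operators to $m$ variables: for $(r_1, \dots, r_m) \in \{0, 1, \dots, p-1\}^m$, define $\Lambda_{r_1, \dots, r_m}(x_1^{n_1} \cdots x_m^{n_m})$ to equal $x_1^{(n_1 - r_1)/p} \cdots x_m^{(n_m - r_m)/p}$ if $n_i \equiv r_i \mod p$ for each $i$, and $0$ otherwise, extended $\ring$-linearly to series. The multivariate analog of Proposition~\ref{Cartier - ring}, namely $\Lambda_{r_1, \dots, r_m}(G F^{p^\alpha}) = \Lambda_{r_1, \dots, r_m}(G) \, F^{p^{\alpha - 1}}$, follows immediately from the multivariate form of Lemma~\ref{lifting-the-exponent}, applied exactly as in the bivariate case.

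Next I would verify the commutation relation $\Lambda_r \circ \mathcal{D} = \mathcal{D} \circ \Lambda_{r, r, \dots, r}$ by a direct monomial check: a coefficient $a(n_1, \dots, n_m)$ contributes to $\Lambda_r(\mathcal{D}(\cdot))$ precisely when $n_1 = \dots = n_m = p k + r$ for some $k \geq 0$, which matches the condition imposed by $\Lambda_{r, \dots, r}$ followed by $\mathcal{D}$. Using the multivariate version of Lemma~\ref{lifting-the-exponent}, I may then rewrite
\[
	\frac{P}{Q} \equiv \frac{P \cdot Q^{p^{\alpha - 1} - 1}}{Q^{p^{\alpha - 1}}} \mod p^\alpha,
\]
so that $F \equiv \mathcal{D}(S_0 / Q^{p^{\alpha - 1}}) \mod p^\alpha$ where $S_0 \colonequal (P \, Q^{p^{\alpha - 1} - 1} \bmod p^\alpha)$. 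Define transitions on $\ring[x_1, \dots, x_m]$ by $\lambda_r(S) \colonequal \Lambda_{r, \dots, r}(S \, Q^{p^\alpha - p^{\alpha - 1}})$; the two facts above combine (exactly as in the bivariate derivation preceding Equation~\eqref{little lambda definition - ring}) to give $\Lambda_r \mathcal{D}(S / Q^{p^{\alpha - 1}}) = \mathcal{D}(\lambda_r(S) / Q^{p^{\alpha - 1}})$. Iterating the $\lambda_r$ from $S_0$ therefore emulates the Cartier operators acting on $(a(n) \bmod p^\alpha)_{n \geq 0}$.

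The crucial remaining step is finiteness of the reachable state set. Setting $h_i \colonequal \max(\deg_{x_i}(P \bmod p), \deg_{x_i}(Q \bmod p))$, I would show that the degree bounds $\deg_{x_i} S \leq p^{\alpha - 1} h_i$ (for $1 \leq i \leq m$) are both satisfied by $S_0$ and preserved under each $\lambda_r$: indeed $\deg_{x_i}(S \, Q^{p^\alpha - p^{\alpha - 1}}) \leq p^{\alpha - 1} h_i + (p^\alpha - p^{\alpha - 1}) h_i = p^\alpha h_i$, and $\Lambda_{r, \dots, r}$ contracts $x_i$-degrees by a factor of $p$. Since there are only finitely many polynomials in $\ring[x_1, \dots, x_m]$ subject to these bounds, the set of reachable states is finite. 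Assigning to each state $S$ the output equal to the constant term of $S$ divided by the constant term of $Q^{p^{\alpha - 1}}$ (a unit of $\ring$ because $Q(0, \dots, 0) \nequiv 0 \mod p$) yields a finite $p$-automaton generating $(a(n) \bmod p^\alpha)_{n \geq 0}$.

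The main obstacle is not conceptual but bookkeeping: I would need to verify that the transitions and outputs are insensitive to leading zeros, which reduces to a direct generalization of the constant-term argument at the end of Section~\ref{section: vector space - ring}. Note that, unlike the bivariate setting of Section~\ref{section: vector space - ring} in which shearing introduced negative powers of $y$, here the denominator $Q$ is a polynomial, so all intermediate Laurent polynomials are in fact polynomials and Minkowski-type constraints like Lemma~\ref{Minkowski} are unnecessary. Quantifying this bound yields the $p^M$ estimate of Theorem~\ref{kernel size upper bound - multivariate diagonals}, whose proof I would take up afterwards in Section~\ref{section: diagonals}.
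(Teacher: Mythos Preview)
Your proposal is correct and matches the constructive approach the paper describes (the paper does not prove Theorem~\ref{automatic diagonal} itself but cites it to Denef--Lipshitz and to \cite{Rowland--Yassawi}, then records exactly the operators $\lambda_{r,\dots,r}(S) = \Lambda_{r,\dots,r}(S\,Q^{p^\alpha-p^{\alpha-1}})$ and initial state $S_0 = P\,Q^{p^{\alpha-1}-1} \bmod p^\alpha$ that you derive).

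One small gap: with your definition $h_i = \max(\deg_{x_i}(P \bmod p),\deg_{x_i}(Q \bmod p))$, the claim that $\deg_{x_i} S_0 \le p^{\alpha-1} h_i$ need not hold, since $P$ or $Q$ may carry monomials of higher $x_i$-degree whose coefficients lie in $p\Z_p$. (Take $p=2$, $\alpha=2$, $Q=1$, $P = x + 2x^3$: then $h_1=1$, $p^{\alpha-1}h_1=2$, but $\deg_x S_0 = 3$.) For the bare automaticity statement of Theorem~\ref{automatic diagonal} this is harmless: simply replace $h_i$ by $\max(\deg_{x_i} P,\deg_{x_i} Q)$ and your invariance argument goes through unchanged. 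If you want the sharper mod-$p$ degrees for the bound in Theorem~\ref{kernel size upper bound - multivariate diagonals}, you must either replace $Q$ by a lift of $Q\bmod p$ with the same monomial support (as the paper does in its Notation) and argue as in Lemma~\ref{initial state - diagonals}, or show that after finitely many transitions the degrees drop into the desired box (the analogue of Proposition~\ref{transient - ring}).
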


The proof of Theorem~\ref{automatic diagonal} is constructive~\cite{Rowland--Yassawi}, and the relevant operators $\lambda_{r, \dots, r}$ are defined for $S \in \ring[x_1, \dots, x_m]$ by
\[
	\lambda_{r, \dots, r}(S)
	\colonequal
	\Lambda_{r, \dots, r}\!\paren{S (Q \bmod p^\alpha)^{p^\alpha - p^{\alpha - 1}}}.
\]
By Lemma~\ref{lifting-the-exponent}, we can replace $Q \bmod p^\alpha$ with a lift of $Q \bmod p$ to $\ring[x_1, \dots, x_m]$ which has the same monomial support as $Q \bmod p$.

The numeration system that we developed in Section~\ref{section: image - ring} for bivariate Laurent polynomials can be adapted to multivariate polynomials.
This will allow us to prove Theorem~\ref{kernel size upper bound - multivariate diagonals}.

We begin with a bivariate analogue of Theorem~\ref{kernel size upper bound - ring}.
To state it, we extend the function $\Landaulcm(n_1, n_2, n_3)$ from Section~\ref{section: introduction} to $\Landaulcm(n_1, n_2, n_3, n_4)$, defined analogously as the maximum value of $\lcm(\lcm(\sigma_1), \lcm(\sigma_2), \lcm(\sigma_3), \lcm(\sigma_4))$ over integer partitions $\sigma_i$ of integers in $\{1, 2, \dots, n_i\}$.
The reason for this is that Theorem~\ref{automatic diagonal} is symmetric in $x_1, \dots, x_m$, unlike Theorem~\ref{Furstenberg - ring}.
This symmetry leads to the appearance of $\Landaulcm(h, h, d, d)$ in Theorem~\ref{kernel size upper bound - diagonals} instead of $\Landaulcm(h, d, d)$ as in Theorem~\ref{kernel size upper bound - ring}.

\begin{theorem}\label{kernel size upper bound - diagonals}
Let $p$ be a prime, and let $\alpha \geq 1$.
Let $P(x, y)$ and $Q(x, y)$ be polynomials in $\Z_p[x, y]$ such that $Q(0, 0) \nequiv 0 \mod p$.
Let
\[
	F = \mathcal{D}\!\paren{\frac{P(x, y)}{Q(x, y)}},
\]
and write $F = \sum_{n \geq 0} a(n) x^n$.
Let
\begin{align*}
	h &= \max(\deg_x(P \bmod p), \deg_x(Q \bmod p)) \\
	d &= \max(\deg_y(P \bmod p), \deg_y(Q \bmod p)),
\end{align*}
and assume $h\geq 1$ and $d\geq 1$.
Define $N = \frac{1}{6} \alpha (\alpha + 1) (2 \alpha + 1) h d$,
\begin{align*}
	u &= \big\lfloor\!\log_p \max\!\big( \\
		&\qquad \quad \alpha \, (\max(\deg_x(P \bmod p^\alpha), \deg_x(Q \bmod p^\alpha)) - h), \\
		&\qquad \quad \alpha \, (\max(\deg_y(P \bmod p^\alpha), \deg_y(Q \bmod p^\alpha)) - d), \\
		&\qquad \quad 1 \\
	&\phantom{{} = {}} \,\big)\big\rfloor,
\end{align*}
and
\begin{align*}
	u\subl &= \floor{\log_p \max\!\paren{p^{\alpha - 1} (d - \deg \pi_{x, 0}(Q)), 1}} + 1 \\
	u\subr &= \floor{\log_p \max\!\paren {p^{\alpha - 1} (d - \deg \pi_{x, h}(Q)), 1}} + 1 \\
	u\subb &= \floor{\log_p \max\!\paren {p^{\alpha - 1} (h - \deg \pi_{y, 0}(Q)), 1}} + 1\\
	u\subt &= \floor{\log_p \max\!\paren {p^{\alpha - 1} (h - \deg \pi_{y, d}(Q)), 1}} + 1.
\end{align*}
Then the size of $\ker_p((a(n) \bmod p^\alpha)_{n \geq 0})$ is at most
\begin{multline*}
	p^N + p^{N - \alpha ((\alpha + 1) (h + d) - 2) / 2} \Landaulcm(h, h, d, d)
	\\
	+ \max(u\subl, u\subr, u\subb, u\subt)
	+ \floor{\log_p \max(h, d)} + 2 \alpha - 1 + \tfrac{p^u - 1}{p - 1}.
\end{multline*}
Consequently, if $h = \max(\deg_x P, \deg_x Q)$ and $d = \max(\deg_y P, \deg_y Q)$, then $\size{\ker_p((a(n) \bmod p^\alpha)_{n \geq 0})}$ is in $(1 + o(1)) p^N$ as any of $p$, $\alpha$, $h$, or $d$ tends to infinity and the others remain constant.
\end{theorem}

Using $\max(u\subl, u\subr, u\subb, u\subt) \leq \floor{\log_p \max(h, d)} + \alpha$, we obtain the simpler bound
\[
	p^N + p^{N - \alpha ((\alpha + 1) (h + d) - 2) / 2} \Landaulcm(h, h, d, d)
	+ 2 \floor{\log_p \max(h, d)}
	+ 3 \alpha - 1
	+ \tfrac{p^u - 1}{p - 1}.
\]
We remark that one could further refine the definitions of $h$ and $d$ to obtain more refined bounds;
in particular, the bounds on the degrees of the digits of states in the automaton depend more on the degrees of $Q$ than $P$.

The structure of the proof of Theorem~\ref{kernel size upper bound - diagonals} is similar to that of Theorem~\ref{kernel size upper bound - ring}.
One difference is that the diagonal in Theorem~\ref{Furstenberg - ring} contains expressions of the form $P(x y, y)$, which led us to shear and to consider the maps $\lambda_{r, 0}$ on Laurent polynomials.
For general diagonals, the symmetry in $x, y$ means that no shearing is required, and no Laurent polynomials enter the picture.
We define the main objects and state the modifications of relevant results used in the proof.

Define $h$ and $d$ as in Theorem~\ref{kernel size upper bound - diagonals}.
Let
\[
	W_k \colonequal
	\left\{
		\sum_{i = 0}^{(k + 1) h - 1}
		\sum_{j = 0}^{(k + 1) d - 1}
		c_{i, j} x^i y^j
		: \text{$c_{i, j} \in D$ for each $i, j$}
	\right\}
\]
and
\[
	V_k \colonequal \left\{
		\sum_{i = 0}^{(k + 1) h}
		\sum_{j = 0}^{(k + 1) d}
		c_{i, j} x^i y^j
		: \text{$c_{i, j} \in D$ for each $i, j$}
	\right\}.
\]
Define
\begin{align*}
	\mathcal W &\colonequal \{
		(T_{\alpha - 1}, \dots, T_1, T_0)
		: \text{$T_k \in W_k$ for each $k \in \{0, 1, \dots, \alpha - 1\}$}
	\} \\
	\mathcal V &\colonequal \{
		(T_{\alpha - 1}, \dots, T_1, T_0)
		: \text{$T_k \in V_k$ for each $k \in \{0, 1, \dots, \alpha - 1\}$}
	\}.
\end{align*}
The dimension of $\mathcal W$ is
\[
	N
	\colonequal \sum_{k = 0}^{\alpha - 1} (k + 1) h \cdot (k + 1) d
	= \tfrac{1}{6} \alpha (\alpha + 1) (2 \alpha + 1) h d.
\]
The initial state of the automaton is $S_0 = \paren{P Q^{p^{\alpha - 1} - 1} \bmod p^\alpha}$.
An analogue of Theorem~\ref{state base-p/Q representation} tells us that every state in the automaton has a base-$\frac{p}{Q \bmod p^\alpha}$ representation.
To simplify notation, for the remainder of this section, we refer to this representation as a base-$\frac{p}{Q}$ representation.

To prove Theorem~\ref{kernel size upper bound - diagonals}, we provide analogues of Lemma~\ref{initial state}, Proposition~\ref{transient - ring}, Proposition~\ref{univariate emulation - ring}, and Lemma~\ref{right degree - ring}.
Let
\begin{align*}
	h_k &= \max(\deg_x(P \bmod p^k), \deg_x(Q \bmod p^k)) \\
	d_k &= \max(\deg_y(P \bmod p^k), \deg_y(Q \bmod p^k)).
\end{align*}

\begin{lemma}\label{initial state - diagonals}
The base-$\frac{p}{Q}$ digits $T_k$ of the initial state $S_0$ satisfy
\begin{align*}
	\deg_x T_k &\leq (k + 1) h_k \\
	\deg_y T_k &\leq (k + 1) d_k.
\end{align*}
\end{lemma}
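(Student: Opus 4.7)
The proof plan mirrors Lemma~\ref{initial state}, but is more straightforward because the initial state is given directly by $S_0 = \paren{P Q^{p^{\alpha - 1} - 1} \bmod p^\alpha}$ and no analogue of the quotient analysis in Lemma~\ref{polynomial quotient} is required.  First I would establish a recursion for the base-$\frac{p}{Q}$ digits of $S_0$.  Writing the base-$\frac{p}{Q}$ expansion of $S_0$, dividing through by $Q^{p^{\alpha - 1} - 1}$ (valid modulo $p^\alpha$ in the formal power series ring since $Q(0, 0)$ is a unit modulo~$p$), multiplying by $Q^k$, and reducing modulo $p^{k + 1}$, we obtain
\[
	P Q^k \equiv T_0 Q^k + T_1 \, p \, Q^{k - 1} + \dots + T_k \, p^k \mod p^{k + 1}.
\]
With $T_0 \colonequal P \bmod p$, this yields the recursion
\[
	T_k = \paren{\frac{P Q^k - T_0 Q^k - T_1 \, p \, Q^{k - 1} - \dots - T_{k - 1} \, p^{k - 1} Q}{p^k} \bmod p},
\]
where the divisibility of the numerator by $p^k$, and hence the membership $T_k \in D[x, y]$, follows inductively from the construction.

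Next I would bound the $x$-degree of $T_k$ by induction on $k$.  Since the numerator in the recursion is divided by $p^k$ and then reduced modulo~$p$, only the data of $P$ and $Q$ modulo~$p^{k + 1}$ is relevant, so each degree is controlled by $h_k$ (under the convention that $h_k$ records the $x$-degree information of $P$ and $Q$ truncated at the relevant level, so that $h_j \leq h_k$ for $j \leq k$).  The base case $\deg_x T_0 \leq h_0$ is immediate.  For the inductive step, the term $P Q^k$ contributes $x$-degree at most $(k + 1) h_k$, and for each $j \in \{0, 1, \dots, k - 1\}$ the term $T_j \, p^j Q^{k - j}$ contributes at most $(j + 1) h_j + (k - j) h_k$, which is bounded by $(k + 1) h_k$ since $h_j \leq h_k$.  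Taking the maximum over the terms gives $\deg_x T_k \leq (k + 1) h_k$.  The identical calculation with $y$ in place of $x$ yields $\deg_y T_k \leq (k + 1) d_k$.

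There is no serious obstacle here.  The main simplification relative to the proof of Lemma~\ref{initial state} is the absence of $y$-min-degree and Minkowski-type constraints, because in the multivariate diagonal setting we work directly with polynomials in $\ring[x_1, \dots, x_m]$ rather than with sheared Laurent polynomials.  In particular, one does not need to invoke Lemma~\ref{Minkowski} or to analyze the quotient $\frac{Q^{k + 1}}{P/y}$, so the degree bookkeeping is noticeably cleaner than in the Furstenberg case.
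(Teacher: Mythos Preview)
Your proposal is correct and follows precisely the approach the paper intends: Lemma~\ref{initial state - diagonals} is stated without proof as an analogue of Lemma~\ref{initial state}, and your recursion $T_k = \paren{(P Q^k - T_0 Q^k - \dots - T_{k-1} p^{k-1} Q)/p^k \bmod p}$ together with the inductive degree bound is exactly the adaptation one obtains by specializing the proof of Lemma~\ref{initial state} to the initial state $S_0 = \paren{P Q^{p^{\alpha-1}-1} \bmod p^\alpha}$. Your observation that Lemma~\ref{polynomial quotient}, the $y$-min-degree tracking, and Lemma~\ref{Minkowski} are all unnecessary here is also right, since no shearing and no quotient $Q^{k+1}/(P/y)$ enter in the diagonal setting.
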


\begin{proposition}\label{transient - diagonals}
Let $S_0$ be the initial state, and let $(T_{\alpha - 1}, \dots, T_1, T_0) \colonequal \rep_{p/Q}(S_0)$.
Let
\[
	u = \floor{\log_p \max\!\paren{\alpha (h_{\alpha - 1} - h), \alpha (d_{\alpha - 1} - d), 1}} + 1.
\]
Then, for all $r_1, r_2, \dots, r_u \in \{0, 1, \dots, p - 1\}$, we have $\rep_{p/Q}((\lambda_{r_u, 0} \circ \dots \circ \lambda_{r_2, 0} \circ \lambda_{r_1, 0})(S_0)) \in \mathcal V$.
\end{proposition}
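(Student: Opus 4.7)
The plan is to mirror the proof of Proposition~\ref{transient - ring}, with two simplifications afforded by the diagonal setting. Since we now work with ordinary polynomials in $\ring[x,y]$ rather than Laurent polynomials, there is no $y$-min-degree to track, and the symmetric roles of $x$ and $y$ mean that a single argument handles both coordinates. Throughout I treat the relevant Cartier-type operators as $\lambda_{r,r}$, following the convention set at the start of Section~\ref{section: diagonals}.

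First I would establish the direct analogue of Proposition~\ref{digit bounds under Cartier}: if $(T_{\alpha-1}, \dots, T_0) = \rep_{p/Q}(S)$ and $(T_{\alpha-1}', \dots, T_0') = \rep_{p/Q}(\lambda_{r,r}(S))$, then
\begin{align*}
    \deg_x T_m' &\leq \max_{0 \leq k \leq m}\floor{\tfrac{\deg_x T_k - (k+1)h - r}{p}} + (m+1)h, \\
    \deg_y T_m' &\leq \max_{0 \leq k \leq m}\floor{\tfrac{\deg_y T_k - (k+1)d - r}{p}} + (m+1)d.
\end{align*}
The proof is a routine adaptation of Theorem~\ref{closure under Cartier} and Lemma~\ref{polynomial modulo prime power}: introduce the auxiliary polynomials $U_{k,j}$ that absorb the carries, bound their $x$- and $y$-degrees using $Q(x^p,y^p) \equiv Q(x,y)^p \mod p$, and then add the $h$ or $d$ contributed by each carry step.

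Next I would argue by induction on $n \geq 0$ that if, for every $k \in \{0,1,\dots,\alpha-1\}$, the digits $T_k$ of some state $S$ satisfy
\[
    \deg_x T_k \leq \floor{\tfrac{(k+1)(h_k - h)}{p^n}} + (k+1)h \quad\text{and}\quad \deg_y T_k \leq \floor{\tfrac{(k+1)(d_k - d)}{p^n}} + (k+1)d,
\]
then the digits of $\lambda_{r,r}(S)$ satisfy the same inequalities with $n$ replaced by $n+1$. This combines the digit bound above with the elementary inequality $\floor{\floor{x/p^n}/p} \leq \floor{x/p^{n+1}}$ and the monotonicity $h_k \leq h_m$, $d_k \leq d_m$ for $k \leq m$. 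Lemma~\ref{initial state - diagonals} supplies the base case $n = 0$ for the initial state $S_0$, since its digits satisfy $\deg_x T_k \leq (k+1)h_k$ and $\deg_y T_k \leq (k+1)d_k$.

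Finally, the definition of $u$ guarantees $p^u > \alpha(h_{\alpha-1} - h)$ and $p^u > \alpha(d_{\alpha-1} - d)$, so $(k+1)(h_k - h) < p^u$ and $(k+1)(d_k - d) < p^u$ for every $k \leq \alpha - 1$. Hence after $u$ applications of operators of the form $\lambda_{r_i,r_i}$ the two floor expressions vanish, the digits of $(\lambda_{r_u,r_u} \circ \dots \circ \lambda_{r_1,r_1})(S_0)$ meet the defining degree bounds of $\mathcal V$, and the conclusion follows. The $\max$ with $1$ in the definition of $u$ merely forces $u \geq 1$ in the degenerate case where no degrees drop, which is what subsequent applications of the proposition require. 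I do not expect any genuine obstacle: each step is a direct transcription of the corresponding step in the proof of Proposition~\ref{transient - ring}, and the absence of Laurent polynomials makes the bookkeeping strictly easier.
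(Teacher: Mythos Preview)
Your proposal is correct and follows precisely the approach the paper intends: the paper does not give a separate proof of Proposition~\ref{transient - diagonals} but simply presents it as the diagonal analogue of Proposition~\ref{transient - ring}, and your outline is exactly the expected transcription of that proof (including the right adjustments $d-1 \mapsto d$ and the use of $\lambda_{r,r}$ in place of $\lambda_{r,0}$, which corrects a notational slip in the statement).
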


\begin{proposition}\label{univariate emulation - diagonals}
We have the following.
\begin{enumerate}
\item
Let $R = \pi_{x, 0}(Q)$.
For all $S \in \ring[x, y]$,
\[
	\pi_{x, 0}(\lambda_{0, 0}(S))
	= \lambda_0(\pi_{x, 0}(S)).
\]
\item
Let $R = \pi_{x, h}(Q)$.
For all $S \in \ring[x, y]$ with height at most $p^{\alpha - 1} h$,
\[
	\pi_{x, p^{\alpha - 1} h}(\lambda_{0, 0}(S))
	= \lambda_0(\pi_{x, p^{\alpha - 1} h}(S)).
\]
\item
Let $R = \pi_{y, 0}(Q)$.
For all $S \in \ring[x, y]$,
\[
	\pi_{y, 0}(\lambda_{0, 0}(S))
	= \lambda_0(\pi_{y, 0}(S)).
\]
\item
Let $R = \pi_{y, d}(Q)$.
For all $S \in \ring[x, y]$ with degree at most $p^{\alpha - 1} d$,
\[
	\pi_{y, p^{\alpha - 1} d}(\lambda_{0, 0}(S))
	= \lambda_0(\pi_{y, p^{\alpha - 1} d}(S)).
\]
\end{enumerate}
\end{proposition}

\begin{lemma}\label{right degree - diagonals}
Define $u\subl$, $u\subr$, $u\subb$, and $u\subt$ as in Theorem~\ref{kernel size upper bound - diagonals}.
For all $S \in \val_{p/Q}(\mathcal V)$, we have
\begin{enumerate}
\item
$\deg \pi_{x, 0}(\lambda_{0, 0}^n(S)) \leq p^{\alpha - 1} \deg \pi_{x, 0}(Q)$ for all $n \geq u\subl$,
\item
$\deg \pi_{x, p^{\alpha - 1} h}(\lambda_{0, 0}^n(S)) \leq p^{\alpha - 1} \deg \pi_{x, h}(Q)$ for all $n \geq u\subr$,
\item
$\deg \pi_{y, 0}(\lambda_{0,0}^n(S)) \leq p^{\alpha - 1} \deg \pi_{y, 0}(Q)$ for all $n \geq u\subb$,
and
\item
$\deg \pi_{y, p^{\alpha - 1} d}(\lambda_{0,0}^n(S)) \leq p^{\alpha - 1} \deg \pi_{y, d}(Q)$ for all $n \geq u\subt$.
\end{enumerate}
\end{lemma}

Define
\[
	V_k^\circ \colonequal \left\{
		\sum_{i = 1}^{(k + 1) h - 1}
		\sum_{j = 1}^{(k + 1) d - 1}
		c_{i, j} x^i y^j
		: \text{$c_{i, j} \in D$ for each $i, j$}
	\right\},
\]
and define
\[
	\mathcal V^\circ \colonequal \{
		(T_{\alpha - 1}, \dots, T_1, T_0)
		: \text{$T_k \in V_k^\circ$ for each $k \in \{0, 1, \dots, \alpha - 1\}$}
	\}.
\]
We have
\begin{align*}
	\dim \mathcal V^\circ
	&= \tfrac{1}{6} \alpha (2 \alpha^2 h d + 3 \alpha (h d - h - d) + h d - 3 h - 3 d + 6) \\
	&= N - \tfrac{1}{2} \alpha ((\alpha + 1) (h + d) - 2).
\end{align*}
With Lemma~\ref{initial state - diagonals}, Proposition~\ref{transient - diagonals}, Proposition~\ref{univariate emulation - diagonals}, and Lemma~\ref{right degree - diagonals}, one can follow the proof of Theorem~\ref{kernel size upper bound - ring} to give a proof of Theorem~\ref{kernel size upper bound - diagonals}.

Finally, we prove Theorem~\ref{kernel size upper bound - multivariate diagonals}.

\begin{main theorem - diagonals}
Let $p$ be a prime, let $\alpha \geq 1$, and let
\[
	F \colonequal \mathcal{D}\!\paren{\frac{P(x_1, \dots, x_m)}{Q(x_1, \dots, x_m)}}
\]
where $P(x_1, \dots, x_m)$ and $Q(x_1, \dots, x_m)$ are polynomials in $\Z_p[x_1, \dots, x_m]$ such that $Q(0, \dots, 0) \nequiv 0 \mod p$ and $m\geq 2$.
Write $F = \sum_{n \geq 0} a(n) x^n$.
Let $h_i = \max(\deg_{x_i}(P \bmod p), \deg_{x_i}(Q \bmod p))$, and assume that $h_i \geq 1$ for each $i$.
Let $M = \sum_{k = 0}^{\alpha - 1} \prod_{i = 1}^m ((k + 1) h_i + 1)$; then
\[
	\size{\ker_p((a(n) \bmod p^\alpha)_{n \geq 0})}\leq p^M.
\]
\end{main theorem - diagonals}

\begin{proof}
The proof of the bound is similar to the proof of Corollary~\ref{kernel initial upper bound - ring}.
Namely, let
\[
	V_k \colonequal \left\{
		S \in D[x_1, \dots, x_m]
		:
		\text{$\deg_{x_i} S \leq (k + 1) h_i$ for each $i$}
	\right\}
\]
and
\[
	\mathcal V \colonequal \{
		(T_{\alpha - 1}, \dots, T_1, T_0)
		: \text{$T_k \in V_k$ for each $k \in \{0, 1, \dots, \alpha - 1\}$}
	\}.
\]
Then we have
\[
	\dim \mathcal V
	= \sum_{k = 0}^{\alpha - 1} \prod_{i = 1}^m ((k + 1) h_i + 1)
	= M.
\]
By Lemma~\ref{initial state - diagonals}, the initial state belongs to $\mathcal V$.
By a version of Corollary~\ref{module - ring}, we have $\lambda_{r, 0}(\val_{p/Q}(\mathcal V)) \subseteq \val_{p/Q}(\mathcal V)$.
The statement follows.
\end{proof}

\begin{remark}
For $m \geq 3$, we cannot do better without a multivariate version of the results of Section~\ref{section: orbit size univariate - ring}, in particular Corollary~\ref{combined univariate orbit size upper bound - ring}.
For example, let $m = 3$.
One can modify the beginning of the proof of Theorem~\ref{kernel size upper bound - ring} to bound $\size{\orb_{\lambda_{0, 0, 0}}(S_0)}$ using $p^{\dim \mathcal V^\circ}$ and six bivariate operators $\lambda_{0, 0}$, where
\[
	V_k^\circ \colonequal
	\left\{
		S \in D[x_1, x_2, x_3]
		:
		\text{$1\leq \mindeg_{x_i} S$ and $\deg_{x_i} S \leq (k + 1) h_i -2$ for each $i$}
	\right\}
\]
and
\[
	\mathcal V^\circ \colonequal \{
		(T_{\alpha - 1}, \dots, T_1, T_0)
		: \text{$T_k \in V_k^\circ$ for each $k \in \{0, 1, \dots, \alpha - 1\}$}
	\}.
\]
We have
\[
	\dim \mathcal V^\circ
	= \sum_{k = 0}^{\alpha - 1} \prod_{i = 1}^3 ((k + 1) h_i -2).
\]
By Theorem~\ref{kernel size upper bound - diagonals}, the orbit size of the projection of the initial state onto one of the six borders under the relevant operator $\lambda_{0, 0}$ is in $(1 + o(1)) p^{\frac{1}{6} \alpha (\alpha + 1) (2 \alpha + 1) h_i h_j}$ for the appropriate $i$ and $j$ where $i \neq j$.
However, this is too large relative to the size of $\mathcal W$, defined by
\[
	W_k \colonequal \left\{
		S \in D[x_1, x_2, x_3]
		:
		\text{$\deg_{x_i} S \leq (k + 1) h_i -1$ for each $i$}
	\right\}
\]
and
\[
	\mathcal W \colonequal \{
		(T_{\alpha - 1}, \dots, T_1, T_0)
		: \text{$T_k \in W_k$ for each $k \in \{0, 1, \dots, \alpha - 1\}$}
	\}.
\]
Namely, we have
\[
	N\colonequal \dim \mathcal W
	= \sum_{k = 0}^{\alpha - 1} \prod_{i = 1}^3 (k + 1) h_i
	= \tfrac{1}{4} \alpha^2 (\alpha +1)^2 h_1 h_2 h_3.
\]
Let $S_0 = \paren{P Q^{p^{\alpha - 1} - 1} \bmod p^\alpha}$.
If $r>0$, then a version of Corollary~\ref{module - ring} gives $\rep_{p/Q}(\lambda_{r, r, r} (S_0)) \in \mathcal W$.
It remains to consider $\size{\orb_{\lambda_{0, 0, 0}}(S_0)}$.
The ratio of the orbit size of $S_0$ under this $\lambda_{0, 0, 0}$ to the size of $\mathcal W$ is in
\begin{multline*}
	\frac{
		p^{\dim \mathcal V^\circ} \cdot (1+o(1)) p^{\frac{1}{6} \alpha (\alpha + 1) (2 \alpha + 1) (2 h_1 h_2 + 2 h_1 h_3 + 2 h_2 h_3)}
	}{
		p^{\frac{1}{4} \alpha^2 (\alpha +1)^2 h_1 h_2 h_3}
	} \\
	= (1 + o(1)) p^{2 \alpha (\alpha +1) (h_1+h_2+h_3) - 8 \alpha},
\end{multline*}
which clearly is not in $o(1)$ as $p$, $\alpha$, $h_1$, $h_2$, or $h_3$ tends to infinity.
\end{remark}

\section*{Acknowledgments}

We thank Frits Beukers and Armin Straub for discussions about their related work.
We also thank the reviewers for constructive feedback.

\section*{Appendix: Explicit automaton sizes}

The tables in this appendix contain polynomials $P \in \ring[x, y]$, found through systematic searches, whose Furstenberg series achieve maximum automaton size or maximum orbit size under $\lambda_{0, 0}$.
By definition, $P(0, 0) = 0$ and $\frac{\partial P}{\partial y}(0, 0) \nequiv 0 \mod p$; for simplicity, we require that the coefficient of $x^0 y^1$ in $P$ is $1$.
All polynomials listed happen to satisfy $h= \deg_x P$ and $d = \deg_y P$, so the value of $u$ in Theorem~\ref{kernel size upper bound - ring} is always $u = 1$.
The automata were computed with the Mathematica package \textsc{IntegerSequences}~\cite{IntegerSequences, IntegerSequences article}.

Table~\ref{automaton size table - ring} lists the maximum unminimized automaton size for several values of $p$, $\alpha$, $h$, and $d$, along with one polynomial that achieves this size and the value of the bound in Theorem~\ref{kernel size upper bound - ring}.

Table~\ref{orbit size table - ring} contains data on maximal orbit sizes under $\lambda_{0,0}$.
For these searches, we assume that the coefficients in $P$ belong to $\{0, 1, \dots, p - 1\}$ to make the search space more accessible.
In practice, this restriction does not seem to be consequential.
For $p^\alpha \in \{4, 8\}$, $d = 1$, and $h \leq 4$, only one polynomial with a coefficient outside this range results in a larger orbit size, namely $(2 x + 1) y + x$, which yields orbit size $3$ modulo~$4$ and also $3$ modulo~$8$.
For $p^\alpha \in \{4, 8\}$, $d = 2$, and $h \leq 2$, no polynomials result in a larger orbit size.

Surprisingly, all but one of the polynomials that maximize the orbit size for $p^\alpha = 4$ in Table~\ref{orbit size table - ring} also maximize the orbit size for $p^\alpha = 8$.
These polynomials are therefore good candidates for obtaining maximal orbit sizes for larger values of $\alpha$.
For example, let $P = x^2 y^2 + (x^2 + x + 1) y + x^2$.
For each $\alpha \in \{1, 2, \dots, 14\}$, the transient length under $\lambda_{0, 0}$ is $\alpha + 1$ and the eventual period length is $2^{\alpha + 1}$, leading one to conjecture that the orbit size is exactly $2^{\alpha + 1} + \alpha + 1$ for all $\alpha \geq 1$.
This perhaps suggests the true growth rate, indicating a possible direction for future work.

\small

\begin{table}[h]
	\raggedright
	$p^\alpha = 4$:
	\[
		\begin{array}{c|c|c|c|c}
			h & d & P & \text{aut.\ size} & p^N \\
			\hline
			\hline
			1 & 1 & (x + 1) y + x & 6 & 16 \\
			2 & 1 & (3 x^2 + x + 1) y + x^2 & 18 & 512 \\
			3 & 1 & (x^3 + x + 1) y + x^3 & 70 & 16384 \\
			4 & 1 & (3 x^4 + x + 1) y + x^4 + x & 189 & 524288 \\
			\hline
			1 & 2 & (x + 2) y^2 + (x + 1) y + x & 18 & 512 \\
			2 & 2 & (x^2 + x + 3) y^2 + (2 x^2 + x + 1) y + x & 222 & 524288 \\
			3 & 2 & (x^3 + x^2 + 1) y^2 + (x^3 + 1) y + x & 4826 & 536870912
		\end{array}
	\]
	$p^\alpha = 8$:
	\[
		\begin{array}{c|c|c|c|c}
			h & d & P & \text{aut.\ size} & p^N \\
			\hline
			\hline
			1 & 1 & (x + 1) y + x & 10 & 1024 \\
			2 & 1 & (3 x^2 + x + 1) y + x^2 + x & 61 & 16777216 \\
			3 & 1 & (x^3 + x + 1) y + x^3 & 246 & 274877906944 \\
			\hline
			1 & 2 & (x + 2) y^2 + (5 x + 1) y + x & 56 & 16777216 \\
			2 & 2 & (x^2 + x + 3) y^2 + (x^2 + 2 x + 1) y + x^2 & 2571 & 4503599627370496
		\end{array}
	\]
	$p^\alpha = 9$:
	\[
		\begin{array}{c|c|c|c|c}
			h & d & P & \text{aut.\ size} & p^N \\
			\hline
			\hline
			1 & 1 & (4 x + 1) y + x & 14 & 81 \\
			2 & 1 & (2 x^2 + 7 x + 1) y + x^2 + x & 123 & 19683 \\
			3 & 1 & (4 x^3 + 2 x + 1) y + x^3 & 562 & 4782969 \\
			\hline
			1 & 2 & (x + 4) y^2 + y + x & 171 & 19683 \\
			2 & 2 & (x^2 + x + 5) y^2 + (x^2 + 1) y + x & 11073 & 1162261467
		\end{array}
	\]
	\caption{Polynomials in $\ring[x, y]$ achieving the maximum unminimized automaton size for given values of $p$, $\alpha$, $h$, and $d$. The value of $N$ in the final column is $N = \frac{1}{6} \alpha (\alpha + 1) ((2 h d - 1) \alpha + h d + 1)$ from Theorem~\ref{kernel size upper bound - ring}.}
	\label{automaton size table - ring}
\end{table}

\begin{table}[h]
	\raggedright
	$p^\alpha = 4$:
	\[
		\begin{array}{c|c|c|c|c}
			h & d & P & \text{orbit size} & \text{bound} \\
			\hline
			\hline
			1 & 1 & y + x & 2 & 10 \\
			2 & 1 & y + x^2 + x & 4 & 136 \\
			3 & 1 & y + x^3 + x^2 & 4 & 3080 \\
			4 & 1 & y + x^4 + x & 5 & 65546 \\
			5 & 1 & (x^5 + x + 1) y + x & 7 & 1572874 \\
			6 & 1 & (x^5 + x + 1) y + x^6 + x & 8 & 25165834 \\
			7 & 1 & (x^7 + x^6 + x^2 + x + 1) y + x & 13 & 805306378 \\
			8 & 1 & (x^8 + x^3 + 1) y + x & 16 & 1.61 \times 10^{10} \\
			9 & 1 & (x^9 + x^2 + 1) y + x & 21 & 3.43 \times 10^{11} \\
			10 & 1 & (x^{10} + x^6 + x^3 + x^2 + 1) y + x & 31 & 8.24 \times 10^{12} \\
			\hline
			1 & 2 & x y^2 + (x + 1) y + x & 4 & 135 \\
			2 & 2 & x^2 y^2 + (x^2 + x + 1) y + x^2 & 11 & 2097160 \\
			3 & 2 & (x^3 + x + 1) y^2 + (x^3 + 1) y + x^3 & 25 & 1.03 \times 10^{11} \\
			4 & 2 & (x^4 + x^2 + x) y^2 + (x^4 + x + 1) y + x^4 & 50 & 1.68 \times 10^{15} \\
			5 & 2 & (x^5 + x^3 + 1) y^2 + (x^5 + x + 1) y + x & 121 & 4.61 \times 10^{19} \\
			6 & 2 & (x^6 + x^4 + x) y^2 + (x^5 + x + 1) y + x^3 & 122 & 7.55 \times 10^{23} \\
			7 & 2 & (x^7 + x + 1) y^2 + (x^7 + x^6 + x^5 + x + 1) y + x & 337 & 1.73 \times 10^{28}
		\end{array}
	\]
	$p^\alpha = 8$:
	\[
		\begin{array}{c|c|c|c|c}
			h & d & P & \text{orbit size} & \text{bound} \\
			\hline
			\hline
			1 & 1 & y + x & 2 & 4105 \\
			2 & 1 & y + x^2 + x & 5 & 1.37 \times 10^{11} \\
			3 & 1 & y + x^3 + x^2 & 5 & 3.45 \times 10^{18} \\
			4 & 1 & y + x^4 + x & 6 & 7.73 \times 10^{25} \\
			5 & 1 & (x^5 + x + 1) y + x & 8 & 1.94 \times 10^{33} \\
			6 & 1 & (x^5 + x + 1) y + x^6 & 9 & 3.26 \times 10^{40} \\
			7 & 1 & (x^7 + x^6 + x^2 + x + 1) y + x & 14 & 1.09 \times 10^{48} \\
			8 & 1 & (x^8 + x^3 + 1) y + x & 17 & 2.29 \times 10^{55} \\
			9 & 1 & (x^9 + x^2 + 1) y + x & 22 & 5.14 \times 10^{62} \\
			10 & 1 & (x^{10} + x^6 + x^3 + x^2 + 1) y + x & 32 & 1.29 \times 10^{70} \\
			\hline
			1 & 2 & x y^2 + (x + 1) y + x & 5 & 1.37 \times 10^{11} \\
			2 & 2 & x^2 y^2 + (x^2 + x + 1) y + x^2 & 20 & 1.01 \times 10^{31} \\
			3 & 2 & (x^3 + x + 1) y^2 + (x^3 + 1) y + x^3 & 50 & 2.24 \times 10^{51} \\
			4 & 2 & (x^4 + x^2 + x) y^2 + (x^4 + x + 1) y + x^4 & 99 & 1.65 \times 10^{71} \\
			5 & 2 & (x^5 + x^3 + 1) y^2 + (x^5 + x + 1) y + x & 242 & 2.03 \times 10^{91} \\
			6 & 2 & (x^6 + x^4 + x) y^2 + (x^5 + x + 1) y + x^3 & 243 & 1.50 \times 10^{111} \\
			7 & 2 & (x^7+ x+ 1) y^2+ (x^7+ x^6+ x^5+ x+ 1) y+ x & 674 & 1.55 \times 10^{131}
		\end{array}
	\]
	$p^\alpha = 9$:
	\[
		\begin{array}{c|c|c|c|c}
			h & d & P & \text{orbit size} & \text{bound} \\
			\hline
			\hline
			1 & 1 & y + x & 2 & 15 \\
			2 & 1 & y + x^2 + x & 3 & 1464 \\
			3 & 1 & y + x^3 + x & 4 & 177155 \\
			4 & 1 & (2 x^4 + x + 1) y + x & 5 & 19131884 \\
			5 & 1 & (x^5 + 2 x^2 + 1) y + x & 7 & 2324522942 \\
			6 & 1 & (x^5 + 2 x^2 + 1) y + x^6 & 8 & 1.88 \times 10^{11} \\
			\hline
			1 & 2 & x y^2 + y + x & 4 & 1464 \\
			2 & 2 & (x^2 + x + 1) y^2 + y + x^2 + x & 20 & 6973568808 \\
			3 & 2 & (x^3 + x^2 + x + 2) y^2 + (x + 1) y + x^3 + x & 57 & 1.00 \times 10^{17} \\
			4 & 2 & (x^3 + 2 x + 1) y^2 + (x^4 + 1) y + x & 218 & 4.78 \times 10^{23}
		\end{array}
	\]
	\caption{Polynomials with coefficients in $\{0, 1, \dots, p - 1\}$ for which the initial state achieves the maximum orbit size under $\lambda_{0, 0}$ for given values of $p$, $\alpha$, $h$, and $d$. The final column contains the value of $p^{N - \alpha (\alpha + 1) (h + d - 1) / 2} \Landaulcm(h, d, d) + \floor{\log_p \max\!\paren{p^{\alpha - 1} h, p^{\alpha - 1} (d - 1)}} + 1 + \floor{\log_p \max(h, d)} + 2 \alpha - 1 + 1$ from Theorem~\ref{kernel size upper bound - ring}.}
	\label{orbit size table - ring}
\end{table}

\clearpage

\normalsize

\end{document}